\documentclass[reqno]{amsart}
\usepackage{amsmath}
\usepackage{amsthm}
\usepackage{graphicx}
\usepackage{amsfonts}
\usepackage{amssymb}
\usepackage{enumerate}
\parindent=.25in
\parskip=0.5ex

\addtolength{\oddsidemargin}{-.5in}
\addtolength{\evensidemargin}{-.5in}
\addtolength{\textwidth}{1in}
\addtolength{\topmargin}{-.5in}
\addtolength{\textheight}{1in}

\numberwithin{equation}{section}

\newtheorem{theorem}{Theorem}[section]
\newtheorem{lemma}[theorem]{Lemma}
\newtheorem{proposition}[theorem]{Proposition}
\newtheorem{corollary}[theorem]{Corollary}

\theoremstyle{definition}

\newtheorem{definition}[theorem]{Definition}
\newtheorem{remark}[theorem]{Remark}

\newenvironment{assumption}[1]
  {\innercustomthm}
  {\endinnercustomthm}

\def\E{{\mathbb E}}
\def\R{{\mathbb R}}
\def\N{{\mathbb N}}
\def\PP{{\mathbb P}}
\def\P{{\mathcal P}}
\def\RC{{\mathcal R}}
\def\B{{\mathcal B}}
\def\V{{\mathcal V}}
\def\M{{\mathcal M}}

\def\X{{\mathcal X}}

\def\Q{{\mathcal Q}}

\def\G{{\mathcal G}}

\def\W{{\mathcal W}}

\def\A{{\mathcal A}}
\def\F{{\mathcal F}}

\def\C{{\mathcal C}}

\title[The mean field limit for stochastic differential games]{A general characterization of the mean field limit for stochastic differential games}
\author{Daniel Lacker}
\address{ORFE,  Princeton University,
Princeton, NJ  08544, USA.}
\email{dlacker@princeton.edu}
\thanks{Partially supported by NSF: DMS-0806591}

\begin{document}

\begin{abstract}
The mean field limit of large-population symmetric stochastic differential games is derived in a general setting, with and without common noise, on a finite time horizon. Minimal assumptions are imposed on equilibrium strategies, which may be asymmetric and based on full information. It is shown that approximate Nash equilibria in the $n$-player games admit certain weak limits as $n$ tends to infinity, and every limit is a weak solution of the mean field game (MFG). Conversely, every weak MFG solution can be obtained as the limit of a sequence of approximate Nash equilibria in the $n$-player games. Thus, the MFG precisely characterizes the possible limiting equilibrium behavior of the $n$-player games. 
Even in the setting without common noise, the empirical state distributions may admit stochastic limits which cannot be described by the usual notion of MFG solution.
\end{abstract}

\maketitle


\section{Introduction}
A decade of active research on mean field games (MFGs) has been driven by a primarily intuitive connection with large-population stochastic differential games of a certain symmetric type.
The idea, which began with the pioneering work of Lasry and Lions \cite{lasrylionsmfg} and Huang, Malham\'e and Caines \cite{huangmfg1}, is that
a large-population game of this type should behave similarly to its MFG counterpart, which may be thought of as an infinite-player version of the game. 
Rigorous analysis of this connection, however, remains restricted in scope. Following \cite{huangmfg1}, the vast majority of the literature works backward from the mean field limit, in the sense that a solution of the MFG is used to construct approximate Nash equilibria for the corresponding $n$-player games for large $n$. Fewer papers \cite{lasrylionsmfg,fischer-mfgconnection,bardi-explicitsolutions,feleqi-MFGderivation} have approached from the other direction: given for each $n$ a Nash equilibrium for the $n$-player game, in what sense (if any) do these equilibria converge as $n$ tends to infinity?
The goal of this paper is to address both of these problems in a general framework.

More precisely, we study an $n$-player stochastic differential game, in which the private state processes $X^1,\ldots,X^n$ of the agents (or players) are given by the following dynamics:
\begin{align*}
dX^{i}_t &= b(t,X^{i}_t,\widehat{\mu}^n_t,\alpha^i_t)dt + \sigma(t,X^{i}_t,\widehat{\mu}^n_t)dW^i_t + \sigma_0(t,X^{i}_t,\widehat{\mu}^n_t)dB_t, \\
\widehat{\mu}^n_t &= \frac{1}{n}\sum_{k=1}^n\delta_{X^{k}_t}.
\end{align*}
Here $B,W^1,\ldots,W^n$ are independent Wiener processes, $\alpha^i$ is the control of agent $i$, and $\widehat{\mu}^n$ is the empirical distribution of the state processes. We call $W^1,\ldots,W^n$ the \emph{independent} or \emph{idiosyncratic noises}, since agent $i$ feels only $W^i$ directly, and we call $B$ the \emph{common noise}, since each agent feels $B$ equally. The reward to agent $i$ of the strategy profile $(\alpha^1,\ldots,\alpha^n)$ is
\[
J_i(\alpha^1,\ldots,\alpha^n) = \E\left[\int_0^Tf(t,X^{i}_t,\widehat{\mu}^n_t,\alpha^i_t)dt + g(X^{i}_T,\widehat{\mu}^n_T)\right].
\]
Agent $i$ seeks to maximize this reward, and so we say that $(\alpha^1,\ldots,\alpha^n)$ form an $\epsilon$-Nash equilibrium (or an approximate Nash equilibrium) if 
\[
J_i(\alpha^1,\ldots,\alpha^n) + \epsilon \ge J_i(\alpha^1,\ldots,\alpha^{i-1},\beta,\alpha^{i+1},\ldots,\alpha^n)
\]
for each admissible alternative strategy $\beta$. Intuitively, if the number of agents $n$ is very large, a single representative agent has little influence on the empirical measure flow $(\widehat{\mu}^n_t)_{t \in [0,T]}$, and so this agent expects to lose little in the way of optimality by ignoring her own effect on the empirical measure. Crucially, the system is symmetric in the sense that the same functions $(b,\sigma,\sigma_0)$ and $(f,g)$ determine the dynamics and objectives of each agent, and thus we may hope to learn something of the entire system from the behavior of a single representative agent.

The mean field game is specified precisely in Section \ref{se:statement}, and it follows this intuition by treating $n$ as infinite. Loosely speaking, a \emph{strong MFG solution} is a $(\F^B_t = \sigma(B_s : s\le t))_{t \in [0,T]}$-adapted measure-valued process $(\mu_t)_{t \in [0,T]}$ satisfying $\mu_t = \text{Law}(X^{\alpha^*}_t \ | \ \F^B_t)$ for each $t$, where $X^{\alpha^*}$ is an optimally controlled state process coming from the following stochastic optimal control problem:
\[
\begin{cases}
\alpha^* &\in \arg\max_\alpha\E\left[\int_0^Tf(t,X^\alpha_t,\mu_t,\alpha_t)dt + g(X^\alpha_T,\mu_T)\right], \text{ s.t.} \\
dX^\alpha_t &= b(t,X^\alpha_t,\mu_t,\alpha_t)dt + \sigma(t,X^\alpha_t,\mu_t)dW_t + \sigma_0(t,X^\alpha_t,\mu_t)dB_t.
\end{cases}
\]
In other words, with the process $(\mu_t)_{t \in [0,T]}$ treated as fixed, the representative agent solves an optimal control problem. The requirement $\mu_t = \text{Law}(X^{\alpha^*}_t \ | \ \F^B_t)$, often known as a \emph{consistency condition}, assures us that this decoupled optimal control problem is truly \emph{representative} of the entire population, and we may think of the measure flow $(\mu_t)_{t \in [0,T]}$ as an \emph{equilibrium}.

The analysis of this paper focuses on mean field games with common noise, but both of the volatility coefficients $\sigma$ and $\sigma_0$ are allowed to be degenerate. Hence, our results cover the usual mean field games without common noise (where $\sigma_0 \equiv 0$) as well as deterministic mean field games (where $\sigma \equiv \sigma_0 \equiv 0$).
The literature on mean field games with common noise is quite scarce so far, but some general analysis is provided in the recent papers \cite{carmonadelaruelacker-mfgcommonnoise,ahuja-mfgwellposedness,carmona:delarue:4lyons,bensoussan-masterequation}, and some specific models were studied in \cite{carmonafouque-systemicrisk,gueantlasrylionsmfg}. This paper can be seen as a sequel to \cite{carmonadelaruelacker-mfgcommonnoise}, from which we borrow many definitions and a handful of lemmas.
It is emphasized in \cite{carmonadelaruelacker-mfgcommonnoise} that strong solutions are quite difficult to obtain when common noise is present, and this leads to a notion of \emph{weak MFG solution}. Weak solutions, defined carefully in Section \ref{se:mfgdefinition}, differ most significantly from strong solutions in that the measure flow $(\mu_t)_{t \in [0,T]}$ need not be $(\F^B_t)_{t \in [0,T]}$-adapted, and the consistency condition is weakened to something like $\mu_t = \text{Law}(X^{\alpha^*}_t \ | \ \F^{B,\mu}_t)$, where $\F^{B,\mu}_t = \sigma(B_s,\mu_s : s \le t)$. Additionally, weak MFG solutions allow for relaxed (i.e. measure-valued) controls which need not be adapted to the filtration generated by the inputs $(X_0,B,W,\mu)$ of the control problem.

Although this weaker notion of MFG solution was introduced in \cite{carmonadelaruelacker-mfgcommonnoise} to develop an existence and uniqueness theory for MFGs with common noise, the main result of this paper is to assert that this notion is the right one from the point of view of the finite-player game, in the sense that weak MFG solutions characterize the limits of approximate Nash equilibria. The main results are stated in full generality in Sections \ref{se:mainresults} and \ref{se:converse}, but let us state them loosely for now in a simplified form: First, we show that if for each $n$ we are given an $\epsilon_n$-Nash equilibrium $(\alpha^{n,1},\ldots,\alpha^{n,n})$ for the $n$-player game, where $\epsilon_n \rightarrow 0$, then the family $(\text{Law}(B,\widehat{\mu}^n))_{n=1}^\infty$ is tight, and every weak limit agrees with the law of $(B,\mu)$ coming from some weak MFG solution. Second, we show conversely that every weak MFG solution can be obtained as a limit in this way.

Specializing our results to the case without common noise uncovers something unexpected. In the literature thus far, a MFG solution is defined in terms of a \emph{deterministic} equilibrium $(\mu_t)_{t \in [0,T]}$, corresponding to our notion of strong MFG solution. Even when there is no common noise, a weak MFG solution still involves a stochastic equilibrium, and because of our main theorems we must therefore expect the limits of the finite-player empirical measures to remain stochastic. 
Moreover, we demonstrate by a simple example that \emph{a stochastic equilibrium is not necessarily just a randomization among the family of deterministic equilibria}. 
Hence, \emph{the solution concept considered thusfar in literature on mean field games (without common noise) does not fully capture the limiting dynamics of finite-player approximate Nash equilibria}.
This is unlike the case of McKean-Vlasov limits (see \cite{oelschlagermkv,gartnermkv,sznitman}), which can be seen as mean field games with no control.
We prove some admittedly difficult-to-apply results which nevertheless shed some light on this phenomenon: The fundamental obstruction is the adaptedness required of controls, which renders the class of admissible controls quite sensitive to whether or not $(\mu_t)_{t \in [0,T]}$ is stochastic.

Our first theorem, regarding the convergence of arbitrary approximate equilibria (open-loop, full-information, and possibly asymmetric), is arguably the more novel of our two main theorems. It appears to be the first result of its kind for mean field games \emph{with common noise}, with the exception of the linear quadratic model of \cite{carmonafouque-systemicrisk} for which explicit computations are available. 
However, even in the setting without common noise we subtantially generalize the few existing results.

Several papers, such as the recent \cite{carmona:delarue:4lyons} dealing with common noise, contain purely heuristic derivations of the MFG as the limit of $n$-player games. The intuition guiding such derivations is as follows (and let us assume there is no common noise for the sake of simplicity): If $n$ is large, a single agent in a large population should lose little in the way of optimality if she ignores the small feedbacks arising through the empirical measure flow $(\widehat{\mu}^n_t)_{t \in [0,T]}$. If each of the $n$ identical agents does this, then we expect to see symmetric strategies which are nearly independent and ideally of the form $\hat{\alpha}(t,X^i_t)$, for some feedback control $\hat{\alpha}$ common to all of the agents. From the theory of McKean-Vlasov limits, we then expect that $(\widehat{\mu}^n_t)_{t \in [0,T]}$ converges to a deterministic limit. This intuition, however, is largely unsubstantiated and, we will argue, inaccurate in general. 

Lasry and Lions \cite{lasrylionsmfg,lasrylions-jeux1} first attacked this problem rigorously using PDE methods, working with an infinite time horizon and strong simplifying assumptions on the data, and their results were later generalized by Feleqi \cite{feleqi-MFGderivation}. Bardi and Priuli \cite{bardi-explicitsolutions,bardipriuli-ergodicLQMFG} justified the MFG limit for certain linear-quadratic problems, and Gomes et al. \cite{gomes-finitestatemfg} studied models with finite state space. Substantial progress was made in a very recent paper of Fischer \cite{fischer-mfgconnection}, which deserves special mention also because both the level of generality and the method of proof are quite similar to ours; we will return to this shortly.

With the exception of \cite{fischer-mfgconnection}, the aforementioned results share the important limitation that the agents have only \emph{partial information}: the control of agent $i$ may depend only on her own state process $X^{n,i}$ or Wiener process $W^i$. 
Our results allow for arbitrary full-information strategies, settling a conjecture of Lasry and Lions (stated in Remark x after \cite[Theorem 2.3]{lasrylionsmfg} for the case of infinite time horizon).
Combined in \cite{lasrylionsmfg,lasrylions-jeux1,feleqi-MFGderivation} with the assumption that the state process coefficients $(b,\sigma)$ do not depend on the empirical measure, the assumption of partial information leads to the immensely useful simplification that the state processes of the $n$-player games are independent. By showing then that they are also asymptotically identically distributed, 
the aforementioned heuristic argument can be made precise.

Fischer \cite{fischer-mfgconnection}, on the other hand, allows for full-information controls but characterizes  only the \emph{deterministic} limits of $(\widehat{\mu}^n_t)_{t \in [0,T]}$ as MFG equilibria. Assuming that the limit is deterministic implicitly restricts the class of $n$-player equilibria in question. By characterizing even the stochastic limits of $(\widehat{\mu}^n_t)_{t \in [0,T]}$, which we show are in fact quite typical, we impose no such restriction on the equilibrium strategies of the $n$-player games. This not to say, however, that our results completely subsume those of \cite{fischer-mfgconnection}, which work with a more general notion of \emph{local} approximate equilibria and which notably include conditions under which the assumption of a deterministic limit can be verified.

Our second main theorem, which asserts that every weak MFG solution is attainable as a limit of finite-player approximate Nash equilibria, is something of an abstraction of the kind of limiting result most commonly discussed in the MFG literature. In a tradition beginning with \cite{huangmfg1} and continued by the majority of the probabilistic papers on the subject \cite{carmonadelarue-mfg,carmonalacker-probabilisticweakformulation,bensoussan-lqmfg,bensoussan-mfgbook,kolokoltsov-mfgnonlinearmarkov}, an optimal control from an MFG solution is used to construct approximate equilibria for the finite-player games. 
Although our result applies in more general settings, our conclusions are duly weaker, in the sense that the approximate equilibria we construct do not necessarily consist of particularly tangible (i.e. distributed or even symmetric) strategies. We emphasize that the goal of this work is not to construct \emph{nice} approximate equilibria but rather to characterize all possible limits of approximate equilibria.

It is worth emphasizing that this paper makes no claims whatsoever regarding the existence or uniqueness of equilibria for either the $n$-player game or the MFG. Rather, we show that if a sequence of $n$-player approximate equilibria exists, then its limits are described by weak MFG solutions. Conversely, if a weak MFG solution exists, then it is achieved as the limit of some sequence of $n$-player approximate equilibria. Hence, existence of a weak MFG solution is equivalent to existence of a sequence of $n$-player approximate equilibria. Note, however, that the main assumption \ref{assumption:A} of this paper actually guarantees the existence of a weak MFG solution, because of the recent results of \cite{carmonadelaruelacker-mfgcommonnoise}. Far more results are available for MFGs without common noise; refer to the surveys \cite{cardaliaguet-mfgnotes,gomessaude-mfgsurvey} and the recent book \cite{bensoussan-mfgbook} for a wealth of wellposedness results and for further discussion of MFG theory in general.

The paper is organized as follows. Section \ref{se:statement} defines the MFG and the corresponding $n$-player games, before stating the main limit Theorem \ref{th:mainconvergence} and its converse, Theorem \ref{th:converseconvergence}, along with several useful corollaries. Section \ref{se:nocommonnoise} specializes the results to the more familiar setting without common noise and explains the gap between weak and strong solutions. Section \ref{se:canonicalspace} provides some background on weak solutions of MFGs with common noise, borrowed from \cite{carmonadelaruelacker-mfgcommonnoise}, before we turn to the proofs of the main results in Sections \ref{se:mainconvergenceproof}, \ref{se:converseconvergenceproof}, and \ref{se:withoutcommonnoiseproof}. Section \ref{se:mainconvergenceproof} is devoted to the proof of Theorem \ref{th:mainconvergence}, while Section \ref{se:converseconvergenceproof} contains the proof of the converse Theorem \ref{th:converseconvergence}. Finally, Section \ref{se:withoutcommonnoiseproof} explains how to carefully specialize these two theorems to the setting without common noise.

\section{The mean field limit with common noise} \label{se:statement}
After establishing some notation, this section first defines quickly and concisely the mean field game. We work with the same definitions and nearly the same assumptions as \cite{carmonadelaruelacker-mfgcommonnoise}, to which the reader is referred for a more thorough discussion. Then, the $n$-player game is formulated precisely, allowing for somewhat more general information structures than one usually finds in the literature on stochastic differential games. This generality is not just for its own sake; it will play a crucial role in the proofs later.

\subsection{Notation and standing assumptions}

For a topological space $E$, let $\B(E)$ denote the Borel $\sigma$-field, and let $\P(E)$ denote the set of Borel probability measures on $E$. 
For $p \ge 1$ and a separable metric space $(E,d)$, let $\P^p(E)$ denote the set of $\mu \in \P(E)$ satisfying $\int_Ed^p(x,x_0)\mu(dx) < \infty$ for some (and thus for any) $x_0 \in E$. Let $\ell_{E,p}$ denote the $p$-Wasserstein distance on $\P^p(E)$, given by
\begin{align}
\ell_{E,p}(\mu,\nu) := \inf\left\{\left(\int_{E \times E}\gamma(dx,dy)d^p(x,y)\right)^{1/p} : \gamma \in \P(E \times E) \text{ has marginals } \mu,\nu\right\} \label{def:wasserstein}
\end{align}
Unless otherwise stated, the space $\P^p(E)$ is equipped with the metric $\ell_{E,p}$, and all continuity and measurability statements involving $\P^p(E)$ are with respect to $\ell_{E,p}$ and the corresponding Borel $\sigma$-field. The analysis of the paper will make routine use of several topological properties of the spaces $\P^p(E)$ and $\P^p(\P^p(E))$, especially when $E$ is a product space. All of the results we need, well known or not, are summarized in the Appendices A and B of \cite{lacker-mfgcontrolledmartingaleproblems}.

We are given a time horizon $T > 0$, three exponents $(p',p,p_\sigma)$ with $p \ge 1$, a control space $A$, an initial state distribution $\lambda \in \P(\R^d)$, and the following functions:
\begin{align*}
(b,f) &: [0,T] \times \R^d \times \P^p(\R^d) \times A \rightarrow \R^d \times \R, \\ 
(\sigma,\sigma_0) &: [0,T] \times \R^d \times \P^p(\R^d) \rightarrow \R^{d \times m} \times \R^{d \times m_0}, \\ 
g &: \R^d \times \P^p(\R^d) \rightarrow \R.
\end{align*}
Assume \emph{throughout the paper} that the following assumption \ref{assumption:A} holds. This is exactly Assumption \textbf{A} of \cite{carmonadelaruelacker-mfgcommonnoise}, except that here we require that $p' \ge 2$ and that $(b,\sigma,\sigma_0)$ are Lipschitz not only in the state argument but also in the measure argument.

\begin{assumption}{\textbf{A}} \label{assumption:A}
{\ }
\begin{enumerate}
\item[(A.1)] $A$ is a closed subset of a Euclidean space. (More generally, as in \cite{haussmannlepeltier-existence}, a closed $\sigma$-compact subset of a Banach space would suffice.)
\item[(A.2)] The exponents satisfy $p' > p \ge 1 \vee p_\sigma$ and $p' \ge 2 \ge p_\sigma \ge 0$, and also $\lambda \in \P^{p'}(\R^d)$.
\item[(A.3)] The functions $b$, $\sigma$, $\sigma_0$, $f$, and $g$ of $(t,x,\mu,a)$ are jointly measurable and are continuous in $(x,\mu,a)$ for each $t$.
\item[(A.4)] There exists $c_1 > 0$ such that, for all $(t,x,y,\mu,\nu,a) \in [0,T] \times \R^d \times \R^d \times \P^p(\R^d) \times \P^p(\R^d) \times A$,
\begin{align*}
|b(t,x,\mu,a) - b(t,y,\nu,a)| &+ |(\sigma,\sigma_0)(t,x,\mu) - (\sigma,\sigma_0)(t,y,\nu)| \le c_1\left(|x-y| + \ell_{\R^d,p}(\mu,\nu)\right),
\end{align*}
and
\begin{align*}
|b(t,0,\delta_0,a)| &\le c_1(1 + |a|), \\
|(\sigma\sigma + \sigma_0\sigma_0^\top)(t,x,\mu)| &\le c_1\left[1 + |x|^{p_\sigma} + \left(\int_{\R^d}|z|^p\mu(dz)\right)^{p_\sigma/p}\right].
\end{align*}
\item[(A.5)] There exist $c_2, c_3 > 0$ such that, for each $(t,x,\mu,a) \in [0,T] \times \R^d \times \P^p(\R^d) \times A$, 
\begin{align*}
|g(x,\mu)| &\le c_2\left(1 + |x|^p + \int_{\R^d}|z|^p\mu(dz)\right), \\
-c_2\left(1 + |x|^p + \int_{\R^d}|z|^p\mu(dz) + |a|^{p'}\right) \le f(t,x,\mu,a) &\le c_2\left(1 + |x|^p + \int_{\R^d}|z|^p\mu(dz)\right) - c_3|a|^{p'}.
\end{align*}
\end{enumerate}
\end{assumption}

While these assumptions are fairly general, they do not cover all linear-quadratic models. Because of the requirement $p' > p$, the running objectve $f$ may grow quadratically in $a$ only if its growth in $(x,\mu)$ is strictly subquadratic. This requirement is important for compactness purposes, both for the results of this paper and for the existence results of \cite{lacker-mfgcontrolledmartingaleproblems,carmonadelaruelacker-mfgcommonnoise}. In fact, \cite{lacker-mfgcontrolledmartingaleproblems,carmonadelaruelacker-mfgcommonnoise} provide examples of MFGs with $p'=p$ which do not admit solutions even though they verify the rest of assumption \ref{assumption:A}. Existence results for this somewhat delicate boundary case have been obtained in \cite{carmonadelarue-mfg,carmonadelaruelachapelle-mkvvsmfg,bensoussan-lqmfg,carmonafouque-systemicrisk} by assuming some additional inequalities between coefficients. It seems feasible to expect our main results to adapt to such settings, but we do not pursue this here.

\subsection{Relaxed controls and mean field games} \label{se:mfgdefinition}
Define $\V$ to be the set of measures $q$ on $[0,T] \times A$ with first marginal equal to Lebesgue measure, i.e. $q([s,t] \times A) = t-s$ for $0 \le s \le t \le T$, satisfying also
\[
\int_{[0,T] \times A}|a|^pq(dt,da) < \infty.
\]
Since these measures have mass $T$, we may endow $\V$ with a suitable scaling of the $p$-Wasserstein metric.
Each $q \in \V$ may be identified with a measurable function $[0,T] \ni t \mapsto q_t \in \P^p(A)$, determined uniquely (up to a.e. equality) by $dtq_t(da) = q(dt,da)$. 
It is known that $\V$ is a Polish space, and in fact if $A$ is compact then so is $\V$; see \cite[Appendix A]{lacker-mfgcontrolledmartingaleproblems} for more details.
The elements of $\V$ are called \emph{relaxed controls}, and $q \in \V$ is called a \emph{strict control} if it satisfies $q(dt,da) = dt\delta_{\alpha_t}(da)$ for some measurable function $[0,T] \ni t \mapsto \alpha_t \in A$.
Finally, if we are given a measurable process $(\Lambda_t)_{t \in [0,T]}$ with values in $\P(A)$ defined on some measurable space and with $\int_0^T\int_A|a|^{p}\Lambda_t(da)dt < \infty$, we write $\Lambda = dt\Lambda_t(da)$ for the corresponding random element of $\V$.

Let us define some additional canonical spaces. For a positive integer $k$ let $\C^k = C([0,T];\R^k)$ denote the set of continuous functions from $[0,T]$ to $\R^k$, and define the truncated supremum norms $\|\cdot\|_t$ on $\C^k$ by
\begin{align}
\|x\|_t := \sup_{s \in [0,t]}|x_s|, \ t \in [0,T]. \label{def:truncatedsupnorm}
\end{align}
Unless otherwise stated, $\C^k$ is endowed with the norm $\|\cdot\|_T$ and its Borel $\sigma$-field.
For $\mu \in \P(\C^k)$, let $\mu_t \in \P(\R^k)$ denote the image of $\mu$ under the map $x \mapsto x_t$. 
Let
\begin{align}
\X &:= \C^m \times \V \times \C^d. \label{def:Xspace}
\end{align}
This space will house the idiosyncratic noise, the relaxed control, and the state process.
Let $(\F^\X_t)_{t \in [0,T]}$ denote the canonical filtration on $\X$, where $\F^\X_t$ is the $\sigma$-field generated by the maps
\begin{align*}
\X \ni (w,q,x) &\mapsto \left(w_s,x_s,q([0,s] \times C)\right) \in \R^m \times \R^d \times \R, \text{ for } s \le t, \ C \in \B(A).
\end{align*}
For $\mu \in \P(\X)$, let $\mu^x := \mu(\C^m \times \V \times \cdot)$ denote the $\C^d$-marginal. Finally, for ease of notation let us define the objective functional $\Gamma : \P^p(\C^d) \times \V \times \C^d \rightarrow \R$ by
\begin{align}
\Gamma(\mu,q,x) := \int_0^T\int_Af(t,x_t,\mu_t,a)q_t(da)dt + g(x_T,\mu_T). \label{def:gamma}
\end{align}
The following definition of weak mean field game (MFG) solution is borrowed from \cite{carmonadelaruelacker-mfgcommonnoise}.

\begin{definition} \label{def:weakMFGsolution}
A weak MFG solution with weak control (with initial state distribution $\lambda$), or simply a \emph{weak MFG solution}, is a tuple $(\widetilde{\Omega},(\F_t)_{t \in [0,T]},P,B,W,\mu,\Lambda,X)$, where $(\widetilde{\Omega},(\F_t)_{t \in [0,T]},P)$ is a complete filtered probability space supporting $(B,W,\mu,\Lambda,X)$ satisfying
\begin{enumerate}
\item $(B_t)_{t \in [0,T]}$ and $(W_t)_{t \in [0,T]}$ are independent $(\F_t)_{t \in [0,T]}$-Wiener processes of respective dimension $m_0$ and $m$, the process $(X_t)_{t \in [0,T]}$ is $(\F_t)_{t \in [0,T]}$-adapted with values in $\R^d$, and $P \circ X_0^{-1} = \lambda$. Moreover, $\mu$ is a random element of $\P^p(\X)$ such that $\mu(C)$ is $\F_t$-measurable for each $C \in \F^\X_t$ and $t \in [0,T]$.
\item $X_0$, $W$, and $(B,\mu)$ are independent.
\item $(\Lambda_t)_{t \in [0,T]}$ is $(\F_t)_{t \in [0,T]}$-progressively measurable with values in $\P(A)$ and
\[
\E^P\int_0^T\int_A|a|^p\Lambda_t(da)dt < \infty.
\]
Moreover, $\sigma(\Lambda_s : s \le t)$ is conditionally independent of $\F^{X_0,B,W,\mu}_T$ given $\F^{X_0,B,W,\mu}_t$, for each $t \in [0,T]$, where
\begin{align*}
\F^{X_0,B,W,\mu}_t &= \sigma\left(X_0,B_s,W_s,\mu(C) : s \le t, \ C \in \F^\X_t\right).
\end{align*}
\item The state equation holds:
\begin{align}
dX_t = \int_Ab(t,X_t,\mu^x_t,a)\Lambda_t(da)dt + \sigma(t,X_t,\mu^x_t)dW_t + \sigma_0(t,X_t,\mu^x_t)dB_t. \label{def:relaxedSDE-weak}
\end{align}
\item If $(\widetilde{\Omega}',(\F'_t)_{t \in [0,T]},P')$ is another filtered probability space supporting $(B',W',\mu',\Lambda',X')$ satisfying (1-4) and $P \circ (B,\mu)^{-1} = P' \circ (B',\mu')^{-1}$, then
\begin{align*}
\E^P\left[\Gamma(\mu^x,\Lambda,X)\right] \ge \E^{P'}\left[\Gamma(\mu'^x,\Lambda',X')\right].
\end{align*}
\item $\mu$ is a version of the conditional law of $(W,\Lambda,X)$ given $(B,\mu)$.
\end{enumerate}
If also there exists an $A$-valued process $(\alpha_t)_{t \in [0,T]}$ such that $P(\Lambda_t = \delta_{\alpha_t} \ a.e. \ t)=1$, then we say the weak MFG solution has \emph{strict control}. If this $(\alpha_t)_{t \in [0,T]}$ is progressively measurable with respect to the completion of $(\F^{X_0,B,W,\mu}_t)_{t \in [0,T]}$, we say the weak MFG solution has \emph{strong control}.
If $\mu$ is a.s. $B$-measurable, then we have a \emph{strong MFG solution} (with either weak control, strict control, or strong control).
\end{definition}

Given a weak MFG solution $(\widetilde{\Omega},(\F_t)_{t \in [0,T]},P,B,W,\mu,\Lambda,X)$, we may view $(X_0,B,W,\mu,\Lambda,X)$ as a random element of the canonical space
\begin{align}
\Omega := \R^d \times \C^{m_0} \times \C^m \times \P^p(\X) \times \V \times \C^d. \label{def:Omega}
\end{align}
A weak MFG solution thus induces a probability measure on $\Omega$, which itself we would like to call a MFG solution, as it is really the object of interest more than the particular probability space. The following definition will be reformulated in Section \ref{se:canonicalspace} in a more intrinsic manner.

\begin{definition} \label{def:weakMFGsolutionlaw}
If $P  \in \P(\Omega)$ satisfies $P = P' \circ (X_0,B,W,\mu,\Lambda,X)^{-1}$ for some weak MFG solution $(\Omega',(\F'_t)_{t \in [0,T]},P',B,W,\mu,\Lambda,X)$, then we refer to $P$ itself as a \emph{weak MFG solution}. Naturally, we may also refer to $P$ as a weak MFG solution with strict control or strong control, or as a strong MFG solution, under the analogous additional assumptions.
\end{definition}

\subsection{Finite-player games} \label{se:finiteplayergames}
This section describes a general form of the finite-player games, allowing controls to be relaxed and adapted to general filtrations.

An \emph{$n$-player environment} is defined to be any tuple $\mathcal{E}_n = (\Omega_n,(\F^n_t)_{t \in [0,T]},\PP_n,\xi,B,W)$, where $(\Omega_n,(\F^n_t)_{t \in [0,T]},\PP_n)$ is a complete filtered probability space supporting an $\F^n_0$-measurable $(\R^d)^n$-valued random variable $\xi = (\xi^1,\ldots,\xi^n)$ with law $\lambda^{\times n}$, an $m_0$-dimensional $(\F^n_t)_{t \in [0,T]}$-Wiener process $B$, and a $nm$-dimensional $(\F^n_t)_{t \in [0,T]}$-Wiener process $W = (W^1,\ldots,W^n)$, independent of $B$. For simplicity, we consider i.i.d. initial states $\xi^1,\ldots,\xi^n$ with common law $\lambda$, although it is presumably possible to generalize this. Perhaps all of the notation here should be parametrized by $\mathcal{E}_n$ or an additional index for $n$, but, since we will typically focus on a fixed sequence of environments $(\mathcal{E}_n)_{n=1}^\infty$, we avoid complicating the notation. Indeed, the subscript $n$ on the measure $\PP_n$ will be enough to remind us on which environment we are working at any moment.

Until further notice, we work with a fixed $n$-player environment $\mathcal{E}_n$. An \emph{admissible control} is any $(\F^n_t)_{t \in [0,T]}$-progressively measurable $\P(A)$-valued process $(\Lambda_t)_{t \in [0,T]}$ satisfying
\[
\E^{\PP_n}\int_0^T\int_A|a|^p\Lambda_t(da)dt < \infty.
\]
An \emph{admissible strategy} is a vector of $n$ admissible controls. The set of admissible controls is denoted $\A_n(\mathcal{E}_n)$, and accordingly the set of admissible strategies is the Cartesian product $\A_n^n(\mathcal{E}_n)$. A \emph{strict control} is any control $\Lambda \in \A_n(\mathcal{E}_n)$ such that $\PP_n(\Lambda_t = \delta_{\alpha_t}, \ a.e. \ t) = 1$ for some $(\F^n_t)_{t \in [0,T]}$-progressively measurable $A$-valued process $(\alpha_t)_{t \in [0,T]}$, and a \emph{strict strategy} is any vector of $n$ strict controls. 
Given an admissible control $\Lambda=(\Lambda^1,\ldots,\Lambda^n) \in \A_n^n(\mathcal{E}_n)$ define the state processes $X[\Lambda] := (X^1[\Lambda],\ldots,X^n[\Lambda])$ by
\begin{align*}
dX^i_t[\Lambda] &= \int_Ab(t,X^i_t[\Lambda],\widehat{\mu}^x_t[\Lambda],a)\Lambda^i_t(da)dt + \sigma(t,X^i_t[\Lambda],\widehat{\mu}^x_t[\Lambda])dW^i_t \\
	&\quad + \sigma_0(t,X^i_t[\Lambda],\widehat{\mu}^x_t[\Lambda])dB_t, \quad\quad X^i_0 = \xi^i, \\
\widehat{\mu}^x[\Lambda] &:= \frac{1}{n}\sum_{k=1}^n\delta_{X^k[\Lambda]}.
\end{align*}
Note that assumption \ref{assumption:A} ensures that a unique strong solution of this SDE system exists\footnote{As in \cite{carmonadelaruelacker-mfgcommonnoise}, we avoid augmenting the filtrations to be right-continuous, taking advantage of the careful treatment of stochastic integration of \cite[Section 4.3]{stroockvaradhanbook}.}. Indeed, the Lipschitz assumption of (A.4) and the obvious inequality
\[
\ell^p_{\R^d,p}\left(\frac{1}{n}\sum_{i=1}^n\delta_{x_i},\frac{1}{n}\sum_{i=1}^n\delta_{y_i}\right) \le \frac{1}{n}\sum_{i=1}^n|x_i-y_i|^p
\]
together imply, for example, that the function
\[
(\R^d)^n \ni (x_1,\ldots,x_n) \mapsto b\left(t,x_1,\frac{1}{n}\sum_{i=1}^n\delta_{x_i},a\right) \in \R^d
\]
is Lipschitz, uniformly in $(t,a)$. A standard estimate using assumption (A.4), which is worked out in Lemma \ref{le:finitestateestimate}, shows that $\E^{\PP_n}[\|X^i[\Lambda]\|_T^p] < \infty$ for each $\Lambda \in \A_n^n(\mathcal{E}_n)$, $n \ge i \ge 1$.

The value for player $i$ corresponding to a strategy $\Lambda = (\Lambda^1,\ldots,\Lambda^n) \in \A_n^n(\mathcal{E}_n)$ is defined by
\[
J_i(\Lambda) := \E^{\PP_n}\left[\Gamma(\widehat{\mu}^x[\Lambda],\Lambda^i,X^i_t[\Lambda])\right].
\]
Note that $J_i(\Lambda) < \infty$ is well-defined because of the upper bounds of assumption (A.5), but it is possible that $J_i(\Lambda) = -\infty$, since we do not require that an admissible control possess finite moment of order $p'$.
Given a strategy $\Lambda = (\Lambda^1,\ldots,\Lambda^n) \in \A_n^n(\mathcal{E}_n)$ and a control $\beta \in \A_n(\mathcal{E}_n)$, define a new strategy $(\Lambda^{-i},\beta) \in \A_n^n(\mathcal{E}_n)$ by
\[
(\Lambda^{-i},\beta) = (\Lambda^1,\ldots,\Lambda^{i-1},\beta,\Lambda^{i+1},\ldots,\Lambda^n).
\]
Given $\epsilon = (\epsilon_1,\ldots,\epsilon_n) \in [0,\infty)^n$, a \emph{relaxed $\epsilon$-Nash equilibrium in $\mathcal{E}_n$} is any strategy $\Lambda \in \A_n^n(\mathcal{E}_n)$ satisfying
\[
J_i(\Lambda) \ge \sup_{\beta \in \A_n(\mathcal{E}_n)}J_i((\Lambda^{-i},\beta)) - \epsilon_i, \quad i=1,\ldots,n.
\]
Naturally, if $\epsilon_i=0$ for each $i=1,\ldots,n$, we use the simpler term \emph{Nash equilibrium}, as opposed to \emph{$0$-Nash equilibrium}.
A \emph{strict $\epsilon$-Nash equilibrium in $\mathcal{E}_n$} is any \emph{strict} strategy $\Lambda \in \A_n^n(\mathcal{E}_n)$ satisfying
\[
J_i(\Lambda) \ge \sup_{\beta \in \A_n(\mathcal{E}_n) \text{ strict}}J_i((\Lambda^{-i},\beta)) - \epsilon_i, \quad i=1,\ldots,n.
\]
Note that the optimality is required only among \emph{strict} controls.

Note that the role of the filtration $(\F^n_t)_{t \in [0,T]}$ in the environment $\mathcal{E}_n$ is mainly to specify the class of admissible controls. We are particularly interested in the sub-filtration generated by the Wiener processes and initial states; define $(\F^{s,n}_t)_{t \in [0,T]}$ to be the $\PP_n$-completion of
\[
\left(\sigma(\xi,B_s,W_s : s \le t)\right)_{t \in [0,T]}.
\]
Of course, $\F^{s,n}_t \subset \F^n_t$ for each $t$. Let us say that $\Lambda \in \A_n(\mathcal{E}_n)$ is a \emph{strong control} if $\PP_n(\Lambda_t = \delta_{\alpha_t} \ a.e. \ t)=1$ for some $(\F^{s,n}_t)_{t \in [0,T]}$-progressively measurable $A$-valued process $(\alpha_t)_{t \in [0,T]}$. Naturally, a \emph{strong strategy} is a vector of strong controls. 
A \emph{strong $\epsilon$-Nash equilibrium in $\mathcal{E}_n$} is any \emph{strong} strategy $\Lambda \in \A_n^n(\mathcal{E}_n)$ such that
\[
J_i(\Lambda) \ge \sup_{\beta \in \A_n(\mathcal{E}_n) \text{ strong}}J_i((\Lambda^{-i},\beta)) - \epsilon_i, \quad i=1,\ldots,n.
\]

\begin{remark} \label{re:equivalentstrongequilibrium}
Equivalently, a strong $\epsilon$-Nash equilbrium in $\mathcal{E}_n = (\Omega_n,(\F^n_t)_{t \in [0,T]},\PP_n,\xi,B,W)$ is a strict $\epsilon$-Nash equilibrium in $\widetilde{\mathcal{E}}_n := (\Omega_n,(\F^{s,n}_t)_{t \in [0,T]},\PP_n,\xi,B,W)$.
\end{remark}

The most common type of Nash equilibrium considered in the literature is, in our terminology, a strong Nash equilibrium. The next proposition assures us that our equilibrium concept using relaxed controls (and general filtrations) truly generalizes this more standard situation, thus permitting a unified analysis of all of the equilibria described thusfar. The proof is deferred to Appendix \ref{se:proof-equilibriuminclusions}.

\begin{proposition} \label{pr:equilibriuminclusions}
On any $n$-player environment $\mathcal{E}_n$, every strong $\epsilon$-Nash equilibrium is also a strict $\epsilon$-Nash equilibrium, and every strict $\epsilon$-Nash equilibrium is also a relaxed $\epsilon$-Nash equilibrium.
\end{proposition}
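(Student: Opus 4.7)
My plan is to prove the two inclusions separately, starting with the easier one.

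For the second inclusion (strict $\epsilon$-Nash $\Rightarrow$ relaxed $\epsilon$-Nash), I will invoke a chattering-type approximation. Given any admissible relaxed deviation $\beta \in \A_n(\mathcal{E}_n)$, I plan to produce strict controls $\beta^k \in \A_n(\mathcal{E}_n)$ with $\beta^k \to \beta$ in $\V$ almost surely, via a standard time-discretization/randomization procedure applied pathwise. The Lipschitz stability in \ref{assumption:A}(A.4) gives convergence of the associated state processes, and the polynomial growth bounds of (A.5) permit passage to the limit in the reward by dominated convergence, so $J_i((\Lambda^{-i}, \beta^k)) \to J_i((\Lambda^{-i}, \beta))$. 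Hence $\sup_{\beta \text{ admissible}} J_i((\Lambda^{-i}, \beta)) = \sup_{\beta \text{ strict}} J_i((\Lambda^{-i}, \beta))$, and the strict $\epsilon$-Nash property upgrades immediately to the relaxed one.

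The first inclusion (strong $\epsilon$-Nash $\Rightarrow$ strict $\epsilon$-Nash) requires a de-randomization argument: given a strict $\F^n_t$-adapted deviation $\beta$ for player $i$, I plan to construct a strong deviation $\gamma$ with $J_i((\Lambda^{-i}, \gamma)) \ge J_i((\Lambda^{-i}, \beta))$. The structural input is that the requirement that $B$ and $W$ be $\F^n_t$-Wiener processes forces $\F^n_0$ to be independent of $(B,W)$ and, more generally, any information in $\F^n_t$ beyond $\F^{s,n}_t$ to be jointly independent of $(B, W)$. Passing to a Polish realization of $\mathcal{E}_n$, I would write $\beta_t = \delta_{a(t, \xi, B_{\le t}, W_{\le t}, \eta_{\le t})}$ for some auxiliary process $\eta$ with $\eta \perp (B, W)$ (possibly correlated with $\xi$). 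For each deterministic realization $e$ of $\eta$, the frozen control $\beta^e_t := \delta_{a(t, \xi, B_{\le t}, W_{\le t}, e_{\le t})}$ is $\F^{s,n}_t$-adapted, hence a strong control. Since $\Lambda^{-i}$ is strong by hypothesis, disintegrating the reward over the independent randomness $\eta$ gives
\[
J_i((\Lambda^{-i}, \beta)) = \E[G(\xi, \eta)], \qquad G(\xi, e) := \E\bigl[\Gamma(\widehat{\mu}^x[\Lambda^{-i}, \beta^e], \beta^e, X^i[\Lambda^{-i}, \beta^e]) \bigm| \xi\bigr].
\]
A measurable selection via Kuratowski--Ryll-Nardzewski, applied to the correspondence $x \mapsto \{e : G(x, e) \ge \E[G(\xi, \eta) \mid \xi = x]\}$ (each fiber is nonempty by the mean-value property), produces $e^*(\xi)$ with $G(\xi, e^*(\xi)) \ge \E[G(\xi, \eta) \mid \xi]$ almost surely, and $\gamma := \beta^{e^*(\xi)}$ is then a strong control satisfying $J_i((\Lambda^{-i}, \gamma)) \ge J_i((\Lambda^{-i}, \beta))$.

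The hard part will be the filtration decomposition that underpins the de-randomization: realizing $\F^n_t$ explicitly as the join of the filtration generated by $(\xi, B, W)$ and an independent auxiliary noise $\eta$, with $\eta$ allowed to be correlated with the initial data but independent of the Wiener noise. This requires passing to a canonical Polish representation of $\Omega_n$ (or enlarging the probability space) and careful bookkeeping of what the ``extra'' information in $\F^n_t \setminus \F^{s,n}_t$ can look like compatibly with the Wiener constraint. Once that decomposition is in place, both the disintegration and the measurable selection are routine, and the chattering step used in the other direction, while technical, is entirely standard in stochastic control.
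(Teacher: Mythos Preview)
Your chattering argument for strict $\Rightarrow$ relaxed is essentially the paper's Step~2 and is correct in spirit (the paper first truncates to bounded controls, applies the chattering lemma of \cite{elkaroui-partialobservations} to get a.s.\ convergence in $\V$, and then uses Kurtz--Protter SDE stability plus the uniform integrability built into Lemma~\ref{le:jcontinuous} rather than a bare Lipschitz/dominated-convergence argument, but these are refinements, not new ideas).

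For strong $\Rightarrow$ strict, however, your de-randomization route diverges from the paper and the step you flag as ``the hard part'' is a genuine gap. The Wiener property of $(B,W)$ relative to $(\F^n_t)$ only says that future increments are independent of $\F^n_t$; it does \emph{not} force $\F^n_t$ to split as $\F^{s,n}_t \vee \sigma(\eta_{\le t})$ for a process $\eta$ independent of $(B,W)$. There is no general structure theorem to that effect, and filtrations supporting a given Brownian motion can be quite pathological. Even if one enlarges the probability space to manufacture such an $\eta$, one must then argue that the strong $\epsilon$-Nash property transfers to the enlarged environment, which is not automatic. Your measurable-selection step is fine \emph{conditional} on the decomposition, but the decomposition itself is where the argument currently has no foundation.

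The paper avoids all of this by proving strong $\Rightarrow$ relaxed directly (which yields strong $\Rightarrow$ strict for free, since the relaxed supremum dominates the strict one). The mechanism is a pure density argument: given any relaxed deviation $\beta^*$, Lemma~\ref{le:adapteddensity} produces \emph{strong} approximants $\beta^k$. The key observation you are missing is that because the equilibrium profile $\Lambda$ is strong, one can write $\Lambda = \widehat{\Lambda}(\xi,B,W)$ for a fixed measurable map $\widehat{\Lambda}$; hence convergence in law of $(\xi,B,W,\beta^k)$ automatically upgrades to convergence of $(\xi,B,W,(\Lambda^{-i},\beta^k))$, and SDE stability plus continuity of $J$ give $J_i((\Lambda^{-i},\beta^k)) \to J_i((\Lambda^{-i},\beta^*))$. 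This shows $\sup_{\text{strong}} = \sup_{\text{relaxed}}$ without ever analyzing the structure of $\F^n$ beyond the Wiener property. If you want to salvage your approach, the cleanest fix is simply to replace the de-randomization by this density argument.
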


\begin{remark}
Another common type of strategy in dynamic game theory is called \emph{closed-loop}. Whereas our strategies (also called \emph{open-loop}) are specified by \emph{processes}, a closed-loop (strict) strategy is specified by feedback functions $\phi_i : [0,T] \times (\R^d)^n \rightarrow A$, for $i=1,\ldots,n$, to be evaluated along the path of the state process. In the model of Carmona et al. \cite{carmonafouque-systemicrisk}, both the open-loop and closed-loop equilibria are computed explicitly for the $n$-player games, and they are shown to converge to the same MFG limit. There is no distinction between open-loop and closed-loop in the MFG, and this begs the question of whether or not closed-loop equilibria converge to the same MFG limit that we obtain in Theorem \ref{th:mainconvergence}. This paper does not attempt to answer this question.
\end{remark}

\subsection{The main limit theorem} \label{se:mainresults}
We are ready now to state the first main Theorem \ref{th:mainconvergence} and its corollaries. 
The proof is deferred to Section \ref{se:mainconvergenceproof}.
Given an admissible strategy $\Lambda = (\Lambda^1,\ldots,\Lambda^n) \in \A_n^n(\mathcal{E}_n)$ defined on some $n$-player environment $\mathcal{E}_n = (\Omega_n,(\F^n_t)_{t \in [0,T]},\PP_n,\xi,B,W)$, define (on $\Omega_n$) the random element $\widehat{\mu}[\Lambda]$ of $\P^p(\X)$ (recalling the definition of $\X$ from \eqref{def:Xspace}) by
\[
\widehat{\mu}[\Lambda] := \frac{1}{n}\sum_{i=1}^n\delta_{(W^i,\Lambda^i,X^i[\Lambda])}.
\]
As usual, we identify a $\P(A)$-valued process $(\Lambda^i_t)_{t \in [0,T]}$ with the random element $\Lambda^i = dt\Lambda^i_t(da)$ of $\V$. Recall the definition of the canonical space $\Omega$ from \eqref{def:Omega}.

\begin{theorem} \label{th:mainconvergence}
Suppose assumption \ref{assumption:A} holds. For each $n$, let $\epsilon^n = (\epsilon^n_1,\ldots,\epsilon^n_n) \in [0,\infty)^n$, and let $\mathcal{E}_n = (\Omega_n,(\F^n_t)_{t \in [0,T]},\PP_n,\xi,B,W)$ be any $n$-player environment. Assume
\begin{align}
\lim_{n \rightarrow\infty} \frac{1}{n}\sum_{i=1}^n\epsilon^n_i = 0. \label{def:epsilonconverge}
\end{align}
Suppose for each $n$ that $\Lambda^n = (\Lambda^{n,1},\ldots,\Lambda^{n,n}) \in \A_n^n(\mathcal{E}_n)$ is a relaxed $\epsilon^n$-Nash equilibrium, and let
\begin{align}
P_n := \frac{1}{n}\sum_{i=1}^n\PP_n \circ \left(\xi^i,B,W^i,\widehat{\mu}[\Lambda^n],\Lambda^{n,i},X^i[\Lambda^n]\right)^{-1}. \label{def:pn}
\end{align}
Then $(P_n)_{n=1}^\infty$ is relatively compact in $\P^p(\Omega)$, and each limit point is a weak MFG solution.
\end{theorem}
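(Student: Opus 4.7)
The plan is to split the argument in two: first establish relative compactness of $(P_n)_{n=1}^\infty$ in $\P^p(\Omega)$, then pass to a subsequential limit $P_\infty$ and verify that the canonical process on $\Omega$ under $P_\infty$ satisfies the six clauses of Definition \ref{def:weakMFGsolution}. The symmetrization in \eqref{def:pn} is the key structural device: even though the finite-player equilibria may be asymmetric, averaging over $i$ produces a law in which the identity of the representative player has been anonymized, making a single-agent MFG the natural limit object.

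For the compactness step, the main task is to obtain uniform $L^{p'}$ moment bounds on the controls. Comparing $J_i(\Lambda^n)$ with $J_i((\Lambda^{n,-i},\delta_{\alpha_0}))$ for a fixed constant control $\alpha_0\in A$ via the $\epsilon^n_i$-Nash inequality, using the two-sided bound on $f$ in (A.5) (in particular the coercive $-c_3|a|^{p'}$ term in the upper bound), and averaging over $i$ to close a Gronwall loop through Lemma \ref{le:finitestateestimate}, one obtains uniform bounds on $\tfrac1n\sum_i\E\int_0^T\!\int_A|a|^{p'}\Lambda^{n,i}_t(da)dt$ and on $\tfrac1n\sum_i\E\|X^i[\Lambda^n]\|_T^{p'}$. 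Since $p'>p$, de la Vall\'ee--Poussin upgrades ordinary tightness to relative compactness in $\P^p(\Omega)$, with tightness of the $\V$-marginal handled as in \cite[Appendix A]{lacker-mfgcontrolledmartingaleproblems}. Clauses (1)--(4) then transfer to $P_\infty$ by routine weak-convergence machinery: the marginals of $B$ and $W$ remain Wiener, the independence and compatibility structures pass to the limit from their $n$-player analogues, and the state SDE \eqref{def:relaxedSDE-weak} is checked via the martingale problem by testing against smooth functions and invoking joint continuity of $(b,\sigma,\sigma_0)$ together with the $L^{p'}$ uniform integrability. Clause (6) is essentially built into the averaging: by construction, $\widehat\mu[\Lambda^n]=\tfrac1n\sum_k\delta_{(W^k,\Lambda^{n,k},X^k[\Lambda^n])}$, so under $P_n$ the conditional law of $(W,\Lambda,X)$ given $(B,\widehat\mu[\Lambda^n])$ equals $\widehat\mu[\Lambda^n]$ itself exactly; a careful continuity argument for the disintegration functional on $\P^p(\Omega)$ transports this identity to the limit.

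The central obstacle is the optimality clause (5). Given an alternative filtered space supporting $(B',W',\mu',\Lambda',X')$ with $P_\infty\circ(B,\mu)^{-1}=P'\circ(B',\mu')^{-1}$, I must show $\E^{P_\infty}[\Gamma(\mu^x,\Lambda,X)]\ge\E^{P'}[\Gamma(\mu'^x,\Lambda',X')]$. The plan is to use $\Lambda'$ as a template for a single-player deviation in each $\mathcal{E}_n$: using the canonical-space reformulation of Section \ref{se:canonicalspace}, represent $\Lambda'$ as a measurable functional of $(X_0',B',W',\mu')$ respecting the compatibility condition (3), then transplant this functional --- evaluated at $(\xi^i,B,W^i,\widehat\mu[\Lambda^n])$ --- into $\mathcal{E}_n$ as player $i$'s deviation $\beta^{n,i}$. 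Averaging the $\epsilon^n_i$-Nash inequalities $J_i(\Lambda^n)\ge J_i((\Lambda^{n,-i},\beta^{n,i}))-\epsilon^n_i$ over $i$ and invoking \eqref{def:epsilonconverge}, continuity of $\Gamma$, and the moment bounds from the compactness step, should deliver the required inequality in the limit. The delicate point --- and in my view the hardest part of the whole theorem --- is that the transplanted deviation must be admissible in $\mathcal{E}_n$ and must reproduce the target joint law to within $o(1)$; this is precisely where the generality in defining $n$-player environments (allowing $(\F^n_t)$ to be strictly richer than the filtration generated by $(\xi,B,W)$) becomes essential, since the filtration may need to carry additional randomness encoded in $\mu'$.
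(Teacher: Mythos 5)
Your overall architecture coincides with the paper's: coercivity of $f$ plus comparison with a constant control and a Gronwall loop for the uniform $p'$-moments, weak-convergence arguments for clauses (1)--(4) and (6), and a transplantation of a near-optimal MFG control as a single-player deviation for clause (5). However, the optimality step as you describe it has two genuine gaps, and they are exactly the points the paper spends most of its effort on.

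First, representing the competitor $\Lambda'$ as a \emph{measurable} functional of $(X_0',B',W',\mu')$ is both unavailable and insufficient. Unavailable, because a control satisfying only the compatibility condition (3) of Definition \ref{def:weakMFGsolution} may carry extra randomization and need not be any functional of the inputs; the paper first reduces, via Lemma \ref{le:canonicalmfgsolution} and Lemma \ref{le:jcontinuous}, to showing $J(P)\ge\sup_{P'\in\RC\A(\rho)}J(P')$ with the supremum attained at some $Q^*$. Insufficient, because to conclude that $\tilde\phi(\xi^i,B,W^i,\widehat\mu^{n})$ converges in law to $\tilde\phi(\xi,B,W,\mu)$ along $\widehat\mu^n\to\mu$ \emph{in law}, you need $\tilde\phi$ to be continuous in the measure argument (and compact-valued, to keep the uniform integrability needed for continuity of $J$); a merely measurable functional composed with a weakly convergent sequence gives nothing. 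This is precisely the content of the density Lemma \ref{le:adapteddensity} (adapted, compact, continuous-in-$\mu$ controls are dense in $\A(\rho)$ and approximate the value), which your proposal never invokes and cannot do without.

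Second, the feedback problem: if player $i$ deviates to $\beta^{n,i}=\tilde\phi(\xi^i,B,W^i,\widehat\mu[\Lambda^n])$, the reward $J_i((\Lambda^{n,-i},\beta^{n,i}))$ is computed along the \emph{post-deviation} empirical measure $\widehat\mu[(\Lambda^{n,-i},\beta^{n,i})]$ and state $X^i[(\Lambda^{n,-i},\beta^{n,i})]$, which are entangled with the deviation itself, so the law of the deviating player's tuple is not the image under $\RC$ of anything you control. The paper resolves this with the $k$-modified game: the deviation is evaluated at $\widehat\mu^{-k}[\Lambda^n]$, the empirical measure with player $k$ removed, which is literally unchanged by player $k$'s control; this yields the exact identity \eqref{pf:optimal1.1} identifying the deviating player's law with $\RC(Q_{n,k})$, and Lemma \ref{le:modifiedsystem} supplies the $O(1/n)$ comparison between the modified and true systems needed to transfer the resulting inequality back to $J_i$. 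Your proposal acknowledges that something delicate happens here but offers no mechanism; without the decoupling device (or an equivalent one-particle perturbation estimate) the limit passage in the averaged Nash inequality does not close. A smaller point: under $P_n$ the triple $(\xi^i,W^i)$ is \emph{not} independent of $\widehat\mu[\Lambda^n]$ (the empirical measure contains $W^i$), so clause (2) does not ``pass to the limit from its $n$-player analogue'' but must be produced by a law-of-large-numbers argument as in Lemma \ref{le:convergence-pre-solution}.
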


\begin{remark} \label{re:exchangeability}
Averaging over $i=1,\ldots,n$ in \eqref{def:pn} circumvents the problem that the strategies $(\Lambda^{n,1},\ldots,\Lambda^{n,n})$ need not be exchangeable, and we note that the limiting behavior of $\PP_n \circ (B,\widehat{\mu}[\Lambda^n])^{-1}$ can always be recovered from that of $P_n$. To interpret the definition of $P_n$, note that we may write
\[
P_n = \PP_n \circ \left(\xi^{U_n},B,W^{U_n},\widehat{\mu}[\Lambda^n],\Lambda^{n,U_n},X^{U_n}[\Lambda^n]\right)^{-1},
\]
where $U_n$ is a random variable independent of $\F^n_T$, uniformly distributed among $\{1,\ldots,n\}$, constructed by extending the probability space $\Omega_n$. In words, $P_n$ is the joint law of the processes relevant to a \emph{randomly selected representative agent}. Of course, Theorem \ref{th:mainconvergence} specializes when there is exchangeability, in the following sense. For any set $E$, any element $e = (e^1,\ldots,e^n) \in E^n$, and any permutation $\pi$ of $\{1,\ldots,n\}$, let $e_\pi := (e^{\pi(1)},\ldots,e^{\pi(n)})$. If
\[
\PP_n \circ \left(\xi_\pi,B,W_\pi,\Lambda^n_\pi\right)^{-1}
\]
is independent of the choice of permutation $\pi$, then so is
\[
\PP_n \circ \left(\xi_\pi,B,W_\pi,\widehat{\mu}[\Lambda^n_\pi],\Lambda^n_\pi,X[\Lambda^n_\pi]_\pi\right)^{-1}.
\]
It then follows that
\[
P_n = \PP_n \circ \left(\xi^k,B,W^k,\widehat{\mu}[\Lambda^n],\Lambda^{n,k},X^k[\Lambda^n]\right)^{-1}, \text{ for } n \ge k.
\]
\end{remark}

Theorem \ref{th:mainconvergence} is stated in quite a bit of generality, devoid even of standard convexity assumptions on the objective functions $f$ and $g$. Theorem \ref{th:mainconvergence} includes quite degenerate cases, such as the case of \emph{no objectives}, where $f \equiv g \equiv 0$ and $A$ is compact. In this case, \emph{any strategy profile} whatsoever in the $n$-player game is a Nash equilibrium, and any weak control can arise in the limit. Exploiting results of \cite{carmonadelaruelacker-mfgcommonnoise}, the following corollaries demonstrate how, under various additional convexity assumptions, we may refine the conclusion of Theorem \ref{th:mainconvergence} by ruling out certain types of limits, such as those involving relaxed controls.

\begin{corollary} \label{co:weakstrictlimit}
Suppose the assumptions of Theorem \ref{th:mainconvergence} hold, and assume also that for each $(t,x,\mu) \in [0,T] \times \R^d \times \P^p(\R^d)$ the following subset of $\R^d \times \R$ is convex:
\[
\left\{(b(t,x,\mu,a),z) : a \in A, \ z \le f(t,x,\mu,a) \right\}.
\]
Then 
\[
\left\{\frac{1}{n}\sum_{i=1}^n\PP_n \circ \left(B,W^i,\widehat{\mu}^x[\Lambda^n],X^i[\Lambda^n]\right)^{-1} : n \ge 1 \right\}
\]
is relatively compact in $\P^p(\C^{m_0} \times \C^m \times \P^p(\C^d) \times \C^d)$, 
and every limit is of the form $P \circ (B,W,\mu^x,X)^{-1}$, for some weak MFG solution with strict control $(\widetilde{\Omega},(\F_t)_{t \in [0,T]},P,B,W,\mu,\Lambda,X)$.
\end{corollary}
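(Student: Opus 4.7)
The plan is in two steps, and the first is largely routine while the second is the substantive one.

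First, I would apply Theorem \ref{th:mainconvergence} to obtain that $(P_n)_{n=1}^\infty$ is relatively compact in $\P^p(\Omega)$ and that every limit point $P^*$ is a weak MFG solution in the sense of Definition \ref{def:weakMFGsolutionlaw}. I would then consider the projection
\[
\pi : \Omega \to \C^{m_0} \times \C^m \times \P^p(\C^d) \times \C^d, \qquad (\xi,B,W,\mu,\Lambda,X) \mapsto (B,W,\mu^x,X),
\]
which is continuous: continuity in the $B$, $W$, and $X$ coordinates is immediate, and continuity of the marginal map $\P^p(\X) \ni \mu \mapsto \mu^x \in \P^p(\C^d)$ follows from the fact that the coordinate projection $\X \to \C^d$ is $1$-Lipschitz with respect to a natural product metric on $\X$, so by the Kantorovich duality $\ell_{\C^d,p}(\mu^x,\nu^x) \le \ell_{\X,p}(\mu,\nu)$. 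By the continuous mapping theorem, $\{P_n \circ \pi^{-1}\}$ is relatively compact in $\P^p(\C^{m_0} \times \C^m \times \P^p(\C^d) \times \C^d)$, and the averaging in \eqref{def:pn} shows that $P_n \circ \pi^{-1}$ is exactly the $n$-th element of the family stated in the corollary.

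Second, I would upgrade any weak MFG solution with weak (relaxed) control to one with strict control while preserving the law of $(B,W,\mu^x,X)$. Given a weak MFG solution $(\widetilde\Omega,(\F_t),P,B,W,\mu,\Lambda,X)$ obtained as a limit, the convexity assumption on the set $\{(b(t,x,\mu,a),z) : a \in A,\ z \le f(t,x,\mu,a)\}$ enables a Filippov-type measurable selection producing a progressively measurable $A$-valued process $\alpha$ such that
\[
b(t,X_t,\mu^x_t,\alpha_t) = \int_A b(t,X_t,\mu^x_t,a)\,\Lambda_t(da), \qquad f(t,X_t,\mu^x_t,\alpha_t) \ge \int_A f(t,X_t,\mu^x_t,a)\,\Lambda_t(da),
\]
a.s., for a.e.\ $t$. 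Since neither $\sigma$ nor $\sigma_0$ depends on the control and the drift is preserved, the state process $X$ is left unchanged by the substitution $\Lambda_t \leadsto \delta_{\alpha_t}$, while the reward can only increase. Replacing $\mu$ by the version of the conditional law of $(W,\delta_{\alpha},X)$ given $(B,\mu)$ (which has the same $\C^d$-marginal $\mu^x$ as the original $\mu$) then yields a tuple which I would verify to be a weak MFG solution with strict control, with the joint law of $(B,W,\mu^x,X)$ unchanged.

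The main obstacle is the second step: after replacing the relaxed control by $\delta_\alpha$ and updating $\mu$ accordingly, one must check every clause of Definition \ref{def:weakMFGsolution} — most delicately, the compatibility/conditional-independence requirement in (3), the state equation in (4), the optimality in (5) (where one must also compare against all admissible controls on suitably enlarged spaces, not just those obtained by the same selection), and the consistency condition (6). The advantage is that an essentially identical substitution has already been carried out in \cite{carmonadelaruelacker-mfgcommonnoise} in the course of proving existence of strict-control weak MFG solutions, so I would appeal to that machinery rather than redo the measurable selection and filtration manipulations from scratch.
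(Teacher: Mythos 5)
Your proposal is correct and follows essentially the same route as the paper: apply Theorem \ref{th:mainconvergence} to get relative compactness and identify every limit as a weak MFG solution, then invoke the strict-control replacement of \cite[Theorem 4.1]{carmonadelaruelacker-mfgcommonnoise} (the Filippov-type selection you describe is exactly what that argument does) to produce a weak MFG solution with strict control having the same law of $(B,W,\mu^x,X)$. The paper's proof is just a two-line citation of these two ingredients, so your additional detail on the projection map and the selection is consistent elaboration rather than a different method.
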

\begin{proof}
This follows from Theorem \ref{th:mainconvergence} and the argument of \cite[Theorem 4.1]{carmonadelaruelacker-mfgcommonnoise}. Indeed, the latter shows that for every weak MFG solution with \emph{weak} control $(\widetilde{\Omega},(\F_t)_{t \in [0,T]},P,B,W,\mu,\Lambda,X)$, there exists a weak MFG solution with \emph{strict} control $(\widetilde{\Omega}',(\F'_t)_{t \in [0,T]},P',B',W',\mu',\Lambda',X')$ such that $P \circ (B,W,\mu^x,X)^{-1} = P' \circ (B',W',\mu'^x,X')^{-1}$.
\end{proof}

\begin{corollary} \label{co:stronglimit}
Suppose the assumptions of Theorem \ref{th:mainconvergence} hold, and define $P_n$ as in \eqref{def:pn}. Assume also that for each fixed $(t,\mu) \in [0,T] \times \P^p(\R^d)$, $(b,\sigma,\sigma_0)(t,x,\mu,a)$ is affine in $(x,a)$, $g(x,\mu)$ is concave in $x$, and $f(t,x,\mu,a)$ is strictly concave in $(x,a)$. Then $(P_n)_{n=1}^\infty$ is relatively compact in $\P^p(\Omega)$, and every limit point is a weak MFG solution with \emph{strong} control.
\end{corollary}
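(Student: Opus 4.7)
My plan is to derive Corollary \ref{co:stronglimit} by invoking Theorem \ref{th:mainconvergence} to obtain relative compactness and identification of limits as weak MFG solutions, and then upgrading the ``weak control'' of Definition \ref{def:weakMFGsolution} to a ``strong control'' by a two-step projection argument that exploits the affine-concave structure.

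Applying Theorem \ref{th:mainconvergence} directly yields that $(P_n)_{n=1}^\infty$ is relatively compact in $\P^p(\Omega)$ and that every limit point agrees with the joint law of $(X_0,B,W,\mu,\Lambda,X)$ coming from a weak MFG solution $(\widetilde{\Omega},(\F_t)_{t \in [0,T]},P,B,W,\mu,\Lambda,X)$. The only remaining task is thus to show that, under the present convexity hypotheses, any such weak MFG solution automatically has strong control.

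The first step is the standard relaxed-to-strict reduction: define the barycenter $\bar{\alpha}_t := \int_A a\,\Lambda_t(da)$ and the alternative control $\Lambda'_t := \delta_{\bar{\alpha}_t}$. Because $b$ is affine in $a$, the state process is unchanged when $\Lambda$ is replaced by $\Lambda'$, while strict concavity of $f$ in $a$ strictly increases the running reward whenever $\Lambda_t$ is not a Dirac; the optimality condition (5) of Definition \ref{def:weakMFGsolution} therefore forces $\Lambda_t = \delta_{\alpha_t}$ for some $A$-valued process $\alpha$. The second step is the classical affine-concave projection argument: let $\alpha'$ be the progressive projection of $\alpha$ onto the filtration $(\F^{X_0,B,W,\mu}_t)$ and let $X'$ be the resulting state process. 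By affinity of $(b,\sigma,\sigma_0)$ in $(x,a)$, conditional expectation commutes with the SDE in the sense that $X'_t = \E^P[X_t \mid \F^{X_0,B,W,\mu}_t]$; conditional Jensen, together with the concavity of $g$ in $x$ and of $f$ in $(x,a)$, gives
\begin{align*}
\E^P\left[\Gamma\bigl(\mu^x,\, dt\delta_{\alpha'_t}(da),\, X'\bigr)\right] \ge \E^P\left[\Gamma(\mu^x,\Lambda,X)\right].
\end{align*}
Strict concavity of $f$ then forces equality, hence $\alpha = \alpha'$ $dt \otimes dP$-a.e., which is precisely the definition of a strong control.

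The main obstacle is in the second step: one must verify carefully that the projected tuple $(X_0,B,W,\mu,\,dt\delta_{\alpha'_t}(da),X')$ satisfies conditions (1)--(4) of Definition \ref{def:weakMFGsolution} (the conditional-independence requirement (3) is automatic because $\alpha'$ is already $(\F^{X_0,B,W,\mu}_t)$-adapted), that conditional expectation really does commute with the affine SDE in the required sense, and that the strict-concavity equality case can be rigorously extracted. These are the kinds of technical issues already worked out in \cite{carmonadelaruelacker-mfgcommonnoise}, so the proof ultimately reduces to invoking Theorem \ref{th:mainconvergence} together with an upgrade statement from there, in full parallel with the proof of Corollary \ref{co:weakstrictlimit}.
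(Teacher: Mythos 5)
Your proposal is correct and follows essentially the same route as the paper: the paper's proof simply invokes Theorem \ref{th:mainconvergence} and then cites \cite[Proposition 4.4]{carmonadelaruelacker-mfgcommonnoise}, which is exactly the affine-concave upgrade (relaxed-to-strict via strict concavity in $a$, then projection onto $(\F^{X_0,B,W,\mu}_t)_{t \in [0,T]}$ via conditional Jensen) that you sketch and then defer to that reference.
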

\begin{proof}
By \cite[Proposition 4.4]{carmonadelaruelacker-mfgcommonnoise}, the present assumptions guarantee that every weak MFG solution is a weak MFG solution with strong control. The claim then follows from Theorem \ref{th:mainconvergence}.
\end{proof}

Finally, we provide an example of the satisfying situation, in which there is a unique MFG solution. Say that \emph{uniqueness in law} holds for the MFG if any two weak MFG solutions induce the same law on $\Omega$. The following corollary is an immediate consequence of Theorem \ref{th:mainconvergence} and the uniqueness result of \cite[Theorem 6.2]{carmonadelaruelacker-mfgcommonnoise}, which makes use of the monotonicity assumption of Lasry and Lions \cite{lasrylionsmfg}.

\begin{corollary} \label{co:uniquelimit}
Suppose the assumptions of Corollary \ref{co:stronglimit} hold, and define $P_n$ as in \eqref{def:pn}. Assume also that
\begin{enumerate}
\item $b$, $\sigma$, and $\sigma_0$ have no mean field term, i.e. no $\mu$ dependence,
\item $f$ is of the form $f(t,x,\mu,a) = f_1(t,x,a) + f_2(t,x,\mu)$,
\item For each $\mu,\nu \in \P^p(\C^d)$ we have
\[
\int_{\C^d}(\mu-\nu)(dx)\left[g(x_T,\mu_T) - g(x_T,\nu_T) + \int_0^T\left(f_2(t,x,\mu) - f_2(t,x,\nu)\right)dt\right] \le 0.
\]
\end{enumerate}
Then there exists a unique in law weak MFG solution, and it is a strong MFG solution with strong control. In particular, $P_n$ converges in $\P^p(\Omega)$ to this unique MFG solution.
\end{corollary}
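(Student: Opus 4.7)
The plan is to obtain this corollary by stitching together three ingredients: Corollary \ref{co:stronglimit}, whose hypotheses are already assumed in the present statement, the Lasry-Lions monotonicity uniqueness result \cite[Theorem 6.2]{carmonadelaruelacker-mfgcommonnoise}, and the soft topological fact that a relatively compact sequence in a metric space with a unique subsequential limit point must converge to that point.

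First I would note that, under the hypotheses inherited from Corollary \ref{co:stronglimit}, the sequence $(P_n)_{n=1}^\infty$ is automatically relatively compact in $\P^p(\Omega)$ and every subsequential limit is a weak MFG solution with strong control. So all the work reduces to showing that these limits coincide.

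Next I would invoke \cite[Theorem 6.2]{carmonadelaruelacker-mfgcommonnoise}: the extra assumptions (1)--(3) are precisely the Lasry-Lions monotonicity conditions under which any two weak MFG solutions induce the same law on $\Omega$. Assumption (1) — the absence of $\mu$-dependence in $(b,\sigma,\sigma_0)$ — additionally ensures that the measure flow $\mu$ of any weak MFG solution is $\sigma(B_s : s \le t)$-adapted, upgrading it to a \emph{strong} MFG solution with strong control. Denote the resulting unique law on $\Omega$ by $P^\ast$; this is then the only possible subsequential limit of $(P_n)$.

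Finally, combining relative compactness with uniqueness of subsequential limits forces $P_n \to P^\ast$ in $\P^p(\Omega)$, completing the argument. The only genuinely non-mechanical step is verifying that the hypotheses of \cite[Theorem 6.2]{carmonadelaruelacker-mfgcommonnoise} match the ones stated here and that its conclusion delivers both uniqueness in law and the promotion to a strong MFG solution with strong control; once that is in hand, the convergence assertion is a soft consequence of Theorem \ref{th:mainconvergence}, justifying the description of this corollary as "immediate".
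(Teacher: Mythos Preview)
Your proposal is correct and follows essentially the same route as the paper, which simply states that the corollary is an immediate consequence of Theorem \ref{th:mainconvergence} and \cite[Theorem 6.2]{carmonadelaruelacker-mfgcommonnoise}. One caution: your intermediate claim that assumption (1) alone upgrades any weak MFG solution to a strong one is not justified as stated---even when $(b,\sigma,\sigma_0)$ have no $\mu$-dependence, the strong control may still depend on $\mu$ through the filtration $\F^{X_0,B,W,\mu}_t$, so $X$ and hence $\mu$ need not be $B$-measurable for that reason alone. The strongness of the unique solution is rather part of the conclusion of \cite[Theorem 6.2]{carmonadelaruelacker-mfgcommonnoise}, as you correctly acknowledge in your final paragraph; you should rely on that directly rather than on assumption (1) in isolation.
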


\subsection{The converse limit theorem} \label{se:converse}
This section states and discusses a converse to Theorem \ref{th:mainconvergence}. 
For this, we need an additional technical assumption, which we note holds automatically under assumption \ref{assumption:A} in the case that the control space $A$ is compact.

\begin{assumption}{\textbf{B}} \label{assumption:B}
The function $f$ of $(t,x,\mu,a)$ is continuous in $(x,\mu)$, \emph{uniformly in $a$}, for each $t \in [0,T]$. That is,
\[
\lim_{(x',\mu') \rightarrow (x,\mu)}\sup_{a \in A}\left|f(t,x',\mu',a) - f(t,x,\mu,a)\right| = 0, \ \forall t \in [0,T].
\]
Moreover, there exists $c_4 > 0$ such that, for all $(t,x,x',\mu,\mu',a)$,
\[
\left|f(t,x',\mu',a) - f(t,x,\mu,a)\right| \le c_4\left(1 + |x'|^p + |x|^p + \int_{\R^d}|z|^p(\mu' + \mu)(dz)\right).
\]
\end{assumption}

\begin{theorem} \label{th:converseconvergence}
Suppose assumptions \ref{assumption:A} and \ref{assumption:B} hold. Let $P \in \P(\Omega)$ be a weak MFG solution, and for each $n$ let $\mathcal{E}_n = (\Omega_n,(\F^n_t)_{t \in [0,T]},\PP_n,\xi,B,W)$ be any $n$-player environment. Then there exist, for each $n$, $\epsilon_n \ge 0$ and a strong $(\epsilon_n,\ldots,\epsilon_n)$-Nash equilibrium $\Lambda^n = (\Lambda^{n,1},\ldots,\Lambda^{n,n})$ on $\mathcal{E}_n$,
such that $\lim_{n\rightarrow\infty}\epsilon_n = 0$ and
\begin{align}
P = \lim_{n\rightarrow\infty}\frac{1}{n}\sum_{i=1}^n\PP_n \circ \left(\xi^i,B,W^i,\widehat{\mu}[\Lambda^n],\Lambda^{n,i},X^i[\Lambda^n]\right)^{-1}, \text{ in } \P^p(\Omega). \label{def:conversesequence}
\end{align}
\end{theorem}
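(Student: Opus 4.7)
The plan is to \emph{replicate} an approximate MFG optimal control across all $n$ agents, using the $n$-player empirical measure as a proxy for the MFG random measure $\mu$. Given a weak MFG solution $P$, I would first approximate it by one whose control has a tractable functional form. Exploiting the density in $\P^p(\V)$ of relaxed controls of simple form (cf.\ \cite{lacker-mfgcontrolledmartingaleproblems}) together with the value-stability results of \cite{carmonadelaruelacker-mfgcommonnoise}, I would reduce, up to an $O(\delta)$ loss in the MFG value, to the case where the optimal control is of the form $\Lambda_t = \phi_\delta(t, X_0, B, W, \mu)$ with $\phi_\delta$ piecewise constant on a grid $0 = t_0 < \cdots < t_N = T$, continuous in $\mu$ relative to $\ell_{\X,p}$, and at time $t \in [t_k, t_{k+1})$ depending on $\mu$ only through its restriction to $[0, t_k]$. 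The requisite absorption of the extra weak-control randomization into a function of $(X_0, B, W, \mu)$ and an auxiliary uniform variable can be handled by a measurable selection, and the uniform can in turn be generated from an unused portion of one of the $W^i$.

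Given such a $\phi_\delta$, on the environment $\mathcal{E}_n$ I would define inductively on the grid
\[
\Lambda^{n,i}_t := \phi_\delta\bigl(t, \xi^i, B, W^i, \widehat{\nu}^{n}_{t_k}\bigr), \qquad t \in [t_k, t_{k+1}),
\]
where $\widehat{\nu}^{n}_{t_k} := \frac{1}{n}\sum_{j=1}^n \delta_{(W^j, \Lambda^{n,j}, X^j[\Lambda^n])|_{[0,t_k]}}$. Because the right-hand side at time $t \in [t_k, t_{k+1})$ is $\F^{s,n}_{t_k}$-measurable, solving the $n$-dimensional state SDE inductively piece by piece delivers a well-posed strong control $\Lambda^{n,i}$.

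To verify convergence of the averaged law \eqref{def:conversesequence} to $P$, I would use a conditional propagation-of-chaos argument. The $n$-tuples $(\xi^i, W^i, \Lambda^{n,i}, X^i[\Lambda^n])_{i=1}^n$ are exchangeable, and conditionally on $B$ and the random limit $\mu^\infty$ of $(\widehat{\nu}^n_{t_k})_{k,n}$ they are asymptotically i.i.d.\ copies of the MFG triple $(X_0 \vee W, \phi_\delta(\cdot), X)$; the conditional LLN together with the $\ell_{\X,p}$-continuity of $\phi_\delta$ then forces $\widehat{\mu}[\Lambda^n]$ to converge to $\mu^\infty$, which automatically satisfies the MFG consistency condition and hence matches the law of $\mu$ under the $\delta$-approximate MFG solution. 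For the near-Nash property, for any strong deviation $\beta$ of player $i$ the perturbed empirical measure $\widehat{\mu}^x[(\Lambda^{n,-i}, \beta)]$ differs from $\widehat{\mu}^x[\Lambda^n]$ by $O(1/n)$ in $\ell_{\R^d,p}$ uniformly in $t$ (a standard estimate via A.4 and Gronwall, worked out in the announced Lemma \ref{le:finitestateestimate}). Combined with the uniform-in-$a$ continuity of $f$ supplied by assumption \ref{assumption:B}, this reduces player $i$'s optimization, up to $o(1)$ errors, to the representative MFG problem with fixed measure $\mu^\infty$, for which $\phi_\delta$ is $\delta$-optimal by construction. Choosing $\delta = \delta_n \to 0$ slowly along a diagonal yields a sequence $\epsilon_n \to 0$ and strong $(\epsilon_n, \ldots, \epsilon_n)$-Nash equilibria realizing $P$ in the limit.

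The hard part is controlling the two approximation scales simultaneously --- the discretization $\delta$ encoding the reduction of the MFG control, and the population size $n$ governing the propagation-of-chaos error --- while maintaining the optimality bound \emph{uniformly over all strong deviations $\beta$}. One cannot freeze $\beta$ independently of $n$, so bounding $\sup_\beta J_i((\Lambda^{n,-i}, \beta))$ in terms of the MFG representative value function forces the use of a measure-argument continuity of $f$ that is uniform in the control variable; this is precisely the content of assumption \ref{assumption:B}, and is what keeps the $o(1)$ error in the optimality gap independent of $\beta$.
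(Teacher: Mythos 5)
There is a genuine gap at the heart of your construction. Your $n$-player strategies $\Lambda^{n,i}_t = \phi_\delta(t,\xi^i,B,W^i,\widehat{\nu}^n_{t_k})$ are a \emph{symmetric feedback of the empirical measure}, so the limit $\mu^\infty$ of $\widehat{\mu}[\Lambda^n]$ is whatever the propagation-of-chaos analysis of that interacting system produces: conditionally on $B$ (and on nothing else, since each player's auxiliary uniform is drawn from her \emph{own} $W^i$ and hence averages out across the population), the empirical measure converges to a solution of the McKean--Vlasov fixed point $\mu^\infty = \mathrm{Law}((W,\Lambda,X)\mid B,\mu^\infty)$ that is essentially determined by $B$. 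You then assert that $\mu^\infty$ ``automatically satisfies the MFG consistency condition and hence matches the law of $\mu$,'' but this conflates \emph{satisfying the consistency condition} with \emph{having the prescribed law $P\circ\mu^{-1}$}: consistency solutions are non-unique, and your scheme provides no mechanism to select the given one. The example of Section \ref{se:example} is a concrete counterexample: there $\sigma_0\equiv 0$ and the target weak solution has $\bar{\mu}^x_2=\pm 1$ with probability $1/2$ each, a randomness that no symmetric empirical-feedback system driven only by $(\xi,W^1,\ldots,W^n)$ can reproduce in the limit. Reproducing a genuinely stochastic $\mu$ requires the players' controls to be \emph{correlated} through a common randomization that survives the averaging, i.e.\ highly asymmetric full-information strong controls (e.g.\ all players reading a common approximate uniform off the same few noises); your construction does not do this.

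The paper's route is structured precisely to avoid this. It first builds special environments on a product space carrying conditionally i.i.d.\ copies $(W^i,\Lambda^i,Y^i)$ of the MFG triple given $(B,\mu)$ plus an auxiliary uniform $U$, and shows the coordinate controls $(\Lambda^1,\ldots,\Lambda^n)$ --- which are correlated through the realized $\mu$ and are \emph{not} functions of $(\xi,B,W)$ --- form a relaxed $\epsilon_n$-Nash equilibrium converging to $P$ (Theorem \ref{th:partialconverseconvergence}, via Lemmas \ref{le:yLLN} and \ref{le:pathwisepropagation}). Only then, for each \emph{fixed} $n$, does it approximate this relaxed equilibrium by strong controls while preserving the approximate-Nash property (Proposition \ref{pr:fixednapproximation}, using the density of adapted controls and the limit-representation Lemma \ref{le:limitrepresentation}), and a diagonalization in $(n,k)$ finishes. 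The order of limits is essential: the strong approximants at stage $n$ inherit the correlation through $\mu$ from the relaxed equilibrium they approximate, which is exactly what your one-shot symmetric construction cannot supply. Your observations about assumption \ref{assumption:B} and the $O(1/n)$ insensitivity of the empirical measure to one player's deviation are correct in spirit and do appear in the paper's Nash verification, but they do not repair the selection problem above.
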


Combining Theorems \ref{th:mainconvergence} and \ref{th:converseconvergence} shows that the set of weak MFG solutions is exactly the set of limits of strong approximate Nash equilibria. More precisely, the set of weak MFG solutions is exactly the set of limits
\[
\lim_{k\rightarrow\infty}\frac{1}{n_k}\sum_{i=1}^{n_k}\PP_{n_k} \circ \left(\xi^i,B,W^i,\widehat{\mu}[\Lambda^{n_k}],\Lambda^{n_k,i},X^i[\Lambda^{n_k}]\right)^{-1},
\]
where $\Lambda^n \in \A_n^n(\mathcal{E}_n)$ are strong $\epsilon^n$-Nash equilibria and $\epsilon^n=(\epsilon^n_1,\ldots,\epsilon^n_n) \in [0,\infty)^n$ satisfies \eqref{def:epsilonconverge}.
The same statement is true when the word ``strong'' is replaced by ``strict'' or ``relaxed'', because of Proposition \ref{pr:equilibriuminclusions}. Similarly, combining Theorem \ref{th:converseconvergence} with Corollaries \ref{co:weakstrictlimit} and \ref{co:stronglimit} yields characterizations of the mean field limit without recourse to relaxed controls.

\begin{remark} \label{re:insensitivetoenvironment}
In light of Remark \ref{re:equivalentstrongequilibrium}, the statement of Theorem \ref{th:converseconvergence} is insensitive to the choice of environments $\mathcal{E}_n$. Without loss of generality, they may all be assumed to satisy $\F^n_t = \F^{s,n}_t$ for each $t$; that is, the filtration may be taken to be the one generated by the process $(\xi,B_t,W_t)_{t \in [0,T]}$.
\end{remark}

\begin{remark}
It follows from the proofs of Theorems \ref{th:mainconvergence} and \ref{th:converseconvergence} that the \emph{values} converge as well, in the sense that $\frac{1}{n}\sum_{i=1}^nJ_i(\Lambda^n)$ converges (along a subsequence in the case of Theorem \ref{th:mainconvergence}) to the corresponding optimal value corresponding to the MFG solution.
\end{remark}

\begin{remark} \label{re:distributedstrategies}
Theorem \ref{th:converseconvergence} is admittedly abstract, and not as strong in its conclusion as the typical results of this nature in the literature. Namely, in the setting without common noise, it is usually argued as in \cite{huangmfg1} that a MFG solution may be used to construct not just any sequence of approximate equilibria, but rather one consisting of \emph{symmetric distributed strategies}, in which the control of agent $i$ is of the form $\hat{\alpha}(t,X^i_t)$ for some function $\hat{\alpha}$ which depends neither on the agent $i$ nor the number of agents $n$. The techniques of this paper seem too abstract to yield a result of this nature, but in any case this would stray from the objective of the paper.
On a somewhat related note, at the level of generality of Theorem \ref{th:converseconvergence} we do not expect to obtain a rate of convergence of $\epsilon_n$, as in \cite{kolokoltsov-mfgnonlinearmarkov,carmonadelarue-mfg}.
\end{remark}

\section{The case of no common noise} \label{se:nocommonnoise}

The goal of this section is to specialize the main results to MFGs without common noise. Indeed \emph{we assume that $\sigma_0 \equiv 0$ throughout this section}. Assumption \ref{assumption:A} permits degenerate volatility, but when $\sigma_0 \equiv 0$ our general definition of weak MFG solution still involves the common noise $B$, which in a sense should no longer play any role. To be absolutely clear, we will rewrite the definitions and the two main theorems so that they do not involve a common noise; most notably, the notion of \emph{strong controls} for the finite-player games is refined to \emph{very strong controls}.

The proofs of the main results of Section \ref{se:withoutcommonnoise-results}, Proposition \ref{pr:verystrongeq} and Theorem \ref{th:withoutcommonnoise}, are deferred to Section \ref{se:withoutcommonnoiseproof}, where we will see how to deduce
almost all of the results \emph{without} common noise from those \emph{with} common noise. 
Crucially, even without common noise, a weak MFG solution still involves a \emph{random} measure $\mu$, and the consistency condition becomes $\mu = P((W,\Lambda,X) \in \cdot \ | \ \mu)$. We illustrate by example just how different weak solutions can be from the strong solutions typically considered in the MFG literature, in which $\mu$ is deterministic. Finally we close the section by discussing some situations in which weak solutions are concentrated on the family of strong solutions.

\subsection{Definitions and results} \label{se:withoutcommonnoise-results}
First, let us state a simplified definition of MFG solution for the case $\sigma_0 \equiv 0$, which is really just Definition \ref{def:weakMFGsolution} rewritten without $B$. Again, the following definition is relative to the initial state distribution $\lambda$.

\begin{definition} \label{def:mfgsolutionwithoutcommonnoise}
A \emph{weak MFG solution without common noise} is a tuple $(\widetilde{\Omega},(\F_t)_{t \in [0,T]},P,W,\mu,\Lambda,X)$, where $(\widetilde{\Omega},(\F_t)_{t \in [0,T]},P)$ is a complete filtered probability space supporting $(W,\mu,\Lambda,X)$ satisfying
\begin{enumerate}
\item $(W_t)_{t \in [0,T]}$ is an $(\F_t)_{t \in [0,T]}$-Wiener processes of dimension $m$, the process $(X_t)_{t \in [0,T]}$ is $(\F_t)_{t \in [0,T]}$-adapted with values in $\R^d$, and $P \circ X_0^{-1} = \lambda$. Moreover, $\mu$ is a random element of $\P^p(\X)$ such that $\mu(C)$ is $\F_t$-measurable for each $C \in \F^\X_t$ and $t \in [0,T]$.
\item $X_0$, $W$, and $\mu$ are independent.
\item $(\Lambda_t)_{t \in [0,T]}$ is $(\F_t)_{t \in [0,T]}$-progressively measurable with values in $\P(A)$ and
\[
\E^P\int_0^T\int_A|a|^p\Lambda_t(da)dt < \infty.
\]
Moreover, $\sigma(\Lambda_s : s \le t)$ is conditionally independent of $\F^{X_0,W,\mu}_T$ given $\F^{X_0,W,\mu}_t$, for each $t \in [0,T]$, where
\begin{align*}
\F^{X_0,W,\mu}_t &= \sigma\left(X_0,W_s,\mu(C) : s \le t, \ C \in \F^\X_t\right).
\end{align*}
\item The state equation holds:
\begin{align}
dX_t = \int_Ab(t,X_t,\mu^x_t,a)\Lambda_t(da)dt + \sigma(t,X_t,\mu^x_t)dW_t. \label{def:relaxedSDE-weak-ncn}
\end{align}
\item If $(\widetilde{\Omega}',(\F'_t)_{t \in [0,T]},P')$ is another filtered probability space supporting $(W',\mu',\Lambda',X')$ satisfying (1-4) and $P \circ \mu^{-1} = P' \circ (\mu')^{-1}$, then
\begin{align*}
\E^P\left[\Gamma(\mu^x,\Lambda,X)\right] \ge \E^{P'}\left[\Gamma(\mu'^x,\Lambda',X')\right].
\end{align*}
\item $\mu$ is a version of the conditional law of $(W,\Lambda,X)$ given $\mu$.
\end{enumerate}
As in Definition \ref{def:weakMFGsolutionlaw}, we may refer to the law $P \circ (W,\mu,\Lambda,X)^{-1}$ itself as a weak MFG solution.
Again, if also there exists an $A$-valued process $(\alpha_t)_{t \in [0,T]}$ such that $P(\Lambda_t = \delta_{\alpha_t} \ a.e. \ t)=1$, then we say the MFG solution has \emph{strict control}. If this $(\alpha_t)_{t \in [0,T]}$ is progressively measurable with respect to the completion of $(\F^{X_0,W,\mu}_t)_{t \in [0,T]}$, we say the MFG solution has \emph{strong control}.
If $\mu$ is a.s.-constant, then we have a \emph{strong MFG solution without common noise}. In this case, we may abuse the terminology somewhat by saying that a measure $\widetilde{\mu} \in \P^p(\X)$ is itself a \emph{strong MFG solution (without common noise)}, if there exists a weak MFG solution $(\widetilde{\Omega},(\F_t)_{t \in [0,T]},P,W,\mu,\Lambda,X)$ without common noise such that $P(\mu = \widetilde{\mu}) = 1$.
\end{definition}

\begin{remark} \label{re:deterministicmfgsolution}
Our notion of \emph{strong MFG solution without common noise with strong control} corresponds to the usual definition of MFG solution in the literature.
It is exactly the definition used in the recent papers \cite{fischer-mfgconnection,lacker-mfgcontrolledmartingaleproblems}, and it is a generalization of the more standard definition of MFG solution without common noise found in \cite{huangmfg1,carmonadelarue-mfg,bensoussan-lqmfg}, for example. The latter papers require optimality \emph{only relative to other strong controls}, not among all weak controls as we do in condition (5) of Definition \ref{def:mfgsolutionwithoutcommonnoise}. Under assumption \ref{assumption:A}, however, optimality among strong controls implies optimality among weak controls, and thus our definition does include this more standard one. This is the same phenomenon driving Propositions \ref{pr:equilibriuminclusions} and \ref{pr:verystrongeq}, and it is well known in control theory. Remark \ref{re:adapteddensity} will elaborate on this point, and see also \cite{elkaroui-compactification} or the more recent \cite{karouitan-capacities} for further dicussion. 
\end{remark}

We continue to work with the definition of the $n$-player games of the previous section. Suppose we are given an $n$-player environment $\mathcal{E}_n = (\widetilde{\Omega}_n,(\F^n_t)_{t \in [0,T]},\PP_n,\xi,B,W)$, as was defined in Section \ref{se:finiteplayergames}. Let $(\F^{vs,n}_t)_{t \in [0,T]}$ denote the $\PP_n$-completion of $(\sigma(\xi,W_s : s \le t))_{t \in [0,T]}$, that is the filtration generated by the initial state and the idiosyncratic noises (but not the common noise). Let us say that a control $\Lambda \in \A_n(\mathcal{E}_n)$ is a \emph{very strong control} if $\PP_n(\Lambda_t = \delta_{\alpha_t} \ a.e. \ t) = 1$, for some $(\F^{vs,n}_t)_{t \in [0,T]}$-progressively measurable $A$-valued process $(\alpha_t)_{t \in [0,T]}$. A \emph{very strong strategy} is a vector of strong controls. 
For $\epsilon=(\epsilon_1,\ldots,\epsilon_n) \in [0,\infty)^n$, a \emph{very strong $\epsilon$-Nash equilibrium in $\mathcal{E}_n$} is any \emph{very strong} strategy $\Lambda \in \A_n^n(\mathcal{E}_n)$ such that
\[
J_i(\Lambda) \ge \sup_{\beta \in \A_n(\mathcal{E}_n) \text{ very strong}}J_i((\Lambda^{-i},\beta)) - \epsilon_i, \quad i=1,\ldots,n.
\]
The \emph{very strong} equilibrium is arguably the most natural notion of equilibrium in the case of no common noise, and it is certainly one of the most common in the literature.
The proof of the following Proposition is deferred to Appendix \ref{se:proof-verystrongeq}. 

\begin{proposition} \label{pr:verystrongeq}
When $\sigma_0\equiv 0$, every very strong $\epsilon$-Nash equilibrium is also a relaxed $\epsilon$-Nash equilibrium.
\end{proposition}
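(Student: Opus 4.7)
The plan is to reduce Proposition 3.2 to Proposition 2.7 by exploiting the key observation that, when $\sigma_0 \equiv 0$, the common noise $B$ does not enter the state dynamics. Fix a very strong $\epsilon$-Nash equilibrium $\Lambda = (\Lambda^1,\ldots,\Lambda^n) \in \A_n^n(\mathcal{E}_n)$, a player $i$, and any admissible deviation $\beta \in \A_n(\mathcal{E}_n)$. Since $\Lambda^{-i}$ is $(\F^{vs,n}_t)$-progressive (hence a measurable function of $(\xi, W)$) and $\sigma_0 \equiv 0$, unique strong solvability of the $n$-player SDE system yields state processes $X[(\Lambda^{-i},\beta)]$ and an empirical measure $\widehat\mu^x[(\Lambda^{-i},\beta)]$ that are measurable functionals of $(\xi, W, \beta)$ alone; in particular, $J_i((\Lambda^{-i},\beta))$ depends on $\beta$ only through the joint law of $(\xi, W, \beta)$.

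Next I would introduce the auxiliary environment $\widetilde{\mathcal{E}}_n := (\Omega_n, (\F^{vs,n}_t)_{t\in[0,T]}, \PP_n, \xi, B, W)$, obtained from $\mathcal{E}_n$ by shrinking the filtration. Strictly speaking $\widetilde{\mathcal{E}}_n$ is not an $n$-player environment in the sense of Section \ref{se:finiteplayergames} since $B$ need not be $(\F^{vs,n}_t)$-adapted, but this is harmless: under $\sigma_0 \equiv 0$ all the SDE wellposedness, admissibility, and reward computations used in the proof of Proposition \ref{pr:equilibriuminclusions} go through with $B$ suppressed (equivalently, one could take $m_0 = 0$ or replace $B$ by a constant process). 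In $\widetilde{\mathcal{E}}_n$, admissible controls are exactly the $(\F^{vs,n}_t)$-progressive $\P(A)$-valued processes, and the strong controls coincide with the very strong controls of $\mathcal{E}_n$. Consequently $\Lambda$ is a strong $\epsilon$-Nash equilibrium in $\widetilde{\mathcal{E}}_n$, and Proposition \ref{pr:equilibriuminclusions} applied there yields
\[
J_i(\Lambda) + \epsilon_i \;\ge\; \sup_{\widetilde\beta \in \A_n(\widetilde{\mathcal{E}}_n)} J_i((\Lambda^{-i},\widetilde\beta)), \qquad i=1,\ldots,n.
\]

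The main obstacle is the remaining step: proving the equality
\[
\sup_{\beta \in \A_n(\mathcal{E}_n)} J_i((\Lambda^{-i},\beta)) \;=\; \sup_{\widetilde\beta \in \A_n(\widetilde{\mathcal{E}}_n)} J_i((\Lambda^{-i},\widetilde\beta)),
\]
i.e., that enlarging the filtration from $(\F^{vs,n}_t)$ to $(\F^n_t)$ does not increase the optimal reward of player $i$'s deviation problem (only the inequality $\le$ is nontrivial). My plan is to exploit the independence of $B$ from $(\xi, W)$: given $\beta \in \A_n(\mathcal{E}_n)$, one disintegrates the joint law of $(\xi, W, \beta)$ along the marginal of $B$, and defines $\widetilde\beta \in \A_n(\widetilde{\mathcal{E}}_n)$ as a suitable barycenter in the convex space $\V$ — thereby absorbing the $B$-randomization into the measure-valued relaxed control, which is the key flexibility afforded by the relaxed formulation. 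A martingale-problem / canonical-space argument, in the spirit of the weak-vs-strong equivalence alluded to in Remark \ref{re:deterministicmfgsolution} and underlying the proof of Proposition \ref{pr:equilibriuminclusions}, then gives that $\widetilde\beta$ achieves a reward at least as large as $\beta$. Combined with the previous display, this yields the relaxed $\epsilon$-Nash condition on $\mathcal{E}_n$ and completes the proof.
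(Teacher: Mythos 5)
Your opening observation is right and is exactly the structural point the paper exploits: when $\sigma_0\equiv 0$ and $\Lambda^{-i}$ is very strong, $J_i((\Lambda^{-i},\beta))$ depends on $\beta$ only through the joint law of $(\xi,W,\beta)$. Framing the problem as a filtration-shrinking statement and invoking Proposition \ref{pr:equilibriuminclusions} on $\widetilde{\mathcal{E}}_n$ is also a reasonable organization. But the step you yourself identify as the main obstacle --- that enlarging the filtration from $(\F^{vs,n}_t)_{t\in[0,T]}$ to $(\F^n_t)_{t\in[0,T]}$ does not increase $\sup_\beta J_i((\Lambda^{-i},\beta))$ --- is where the proposal breaks down.

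The barycenter construction cannot work. Averaging the $\P(A)$-valued process $\beta_t$ over the $B$-randomization (or over any extra randomness in $\F^n_t$) changes the reward in general, because the control enters the reward nonlinearly: $X[(\Lambda^{-i},\beta)]$ solves a nonlinear SDE in which $\beta$ appears, and $f,g$ are nonlinear in $(x,\mu)$; the proposition imposes no convexity of the set $\{(b,f)(\cdot,a):a\in A\}$. A toy example: $dX_t=\alpha_t\,dt$, $X_0=0$, $A=\{-1,1\}$, reward $X_1^2$. The control $\beta_t=\delta_\gamma$, with $\gamma=\pm1$ a fair coin independent of everything, yields reward $1$, while its barycenter $\tfrac12\delta_1+\tfrac12\delta_{-1}$ yields reward $0$. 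The affine structure that does exist is at the level of joint laws $Q\in\A(\rho)$ (this is why $J\circ\RC$ is linear), not at the level of pointwise mixing of relaxed controls, and the former gives you nothing here. The correct mechanism --- and the one the paper uses --- is approximation, not projection: take a near-optimal deviation $\beta^*$ (which has finite $p'$-moment by Lemma \ref{le:valuebound}(2)); by Lemma \ref{le:adapteddensity} / Remark \ref{re:adapteddensity}, a chattering-type density result, there exist $(\F^{vs,n}_t)$-progressive $A$-valued processes $\alpha^k$ with $\PP_n\circ(\xi,W,dt\delta_{\alpha^k_t}(da))^{-1}\rightarrow\PP_n\circ(\xi,W,\beta^*)^{-1}$ together with the uniform integrability needed to conclude $J_i((\Lambda^{-i},dt\delta_{\alpha^k_t}(da)))\rightarrow J_i((\Lambda^{-i},\beta^*))$ via Lemmas \ref{le:finitestateconvergence} (with $B$ deleted) and \ref{le:jcontinuous}. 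Since each $dt\delta_{\alpha^k_t}(da)$ is itself a very strong control, the very strong equilibrium property applies to it directly and the relaxed $\epsilon$-Nash inequality follows in the limit; this is precisely the paper's proof (Step 1 of the proof of Proposition \ref{pr:equilibriuminclusions} with ``strong'' replaced by ``very strong'' and $B$ removed), and it makes your intermediate detour through $\widetilde{\mathcal{E}}_n$ unnecessary. Note finally that the reward of $\beta^*$ is only approximately reproduced, which suffices since only suprema need to be compared; an exact replacement of a genuinely randomized deviation by a $(\xi,W)$-adapted one is in general impossible.
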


The following Theorem \ref{th:withoutcommonnoise} rewrites Theorems \ref{th:mainconvergence} and \ref{th:converseconvergence} in the setting without common noise. Although this is mostly derived from Theorems \ref{th:mainconvergence} and \ref{th:converseconvergence}, the proof is spelled out in Section \ref{se:withoutcommonnoiseproof}, as it is not entirely straightforward.

\begin{theorem} \label{th:withoutcommonnoise}
Suppose $\sigma_0 \equiv 0$. Theorem \ref{th:mainconvergence} remains true if the term ``weak MFG solution'' is replaced by ``weak MFG solution without common noise,'' and if $P_n$ is defined instead by
\begin{align}
P_n := \frac{1}{n}\sum_{i=1}^n\PP_n \circ \left(\xi^i,W^i,\widehat{\mu}[\Lambda^n],\Lambda^{n,i},X^i[\Lambda^n]\right)^{-1}. \label{def:pnwithoutcommonnoise}
\end{align}
Theorem \ref{th:converseconvergence} remains true if ``weak MFG solution'' is replaced by ``weak MFG solution without common noise,'' if $P_n$ is defined by \eqref{def:pnwithoutcommonnoise}, and if ``strong'' is replaced by ``very strong.''
\end{theorem}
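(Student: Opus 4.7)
The plan is to deduce Theorem \ref{th:withoutcommonnoise} from Theorems \ref{th:mainconvergence} and \ref{th:converseconvergence} by treating the common noise $B$ as an auxiliary coordinate that is marginalized out in the forward direction and adjoined in the converse direction. Both moves exploit that, when $\sigma_0 \equiv 0$, the dynamics and objectives depend on $B$ only through the strategies. Write $\Omega_{nc} := \R^d \times \C^m \times \P^p(\X) \times \V \times \C^d$ for the ``without $B$'' canonical space and $\pi : \Omega \to \Omega_{nc}$ for the projection dropping the $B$-coordinate.

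\textbf{Forward direction.} Applying Theorem \ref{th:mainconvergence} verbatim shows that $\bar P_n := \frac{1}{n}\sum_i \PP_n \circ (\xi^i, B, W^i, \widehat\mu[\Lambda^n], \Lambda^{n,i}, X^i[\Lambda^n])^{-1}$ is relatively compact in $\P^p(\Omega)$ with every limit a weak MFG solution; continuity of $\pi$ transfers this to $P_n = \bar P_n \circ \pi^{-1}$. The key intermediate claim is: when $\sigma_0 \equiv 0$, the $\pi$-image of any weak MFG solution with common noise satisfies Definition \ref{def:mfgsolutionwithoutcommonnoise}. Conditions (1), (2), (4) are straightforward marginals; (6) follows from its with-CN analogue by the tower property $\E[\mu \mid \mu] = \mu$; and (3) follows by projecting conditional independence against $\F^{X_0,B,W,\mu}$ down to $\F^{X_0,W,\mu}$. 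The delicate point is optimality (5): given a without-CN competitor $(\widetilde\Omega', P', W', \mu', \Lambda', X')$ with $P' \circ (\mu')^{-1} = P \circ \mu^{-1}$, I would construct a with-CN competitor by drawing, on an enlarged space, $(B'', \mu'')$ from $P \circ (B, \mu)^{-1}$ and, conditionally on $\mu'' = \mu'$, drawing $(W'', \Lambda'', X'')$ from the conditional law of $(W', \Lambda', X')$ given $\mu'$ independently of $B''$. Verifying conditions (1)--(4) of Definition \ref{def:weakMFGsolution} for this coupling (using $\sigma_0 \equiv 0$ for the state equation) and invoking with-CN optimality produces the required inequality.

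\textbf{Converse direction.} Given a weak MFG solution without CN $P \in \P(\Omega_{nc})$, I would form $\tilde P$ on $\Omega$ as the product of $P$ with Wiener measure on $\C^{m_0}$, i.e.\ adjoining a $B$ independent of all other coordinates. Verifying $\tilde P$ satisfies Definition \ref{def:weakMFGsolution} is routine since the product structure makes every $B$-independence condition automatic and $\sigma_0 \equiv 0$ leaves the state equation untouched. Applying Theorem \ref{th:converseconvergence} to $\tilde P$ on the given environments $\mathcal{E}_n$ produces strong $(\epsilon_n, \ldots, \epsilon_n)$-Nash equilibria $\Lambda^n$ with $\bar P_n \to \tilde P$ in $\P^p(\Omega)$; projecting away $B$ gives $P_n \to P$ in $\P^p(\Omega_{nc})$.

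\textbf{Main obstacle.} The hardest step is upgrading the \emph{strong} equilibria of Theorem \ref{th:converseconvergence} to \emph{very strong} ones, i.e.\ eliminating their dependence on $B$. My plan is to replace each $\Lambda^{n,i}$ by its regular $\F^{vs,n}_T$-conditional distribution $\tilde\Lambda^{n,i}$, which is $(\F^{vs,n}_t)$-progressive and hence very strong; since $\sigma_0 \equiv 0$, the resulting state $X^i[\tilde\Lambda^n]$ is $(\xi, W)$-adapted, and because the drift enters linearly through $\int_A b(t,x,\mu,a)\Lambda^i_t(da)$, Jensen's inequality propagates the conditioning cleanly through the coefficients. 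One must then verify that $\tilde\Lambda^n$ remains an $\epsilon'_n$-Nash equilibrium with $\epsilon'_n \to 0$ and that the marginalized convergence is unchanged; this rests on controlling $\widehat\mu[\tilde\Lambda^n]$ versus $\widehat\mu[\Lambda^n]$ via the Lipschitz estimates of (A.4) and propagating the bound through $f$ using assumption \ref{assumption:B}. Any very strong deviation against $\tilde\Lambda^n$ can then be viewed, up to these same $o(1)$ error terms, as a strong deviation against $\Lambda^n$, whose optimality gap is controlled by the strong Nash property of $\Lambda^n$.
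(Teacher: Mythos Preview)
Your forward direction and the first half of the converse (adjoining an independent $B$ to upgrade a without-common-noise solution to a with-common-noise one) match the paper's route, which is packaged there as Lemma~\ref{le:with-withoutcommonnoise}.

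The gap is in your ``main obstacle'' step. Replacing a strong control $\Lambda^{n,i}_t = \delta_{\alpha^{n,i}_t}$ by the conditional law of $\alpha^{n,i}_t$ given $\F^{vs,n}_t$ yields an $(\F^{vs,n}_t)$-progressive $\P(A)$-valued process, but this is in general a genuinely \emph{relaxed} control, not a Dirac mass. A very strong control, by definition, must have the form $\delta_{\alpha_t}$ for an $A$-valued $(\F^{vs,n}_t)$-progressive process; averaging out the $B$-dependence destroys this structure whenever $\alpha^{n,i}_t$ actually depends on $B$. So your $\tilde\Lambda^{n,i}$ is not ``hence very strong'' as you claim, and the Jensen remark about the linearity of $\int_A b\,\Lambda^i_t(da)$ does nothing to recover strictness. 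A secondary difficulty is that the agents are coupled through $\widehat\mu^x[\Lambda^n]$, so conditioning each agent's control separately on $\F^{vs,n}_T$ does not factor cleanly through the empirical measure and the state dynamics.

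The paper sidesteps this entirely. Rather than taking the strong equilibria produced by Theorem~\ref{th:converseconvergence} and trying to project out $B$, it goes back one level in that proof, which factors as Theorem~\ref{th:partialconverseconvergence} (constructing \emph{relaxed} equilibria on specific environments) followed by Proposition~\ref{pr:fixednapproximation} (approximating relaxed by strong). The first piece transfers to the without-common-noise setting via Lemma~\ref{le:with-withoutcommonnoise}; the second is then re-run verbatim with $(\F^{s,n}_t)$ replaced by $(\F^{vs,n}_t)$. This works because the density result behind Proposition~\ref{pr:fixednapproximation} (Lemma~\ref{le:adapteddensity}, cf.\ Remark~\ref{re:adapteddensity}) applies to any filtration generated by a Wiener process and an initial condition, and since $\sigma_0 \equiv 0$ the state SDE and Lemma~\ref{le:finitestateconvergence} make sense without $B$. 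One thereby obtains genuinely very strong approximating equilibria directly, with no conditioning step.
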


Since strong MFG solutions are more familiar in the literature on mean field games and presumably more accessible computationally, it would be nice to have a description of weak solutions in terms of strong solutions. We will see that this is not possible in general, and the investigation of this issue highlights the fundamental difference between stochastic and deterministic equilibria (i.e. weak and strong MFG solutions). First, a discussion of a special case will help to clarify the ideas.

\subsection{A digression on McKean-Vlasov equations} \label{se:mkvlimits}
When there is no control (when $A$ is a singleton), the mean field game reduces to a McKean-Vlasov equation. In this case, an interesting simplification occurs: every weak solution is simply a randomization over strong solutions. To be more clear, suppose we have a system of weakly interacting diffusions, given by
\begin{align*}
dX^i_t &= \tilde{b}(t,X^i_t,\mu^n_t)dt + \tilde{\sigma}(t,X^i_t,\mu^n_t)dW^i_t, \\
\mu^n &:= \frac{1}{n}\sum_{k=1}^n\delta_{X^k}.
\end{align*}
A common argument in the theory of McKean-Vlasov limits \cite{oelschlagermkv,gartnermkv,sznitman} is to show, under suitable assumptions on $(\tilde{b},\tilde{\sigma})$, that $(\mu^n)_{n=1}^\infty$ is tight, and that every weak limit point (an element of $\P(\P(\C^d))$) is concentrated on the set of solutions $\mu \in \P(\C^d)$ of the following \emph{strong McKean-Vlasov equation}:
\[
\begin{cases}
dX_t &= \tilde{b}(t,X_t,\mu_t)dt + \tilde{\sigma}(t,X_t,\mu_t)dW_t, \\
\mu &= \text{Law}(X).
\end{cases}
\]
Consider also searching for a $\P(\C^d)$-valued random variable $\mu$ satisfying the \emph{weak McKean-Vlasov equation}:
\[
\begin{cases}
dX_t &= \tilde{b}(t,X_t,\mu_t)dt + \tilde{\sigma}(t,X_t,\mu_t)dW_t, \\
\mu &= \text{Law}(X \ | \ \mu), \text{ with } X_0, \mu, W \text{ independent},
\end{cases}
\]
It is not too difficult to convince yourself that a $\P(\C^d)$-valued random variable satisfies the weak McKean-Vlasov equation if and only if it almost surely satisfies the strong McKean-Vlasov equation. That is, every weak solution is supported on the set of strong solutions. In particular, we find that the set of strong McKean-Vlasov solutions is rich enough to characterize all of the possible limiting behaviors of the finite-particle systems.

In general, no such simplification is available for mean field games. This is essentially because the adaptedness requirement makes the class of admissible controls quite dependent on how random $\mu$ is. To highlight this point, Section \ref{se:example} below describes a model possessing weak MFG solutions which are not randomizations of strong MFG solutions. Subsection \ref{se:supportsofweaksolutions} discusses some partial results on when this simplification can occur in the MFG setting.

\subsection{An illuminating example} \label{se:example}
This section describes a deceptively simple example which illustrates the difference between weak and strong solutions. Consider the time horizon $T = 2$, the initial state distribution $\lambda = \delta_0$, and the following data (still with $\sigma_0 \equiv 0$):
\begin{align*}
b(t,x,\nu,a) &= a, \quad \sigma \text{ constant}, \quad A = [-1,1] \\
g(x,\nu) &= x\bar{\nu}, \quad f \equiv 0,
\end{align*}
where for $\nu \in \P^1(\R)$ we define $\bar{\nu} := \int x\nu(dx)$. Similarly, for $\mu \in \P^1(\X)$ write $\bar{\mu}^x_t := \int_{\R}x\mu^x_t(dx)$. Assumption \ref{assumption:A} is verified by choosing $p=2$, $p_\sigma = 0$, and any $p' > 2$. Let us first study the optimization problems arising in the MFG problem. Let $(\widetilde{\Omega},(\F_t)_{t \in [0,2]},P,W,\mu,\Lambda,X)$ satisfy (1-5) of Definition \ref{def:mfgsolutionwithoutcommonnoise}. For $(\F_t)_{t\in [0,2]}$-progressively measurable $\P([-1,1])$-valued processes $\beta = (\beta_t)_{t\in [0,2]}$, define
\[
\widetilde{J}(\beta) := \E\left[X^\beta_2\bar{\mu}^x_2\right],
\]
where
\[
X^\beta_t = \int_0^t\int_{[-1,1]}a\beta_t(da)dt + \sigma W_t, \ t \in [0,2].
\]
Independence of $W$ and $\mu$ implies
\[
\widetilde{J}(\beta) = \E\left[\int_0^2\int_{[-1,1]}a\bar{\mu}^x_2\beta_t(da)dt\right] =  \E\left[\int_0^2\int_{[-1,1]}a\E^P[\bar{\mu}^x_2 \ | \ \F^\beta_t]\beta_t(da)dt\right],
\]
where $\F^\beta_t := \sigma(\beta_s : s \le t)$.
If it is also required that $\F^\beta_t$ is conditionally independent of $\F^{X_0,W,\mu}_2$ given $\F^{X_0,W,\mu}_t$, then
\[
\E^P[\bar{\mu}^x_2 \ | \ \F^\beta_t] = \E^P[\bar{\mu}^x_2 \ | \ \F^{X_0,W,\mu}_t] = \E^P[\bar{\mu}^x_2 \ | \ \F^\mu_t],
\]
where the last equality follows from independence of $(X_0,W)$ and $\mu$, and $\F^\mu_t := \sigma(\mu(C) : C \in \F^\X_t)$. Hence
\begin{align}
\widetilde{J}(\beta) = \E\left[\int_0^2\int_{[-1,1]}a\E^P[\bar{\mu}^x_2 \ | \ \F^{\mu}_t]\beta_t(da)dt\right]. \label{ex:jdef}
\end{align}
Condition (5) of Definition \ref{def:mfgsolutionwithoutcommonnoise} implies that $\Lambda$ maximizes $J$ over all such processes $\beta$, which implies that $\Lambda_t(\omega)$ must equal $\delta_{\alpha^*_t(\omega)}$ on the $(t,\omega)$-set $\{\alpha^* \neq 0\}$, where
\begin{align*}
\alpha^*_t &:= \text{sign}\left(\E\left[\left.\bar{\mu}^x_2 \right| \F^\mu_t\right]\right), 
\end{align*}
and we use the convention $\text{sign}(0) := 0$.

\begin{remark} \label{re:optimalcontrolselection}
This already highlights the key point: When $\mu$ is deterministic, an optimal control is the constant $\text{sign}(\bar{\mu}^x_2)$, but when $\mu$ is random, this control is inadmissible since it is not adapted.
\end{remark}

\begin{proposition}
Every strong MFG solution (without common noise) satisfies $\bar{\mu}^x_2 \in \{-2,0,2\}$ and $\bar{\mu}^x_t = t\,\mathrm{sign}(\bar{\mu}^x_2)$.
\end{proposition}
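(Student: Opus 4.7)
The plan is to exploit the fact that in a strong MFG solution without common noise the measure $\mu$ is a.s.\ constant, which collapses the equilibrium condition into a one-dimensional scalar optimization. Set $c := \bar{\mu}^x_2$; this is deterministic. Since the filtration $(\F^\mu_t)_{t \in [0,2]}$ is trivial, the conditional expectation appearing in \eqref{ex:jdef} reduces to the constant $c$, so for every admissible $\P([-1,1])$-valued process $\beta$,
\[
\widetilde{J}(\beta) \;=\; c\,\E\!\left[\int_0^2 \!\!\int_{[-1,1]} a\,\beta_t(da)\,dt\right].
\]

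Next, apply the optimality condition (5) of Definition \ref{def:mfgsolutionwithoutcommonnoise} with $\Lambda$ in place of $\beta$. If $c > 0$, the above linear functional is uniquely maximized (up to $dt\otimes dP$-null sets) among $\P([-1,1])$-valued controls by $\beta_t = \delta_1$, so $\Lambda_t = \delta_1$ a.s., a.e. $t$. Plugging back into the state equation \eqref{def:relaxedSDE-weak-ncn} with $\sigma_0 \equiv 0$ yields $X_t = t + \sigma W_t$, hence by consistency $\bar{\mu}^x_t = \E[X_t] = t$, and setting $t = 2$ forces $c = 2$. The case $c < 0$ is symmetric: one obtains $\Lambda_t = \delta_{-1}$, $\bar{\mu}^x_t = -t$, and $c = -2$. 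Either way $\bar{\mu}^x_t = t\,\text{sign}(c)$, as claimed.

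The main obstacle is the degenerate case $c = 0$, in which $\widetilde{J}(\beta) \equiv 0$ so the optimality condition places no restriction on $\Lambda$ at all; in particular, any deterministic $a:[0,2]\to[-1,1]$ with $\int_0^2 a_s\,ds = 0$ appears to yield a strong MFG solution (e.g.\ $a_t = \mathbf{1}_{[0,1]}(t) - \mathbf{1}_{[1,2]}(t)$) whose mean $\bar{\mu}^x_t$ is nonzero on $(0,2)$. To recover the stated identity $\bar{\mu}^x_t \equiv 0$ when $c=0$, one seems to need an additional selection convention — naturally, the canonical $\arg\max$ rule $\alpha^*_t = \text{sign}(\E[\bar{\mu}^x_2 \mid \F^\mu_t])$ with $\text{sign}(0):=0$, which forces $\Lambda_t = \delta_0$ — or else an implicit restriction to solutions whose $\Lambda$ is obtained by plugging this canonical $\alpha^*$ into the optimality characterization. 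I expect the proof to dispatch the $c\neq 0$ cases exactly as above and to invoke such a convention for the boundary case.
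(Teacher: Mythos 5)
Your argument for the case $\bar{\mu}^x_2 \neq 0$ is exactly the paper's proof: optimality forces $\Lambda_t = \delta_{\mathrm{sign}(\bar{\mu}^x_2)}$, the state equation gives $X_t = t\,\mathrm{sign}(\bar{\mu}^x_2) + \sigma W_t$, and consistency then yields $\bar{\mu}^x_t = t\,\mathrm{sign}(\bar{\mu}^x_2)$ and $\bar{\mu}^x_2 = \pm 2$. Your worry about the degenerate case is also well placed: the paper's proof likewise begins with ``suppose $\bar{\mu}^x_2 \neq 0$'' and never returns to $\bar{\mu}^x_2 = 0$, and your example (any deterministic mean-zero control such as $a_t = \mathbf{1}_{[0,1]}(t) - \mathbf{1}_{(1,2]}(t)$) is a genuine strong MFG solution with $\bar{\mu}^x_2 = 0$ but $\bar{\mu}^x_t \neq 0$ on $(0,2)$, so the identity $\bar{\mu}^x_t = t\,\mathrm{sign}(\bar{\mu}^x_2)$ should be understood as established only when $\bar{\mu}^x_2 \neq 0$; no selection convention in the paper rescues it in the boundary case. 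This does not affect what the proposition is used for later, namely that every strong solution has $\bar{\mu}^x_2 \in \{-2,0,2\}$ (to be contrasted with the weak solution of Proposition \ref{pr:exampleweaksolution} supported on $\{\bar{\mu}^x_2 = \pm 1\}$), and that part is fully proved by both your argument and the paper's.
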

\begin{proof}
Let $(\widetilde{\Omega},(\F_t)_{t \in [0,2]},P,W,\mu,\Lambda,X)$ satisfy Definition \ref{def:mfgsolutionwithoutcommonnoise}, with $\mu$ deterministic. In this case, $\alpha^*_t = \text{sign}(\bar{\mu}^x_2)$ for all $t$. Suppose that $\bar{\mu}^x_2 \neq 0$. Then $\Lambda_t = \delta_{\alpha^*_t}$ must hold $dt \otimes dP$-a.e., and thus
\[
X_t = t\,\text{sign}(\bar{\mu}^x_2) + \sigma W_t, \ t \in [0,2].
\]
The consistency condition (6) of Definition \ref{def:mfgsolutionwithoutcommonnoise} implies $\bar{\mu}^x_t = \E[X_t] = t\,\text{sign}(\bar{\mu}^x_2)$. In particular, $\bar{\mu}^x_2 = 2\,\text{sign}(\bar{\mu}^x_2)$, which implies $\bar{\mu}^x_2 = \pm 2$ since we assumed $\bar{\mu}^x_2 \neq 0$.
\end{proof}

\begin{proposition} \label{pr:exampleweaksolution}
There exists a weak MFG solution (without common noise) satisfying $P(\bar{\mu}^x_2 = 1) = P(\bar{\mu}^x_2 = -1) = 1/2$.
\end{proposition}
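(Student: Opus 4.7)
The plan is to construct an explicit weak MFG solution $(\widetilde{\Omega}, (\F_t)_{t \in [0,2]}, P, W, \mu, \Lambda, X)$ in which $\mu$ is driven by a single Rademacher variable $Z \in \{-1,+1\}$ independent of $W$, with $X_0 = 0$. The guiding idea is Remark \ref{re:optimalcontrolselection}: an adapted control cannot ``guess'' the eventual sign of $\bar{\mu}^x_2$ before $\mu$ reveals it, so one should delay the action. Accordingly, set
\[
\Lambda_t := \mathbf{1}_{[0,1]}(t)\,\delta_0 + \mathbf{1}_{(1,2]}(t)\,\delta_Z, \qquad X_t := \sigma W_t + (t-1)^+\, Z,
\]
and let $\mu$ be the regular conditional law of $(W,\Lambda,X)$ given $Z$. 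Then $\mu$ takes two values $\mu_\pm$ with equal probability, $\sigma(\mu) = \sigma(Z)$, and $\bar{\mu}^x_2 = \E[X_2 \mid Z] = Z$, giving the desired law; the consistency condition (6) of Definition \ref{def:mfgsolutionwithoutcommonnoise} is then automatic, as are (1), (2), (4) once $(\F_t)$ is taken to be the $P$-completion of $\sigma(Z, W_s : s \le t)$.

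Verifying condition (3) hinges on computing $\F^\mu_t$. On $[0,1]$ the control is $\delta_0$ and the state is $\sigma W$ regardless of $Z$, so $\mu_+$ and $\mu_-$ agree on $\F^\X_t$ for every $t \le 1$; hence $\F^\mu_t$ is $P$-trivial there. For $t > 1$ the set $C := \{q \in \V : q((1,t]\times\{+1\}) = t-1\}$ belongs to $\F^\X_t$, and $\mu_+(C) = 1$, $\mu_-(C) = 0$, so $Z = 2\mu(C) - 1$ is $\F^\mu_t$-measurable. In particular $\Lambda$ itself is $(\F^\mu_t)$-progressively measurable, which makes the conditional independence requirement in (3) automatic.

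The main substantive step is optimality (condition (5)). For any $(\widetilde{\Omega}', (\F'_t), P', W', \mu', \Lambda', X')$ satisfying (1)--(4) with $P' \circ (\mu')^{-1} = P \circ \mu^{-1}$, the derivation leading to \eqref{ex:jdef} applies verbatim to $\Lambda'$ and yields
\[
\widetilde{J}(\Lambda') = \E^{P'}\!\left[\int_0^2\!\!\int_{[-1,1]} a\, \E^{P'}\!\left[\int x\,\mu'^x_2(dx) \,\Big|\, \F^{\mu'}_t\right] \Lambda'_t(da)\, dt\right].
\]
Since the laws of $\mu$ and $\mu'$ coincide, the analysis of $\F^{\mu'}_t$ is identical: the inner conditional expectation vanishes on $[0,1]$ and equals $\int x\,\mu'^x_2(dx) \in \{-1,+1\}$ on $(1,2]$. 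Hence $\widetilde{J}(\Lambda') \le \E^{P'}\int_1^2 1\, dt = 1$, while direct computation gives $\widetilde{J}(\Lambda) = \E[Z X_2] = \E[Z^2] = 1$, so $\Lambda$ is optimal.

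The main potential obstacle is precisely the careful identification of $\F^\mu_t$ and of $\E^P[\bar{\mu}^x_2 \mid \F^\mu_t]$: one must check that the two possible values of $\mu$ coincide on $\F^\X_1$ but differ on $\F^\X_t$ for every $t > 1$. Once this dichotomy is established, all six conditions of Definition \ref{def:mfgsolutionwithoutcommonnoise} follow from the construction above.
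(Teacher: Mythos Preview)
Your construction is correct and is essentially identical to the paper's: your Rademacher variable $Z$ is the paper's $\gamma$, your control $\mathbf{1}_{[0,1]}\delta_0 + \mathbf{1}_{(1,2]}\delta_Z$ is the paper's $\alpha^*_t = \gamma\mathbf{1}_{(1,2]}(t)$, and the resulting state process, measure $\mu$, and computation of $\F^\mu_t$ all coincide. The only cosmetic difference is that you take $(\F_t)$ to be the completion of $\sigma(Z,W_s:s\le t)$ (so that $Z$ is $\F_0$-measurable), whereas the paper uses the filtration generated by $(W_t,\alpha^*_t)$; both choices satisfy Definition~\ref{def:mfgsolutionwithoutcommonnoise}, and in fact you spell out the verification of optimality (5) a bit more explicitly than the paper does.
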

\begin{proof}
Construct on some probability space $(\widetilde{\Omega},\F,P)$ a random variable $\gamma$ with $P(\gamma = 1) = P(\gamma = -1) = 1/2$ and an independent Wiener process $W$. Let $\alpha^*_t = \gamma1_{(1,2]}(t)$ for each $t$ (noticing that this interval is open on the left), and define $(\F_t)_{t \in [0,2]}$ to be the complete filtration generated by $(W_t,\alpha^*_t)_{t \in [0,2]}$.
Let
\[
X_t := \int_0^t\alpha^*_sds + \sigma W_t = (t-1)\gamma1_{(1,2]}(t) + \sigma W_t, \ t \in [0,2].
\]
Finally, let $\Lambda = dt\delta_{\alpha^*_t}(da)$, and define $\mu := P((W,\Lambda,X) \in \cdot \ | \ \gamma)$. Clearly $\mu$ is $\gamma$-measurable. On the other hand, independence of $\gamma$ and $W$ implies
\[
\bar{\mu}^x_2 = \E[X_2 \ | \ \gamma] = \gamma.
\]
Thus $\gamma$ is also $\mu$-measurable, and we conclude that $\mu := P((W,\Lambda,X) \in \cdot \ | \ \mu)$. It is straightforward to check that
\[
\F^\mu_t = \begin{cases}
\{\emptyset, \widetilde{\Omega}\} &\text{if } t \le 1 \\
\sigma(\gamma) &\text{if } 1 < t \le 2
\end{cases}.
\]
Thus
\[
\E[\bar{\mu}^x_2 \ | \ \F^\mu_t] = \begin{cases}
\E[\gamma] = 0 &\text{if } t \le 1 \\
\E[\gamma \ | \ \gamma] = \gamma &\text{if } 1 < t \le 2
\end{cases}.
\]
Since $\bar{\mu}^x_2 = \gamma = \text{sign}(\gamma)$, we conclude that $\alpha^*_t = \text{sign}(\E[\bar{\mu}^x_2 \ | \ \F^\mu_t])$. It is then readly checked using the previous arguments that $(\widetilde{\Omega},(\F_t)_{t \in [0,2]},P,W,\mu,\Lambda,X)$ is a weak MFG solution.
\end{proof}

To be absolutely clear, the above two propositions imply the following: If $S := \{\nu \in \P(\X) : \bar{\nu}^x_2 \in \{-2,0,2\}\}$, then every strong MFG solution lies in $S$, but there exists a weak MFG solution with $P(\mu \in S) = 0$.

\begin{remark}
The example of Proposition \ref{pr:exampleweaksolution} can be modified to illustrate another strange phenomenon. The proof of Proposition \ref{pr:exampleweaksolution} has $\alpha^*_t = \gamma$ for $t \in (1,2]$ and $\alpha^*_t=0$ for $t \le 1$. Instead, we could set $\alpha^*_t = \eta_t$ for $t \le 1$, for any mean-zero $[-1,1]$-valued process $(\eta_t)_{t \in [0,1]}$ independent of $\gamma$ and $W$. The rest of the proof proceeds unchanged, yielding another weak MFG solution with the same $\C^d$-marginal $\mu^x$. The difference is in the control as well as the \emph{joint} distribution $\mu = P((W,\Lambda,X) \in \cdot \ | \ \gamma)$. (In fact, we could even choose $\alpha^*$ to be any mean-zero \emph{relaxed} control on the time interval $[0,1]$.) Intuitively, for $t \le 1$ we have $\E[\bar{\mu}^x_2 \ | \ \F^\mu_t] = 0$, and the choice of control on the time interval $[0,1]$ does not matter in light of \eqref{ex:jdef}; the agent then has some freedom to \emph{randomize} her choice of control among the family of non-unique optimal choices. This type of randomization can typically occur when optimal controls are non-unique, and although it is unnatural in some sense, our main results indicate that this behavior can indeed arise in the limit from the finite-player games.
\end{remark}

\subsection{Supports of weak solutions} \label{se:supportsofweaksolutions}
In this section, we attempt to partially explain what permits the existence of weak solutions which are not randomizations among strong solutions. As was mentioned in Remark \ref{re:optimalcontrolselection}, the culprit is the adaptedness required of controls. Indeed, in the example of Section \ref{se:example}, very different optimal controls arise depending on whether or not the measure $\mu$ is random. If $\mu$ is deterministic, then so is the optimal control, and we may write this optimal control as a functional of $\mu$ by
\[
\hat{\alpha}^D(t,\mu) = \text{sign}(\bar{\mu}^x_T), \ t \in [0,T].
\]
The problem is as follows: for each fixed deterministic $\mu$, the optimal control $(\hat{\alpha}^D(t,\mu))_{t \in [0,T]}$ is deterministic and thus trivially adapted, but when $\mu$ is allowed to be random then this control is no longer adapted and thus no longer admissible. 
If, for a different MFG problem, it happens that $\hat{\alpha}^D$ is in fact progressively measurable with respect to $(\F^\mu_t)_{t \in [0,T]}$, then this control is still admissible when $\mu$ is randomized; moreover, it should be \emph{optimal} when $\mu$ is randomized, since it was optimal for each realization of $\mu$.
The following results make this idea precise, but first some terminology will be useful. As usual we work under assumption \ref{assumption:A} at all times, and the initial state distribution $\lambda \in \P^{p'}(\R^d)$ is fixed.

\begin{definition} \label{def:universallyadmissible}
We say that a function $\hat{\alpha} : [0,T] \times \C^m \times \C^d \times \P^p(\X) \rightarrow A$ is a \emph{universally admissible control} if:
\begin{enumerate}
\item $\hat{\alpha}$ is progressively measurable with respect to the (universal completion of the) natural filtration $(\F^{W,X,\mu}_t)_{t \in [0,T]}$ on $\C^m \times \C^d \times \P^p(\X)$. Here $\F^{W,X,\mu}_t := \sigma(W_s,X_s,\mu(C) : s \le t, \ C \in \F^\X_t)$ for each $t$, where $(W,X,\mu)$ denotes the identity map on $\C^m \times \C^d \times \P^p(\X)$.
\item For each fixed $\nu \in \P^p(\X)$, the SDE
\begin{align}
dX_t = b(t,X_t,\nu^x_t,\hat{\alpha}(t,W,X,\nu))dt + \sigma(t,X_t,\nu^x_t)dW_t, \ X_0 \sim \lambda, \label{def:fixednusde}
\end{align}
is unique in joint law; that is, if we are given two pairs of processes $(W^i_t,X^i_t)_{t \in [0,T]}$ for $i=1,2$, possibly on different filtered probability spaces but with $(W^i_t)_{t \in [0,T]}$ a Wiener process in either case, then $(W^1,X^1)$ and $(W^2,X^2)$ have the same law.
\item Suppose we are given a filtered probability space $(\widetilde{\Omega},(\widetilde{\F}_t)_{t \in [0,T]},\widetilde{P})$ supporting an $(\widetilde{\F}_t)_{t \in [0,T]}$-Wiener process $\widetilde{W}$, an $\widetilde{\F}_0$-measurable $\R^d$-valued random variable $\widetilde{\xi}$ with law $\lambda$, and a $\P^p(\X)$-valued random variable $\tilde{\mu}$ independent of $(\xi,W)$ such that $\tilde{\mu}(C)$ is $\widetilde{\F}_t$-measurable for each $C \in \F^\X_t$ and $t \in [0,T]$. Then there exists a strong solution $\widetilde{X}$ of the SDE
\[
d\widetilde{X}_t = b(t,\widetilde{X}_t,\tilde{\mu}^x_t,\hat{\alpha}(t,W,\widetilde{X},\tilde{\mu}))dt + \sigma(t,\widetilde{X}_t,\tilde{\mu}^x_t)d\widetilde{W}_t, \ \widetilde{X}_0 = \widetilde{\xi},
\]
and it satisfies $\E\int_0^T|\hat{\alpha}(t,W,\widetilde{X},\tilde{\mu})|^pdt < \infty$.
\end{enumerate}
If $\hat{\alpha}$ is a universally admissible control, we say it is \emph{locally optimal} if for each fixed $\nu \in \P^p(\X)$ there exists a complete filtered probability space $(\Omega^{(\nu)},(\F^{(\nu)}_t)_{t \in [0,T]},P^\nu)$ supporting a Wiener process $W^\nu$ and a continuous adapted process $X^\nu$ such that $(W^\nu,X^\nu)$ satisfies the SDE \eqref{def:fixednusde} and:
\begin{enumerate}
\item[(4)] If $(\widetilde{\Omega},(\F_t)_{t \in [0,T]},P)$ supports a $m$-dimensional Wiener process $W$, a progressive $\P(A)$-valued process $\Lambda$, and a continuous adapted $\R^d$-valued process $X$ satisfying
\[
dX_t = \int_Ab(t,X_t,\nu^x_t,a)\Lambda_t(da)dt + \sigma(t,X_t,\nu^x_t)dW_t, \ P \circ X_0^{-1} = \lambda,
\]
then
\begin{align*}
\E^{P^{(\nu)}}\left[\Gamma\left(\nu^x,dt\delta_{\hat{\alpha}(t,W^\nu,X^\nu,\nu)}(da),X^\nu\right)\right] \ge \E^P\left[\Gamma(\nu^x,\Lambda,X)\right].
\end{align*}
\end{enumerate}
\end{definition}

We need an additional assumption \ref{assumption:C}, which simply requires the uniqueness of the optimal controls. Some simple conditions are given in \cite[Proposition 4.4]{carmonadelaruelacker-mfgcommonnoise} under which assumption \ref{assumption:C} holds: in particular, it suffices to assume that $b$ and $\sigma$ are affine in $(x,a)$, that $f$ is strictly concave in $(x,a)$, and that $g$ is concave in $x$.

\begin{assumption}{\textbf{C}} \label{assumption:C}
If $(\widetilde{\Omega}^i,(\F^i_t)_{t \in [0,T]},P^i,W^i,\mu^i,\Lambda^i,X^i)$ for $i=1,2$ both satisfy (1-5) of Definition \ref{def:mfgsolutionwithoutcommonnoise} as well as $P^1 \circ (\mu^1)^{-1} = P^2 \circ (\mu^2)^{-1}$, then $P^1 \circ (W^1,\mu^1,\Lambda^1,X^1)^{-1} = P^2 \circ (W^2,\mu^2,\Lambda^2,X^2)^{-1}$.
\end{assumption}

\begin{theorem} \label{th:randomizationdeterministicconverse}
Assume \ref{assumption:C} holds. Suppose that there exists a universally admissible and locally optimal control $\hat{\alpha} : [0,T] \times \C^m \times \C^d \times \P^p(\X) \rightarrow A$.
Then, for every weak MFG solution $(\widetilde{\Omega},(\F_t)_{t \in [0,T]},P,W,\mu,\Lambda,X)$ (without common noise), $P \circ \mu^{-1}$ is concentrated on the set of strong MFG solutions (without common noise). Conversely, if $\rho \in \P^p(\P^p(\X))$ is concentrated on the set of strong MFG solutions (without common noise), then there exists a weak MFG solution (without common noise) with $P \circ \mu^{-1} = \rho$.
\end{theorem}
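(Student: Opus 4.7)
The plan is to organize both directions around the value $V^*(\nu)$ attained in the decoupled fixed-$\nu$ optimization by the control $\hat\alpha(\cdot, W^\nu, X^\nu, \nu)$; local optimality of $\hat\alpha$ tells me exactly that $V^*(\nu)$ is the supremum of $\E[\Gamma(\nu^x, \Lambda', X')]$ over all admissible competitors in the deterministic-$\nu$ problem.

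\textbf{Forward direction.} Let $(\widetilde\Omega, (\F_t), P, W, \mu, \Lambda, X)$ be a weak MFG solution without common noise. First I would use property (3) of universal admissibility to produce on $\widetilde\Omega$ a strong solution $\widetilde X$ of the SDE driven by the control $\hat\alpha(t, W, \widetilde X, \mu)$, yielding an admissible alternative $(W, \mu, \widetilde\Lambda, \widetilde X)$ with the same random $\mu$. Joint-law uniqueness from property (2) of Definition \ref{def:universallyadmissible} would then let me identify the regular conditional of $(W, \widetilde X, \widetilde\Lambda)$ given $\mu = \nu$ with the $\hat\alpha$-solution for the deterministic measure $\nu$, so $\E^P[\Gamma(\mu^x, \widetilde\Lambda, \widetilde X)] = \E^P[V^*(\mu)]$. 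Applying the optimality condition (5) of Definition \ref{def:mfgsolutionwithoutcommonnoise} with this alternative (same $\mu$) gives $\E^P[\Gamma(\mu^x, \Lambda, X)] \ge \E^P[V^*(\mu)]$. For the reverse inequality, I would pass to the regular conditional $P(\cdot \mid \mu = \nu)$; independence in condition (2) of Definition \ref{def:mfgsolutionwithoutcommonnoise} keeps $W$ a Wiener process, the state equation is preserved pathwise, and the conditional independence in (3) makes $(W, \Lambda, X)$ an admissible competitor for the fixed-$\nu$ problem, so $\E^P[\Gamma(\nu^x, \Lambda, X) \mid \mu = \nu] \le V^*(\nu)$ a.s., forcing equality $P \circ \mu^{-1}$-a.e. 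Condition (6) then says the conditional law of $(W,\Lambda,X)$ given $\mu = \nu$ is $\nu$ itself, so all items of Definition \ref{def:mfgsolutionwithoutcommonnoise} are satisfied with the constant equilibrium $\mu \equiv \nu$, and $\nu$ is a strong MFG solution for $P \circ \mu^{-1}$-a.e. $\nu$.

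\textbf{Converse direction.} Given $\rho$ concentrated on strong MFG solutions, I would build $(\widetilde\Omega, \F, P)$ carrying independent $\xi \sim \lambda$, an $m$-dimensional Wiener $W$, and $\mu \sim \rho$, then invoke (3) of universal admissibility to obtain the strong solution $X$ of the $\hat\alpha$-driven SDE and set $\Lambda_t = \delta_{\hat\alpha(t, W, X, \mu)}$. Conditions (1)--(4) are immediate from the construction. For (6), conditioning on $\mu = \nu$ exhibits $(W, \Lambda, X)$ as a strong MFG solution with equilibrium $\nu$ (using local optimality to verify the sup clause), and Assumption \ref{assumption:C} applied to the degenerate equilibrium law $\delta_\nu$ pins down the conditional joint law to equal $\nu$. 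For the optimality (5), any competitor $(W', \mu', \Lambda', X')$ with $P' \circ (\mu')^{-1} = \rho$ satisfies $\E^{P'}[\Gamma((\mu')^x, \Lambda', X') \mid \mu'] \le V^*(\mu')$ $P'$-a.s.\ because $\rho$-a.e.\ $\nu$ is a strong MFG solution, hence $\E^{P'}[\Gamma((\mu')^x, \Lambda', X')] \le \E[V^*(\mu)] = \E^P[\Gamma(\mu^x, \Lambda, X)]$ by the calculation from the forward direction.

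\textbf{The hard part.} The main technical obstacle will be the careful bookkeeping of regular conditional probabilities under $\mu = \nu$: I must verify that $W$ remains a Wiener process (from the independence in Definition \ref{def:mfgsolutionwithoutcommonnoise}(2)), that the state SDE survives pathwise, and that the progressive measurability and conditional independence from (3) descend so that $(W, \Lambda, X)\!\mid\!\mu=\nu$ is genuinely admissible for the fixed-$\nu$ problem in the sense of Definition \ref{def:universallyadmissible}(4). Measurability of $\nu \mapsto V^*(\nu)$ is a subsidiary issue but should follow from standard compactness arguments under Assumption \ref{assumption:A}. Once these are handled, the argument is driven almost entirely by the local optimality of $\hat\alpha$ together with the uniqueness clause Assumption \ref{assumption:C}.
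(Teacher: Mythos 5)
Your proposal is correct in both directions and shares the paper's overall skeleton (condition on $\mu=\nu$, sandwich the values using local optimality, use uniqueness in law and Assumption \ref{assumption:C} to pin down conditional laws), but the forward direction is organized differently. The paper first uses Assumption \ref{assumption:C} to identify the equilibrium control: it shows $\Lambda = dt\,\delta_{\hat{\alpha}(t,W,X,\mu)}(da)$ a.s.\ by exactly your sandwich argument plus uniqueness of optimal controls, and only then invokes the uniqueness in law of \eqref{def:fixednusde} to equate $P((W,\Lambda,X)\in\cdot\,|\,\mu=\nu)$ with the law of the auxiliary system $(W^\nu,dt\,\delta_{\hat{\alpha}(t,W^\nu,X^\nu,\nu)}(da),X^\nu)$ on $\Omega^{(\nu)}$; the strong MFG solution is then realized on $\Omega^{(\nu)}$, where conditions (1--4) of Definition \ref{def:mfgsolutionwithoutcommonnoise} are immediate (in particular the compatibility condition (3) holds trivially for a strict control adapted to $(W^\nu,X^\nu)$). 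You instead skip the control identification, work at the level of the value $V^*(\nu)$, and realize the strong solution directly on the conditional space $P(\cdot\,|\,\mu=\nu)$. This buys you a forward direction that does not visibly use Assumption \ref{assumption:C}, but at a price you correctly flag as ``the hard part'': you must verify that the Wiener property, the state equation, \emph{and} the conditional independence of condition (3) all descend to the regular conditional probability, and that $(W,\Lambda,X)$ conditioned on $\mu=\nu$ is a legitimate competitor in the sense of condition (4) of Definition \ref{def:universallyadmissible}. The paper needs only the first two of these facts (for the value comparison) and avoids the third entirely by transferring to $\Omega^{(\nu)}$; that is the real payoff of its Step 1. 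Your converse direction matches the paper's Step 3 essentially verbatim, with Assumption \ref{assumption:C} applied to the degenerate law $\delta_\nu$ playing the same role as the paper's appeal to its Step 1 for deterministic $\mu$. One small imprecision: the conditional bound $\E^{P'}[\Gamma((\mu')^x,\Lambda',X')\,|\,\mu'=\nu]\le V^*(\nu)$ comes from local optimality of $\hat{\alpha}$, not from $\nu$ being a strong MFG solution; the conclusion is unaffected.
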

\begin{proof} Let $(\widetilde{\Omega},(\F_t)_{t \in [0,T]},P,W,\mu,\Lambda,X)$ be a weak MFG solution (without common noise).

\noindent \textit{Step 1:}
We will first show that necessarily $\Lambda_t = \delta_{\hat{\alpha}(t,W,X,\mu)}$ holds $dt \otimes dP$-a.e. On $(\widetilde{\Omega},(\F_t)_{t \in [0,T]},P)$ we may use (3) of Definition \ref{def:universallyadmissible} to find a strong solution $X'$ of the SDE
\[
dX'_t = b(t,X'_t,\mu^x_t,\hat{\alpha}(t,W,X',\mu))dt + \sigma(t,X'_t,\mu^x_t)dW_t, \ X'_0 = X_0,
\]
wth $\E^P\int_0^T|\hat{\alpha}(t,W,X',\mu)|^pdt < \infty$.
In particular, $X'$ is adapted to the (completion of the) filtration $\F^{X_0,W,\mu}_t := \sigma(X_0,W_s,\mu(C) : s\le t, \ C \in \F^\X_t)$.
Let $\Lambda' := dt\delta_{\hat{\alpha}(t,W,X',\mu)}(da)$. Then it is clear that $(\widetilde{\Omega},(\F^{X_0,W,\mu}_t)_{t \in [0,T]},P,W,\mu,\Lambda',X')$ satisfies conditions (1-4) of Definition \ref{def:mfgsolutionwithoutcommonnoise}. Optimality of $P$ implies
\[
\E^P\left[\Gamma(\mu^x,\Lambda,X)\right] \ge \E^P\left[\Gamma(\mu^x,\Lambda',X')\right].
\]
On the other hand, for $P \circ \mu^{-1}$-a.e. $\nu \in \P^p(\X)$, the following hold under $P(\cdot \ | \ \mu=\nu)$:
\begin{itemize}
\item $W$ is a $(\F_t)_{t \in [0,T]}$-Wiener process.
\item $(W,\Lambda,X)$ satisfies
\[
dX_t = \int_Ab(t,X_t,\nu^x_t,a)\Lambda_t(da) + \sigma(t,X_t,\nu^x_t)dW_t.
\]
\item $(W,X')$ solves the SDE \eqref{def:fixednusde}.
\end{itemize}
From the local optimality of $\hat{\alpha}$ we conclude (keeping in mind the uniqueness condition (2) of Definition \ref{def:universallyadmissible}) that
\[
\E^P\left[\left.\Gamma(\mu^x,\Lambda,X)\right| \mu \right] \le \E^P\left[\left.\Gamma(\mu^x,\Lambda',X')\right| \mu \right].
\]
Thus 
\[
\E^P\left[\Gamma(\mu^x,\Lambda,X)\right] = \E^P\left[\Gamma(\mu^x,\Lambda',X')\right].
\]
By assumption \ref{assumption:C}, there is only one optimal control, and so $\Lambda = \Lambda' = dt\delta_{\hat{\alpha}(t,W,X',\mu)}(da)$, $P$-a.s. From uniqueness of the SDE solutions we conclude that $X=X'$ a.s. as well, completing the first step. (Note we do not use the assumptions of Definition \ref{def:universallyadmissible} for this last conclusion, but only the Lipschitz assumption (A.4).)

\noindent \textit{Step 2:} Next, we show that $P \circ \mu^{-1}$ is concentrated on the set of strong MFG solutions.
Using (2) and (3) of Definition \ref{def:universallyadmissible}, we know that for $P \circ \mu^{-1}$-a.e. $\nu \in \P^p(\X)$ there exists on some filtered probability space $(\Omega^{(\nu)},(\F^{(\nu)}_t)_{t \in [0,T]},P^\nu)$ a weak solution $X^\nu$ of the SDE
\[
dX^\nu_t = b(t,X^\nu_t,\nu^x_t,\hat{\alpha}(t,W^\nu,X^\nu,\nu))dt + \sigma(t,X^\nu_t,\nu^x_t)dW^\nu_t, \ P^\nu \circ (X^\nu_0)^{-1} = \lambda,
\] 
where $W^\nu$ is an $(\F^{(\nu)}_t)_{t \in [0,T]}$-Wiener process. From Step 1, on $(\widetilde{\Omega},(\F_t)_{t \in [0,T]},P)$ we have
\[
dX_t = b(t,X_t,\mu^x_t,\hat{\alpha}(t,W,X,\mu))dt + \sigma(t,X_t,\mu^x_t)dW_t, \ P \circ X_0^{-1} = \lambda.
\]
It follows from the $P$-independence of $\mu$, $X_0$, and $W$ along with the uniqueness in law of condition (2) of Definition \ref{def:universallyadmissible} that
\begin{align}
P((W,\Lambda,X) \in \cdot \ | \ \mu = \nu) = P^\nu \circ \left(W^\nu,dt\delta_{\hat{\alpha}(t,W^\nu,X^\nu,\nu)}(da),X^\nu\right)^{-1}, \label{pf:randomizationdeterministic1}
\end{align}
for $P \circ \mu^{-1}$-a.e. $\nu \in \P^p(\X)$. 
Since $\mu = P((W,\Lambda,X) \in \cdot \ | \ \mu)$, it follows that
\begin{align}
\nu = P^\nu \circ \left(W^\nu,dt\delta_{\hat{\alpha}(t,W^\nu,X^\nu,\nu)}(da),X^\nu\right)^{-1}, \text{ for } P \circ \mu^{-1}\text{-a.e. } \nu \in \P^p(\X). \label{pf:randomizationdeterministic2}
\end{align}
We conclude that $P \circ \mu^{-1}$-a.e. $\nu \in \P^p(\X)$ is a strong MFG solution, or more precisely that
\[
(\Omega^{(\nu)},(\F^{(\nu)}_t)_{t \in [0,T]},P^\nu,W^\nu,\nu,dt\delta_{\hat{\alpha}(t,W^\nu,X^\nu,\nu)}(da),X^\nu)
\]
is a strong MFG solution. Indeed, we just verified condition (6) of Definition \ref{def:mfgsolutionwithoutcommonnoise}, and conditions (1-4) are obvious. The optimality condition (5) of Definition \ref{def:mfgsolutionwithoutcommonnoise} is a simple consequence of the local optimality of $\hat{\alpha}$

\noindent \textit{Step 3:}
We turn now to the converse. Let $(\widetilde{\Omega},\F,P)$ be any probability space supporting a random variable $(\xi,W,\mu)$ with values in $\R^d \times \C^m \times \P^p(\X)$ with law $\lambda \times \W^m \times \rho$, where $\W^m$ is Wiener measure on $\C^m$.
Let $(\F_t)_{t \in [0,T]}$ denote the $P$-completion of $(\sigma(\xi,W_s,\mu(C) : s \le t, \ C \in \F^\X_t))_{t \in [0,T]}$.
Solve strongly on $(\widetilde{\Omega},(\F_t)_{t \in [0,T]},P)$ the SDE
\[
dX_t = b(t,X_t,\mu^x_t,\hat{\alpha}(t,W,X,\mu))dt + \sigma(t,X_t,\mu^x_t)dW_t, \ X_0 = \xi.
\]
Note that hypothesis (3) makes this possible. 
Define $\Lambda := dt\delta_{\hat{\alpha}(t,W,X,\mu)}(da)$. 
Clearly $P \circ \mu^{-1} = \rho$ by construction, and we claim that $(\widetilde{\Omega},(\F_t)_{t \in [0,T]},P,W,\mu,\Lambda,X)$ is a weak MFG solution. Using hypothesis (1), it is clear that conditions (1-4) of Definition \ref{def:mfgsolutionwithoutcommonnoise} hold, and thus we must only check the optimality condition (5) and the fixed point condition (6).

First, let $(\widetilde{\Omega}',(\F'_t)_{t \in [0,T]},P',W',\mu',\Lambda',X')$ be an alternative probability space satisfying (1-4) of Definition \ref{def:mfgsolutionwithoutcommonnoise} and $P' \circ (\mu')^{-1} = P \circ \mu^{-1} = \rho$. The uniqueness in law condition (2) of Definition \ref{def:universallyadmissible} implies that $P(((W,X) \in \cdot \ | \ \mu = \nu)$ is exactly the law of the solution of the SDE \eqref{def:fixednusde}, for $P \circ \mu^{-1}$-a.e. $\nu$. Applying local optimality of $\hat{\alpha}$ for each $\nu$, we conclude that
\[
\E^P\left[\left.\Gamma(\nu^x,\Lambda,X)\right| \mu = \nu\right] \ge \E^{P'}\left[\left.\Gamma(\nu^x,\Lambda',X')\right| \mu = \nu\right], \text{for } \rho-a.e. \ \nu.
\]
Integrate with respect to $\rho$ on both sides to get $\E^P[\Gamma(\mu^x,\Lambda,X)] \ge \E^{P'}[\Gamma((\mu')^x,\Lambda',X')]$, which verifies condition (5) of Definition \ref{def:mfgsolutionwithoutcommonnoise}. Finally, we check (6) by applying Step 1 to deterministic $\mu$ and again using uniqueness of the SDE \eqref{def:fixednusde} to find that both \eqref{pf:randomizationdeterministic1} and \eqref{pf:randomizationdeterministic2} hold for $\rho$-a.e. $\nu$.
\end{proof}

\subsection{Applications of Theorem \ref{th:randomizationdeterministicconverse}}
It is admittedly quite difficult to check that there exists a universally admissible, locally optimal control, and we will leave this problem open in all but the simplest cases. Note, however, that conditions (2) and (3) of Definition \ref{def:universallyadmissible} hold automatically when $\hat{\alpha}(t,w,x,\nu) = \hat{\alpha}'(t,w,x_0,\nu)$, for some $\hat{\alpha}' : [0,T] \times \C^m \times \R^d \times \P^p(\X) \rightarrow A$.

\subsubsection*{A simple class of examples}
Suppose $A \subset \R^k$ is convex, $g \equiv 0$, and $f=f(t,\mu,a)$ is twice differentiable in $a$ with uniformly negative Hessian in $a$. That is, $D_a^2f(t,\mu,a) \le -\delta$ for all $(t,\mu)$, for some $\delta > 0$. Suppose as usual that assumption \ref{assumption:A} holds. Define
\[
\hat{\alpha}(t,w,x,\nu) := \arg\min_{a \in A}f(t,\nu^x_t,a), \text{ for } (t,w,x,\mu) \in [0,T] \times \C^m \times \C^d \times \P^p(\X).
\]
It is straightforward to check that assumption \ref{assumption:C} holds and that $\hat{\alpha}$ is a universally admissible and locally optimal control. Of course, this example is simple in that the state process does not influence the optimization.

\subsubsection*{A possible general strategy}
The following approach may be more widely applicable. First, for a fixed $\nu \in \P^p(\X)$, we may define the value function $V[\nu](t,x)$ of the corresponding optimal control problem in the usual way, and it should solve a Hamilton-Jacobi-Bellman (HJB) PDE of the form
\[
\begin{cases}
-\partial_tV[\nu](t,x) - H(t,x,\nu^x_t,D_xV[\nu](t,x),D^2_xV[\nu](t,x)) &= 0, \text{ on } [0,T) \times \R^d, \\
\quad\quad\quad\quad\quad\quad\quad\quad\quad\quad\quad\quad\quad\quad\quad\quad\quad\quad\quad\!  V[\nu](T,x) &= g(x,\nu^x_T)
\end{cases},
\]
where the Hamiltonian $H : [0,T] \times \R^d \times \P^p(\R^d) \times \R^d \times \R^{d \times d} \rightarrow \R$ is defined by
\[
H(t,x,\mu,y,z) := \sup_{a \in A}\left[y^\top b(t,x,\mu,a) + f(t,x,\mu,a)\right] + \frac{1}{2}\mathrm{Tr}\left[z\sigma\sigma^\top(t,x,\mu)\right].
\]
Suppose that we can show (as is well known to be possible in very general situations) that for each $\nu$ the value function $V[\nu]$ is the unique (viscosity) solution of this HJB equation. Then, an optimal control can be obtained by finding $\hat{\alpha}(t,x_t,\nu)$ which achieves the supremum in
\[
H(t,x_t,\nu^x_t,D_xV[\nu](t,x_t),D_x^2V[\nu](t,x_t)), 
\]
for each $(t,x,\nu)$. The crux of this approach is to show that the value function $V[\nu](t,x)$ is \emph{adapted with respect to $\nu$} in some sense, which would imply that $\hat{\alpha}$ is universally admissible and locally optimal. A nice special case would be a Markovian dependence, $V[\nu](t,x) = \widetilde{V}(t,x,\nu^x_t)$. In short, we must study the dependence of a family of HJB equations on a path-valued parameter.

\section{Mean field games on a canonical space} \label{se:canonicalspace}
In this section, we begin to work toward the proofs of the main results announced in Sections \ref{se:mainresults} and \ref{se:converse}. This section briefly elaborates on the notion of mean field game solution on the canonical space, in order to state simpler conditions by which may check that a measure $P \in \P(\Omega)$ is a weak MFG solution, in the sense of Definition \ref{def:weakMFGsolutionlaw}. The definitions and notations of this section are again mostly borrowed from \cite{carmonadelaruelacker-mfgcommonnoise}, to which the reader is referred for more details.

First, we mention some notational conventions. We will routinely use the same letter $\phi$ to denote the natural extension of a function $\phi : E \rightarrow F$ to any product space $E \times E'$, given by $\phi(x,y) := \phi(x)$ for $(x,y) \in E \times E'$. Similarly, we will use the same symbol $(\F_t)_{t \in [0,T]}$ to denote the natural extension of a filtration $(\F_t)_{t \in [0,T]}$ on a space $E$ to any product space $E \times E'$, given by $(\F_t \otimes \{\emptyset, E'\})_{t \in [0,T]}$.

We will make use of the following canonical spaces, two of which have been defined already but are recalled for convenience:
\begin{align*}
\X &:= \C^m \times \V \times \C^d, \quad \Omega_0 := \R^d \times \C^{m_0} \times \C^m, \quad \Omega := \Omega_0 \times \P^p(\X) \times \V \times \C^d.
\end{align*}
From now on, let $\xi$, $B$, $W$, $\mu$, $\Lambda$, and $X$ denote the identity maps on $\R^d$, $\C^{m_0}$, $\C^m$, $\P^p(\X)$, $\V$, and $\C^d$, respectively. Note, for example, that our convention permits $W$ to denote both the identity map on $\C^m$ and the projection from $\Omega$ to $\C^m$. The canonical filtration $(\F^\Lambda_t)_{t \in [0,T]}$ is defined on $\V$ by
\[
\F^\Lambda_t := \sigma\left(\Lambda([0,s] \times C) : s \le t, \ C \in \B(A)\right).
\]
It is known (e.g. \cite[Lemma 3.8]{lacker-mfgcontrolledmartingaleproblems}) that there exists a $(\F^\Lambda_t)_{t \in [0,T]}$-predictable process $\overline{\Lambda} : [0,T] \times \V \rightarrow \P(A)$ such that $dt[\overline{\Lambda}(t,q)](da) = q$ for each $q$, or equivalently $\overline{\Lambda}(t,q) = q_t$ for a.e. $t$, for each $q$. Then, we may think of $(\overline{\Lambda}(t,\cdot))_{t \in [0,T]}$ as the canonical $\P(A)$-valued process defined on $\V$, and it is clear that $\F^\Lambda_t = \sigma(\overline{\Lambda}(s,\cdot) : s \le t)$. With this in mind, we may somewhat abusively write $\Lambda_t$ in place of $\overline{\Lambda}(t,\cdot)$, and with this notation $\F^\Lambda_t = \sigma(\Lambda_s : s \le t)$.

The canonical processes $B$, $W$, and $X$ generate obvious natural filtrations, denoted $(\F^B_t)_{t \in [0,T]}$, $(\F^W_t)_{t \in [0,T]}$, and $(\F^X_t)_{t \in [0,T]}$, respectively. We will frequently work with filtrations generated by several canonical processes, such as $\F^{\xi,B,W}_t := \sigma(\xi,B_s,W_s : s \le t)$ defined on $\Omega_0$, and $\F^{\xi,B,W,\Lambda}_t = \F^{\xi,B,W}_t \otimes \F^{\Lambda}_t$ defined on $\Omega_0 \times \V$. Our convention on canonical extensions of filtrations to product spaces permits the use of $(\F^{\xi,B,W}_t)_{t \in [0,T]}$ to refer also to the filtration on $\Omega_0 \times \V$ generated by $(\xi,B,W)$, and it should be clear from context on which space the filtration is defined. Hence, the filtration $(\F^\X_t)_{t \in [0,T]}$ defined just before  Definition \ref{def:weakMFGsolution} could alternatively be denoted $\F^\X_t = \F^{W,\Lambda,X}_t$, but we stick with the former notation for consistency. Define the canonical filtration $(\F^\mu_t)_{t \in [0,T]}$ on $\P^p(\X)$ by
\[
\F^\mu_t := \sigma\left(\mu(C) : C \in \F^\X_t\right).
\]

There is somewhat of a conflict in notation, between our use of $(\xi,B,W)$ here as the identity map on $\R^d \times \C^{m_0} \times \C^m$ and our previous use (beginning in Section \ref{se:finiteplayergames}) of the same letters for random variables with values in $(\R^d)^n \times \C^{m_0} \times (\C^m)^n$, defined on an $n$-player environment $\mathcal{E}_n = (\Omega_n,(\F^n_t)_{t \in [0,T]},\PP_n,\xi,B,W)$. However, we will almost exclusively discuss the random variables $(\xi,B,W)$ through the lenses of various probability measures, and thus it should be clear from context (i.e. from the nearest notated probability measure) which random variables $(\xi,B,W)$ we are working with at any given moment. For example, given $P \in \P(\Omega)$, the notation $P \circ (\xi,B,W)^{-1}$ refers to a measure on $\R^d \times \C^{m_0} \times \C^m$. On the other hand, $\PP_n$ is reserved for the measure on $\Omega_n$ in a typical $n$-player environment, and so $\PP_n \circ (\xi,B,W)^{-1}$ refers to a measure on $(\R^d)^n \times \C^{m_0} \times (\C^m)^n$.

Recall that the initial state distribution $\lambda \in \P^{p'}(\R^d)$ is fixed throughout. Let $\M_\lambda$ denote the set of $\rho \in \P^p(\Omega_0 \times \P^p(\X))$ satisfying
\begin{enumerate}
\item $\rho \circ \xi^{-1} = \lambda$,
\item $B$ and $W$ are independent Wiener processes on $(\Omega_0 \times \P^p(\X),(\F^{\xi,B,W,\mu}_t)_{t \in [0,T]},\rho)$. 
\end{enumerate}
(Note that the set $\M_\lambda$ was denoted $\P^p_c[(\Omega_0,\W_\lambda) \leadsto \P^p(\X)]$ in \cite{carmonadelaruelacker-mfgcommonnoise}; we prefer this shorter notation mainly because we will make no use of it after this section.)
For $\rho \in \M_\lambda$, the class $\A(\rho)$ of admissible controls is the set of probability measures $Q$ on $\Omega_0 \times \P^p(\X) \times \V$ satisfying:
\begin{enumerate}
\item $\F^\Lambda_t$ and $\F^{\xi,B,W,\mu}_T$ are conditionally independent under $Q$ given $\F^{\xi,B,W,\mu}_t$, for each $t \in [0,T]$,
\item $Q \circ (\xi,B,W,\mu)^{-1} = \rho$,
\item $\E^Q\int_0^T\int_A|a|^p\Lambda_t(da)dt < \infty$.
\end{enumerate}
We say $Q \in \A(\rho)$ is a \emph{strict control} if there exists an $A$-valued process $(\alpha_t)_{t \in [0,T]}$, progressively measurable with respect to the $Q$-completion of $(\F^{\xi,B,W,\mu,\Lambda}_t)_{t \in [0,T]}$, such that
\begin{align*}
Q\left(\Lambda = dt\delta_{\alpha_t}(da)\right) = Q(\Lambda_t = \delta_{\alpha_t} \ a.e. \ t) = 1. 
\end{align*}
We say $Q \in \A(\rho)$ is a \emph{strong control} if the above holds but with $(\alpha_t)_{t \in [0,T]}$ progressively measurable with respect to the $Q$-completion of $(\F^{\xi,B,W,\mu}_t)_{t \in [0,T]}$.

If $\rho \in \M_\lambda$ and $Q \in \A(\rho)$, note that $B$ and $W$ are Wiener processes on $(\Omega_0 \times \P^p(\X) \times \V,(\F^{\xi,B,W,\mu,\Lambda}_t)_{t \in [0,T]},Q)$.
For each $\rho \in \M_\lambda$ and $Q \in \A(\rho)$, on the completion of the filtered probability space $(\Omega_0 \times \P^p(\X) \times \V, (\F^{\xi,B,W,\mu,\Lambda}_t)_{t \in [0,T]},Q)$, there exists a unique strong solution $Y$ of the SDE
\begin{align}
Y_t = \xi + \int_0^t\int_Ab(s,Y_s,\mu^x_s,a)\Lambda_s(da)ds + \int_0^t\sigma(s,Y_s,\mu^x_s)dW_s + \int_0^t\sigma_0(s,Y_s,\mu^x_s)dB_s. \label{def:SDE}
\end{align}
Viewing $Y$ as a random element of $\C^d$, let $\RC(Q) := Q \circ (\xi,B,W,\mu,\Lambda,Y)^{-1} \in \P(\Omega)$ denote the joint law of the solution and the inputs. Define
\[
\RC\A(\rho) := \RC(\A(\rho)) = \left\{\RC(Q) : Q \in \A(\rho)\right\},
\]
which we think of as the set of admissible joint laws for the optimal control problem associated to $\rho$.
Alternatively, $\RC(Q)$ may be defined as the unique element $P$ of $\P(\Omega)$ such that $P \circ (\xi,B,W,\mu,\Lambda)^{-1} = Q$ and such that the canonical processes  $(\xi,B,W,\mu,\Lambda,X)$ verify the state SDE on $\Omega$:
\begin{align}
X_t = \xi + \int_0^t\int_Ab(s,X_s,\mu^x_s,a)\Lambda_s(da)ds + \int_0^t\sigma(s,X_s,\mu^x_s)dW_s + \int_0^t\sigma_0(s,X_s,\mu^x_s)dB_s. \label{def:canonicalSDE}
\end{align}
It follows from standard estimates (e.g. \cite[Lemma 2.4]{carmonadelaruelacker-mfgcommonnoise}) that $\RC(Q) \in \P^p(\Omega)$.

Recalling the definition of the objective functional $\Gamma$ from \eqref{def:gamma}, we define the reward associated to an element $P \in \P^p(\Omega)$ by
\[
J(P) := \E^P\left[\Gamma(\mu^x,\Lambda,X)\right].
\]
Define the set of optimal controls corresponding to $\rho$ by
\begin{align*}
\A^{*}(\rho) &:= \arg\max_{Q \in \A(\rho)}J(\RC(Q)),
\end{align*}
and note that
\begin{align*}
\RC\A^{*}(\rho) &:= \RC(\A^{*}(\rho)) = \arg\max_{P \in \RC\A(\rho)}J(P).
\end{align*}
Let us now adapt the definition of MFG solution to the canonical space $\Omega$:

\begin{definition}[MFG pre-solution] \label{def:mfgsolution}
We say $P \in \P(\Omega)$ is a \emph{MFG pre-solution} if it satisfies the following:
\begin{enumerate}
\item $\xi$, $W$, and $(B,\mu)$ are independent under $P$.
\item $P \in \RC\A(\rho)$ where $\rho := P \circ (\xi,B,W,\mu)^{-1}$ is in $\M_\lambda$.
\item $\mu = P((W,\Lambda,X) \in \cdot \ | \ B,\mu)$ a.s. That is, $\mu$ is a version of the conditional law of $(W,\Lambda,X)$ given $(B,\mu)$.
\end{enumerate}
\end{definition}

The following two Lemmas give us a characterization of MFG solution which is convenient for taking limits. The first is more or less obvious, stated as a Lemma merely for emphasis, while the second has more content and is discussed thoroughly in \cite{carmonadelaruelacker-mfgcommonnoise}.

\begin{lemma}[Lemma 3.9 of \cite{carmonadelaruelacker-mfgcommonnoise}]  \label{le:canonicalmfgsolution}
Let $P \in \P^p(\Omega)$, and define $\rho := P \circ (\xi,B,W,\mu)^{-1}$. If $P$ is an MFG pre-solution and $P \in \RC\A^{*}(\rho)$, then $P$ is a weak MFG solution in the sense of Definition \ref{def:weakMFGsolutionlaw}.
\end{lemma}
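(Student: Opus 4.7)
The plan is to verify directly on the canonical space $(\Omega,(\F^{\xi,B,W,\mu,\Lambda}_t)_{t \in [0,T]})$, augmented in the usual way, that $P$ together with the canonical processes $(B,W,\mu,\Lambda,X)$ satisfies the six conditions of Definition \ref{def:weakMFGsolution}. Conditions (1)--(4) are essentially immediate consequences of the two hypotheses. The initial-state distribution, the $\mu$-measurability clause, the Wiener property of $B$ and $W$, the independence of $\xi$, $W$, and $(B,\mu)$, the $\P(A)$-progressivity of $\Lambda$ with its $p$-th moment bound, the conditional independence of $\F^\Lambda_t$ from $\F^{\xi,B,W,\mu}_T$ given $\F^{\xi,B,W,\mu}_t$, and the state SDE all follow by unpacking $\rho \in \M_\lambda$, $P \in \RC\A(\rho)$, and the definition of $\RC$. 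Condition (6), the fixed-point relation, is literally the third condition in the definition of an MFG pre-solution.

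The only condition requiring real work is (5), optimality against alternative filtered probability spaces. Let $(\widetilde{\Omega}',(\F'_t)_{t \in [0,T]},P',B',W',\mu',\Lambda',X')$ be any such alternative with $P' \circ (B',\mu')^{-1} = P \circ (B,\mu)^{-1}$. The first step is to show that $\rho' := P' \circ (X'_0,B',W',\mu')^{-1}$ equals $\rho := P \circ (\xi,B,W,\mu)^{-1}$. Indeed, the independence of $X'_0$, $W'$, and $(B',\mu')$ required by (2) of Definition \ref{def:weakMFGsolution}, combined with $X'_0 \sim \lambda$ and $W'$ being a standard $m$-dimensional Wiener process, forces $\rho'$ to be a product of $\lambda$, Wiener measure on $\C^m$, and $P \circ (B,\mu)^{-1}$, which by the pre-solution property (1) coincides with $\rho$.

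Next, set $Q' := P' \circ (X'_0,B',W',\mu',\Lambda')^{-1}$ and argue $Q' \in \A(\rho)$: the $(\xi,B,W,\mu)$-marginal equals $\rho$ by the previous step, the $p$-th moment bound on $\Lambda'$ transfers directly, and the conditional-independence requirement in the definition of $\A(\rho)$ follows from the $(\F'_t)_{t \in [0,T]}$-adaptedness of $\Lambda'$ together with the Wiener property of $B'$ and $W'$ relative to $(\F'_t)_{t \in [0,T]}$ on the alternative space. Unique strong solvability of the SDE \eqref{def:canonicalSDE} under assumption (A.4) then identifies $\RC(Q')$ with $P' \circ (X'_0,B',W',\mu',\Lambda',X')^{-1}$, since $X'$ solves the very same state equation driven by inputs of law $Q'$. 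Applying $P \in \RC\A^*(\rho)$ gives $J(P) \ge J(\RC(Q')) = \E^{P'}[\Gamma(\mu'^x,\Lambda',X')]$, which is (5).

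The main obstacle is precisely this translation from the abstract alternative probability space back to the canonical space: one must certify that the alternative data produces a bona fide element of $\RC\A(\rho)$. This rests on two ingredients: (i) the rigid structure of $\M_\lambda$, which pins down $\rho'$ from just the marginal law of $(B,\mu)$ and the independence built into Definition \ref{def:weakMFGsolution}(2); and (ii) uniqueness of strong solutions to \eqref{def:canonicalSDE}, which identifies the law of $X'$ with the canonical construction once the inputs have been matched. Everything else in the verification is bookkeeping.
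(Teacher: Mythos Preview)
Your argument is correct and is exactly the natural verification; the paper itself does not prove this lemma but merely cites it from \cite{carmonadelaruelacker-mfgcommonnoise}, calling it ``more or less obvious.'' One small correction: when you check that $Q' := P' \circ (X'_0,B',W',\mu',\Lambda')^{-1} \in \A(\rho)$, the conditional independence of $\F^\Lambda_t$ from $\F^{\xi,B,W,\mu}_T$ given $\F^{\xi,B,W,\mu}_t$ is not a consequence of ``$(\F'_t)$-adaptedness of $\Lambda'$ together with the Wiener property of $B'$ and $W'$''---those facts alone do not control the future of $\mu'$. Rather, it is literally the conditional-independence clause in condition (3) of Definition \ref{def:weakMFGsolution}, which the alternative tuple is assumed to satisfy. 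With that attribution fixed, your proof is complete.
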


\begin{lemma}[Lemma 3.7 of \cite{carmonadelaruelacker-mfgcommonnoise}]  \label{le:presolution}
Let $P \in \P^p(\Omega)$, and define $\rho := P \circ (\xi,B,W,\mu)^{-1}$. Suppose the following hold under $P$:
\begin{enumerate}
\item $B$ and $W$ are independent $(\F^{\xi,B,W,\mu,\Lambda,X}_t)_{t \in [0,T]}$-Wiener processes, and $P \circ \xi^{-1} = \lambda$.
\item $\xi$, $W$, and $(B,\mu)$ are independent.
\item $\mu = P((W,\Lambda,X) \in \cdot \ | \ B,\mu), \ a.s.$
\item The canonical processes $(\xi,B,W,\mu,\Lambda,X)$ verify the state equation \eqref{def:canonicalSDE} on $\Omega$.
\end{enumerate}
Then $P$ is a MFG pre-solution.
\end{lemma}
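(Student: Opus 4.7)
The plan is to verify the three conditions of Definition \ref{def:mfgsolution}. Conditions (1) and (3) there are verbatim hypotheses (2) and (3) of the present lemma, so the substance of the proof is condition (2): $\rho := P \circ (\xi, B, W, \mu)^{-1}$ belongs to $\M_\lambda$ and $P \in \RC\A(\rho)$. Setting $Q := P \circ (\xi, B, W, \mu, \Lambda)^{-1}$, the task reduces to three assertions: (i) $\rho \in \M_\lambda$; (ii) $Q \in \A(\rho)$; (iii) $\RC(Q) = P$.

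Steps (i) and (iii) are essentially bookkeeping. For (i), $\rho$ has finite $p$-moment and satisfies $\rho \circ \xi^{-1} = \lambda$ by $P \in \P^p(\Omega)$ and hypothesis (1); the Wiener property of $B, W$ on $(\Omega_0 \times \P^p(\X), (\F^{\xi,B,W,\mu}_t), \rho)$ is inherited by restriction from the larger filtration $(\F^{\xi,B,W,\mu,\Lambda,X}_t)$ in hypothesis (1), since both processes are adapted to the smaller filtration and their future increments are independent of the larger past. For (iii), the characterization recalled immediately before the lemma says $\RC(Q)$ is the unique $P' \in \P(\Omega)$ with $P' \circ (\xi,B,W,\mu,\Lambda)^{-1} = Q$ under which the canonical tuple satisfies \eqref{def:canonicalSDE}. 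The marginal identity holds by definition of $Q$, the state equation is hypothesis (4), and pathwise uniqueness---guaranteed by (A.4) together with the Wiener property on the enlarged filtration in hypothesis (1)---yields $P = \RC(Q)$.

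The core of the proof is (ii). The $p$-moment bound on $\Lambda$ and the marginal condition $Q \circ (\xi,B,W,\mu)^{-1} = \rho$ are immediate. The nontrivial piece is the conditional independence $\F^\Lambda_t \perp_Q \F^{\xi,B,W,\mu}_T \mid \F^{\xi,B,W,\mu}_t$; equivalently, $\E^P[Y \mid \F^{\xi,B,W,\mu}_T] = \E^P[Y \mid \F^{\xi,B,W,\mu}_t]$ for every bounded $\F^\Lambda_t$-measurable $Y$. I would decompose the extra information in $\F^{\xi,B,W,\mu}_T$ over $\F^{\xi,B,W,\mu}_t$ into two parts: the future Wiener increments $(B_u - B_t, W_u - W_t)_{u > t}$, and the residual information carried by the random variable $\mu$ beyond $\F^\mu_t$. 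By hypothesis (1) the former are independent of $\F^{\xi,B,W,\mu,\Lambda,X}_t$ and hence of $\F^\Lambda_t \vee \F^{\xi,B,W,\mu}_t$, so they contribute nothing. Hypothesis (3) handles the latter: it yields $\E^P[\phi(\Lambda\vert_{[0,t]}) \mid B, \mu] = \int \phi(q\vert_{[0,t]}) \, \mu(dw, dq, dx)$, a $\sigma(\mu)$-measurable function of $\mu$ alone; together with the mutual independence of $\xi$, $W$, $(B,\mu)$ from hypothesis (2), a monotone class argument on product test functions $\phi_1 \cdot \phi_2 \cdot \phi_3$---with $\phi_1$ bounded continuous and $\F^{\xi,B,W,\mu}_t$-measurable, $\phi_2$ depending only on the future Wiener increments, and $\phi_3$ depending only on the full random variable $\mu$---closes the identity.

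I expect step (ii), specifically this conditional-independence check, to be the main obstacle: both the open-loop character of $\Lambda$ and the path-dependence of the random measure $\mu$ must be handled simultaneously, which requires weaving together the Wiener property from hypothesis (1) and the consistency condition from hypothesis (3). Once this is established, combining (i)--(iii) shows $P \in \RC\A(\rho)$, and together with hypotheses (2) and (3) this delivers all three conditions of Definition \ref{def:mfgsolution}.
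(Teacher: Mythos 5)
The paper does not prove this lemma; it imports it verbatim as Lemma 3.7 of \cite{carmonadelaruelacker-mfgcommonnoise} and explicitly flags it as the one with ``more content.'' So I can only judge your argument on its merits. Your skeleton is the right one: conditions (1) and (3) of Definition \ref{def:mfgsolution} are hypotheses (2) and (3), and the substance is $\rho \in \M_\lambda$, $Q := P \circ (\xi,B,W,\mu,\Lambda)^{-1} \in \A(\rho)$, and $\RC(Q) = P$. Your steps (i) and (iii) are fine, as are the moment and marginal parts of (ii). You also correctly locate the crux in the conditional independence $\F^\Lambda_t \perp \F^{\xi,B,W,\mu}_T \mid \F^{\xi,B,W,\mu}_t$. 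But your proposed resolution of that crux has a genuine gap.

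The problem is the claim that the future increments ``contribute nothing'' because hypothesis (1) makes them independent of $\F^{\xi,B,W,\mu,\Lambda,X}_t$. In your monotone class argument you must factor $\E[Y\,\phi_1\,\phi_2(\Delta)\,\phi_3(\mu)]$ where $\Delta$ denotes the future increments of $(B,W)$ and $\phi_3$ depends on the \emph{whole} random measure $\mu$; to discard $\phi_2$ you need $\Delta$ independent of $\F^{\xi,B,W,\mu,\Lambda,X}_t \vee \sigma(\mu)$, not merely of $\F^{\xi,B,W,\mu,\Lambda,X}_t$. For the common-noise increments this joint independence is simply false in general: in a strong MFG solution $\mu$ is $\sigma(B)$-measurable and certainly depends on $(B_u-B_t)_{u>t}$. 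For the $W$-increments, hypothesis (1) gives $\Delta \perp \F^{\cdots}_t$ and hypothesis (2) gives $W \perp \sigma(B,\mu)$, but two separate independences of this type do not combine into independence from the join (take $\epsilon$ a fair coin independent of $W$, $S$ a symmetric functional of the future of $W$, and observe $\epsilon S$ is independent of $\sigma(W)$ and of $\sigma(\epsilon)$ separately but not of $\sigma(W)\vee\sigma(\epsilon)$). So the factorization step is unjustified, and in fact the whole point of the lemma is that hypotheses (1)--(3) must be used \emph{simultaneously}: one has to show, via the consistency condition (3), that under the conditional measures $\mu(\omega)$ the canonical process $w$ is still an $(\F^\X_t)$-Wiener process, and that the conditional law of $\Lambda\vert_{[0,t]}$ given all of $(\xi,W,B,\mu)$ is measurable with respect to $\F^{\xi,B,W,\mu}_t$ (in particular depends on $\mu$ only through $\F^\mu_t$ and on $W$ only through $\F^W_t$). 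That interlocking argument is the content of Lemma 3.7 of \cite{carmonadelaruelacker-mfgcommonnoise}, and your sketch replaces it with a sequence of factorizations that do not hold termwise.
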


We close the section with three useful results from \cite{carmonadelaruelacker-mfgcommonnoise}, topological in nature. They will not be used until the final step of the proof of Theorem \ref{th:mainconvergence}, in Section \ref{se:optimalityinthelimit}.

\begin{lemma}[Lemma 3.12 of \cite{carmonadelaruelacker-mfgcommonnoise}] \label{le:rccontinuous}
The map $\RC : \bigcup_{\rho \in \M_\lambda}\A(\rho) \rightarrow \P^p(\Omega)$ is continuous.
\end{lemma}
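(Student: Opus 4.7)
The plan is to establish sequential continuity: take $Q_n \in \A(\rho_n)$ with $Q_n \to Q \in \A(\rho)$ in the $p$-Wasserstein metric on $\P(\Omega_0 \times \P^p(\X) \times \V)$, and show $\RC(Q_n) \to \RC(Q)$ in $\P^p(\Omega)$. Since the ambient space is Polish, Skorokhod's representation theorem produces a probability space $(\tilde\Omega, \tilde P)$ carrying random tuples $(\xi^n, B^n, W^n, \mu^n, \Lambda^n)$ of law $Q_n$ converging $\tilde P$-a.s.\ to $(\xi, B, W, \mu, \Lambda)$ of law $Q$; convergence in $\P^p$ also yields the uniform $p$-integrability of the relevant norms, which I will need for the dominated convergence arguments below.

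The central obstacle is that in the Skorokhod coupling, $W^n$ is not adapted to the filtration generated by the limit $W$, so one cannot pathwise compare $\int \sigma \, dW^n$ with $\int \sigma \, dW$. I would sidestep this by introducing the auxiliary SDE
\[
\tilde Y^n_t = \xi + \int_0^t\!\!\int_A b(s, \tilde Y^n_s, (\mu^n)^x_s, a)\Lambda^n_s(da)\,ds + \int_0^t \sigma(s, \tilde Y^n_s, (\mu^n)^x_s)\,dW_s + \int_0^t \sigma_0(s, \tilde Y^n_s, (\mu^n)^x_s)\,dB_s,
\]
driven by the \emph{limit} Brownian motions $(B, W)$ but with inputs $(\mu^n, \Lambda^n)$; its unique strong solution exists by the Lipschitz assumption (A.4). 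Because $(B^n, W^n)$ and $(B, W)$ are both Wiener (hence have identical law) and $(\xi^n, \mu^n, \Lambda^n)$ has the same joint distribution in either coupling, the laws of the tuples $(\xi^n, B^n, W^n, \mu^n, \Lambda^n, Y^n)$ and $(\xi^n, B, W, \mu^n, \Lambda^n, \tilde Y^n)$ agree. Hence it suffices to show $\tilde Y^n \to Y$ in $L^p(\tilde P; \C^d)$, which is a standard SDE-stability question on a single Brownian filtration.

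For that, I would decompose $\tilde Y^n_t - Y_t$ into the initial term, the drift difference, and the two martingale differences, each of which splits into a Lipschitz piece (handled via (A.4), controlled by $\|\tilde Y^n - Y\|_s$ and $\ell_{\R^d, p}((\mu^n)^x_s, \mu^x_s)$) and a residual. The drift residual is
\[
\int_0^t\!\!\int_A b(s, Y_s, (\mu^n)^x_s, a)\Lambda^n(ds, da) - \int_0^t\!\!\int_A b(s, Y_s, \mu^x_s, a)\Lambda(ds, da),
\]
which converges to $0$ in $L^p(\tilde P)$ by $\tilde P$-a.s.\ joint convergence of $((\mu^n)^x, \Lambda^n)$ combined with continuity of $b$ in $(x, \mu, a)$ from (A.3), the growth estimate of (A.4), and dominated convergence licensed by the $p$-uniform integrability of $\int|a|^p\Lambda^n_s(da)\,ds$; the diffusion residuals are analogous, using Burkholder-Davis-Gundy plus the $p_\sigma \le 2$ growth bound. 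A standard $L^{p'}$ bound (Lemma 2.4 of \cite{carmonadelaruelacker-mfgcommonnoise}, leveraging $p' > p$ and $\lambda \in \P^{p'}$) gives $\sup_n \tilde\E\|\tilde Y^n\|_T^{p'} < \infty$, which supplies the dominant and simultaneously upgrades the eventual convergence from $L^p$ to $p$-Wasserstein. Gronwall's lemma then yields $\tilde\E\|\tilde Y^n - Y\|_T^p \to 0$, and we conclude $\RC(Q_n) \to \RC(Q)$ in $\P^p(\Omega)$. The main difficulty is the comparison step—setting up the auxiliary SDE to fix the driving noise—without which one would need the heavier Kurtz-Protter machinery for stochastic-integral stability.
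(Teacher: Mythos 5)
Your argument breaks at the noise-swap step, and the break is not repairable as stated. You claim that because $(B^n,W^n)$ and $(B,W)$ have the same (Wiener) law, the tuples $(\xi^n,B^n,W^n,\mu^n,\Lambda^n,Y^n)$ and $(\xi^n,B,W,\mu^n,\Lambda^n,\tilde Y^n)$ have the same law. Equality of the \emph{marginal} laws of the two noise pairs does not give equality of the \emph{joint} laws with $(\xi^n,\mu^n,\Lambda^n)$: in $\A(\rho_n)$ the random measure $\mu^n$ and the control $\Lambda^n$ are genuinely correlated with $(B^n,W^n)$ (indeed $\mu$ is essentially a conditional law given the common noise, and $\Lambda$ is adapted to a filtration containing $B,W$). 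In the Skorokhod coupling the joint law of $(\xi^n,B,W,\mu^n,\Lambda^n)$ is therefore not $Q_n$, so the law of your auxiliary tuple is not $\RC(Q_n)$, and showing $\tilde Y^n\to Y$ proves nothing about $\RC(Q_n)$. There is a second, related defect: the auxiliary SDE need not even be well-posed, because nothing in the Skorokhod construction guarantees that $(\mu^n,\Lambda^n)$ is non-anticipative with respect to the \emph{limit} noises, i.e.\ that $W_u-W_t$ is independent of $\sigma(\mu^n(C),\Lambda^n_s : C\in\F^\X_t,\ s\le t)$ for $u>t$; without that, $W$ is not a Wiener process for any filtration to which your integrands are adapted and the It\^o integrals defining $\tilde Y^n$ are not defined.

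The remainder of your write-up (Gronwall, splitting into Lipschitz and residual pieces, uniform $p'$-moment bounds to upgrade to $\P^p$-convergence) is standard and fine, but it all sits on the broken comparison. The argument the paper relies on (Lemma 3.12 of \cite{carmonadelaruelacker-mfgcommonnoise}, sketched again here in the proof of Lemma \ref{le:finitestateconvergence}) keeps the coupled noises: solve the SDE on the Skorokhod space driven by $(B^n,W^n)$ with inputs $(\xi^n,\mu^n,\Lambda^n)$, so that the resulting tuple has law exactly $\RC(Q_n)$ by uniqueness in law; establish relative compactness of these laws as in Proposition \ref{pr:itocompact}; identify every limit point as a solution of the limiting SDE using the Kurtz--Protter results on weak convergence of stochastic integrals (a sequence of Brownian motions automatically satisfies their uniform tightness condition); and conclude from pathwise uniqueness of the limit SDE. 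That stochastic-integral convergence step is precisely what your construction was designed to avoid, and it cannot be avoided by exchanging the driving noise.
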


\begin{lemma} \label{le:jcontinuous}
The map $J : \P^p(\Omega) \rightarrow \R$ is upper semicontinuous, and for each $\rho \in \M_\lambda$ the sets $\A^{*}(\rho)$ and $\RC\A^{*}(\rho)$ are nonempty and compact. Moreover, the restriction of $J$ to a set $K \subset \P^p(\Omega)$ is continuous whenever $K$ satisfies the uniform integrability condition
\begin{align}
\lim_{r \rightarrow\infty}\sup_{P \in K}\E^P\left[\int_0^T\int_{\{|a| > r\}}|a|^{p'}\Lambda_t(da)dt\right] = 0. \label{def:uniformintegrability}
\end{align}
\end{lemma}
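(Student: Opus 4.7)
The plan is to prove the three assertions in sequence, decomposing $J(P) = J_g(P) + J_f(P)$ with $J_g(P) := \E^P[g(X_T,\mu^x_T)]$ and $J_f(P) := \E^P[\int_0^T\int_A f(t,X_t,\mu^x_t,a)\Lambda_t(da)dt]$. For upper semicontinuity, the continuity of $g$ from (A.3) together with its $p$-growth bound from (A.5) makes $J_g$ continuous on $\P^p(\Omega)$. For $J_f$, use the upper bound from (A.5) to rewrite
\[
J_f(P) = c_2\E^P\!\left[\int_0^T\!\left(1+|X_t|^p + \int|z|^p\mu^x_t(dz)\right)dt\right] - \E^P[\Psi],
\]
where $\Psi := \int_0^T\int_A\psi(t,X_t,\mu^x_t,a)\Lambda_t(da)dt$ and $\psi(t,x,\mu,a) := c_2(1+|x|^p+\int|z|^p\mu(dz)) - f(t,x,\mu,a) \ge 0$ is continuous in $(x,\mu,a)$. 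The first term is continuous on $\P^p(\Omega)$, and $P \mapsto \E^P[\Psi]$ is lower semicontinuous on $\P(\Omega)$ by writing $\Psi = \sup_R (\Psi \wedge R)$ as a supremum of continuous bounded functionals of $(X,\mu,\Lambda)$ and applying Portmanteau. Hence $J_f$ is upper semicontinuous, giving the first claim.

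For continuity on $K$, suppose $(P_n) \subset K$ converges to $P$ in $\P^p(\Omega)$. The two-sided bound $|f(t,x,\mu,a)| \le c_2(1+|x|^p+\int|z|^p\mu(dz)+|a|^{p'})$ from (A.5), combined with convergence of $p$-moments in $\P^p$ and \eqref{def:uniformintegrability}, yields uniform integrability of the integrands defining $J_f(P_n)$. For each $r$, truncating the integrand to $\{|a| \le r\}$ via a continuous cutoff gives a Carath\'eodory function, bounded in $a$ with $p$-growth in $(x,\mu)$, whose expected value is continuous on $\P^p(\Omega)$; sending $r \to \infty$ and using the uniform integrability bound on the tail completes the argument.

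For nonemptiness and compactness of $\RC\A^*(\rho)$, I would first argue that $\RC\A(\rho)$ is closed in $\P^p(\Omega)$: the fixed $(\xi,B,W,\mu)$-marginal passes to limits, the conditional independence of $\F^\Lambda_t$ from $\F^{\xi,B,W,\mu}_T$ given $\F^{\xi,B,W,\mu}_t$ survives weak limits via a standard test-function argument, and continuity of $\RC$ (Lemma \ref{le:rccontinuous}) handles the state process. Choosing $Q_0 \in \A(\rho)$ with $\Lambda \equiv dt\delta_{a_0}(da)$ for some fixed $a_0 \in A$ gives $J(\RC(Q_0)) > -\infty$ by (A.5). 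For any maximizing sequence $(Q_n) \subset \A(\rho)$, the coercive bound $f \le c_2(1+|x|^p+\int|z|^p\mu(dz)) - c_3|a|^{p'}$ from (A.5), together with standard moment estimates on the controlled state process, yields
\[
c_3\E^{\RC(Q_n)}\!\left[\int_0^T\!\int_A |a|^{p'}\Lambda_t(da)dt\right] \le C.
\]
Since $p < p'$, this implies both $\P^p(\Omega)$-tightness of $(\RC(Q_n))$ and the uniform integrability \eqref{def:uniformintegrability}. Extracting a convergent subsequence $\RC(Q_n) \to P^*$, closedness yields $P^* \in \RC\A(\rho)$, and by the second claim $J(P^*) = \sup_{\A(\rho)}J$, so $P^* \in \RC\A^*(\rho)$. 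Compactness of $\RC\A^*(\rho)$ follows by applying the same argument to any sequence of optimizers, and the corresponding assertions for $\A^*(\rho)$ transfer via continuity of $\RC$. The main subtle point is the preservation under weak limits of the conditional independence condition defining $\A(\rho)$, which, while standard in the theory of relaxed controls, requires careful handling with test functions.
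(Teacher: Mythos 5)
Your proposal is correct in outline, but it is worth noting that the paper itself only proves the \emph{final} claim of this lemma (continuity on uniformly integrable sets $K$); the upper semicontinuity of $J$ and the nonemptiness/compactness of $\A^{*}(\rho)$ and $\RC\A^{*}(\rho)$ are simply cited from Lemma 3.13 of \cite{carmonadelaruelacker-mfgcommonnoise}. So for those two claims you are supplying a self-contained argument where the paper outsources one, and your route is the standard one: the decomposition $f = c_2(1+|x|^p+\int|z|^p\mu(dz)) - \psi$ with $\psi \ge 0$ and monotone truncation of $\psi$ gives upper semicontinuity, and coercivity ($f \le \cdots - c_3|a|^{p'}$ with $p'>p$) along a maximizing sequence gives the tightness and uniform integrability needed for existence and compactness of the optimizer set. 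For the final claim your argument (truncate in $a$, use the two-sided growth bound plus \eqref{def:uniformintegrability} and $p$-moment convergence for the tails) is essentially identical to the paper's, which phrases the same idea via occupation measures $Q_n$ on $[0,T]\times\R^d\times\P^p(\R^d)\times A$.

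Two caveats. First, $f$ and hence $\psi$ are only \emph{measurable} in $t$ (assumption (A.3)), so $\Psi\wedge R$ is not a continuous functional of $(X,\mu,\Lambda)$ and you cannot literally ``apply Portmanteau''; you need the standard strengthening of weak convergence for measures with a fixed $[0,T]$-marginal (test functions measurable in $t$ and continuous in the remaining variables), which is exactly why the paper invokes \cite{jacodmemin-stable} or \cite[Lemma A.3]{lacker-mfgcontrolledmartingaleproblems}. The same remark applies to your truncated Carath\'eodory functionals in the second part. Second, the closedness of $\RC\A(\rho)$ hinges on the stability under weak limits of the conditional-independence (compatibility) condition defining $\A(\rho)$; you correctly identify this as the subtle point but leave it as ``standard,'' and it is genuinely the hardest ingredient --- though since the paper defers this entire portion to the companion paper, your level of detail here is comparable to the source. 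Neither caveat invalidates the approach.
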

\begin{proof}
This is all covered by Lemma 3.13 of \cite{carmonadelaruelacker-mfgcommonnoise}, except for the final claim.
Now let $P_n \rightarrow P_\infty$ in $\P^p(\Omega)$ with $P_n \in K$ for each $n$. The continuity and growth assumptions on $g$ imply that $\E^{P_n}[g(X_T,\mu^x_T)] \rightarrow \E^P[g(X_T,\mu^x_T)]$, and the $f$ term causes the only problems.
The convergence $P_n \rightarrow P_\infty$ implies (e.g. by \cite[Theorem 7.12]{villanibook})
\begin{align}
\lim_{r \rightarrow\infty}\sup_{n}\E^{P_n}\left[\|X\|_T^p1_{\{\|X\|_T^p > r\}} + \int_{\C^d}\|z\|^p_T\mu^x(dz)1_{\left\{\int_{\C^d}\|z\|^p_T\mu^x(dz) > r\right\}}\right] = 0. \label{pf:uniformintegrability1}
\end{align}
For $1 \le n \le \infty$, define probability measures $Q_n$ on $\widetilde{\Omega} := [0,T] \times \R^d \times \P^p(\R^d) \times A$ by
\[
Q_n(C) := \frac{1}{T}\E^{P_n}\left[\int_0^T\int_A1_{\{(t,X_t,\mu^x_t,a) \in C\}}\Lambda_t(da)dt\right], \ C \in \B(\widetilde{\Omega}).
\]
Certainly $Q_n \rightarrow Q_\infty$ weakly in $\P(\widetilde{\Omega})$. Since the $[0,T]$-marginal is the same for each $Q_n$, it is known (e.g. \cite{jacodmemin-stable} or \cite[Lemma A.3]{lacker-mfgcontrolledmartingaleproblems}) that this implies $\int\phi\,dQ_n\rightarrow \int\phi\,dQ_\infty$ for each bounded measurable $\phi : \widetilde{\Omega} \rightarrow \R$ with $\phi(t,\cdot)$ continuous for each $t$. Thus $Q_n \circ f^{-1} \rightarrow Q_\infty \circ f^{-1}$ weakly in $\P(\R)$, by continuity of $f(t,\cdot)$ for each $t$. But it follows from \eqref{def:uniformintegrability}, \eqref{pf:uniformintegrability1}, and the growth assumption of (A.5) that 
\[
\lim_{r \rightarrow \infty}\sup_n\int_{\{|f| > r\}}f\,dQ_n = 0,
\]
and thus $\int f\,dQ_n \rightarrow \int f\,dQ_\infty$.
\end{proof}

The following definition highlights a useful subclass of admissible controls, which Lemma \ref{le:adapteddensity} shows is dense in the class of admissible controls in a sense.

\begin{definition} \label{def:adapted}
A function $\phi : \Omega_0 \times \P^p(\X) \rightarrow \V$ is said to be \emph{adapted} if $\phi^{-1}(C) \in \F^{\xi,B,W,\mu}_t$ for each $C \in \F^\Lambda_t$ and $t \in [0,T]$. We say $\phi$ is \emph{compact} if there exists a compact set $K \subset [0,T] \times A$ such that $\phi(\omega,\nu)(K^c) = 0$ for each $(\omega,\nu) \in \Omega_0 \times \P^p(\X)$. For $\rho \in \M_\lambda$, let $\A_a(\rho)$ denote the set of measures of the form
\[
\rho \circ (\xi,B,W,\mu,\phi(\xi,B,W,\mu))^{-1}
\]
where $\phi$ is adapted and compact and $\P^p(\X) \ni \nu \mapsto \phi(\omega,\nu)$ is continuous for each $\omega \in \Omega_0$.
\end{definition}

\begin{lemma} \label{le:adapteddensity}
For each $\rho \in \M_\lambda$, $\A_a(\rho)$ is a dense subset of $\A(\rho)$. Moreover, for each $P \in \RC\A(\rho)$ with $\E^P\int_0^T\int_A|a|^{p'}\Lambda_t(da)dt < \infty$,  
there exist $P_n \in \RC\A_a(\rho)$ such that $K := \{P_n : n \ge 1\}$ satisfies \eqref{def:uniformintegrability} and $P_n \rightarrow P$ in $\P^p(\Omega)$; in particular, $J(P_n) \rightarrow J(P)$.
\end{lemma}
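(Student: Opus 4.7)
The plan is to exhibit, for an arbitrary $Q \in \A(\rho)$, an explicit approximation by elements of $\A_a(\rho)$, via four stages: (i) truncation in the control variable, (ii) time-discretization through conditional expectations, (iii) measurable realization as a deterministic function of $(\xi,B,W,\mu)$ using a regular conditional kernel, and (iv) mollification in the $\nu$-variable to obtain continuity. Throughout, I would exploit the conditional-independence characterization of $\A(\rho)$, which reduces the adaptedness requirement to a matter of bookkeeping at each stage.

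For (i), fix $a_0 \in A$ and set $\Lambda^R_t(da) := 1_{\{|a| \le R\}}\Lambda_t(da) + (1 - \Lambda_t(\{|a| \le R\}))\delta_{a_0}(da)$, yielding $Q^R \in \A(\rho)$ supported in a fixed compact subset of $A$, with $Q^R \to Q$ in $\P^p$ by the finite $p$-moment. For (ii), on a partition $0 = t_0 < t_1 < \cdots < t_N = T$ replace $\Lambda^R$ on $[t_{k-1}, t_k)$ by its average conditional on $\F^{\xi,B,W,\mu}_{t_{k-1}}$; standard martingale and dominated convergence arguments yield convergence as the mesh goes to zero. For (iii), the resulting piecewise-constant-in-time, adapted $\P(A)$-valued process is $\F^{\xi,B,W,\mu}_{t_{k-1}}$-measurable on each block, hence admits a deterministic measurable realization $\phi^N : \Omega_0 \times \P^p(\X) \to \V$ with $\phi^N \circ (\xi,B,W,\mu) = \Lambda^{R,N}$ $Q$-a.s. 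For (iv), mollify $\phi^N(\omega, \cdot)$ in the $\nu$-variable via a Lusin-type continuous approximation (jointly measurable in $\omega$), to produce $\tilde\phi^N$ which lies in $\A_a(\rho)$ after verifying joint continuity.

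The main obstacle is step (iv): since $\P^p(\X)$ is Polish but not linear, direct convolution is unavailable, and producing a continuous-in-$\nu$ adapted function that well-approximates $\phi^N$ requires careful use of Lusin's theorem together with a Tietze-type extension, while simultaneously preserving measurability in $\omega$ and the adaptedness with respect to $(\F^{\xi,B,W,\mu}_t)_{t \in [0,T]}$. A diagonal extraction across the four stages then produces $Q_n \in \A_a(\rho)$ with $Q_n \to Q$ in the $\P^p$-topology on $\Omega_0 \times \P^p(\X) \times \V$, giving the density claim.

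For the \emph{moreover} clause, the $p'$-integrability hypothesis on $P$ lifts to $Q$ and is preserved by the truncation step, since for $r > |a_0|$ one has $\int_{\{|a|>r\}}|a|^{p'}\Lambda^R_t(da) \le \int_{\{|a|>r\}}|a|^{p'}\Lambda_t(da)$, and the right-hand side is uniformly integrable in $(t,R)$ by the hypothesis. Choosing the subsequent discretization and mollification to preserve compact support in the $a$-variable, the sequence $\RC(Q_n)$ satisfies the uniform-integrability condition \eqref{def:uniformintegrability}. Then Lemma \ref{le:rccontinuous} gives $P_n := \RC(Q_n) \to P$ in $\P^p(\Omega)$, and Lemma \ref{le:jcontinuous} delivers $J(P_n) \to J(P)$.
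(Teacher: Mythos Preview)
The paper does not prove this lemma at all: it cites Lemmas 3.11 and 3.17 of \cite{carmonadelaruelacker-mfgcommonnoise} for the density and for the moreover clause, noting only that the uniform integrability is visible in the proof of the latter. So there is no in-paper argument to compare against; your sketch is effectively a proposal for what those cited proofs must contain.

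Your four-stage outline (truncation, time-discretization by conditional expectations, measurable realization, continuity in $\nu$) is the right architecture, and stages (i)--(iii) are routine. The genuine content is stage (iv), and here your proposal is incomplete in a way that matters. A Lusin/Tietze construction on $\P^p(\X)$ yields a function continuous in $\nu$ that agrees with $\phi^N(\omega,\cdot)$ on a large set, but nothing in that construction respects the adaptedness constraint: after time-discretization, the control on $[t_{k-1},t_k)$ depends on $\nu$ only through $\F^\mu_{t_{k-1}} = \sigma(\nu(C) : C \in \F^\X_{t_{k-1}})$, and a generic continuous extension in the full $\P^p(\X)$-topology destroys this factorization. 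The standard repair --- and almost certainly what the cited lemmas do --- is not Lusin/Tietze but rather approximation of each $\phi_k(\omega,\cdot)$ by a function of finitely many \emph{continuous} $\F^\X_{t_{k-1}}$-measurable statistics of $\nu$ (integrals against bounded continuous $\F^\X_{t_{k-1}}$-measurable test functions), which delivers continuity in $\nu$ and adaptedness simultaneously by construction.

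Your handling of the moreover clause is correct in spirit. One small point: the tail inequality $\int_{\{|a|>r\}}|a|^{p'}\Lambda^R_t(da) \le \int_{\{|a|>r\}}|a|^{p'}\Lambda_t(da)$ survives stages (ii)--(iii) in expectation, but in stage (iv) the continuous extension off the Lusin set can put mass anywhere in $\{|a|\le R_n\}$, contributing up to $R_n^{p'}$ on a set of small $\rho$-measure. You therefore need to couple the choice of that small measure to $R_n$ in the diagonal extraction; once that is done, Lemmas \ref{le:rccontinuous} and \ref{le:jcontinuous} finish the argument exactly as you say.
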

\begin{proof}
Lemma 3.11 of \cite{carmonadelaruelacker-mfgcommonnoise} covers the first claim in the case that $A$ is bounded, while the general case is treated in the second step of the proof of Lemma 3.17 in \cite{carmonadelaruelacker-mfgcommonnoise}. Except for the claim that $K$ satisfies the uniform integrability condition \eqref{def:uniformintegrability}, the second statement is precisely Lemma 3.17 of \cite{carmonadelaruelacker-mfgcommonnoise}, the proof of which elucidates this uniform integrability.
\end{proof}

\section{Proof of Theorem \ref{th:mainconvergence}} \label{se:mainconvergenceproof}
With the mean field game concisely summarized on the canonical space, we now turn to the proof of Theorem \ref{th:mainconvergence}. Throughout the section, we work with the notation and assumptions of Theorem \ref{th:mainconvergence}.
Following Lemma \ref{le:canonicalmfgsolution}, the strategy is to prove the claimed relative compactness, then that any limit is a MFG pre-solution using Lemma \ref{le:presolution}, and then finally that any limit corresponds to an optimal control. First, we establish some useful estimates for the $n$-player systems.

\subsection{Estimates} 
The first estimate below, Lemma \ref{le:finitestateestimate}, is fairly standard, but it is important that it is independent of the number of agents $n$. The second estimate, Lemma \ref{le:valuebound}, will be used to establish some uniform integrability of the equilibrium controls, and it is precisely where we need the coercivity of the running cost $f$.
Note in the following proofs that the initial states $X^i_0[\Lambda] = X^i_0 = \xi^i$ and the initial empirical measure $\widehat{\mu}^x_0[\Lambda] = \widehat{\mu}^x_0 = \frac{1}{n}\sum_{i=1}^n\delta_{\xi^i}$ do not depend on the choice of control. Recall the definition of the truncated supremum norm \eqref{def:truncatedsupnorm}.

\begin{lemma} \label{le:finitestateestimate}
There exists a constant $c_5 \ge 1$, depending only on $p$, $p'$, $T$, and the constant $c_1$ of assumption (A.4) such that, for each $\gamma \in [p,p']$, $\beta = (\beta^1,\ldots,\beta^n) \in \A_n^n(\mathcal{E}_n)$, and $1 \le k \le n$,
\begin{align*}
\E^{\PP_n}[\|X^k[\beta]\|_T^\gamma] &\le c_5\E^{\PP_n}\left[1 + |\xi^1|^\gamma + \int_0^T\int_A|a|^\gamma\beta^k_t(da)dt + \frac{1}{n}\sum_{i=1}^n\int_0^T\int_A|a|^\gamma\beta^i_t(da)dt \right],
\end{align*}
and
\begin{align*}
\E^{\PP_n}\int_{\C^d}\|z\|_T^\gamma\widehat{\mu}^x[\beta](dz) = \frac{1}{n}\sum_{i=1}^n\E^{\PP_n}[\|X^i[\beta]\|_T^\gamma] &\le c_5\E^{\PP_n}\left[1 + |\xi^1|^\gamma + \frac{1}{n}\sum_{i=1}^n\int_0^T\int_A|a|^\gamma\beta^i_t(da)dt\right].
\end{align*}
\end{lemma}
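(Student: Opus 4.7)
The plan is to run a standard BDG-plus-Grönwall argument on the coupled SDE system, but performed twice: first averaged over the $n$ players to decouple the empirical-measure term, and then applied to a single player with the averaged bound plugged in. The reason this works, and why $c_5$ can be chosen independent of $n$, is the symmetry of the dynamics in $(b,\sigma,\sigma_0)$ combined with the trivial bound $\int_{\R^d}|z|^p \widehat{\mu}^x_t[\beta](dz) = \tfrac1n\sum_j |X^j_t[\beta]|^p$.

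Concretely, fix $\gamma \in [p,p']$ and write $v_k(t):=\E^{\PP_n}\|X^k[\beta]\|_t^\gamma$ and $\bar v(t):=\tfrac1n\sum_{k=1}^n v_k(t)$. From the SDE, raising to the power $\gamma\geq 1$, applying $(\int_0^t\cdot\,ds)^\gamma\leq T^{\gamma-1}\int_0^t(\cdot)^\gamma ds$ to the drift, and BDG to the two stochastic integrals, I would obtain, using (A.4):
\begin{align*}
v_k(t) \le C\Bigl\{1+\E^{\PP_n}|\xi^k|^\gamma + \E^{\PP_n}\!\!\int_0^t\!\!\int_A |a|^\gamma \beta^k_s(da)\,ds\Bigr\} + C\int_0^t\bigl[v_k(s)+\bar v(s)\bigr]\,ds,
\end{align*}
where $C$ depends only on $c_1$, $p$, $p'$, and $T$. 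To reach this form I would use the linear-in-$a$ bound $|b(t,x,\mu,a)|\leq c(1+|x|+\ell_{\R^d,p}(\mu,\delta_0)+|a|)$, together with Jensen on the probability measure $\beta^k_s$ to replace $\bigl(\int|a|\beta^k_s(da)\bigr)^\gamma$ by $\int|a|^\gamma\beta^k_s(da)$, and Jensen again (valid because $\gamma/p\geq 1$) to absorb $\bigl(\tfrac1n\sum_j|X^j_s|^p\bigr)^{\gamma/p}$ into $\tfrac1n\sum_j|X^j_s|^\gamma$. For the diffusion I would use the $|\sigma\sigma^\top+\sigma_0\sigma_0^\top|$-bound in (A.4) and the fact that $p_\sigma\leq p\leq \gamma$; Hölder on the probability measure yields $\bigl(\int|z|^p\mu\bigr)^{p_\sigma/p}\leq 1+\int|z|^\gamma\mu$, and then either Jensen-convexity (when $\gamma\geq 2$) or the elementary inequality $y^{\gamma/2}\leq 1+y$ (when $\gamma<2$) turns the $(\int_0^t|\sigma|^2ds)^{\gamma/2}$ arising from BDG into an integral of $1+v_k+\bar v$.

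With this single differential inequality in hand, the endgame is routine. Averaging over $k$, using $\E|\xi^k|^\gamma=\E|\xi^1|^\gamma$, produces
\begin{align*}
\bar v(t)\le C\Bigl\{1+\E^{\PP_n}|\xi^1|^\gamma + \tfrac1n\sum_{i=1}^n \E^{\PP_n}\!\!\int_0^T\!\!\int_A |a|^\gamma\beta^i_s(da)\,ds\Bigr\} + 2C\int_0^t\bar v(s)\,ds,
\end{align*}
and Grönwall's inequality applied on $[0,T]$ yields the second stated bound, with a constant depending only on $c_1, p, p', T$. Substituting this already-controlled bound for $\bar v$ back into the inequality for $v_k$, one more application of Grönwall (this time to the scalar function $v_k$) delivers the first stated bound.

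The main technical obstacle is the bookkeeping needed to prove the differential inequality with a constant independent of $n$: one must juggle three exponent regimes $p_\sigma\in[0,2]$, $p\in[1\vee p_\sigma,\gamma]$, $\gamma\in[p,p']$, and in particular handle the case $\gamma<2$ (where the usual $\gamma\ge 2$ Jensen step fails) without picking up an $n$-dependent factor when converting $\bigl(\tfrac1n\sum_j|X^j|^p\bigr)^{p_\sigma/p}$ into something controllable. Once those moment manipulations are arranged via the Hölder bound $\bigl(\int|z|^p\mu\bigr)^{1/p}\leq \bigl(\int|z|^\gamma\mu\bigr)^{1/\gamma}$ rather than naive subadditivity, everything else is mechanical and the two-step Grönwall closes the argument.
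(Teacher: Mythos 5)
Your proposal is correct and follows essentially the same route as the paper: BDG plus assumption (A.4), the Jensen bound $(\int\|z\|^p\nu)^{\gamma/p}\le\int\|z\|^\gamma\nu$, the case split on $\gamma\ge 2$ versus $\gamma<2$ for the diffusion term, and a two-stage Grönwall argument (averaged over players, then for a single player). The only difference is the order of the two Grönwall applications, which is immaterial.
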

\begin{proof}
We omit $[\beta]$ from the notation throughout the proof, as well as the superscript $\PP_n$ which should appear above the expectations. Abbreviate $\Sigma := \sigma\sigma^\top + \sigma_0\sigma_0^\top$.
Apply the Burkholder-Davis-Gundy inequality and assumption (A.4) to find a universal constant $C > 0$ (which will change from line to line) such that, for all $\gamma \in [p,p']$,
\begin{align*}
\E[\|X^k\|^\gamma_t] \le &\,C\E\left[ |\xi^k|^\gamma + \left(\int_0^t\int_A|b(s,X^k_s,\widehat{\mu}^x_s,a)|\beta^k_s(da)ds\right)^\gamma + \left(\int_0^t\left|\Sigma(s,X^k_s,\widehat{\mu}^x_s)\right|ds\right)^{\gamma/2}\right]   \\
	\le &\,C\,\E\left\{1 + |\xi^k|^\gamma + \int_0^t\left[\|X^k\|^\gamma_s + \left(\int_{\C^d}\|z\|^p_s\widehat{\mu}^x(dz)\right)^{\gamma/p} + \int_A|a|^\gamma\beta^k_s(da)\right] ds\right\} \\
	& + C\,\E\left\{\left[\int_0^t\left(\|X^k\|^{p_\sigma}_s + \left(\int_{\C^d}\|z\|^p_s\widehat{\mu}^x(dz)\right)^{p_\sigma/p}\right)ds\right]^{\gamma/2}\right\} \\
	\le &\,C\,\E\left\{1 + |\xi^k|^\gamma + \int_0^t\left[\|X^k\|^\gamma_s + \int_{\C^d}\|z\|^\gamma_s\widehat{\mu}^x(dz) + \int_A|a|^\gamma\beta^k_s(da)\right] ds \right\}.
\end{align*}
The last line follows from the bound $(\int\|z\|_s^p\nu(dz))^{\gamma/p} \le \int\|z\|_s^\gamma\nu(dz)$ for $\nu \in \P(\C^d)$, which holds because $\gamma \ge p$. To deal with the $\gamma/2$ outside of the time integral, we used the following argument. If $\gamma \ge 2$, we simply use Jensen's inequality to pass $\gamma/2$ inside of the time integral, and then use the inequality $|x|^{p_\sigma\gamma/2} \le 1 + |x|^\gamma$, which holds because $p_\sigma \le 2$. The other case is $1 \vee p_\sigma \le p \le \gamma < 2$, and we use then the inequalities $|x|^{\gamma/2} \le 1 + |x|$ and $|x|^{p_\sigma} \le 1 + |x|^\gamma$. By Gronwall's inequality,
\begin{align}
\E[\|X^k\|^\gamma_t] \le C\E\left\{1 + |\xi^k|^\gamma + \int_0^t\left[\int_{\C^d}\|z\|^\gamma_s\widehat{\mu}^x(dz) + \int_A|a|^\gamma\beta^k_s(da)\right]ds\right\} \label{pf:finitestateestimate1}
\end{align}
Note that $\E^{\PP_n}[|\xi^k|^\gamma] = \E^{\PP_n}[|\xi^1|^\gamma]$ for each $k$, and
average over $k=1,\ldots,n$ to get
\begin{align*}
\E\int_{\C^d}\|z\|^\gamma_t\widehat{\mu}^x(dz) &= \frac{1}{n}\sum_{i=1}^n\E[\|X^i\|^\gamma_t] \\
	&\le C\E\left\{1 + |\xi^1|^\gamma + \int_0^t\left[\int_{\C^d}\|z\|^\gamma_s\widehat{\mu}^x(dz) + \frac{1}{n}\sum_{i=1}^n\int_A|a|^\gamma\beta^i_s(da)\right]ds\right\}.
\end{align*}
Apply Gronwall's inequality once again to prove the second claimed inequality. The first claim follows from the second and from \eqref{pf:finitestateestimate1}.
\end{proof}

\begin{lemma} \label{le:valuebound}
There exist constants $c_6,c_7 > 0$, depending only $p$, $p'$, $T$, and the constants $c_1,c_2,c_3$ of assumption \ref{assumption:A}, such that for each $\beta = (\beta^1,\ldots,\beta^n) \in \A_n^n(\mathcal{E}_n)$, the following hold:
\begin{enumerate}
\item For each $1 \le k \le n$,
\begin{align*}
\E^{\PP_n}\int_0^T\int_A(|a|^{p'} - c_6|a|^p)\beta^k_t(da)dt \le c_7\E^{\PP_n}\left[ 1 + |\xi^1|^p + \frac{1}{n}\sum_{i \neq k}^n\int_0^T\int_A|a|^p\beta^i_t(da)\right] - c_7J_k(\beta).
\end{align*}
\item If for some  $n \ge k \ge 1$, $\epsilon > 0$, and $\widetilde{\beta}^k \in \A_n(\mathcal{E}_n)$ we have
\[
J_k(\widetilde{\beta}^k) \ge \sup_{\widetilde{\beta} \in \A_n(\mathcal{E}_n)}J_k((\beta^{-k},\widetilde{\beta})) - \epsilon,
\]
then
\begin{align*}
\E^{\PP_n}\int_0^T\int_A(|a|^{p'} - c_6|a|^p)\widetilde{\beta}^k_t(da)dt \le c_7\E^{\PP_n}\left[1 + \epsilon + |\xi^1|^p + \frac{1}{n}\sum_{i \neq k}^n\int_0^T\int_A|a|^p\beta^i_t(da)\right].
\end{align*}
\item If $\beta$ is an $\epsilon$-Nash equilibrium for some $\epsilon=(\epsilon_1,\ldots,\epsilon_n) \in [0,\infty)^n$, then
\begin{align*}
\frac{1}{n}\sum_{i=1}^n\E^{\PP_n}\int_0^T\int_A(|a|^{p'} - c_6|a|^p)\beta^i_t(da)dt \le c_7\left(1 + \E^{\PP_n}|\xi^1|^p + \frac{1}{n}\sum_{i=1}^n\epsilon_i\right).
\end{align*}
\end{enumerate}
\end{lemma}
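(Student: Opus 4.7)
The plan is to handle the three parts in order, with parts (1) and (2) being essentially direct consequences of assumption (A.5) combined with Lemma \ref{le:finitestateestimate}, and part (3) requiring an additional absorption argument that exploits the strict inequality $p' > p$.

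For part (1), I would begin by applying the upper bounds of assumption (A.5) to the definition of $J_k(\beta)$: the inequality $f(t,x,\mu,a) \le c_2(1+|x|^p+\int|z|^p\mu(dz)) - c_3|a|^{p'}$ together with the analogous bound on $g$ yields, after rearrangement,
\[
c_3\,\E^{\PP_n}\!\int_0^T\!\!\int_A |a|^{p'}\beta^k_t(da)\,dt \le c_2(T+1)\,\E^{\PP_n}\!\left[1+\|X^k[\beta]\|_T^p + \int_{\C^d}\|z\|_T^p\widehat{\mu}^x[\beta](dz)\right] - J_k(\beta).
\]
Then I would invoke Lemma \ref{le:finitestateestimate} with $\gamma=p$ to bound $\E\|X^k[\beta]\|_T^p$ and $\E\int\|z\|_T^p\widehat{\mu}^x[\beta](dz)$ in terms of $1+\E|\xi^1|^p + \frac{1}{n}\sum_i \E\int|a|^p\beta^i_t\,dt$. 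Splitting the $i=k$ term out of the sum and moving the resulting $\int|a|^p\beta^k_t\,dt$ contribution to the left-hand side yields the stated inequality for a suitably large pair $(c_6, c_7)$.

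For part (2), I would fix an arbitrary reference point $a_0\in A$, set $\beta_0 := dt\,\delta_{a_0}(da)$, and use the $\epsilon$-optimality assumption to write $J_k(\beta^{-k},\widetilde{\beta}^k) \ge J_k(\beta^{-k},\beta_0) - \epsilon$. The lower bound $f(t,x,\mu,a_0) \ge -c_2(1+|x|^p+\int|z|^p\mu(dz)+|a_0|^{p'})$ and the symmetric bound on $g$, combined once more with Lemma \ref{le:finitestateestimate} applied to the strategy $(\beta^{-k},\beta_0)$ (for which the $k$-th control moment $\int|a|^p\beta_0 = T|a_0|^p$ is just a constant), give $-J_k(\beta^{-k},\beta_0)\le C(1+\E|\xi^1|^p + \frac{1}{n}\sum_{i\neq k}\E\int|a|^p\beta^i_t\,dt)$. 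Plugging this into part (1), applied to the strategy $(\beta^{-k},\widetilde{\beta}^k)$, delivers part (2).

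Part (3) is the most delicate step. Summing the inequality from part (1) over $k=1,\ldots,n$ and dividing by $n$, I would set $M := \frac{1}{n}\sum_i\E\int|a|^p\beta^i_t\,dt$ and $M' := \frac{1}{n}\sum_i\E\int|a|^{p'}\beta^i_t\,dt$, and use $\tfrac{1}{n^2}\sum_k\sum_{i\neq k} = \tfrac{n-1}{n^2}\sum_i \le \tfrac{1}{n}\sum_i$ to obtain
\[
M' - c_6 M \le c_7(1+\E|\xi^1|^p) + c_7 M - \frac{c_7}{n}\sum_k J_k(\beta).
\]
The Nash condition $J_k(\beta) \ge J_k(\beta^{-k},\beta_0) - \epsilon_k$ applied to the reference control $\beta_0$ from above gives $-\frac{1}{n}\sum_k J_k(\beta) \le \frac{1}{n}\sum_k\epsilon_k + C(1+\E|\xi^1|^p + M)$. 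Substituting back yields a bound of the form $M' \le C_1 M + C_2\bigl(1+\E|\xi^1|^p + \frac{1}{n}\sum_k\epsilon_k\bigr)$. The main obstacle, and the place where the hypothesis $p'>p$ is crucial, is that $M$ appears on both sides. I would close the estimate by Young's inequality: for every $\delta>0$ there is $C_\delta$ with $|a|^p\le \delta|a|^{p'}+C_\delta$, so $M\le \delta M' + C_\delta$; choosing $\delta$ small enough that $C_1\delta < 1/2$ allows the $M$-term on the right to be absorbed into the left, producing a uniform bound on $M'$, hence the stated bound on $M' - c_6 M \le M'$ after enlarging $c_7$ if necessary.
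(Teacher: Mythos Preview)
Your proof is correct and, for parts (1) and (2), matches the paper's argument essentially line for line. For part (3), however, you work harder than necessary. The paper does not sum part (1) and then separately bound $-\tfrac{1}{n}\sum_k J_k(\beta)$; instead it observes that the $\epsilon$-Nash condition is precisely the hypothesis of part (2) with $\widetilde{\beta}^k=\beta^k$, applies (2) for each $k$, and averages to obtain
\[
M' - c_6 M \;\le\; c_7\Bigl(1+\E^{\PP_n}|\xi^1|^p+\tfrac{1}{n}\textstyle\sum_k\epsilon_k\Bigr) + c_7 M.
\]
The single stray $c_7 M$ on the right is then absorbed by replacing $c_6$ with $c_6+c_7$, which is exactly the form of the stated conclusion. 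No Young's inequality is needed, and in particular the proof of (3) as stated does not use $p'>p$; that hypothesis only enters later, when one wants to extract a bound on $M'$ alone from the inequality $M'-c_6M\le C$. Your Young-inequality closure is valid (and implicitly uses that $M'<\infty$, which does follow from (2)), but the simpler absorption-by-enlarging-$c_6$ is both what the paper does and what the structure of the claimed inequality invites.
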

\begin{proof}
Recall that $\E^{\PP_n}[|\xi^1|^p] < \infty$ and that every $\widetilde{\beta} \in \A_n(\mathcal{E}_n)$ is required to satisfy
\[
\E^{\PP_n}\int_0^T\int_A|a|^p\widetilde{\beta}_t(da)dt < \infty.
\]
Moreover, if $\E^{\PP_n}\int_0^T\int_A|a|^{p'}\widetilde{\beta}_t(da)dt = \infty$ then the upper bound of assumption (A.5) implies that $J_k((\beta^{-k},\widetilde{\beta})) = -\infty$, for each $\beta \in \A_n^n(\mathcal{E}_n)$ and $1 \le k \le n$.

\textit{Proof of (1):} First, use the upper bounds of $f$ and $g$ from assumption (A.5) to get
\begin{align*}
J_k(\beta) &\le c_2(T+1)\E^{\PP_n}\left[1 + \left\|X^k[\beta]\right\|_T^p + \int_{\C^d}\left\|z\right\|^p_T\widehat{\mu}^x[\beta](dz)\right] - c_3\E^{\PP_n}\int_0^T\int_A|a|^{p'}\beta^k_t(da)dt  \\
	&\le 3c_5c_2(T+1)\E^{\PP_n}\left[1 + |\xi^1|^p + \int_0^T\int_A|a|^p\beta^k_t(da)dt + \frac{1}{n}\sum_{i=1}^n\int_0^T\int_A|a|^p\beta^i_t(da)dt\right]  \\
	&\quad\quad  - c_3\E^{\PP_n}\int_0^T\int_A|a|^{p'}\beta^k_t(da)dt,
\end{align*}
where the last inequality follows from Lemma \ref{le:finitestateestimate} (and $c_5 \ge 1$). This proves the first claim, with $c_6 := 6c_5c_2(T+1)/c_3$ and $c_7 := c_6 \vee (1/c_3)$.

\textit{Proof of (2):} Fix $a_0 \in A$ arbitrarily. Abuse notation somewhat by writing $a_0$ in place of the constant strict control $(\delta_{a_0})_{t \in [0,T]} \in \A_n(\mathcal{E}_n)$. Lemma \ref{le:finitestateestimate} implies
\[
\E^{\PP_n}\left[\left\|X^k[(\beta^{-k},a_0)]\right\|_T^p\right] \le c_5\E^{\PP_n}\left[1 + |\xi^1|^p + T\left(1 + \frac{1}{n}\right)|a_0|^p + \frac{1}{n}\sum_{i \neq k}^n\int_0^T\int_A|a|^p\beta^i_t(da) dt\right]
\]
and
\[
\E^{\PP_n}\int_{\C^d}\left\|z\right\|_T^p\widehat{\mu}^x[(\beta^{-k},a_0)](dz) \le c_5\E^{\PP_n}\left[1 + |\xi^1|^p + \frac{T}{n}|a_0|^p + \frac{1}{n}\sum_{i \neq k}^n\int_0^T\int_A|a|^p\beta^i_t(da) dt\right].
\]
Use the hypothesis along with the lower bounds on $f$ and $g$ from assumption (A.5) to get
\begin{align*}
J_k((\beta^{-k},\widetilde{\beta}^k))	 &\ge J_k((\beta^{-k},a_0)) - \epsilon \\
	&\ge -c_2(T+1)\E^{\PP_n}\left[1 + \left\|X^k[(\beta^{-k},a_0)]\right\|_T^p + \int_{\C^d}\left\|z\right\|_T^p\widehat{\mu}^x[(\beta^{-k},a_0)](dz) + |a_0|^{p'}\right]  - \epsilon  \\
	&\ge -C\E^{\PP_n}\left[1 + |\xi^1|^p + \frac{1}{n}\sum_{i \neq k}^n\int_0^T\int_A|a|^p\beta^i_t(da) dt\right]  - \epsilon,
\end{align*}
where $C > 0$ depends only on $c_2$, $c_5$, $T$, and $|a_0|^{p'}$. Applying this with the first result with $\beta$ replaced by $(\beta^{-k},\widetilde{\beta}^k)$ proves (2), replacing $c_7$ by $c_7(1+C)$.

\textit{Proof of (3):} If $\beta$ is an $\epsilon$-Nash equilibrium, then applying (2) with $\widetilde{\beta}^k=\beta^k$ gives
\begin{align*}
\E^{\PP_n}\int_0^T\int_A(|a|^{p'} - c_6|a|^p)\beta^k_t(da)dt \le c_7\E^{\PP_n}\left[1 + \epsilon_k + |\xi^1|^p + \frac{1}{n}\sum_{i = 1}^n\int_0^T\int_A|a|^p\beta^i_t(da)\right].
\end{align*}
The proof is completed by averaging over $k=1,\ldots,n$, rearranging terms, and replacing $c_6$ by $c_6 + c_7$.
\end{proof}

\subsection{Relative compactness and MFG pre-solution}
This section proves that $(P_n)_{n=1}^\infty$, defined in \eqref{def:pn}, is relatively compact and that each limit point is a MFG pre-solution. First, we state a tailor-made tightness result for It\^o processes. It is essentially an application of Aldous' criterion, but the proof is deferred to Section \ref{ap:itocompact}.

\begin{proposition} \label{pr:itocompact}
Fix $c > 0$ and a positive integer $k$. For each $\kappa \ge 0$, let $\Q_\kappa \subset \P(\V \times \C^d)$ denote the set of laws $P \circ (\Lambda,X)^{-1}$ of $\V \times \C^d$-valued random variables $(\Lambda,X)$ defined on some filtered probability space $(\Theta,(\G_t)_{t \in [0,T]},P)$ satisfying
\[
dX_t = \int_AB(t,a)\Lambda_t(da)dt + \Sigma(t)dW_t,
\]
where the following hold:
\begin{enumerate}
\item $W$ is a $(\G_t)_{t \in [0,T]}$-Wiener process of dimension $k$.
\item $\Sigma : [0,T] \times \Theta \rightarrow \R^{d \times k}$ is progressively measurable, and $B : [0,T] \times \Theta \times A \rightarrow \R^d$ is jointly measurable with respect to the progressive $\sigma$-field on $[0,T] \times \Theta$ and the Borel $\sigma$-field on $A$.
\item $X_0$ is $\G_0$-measurable.
\item There exists a nonnegative $\G_T$-measurable random variable $Z$ such that
\begin{enumerate}
\item For each $(t,\omega,a) \in [0,T] \times \Theta \times A$,
\begin{align*}
|B(t,a)| &\le c\left(1 + |X_t| + Z + |a|\right), \quad  |\Sigma\Sigma^\top(t)| \le c\left(1 + |X_t|^{p_\sigma} + Z^{p_\sigma}\right)
\end{align*}
\item Lastly,
\[
\E^P\left[|X_0|^{p'} + Z^{p'} + \int_0^T\int_A|a|^{p'}\Lambda_t(da)dt\right] \le \kappa.
\]
\end{enumerate}
\end{enumerate}
(That is, we vary over $\Sigma$, $B$, $Z$, $k$, and the probability space.) Then, for any triangular array $\{\kappa_{n,i} : 1 \le i \le n\} \subset [0,\infty)$ with $\sup_n\frac{1}{n}\sum_{i=1}^n\kappa_{n,i} < \infty$, the set
\[
\Q := \left\{\frac{1}{n}\sum_{i=1}^nP_i : n \ge 1, \ P^i \in \Q_{\kappa_{n,i}} \text{ for } i=1,\ldots,n\right\}
\]
is relatively compact in $\P^p(\V \times \C^d)$.
\end{proposition}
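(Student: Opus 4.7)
\textbf{Proof proposal for Proposition \ref{pr:itocompact}.}

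The plan is to first establish tightness and uniform integrability for elements of each family $\Q_\kappa$ (with constants scaling linearly in $\kappa$), then exploit linearity in the measure to transfer everything to the mixture family $\Q$, and finally upgrade from weak tightness to relative compactness in the $p$-Wasserstein topology by using the strict inequality $p'>p$.

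\emph{Step 1: $p'$-moment estimate for $\|X\|_T$.} For $P \in \Q_\kappa$ with associated $(B,\Sigma,Z,W)$, a standard application of Burkholder--Davis--Gundy, the growth hypotheses on $B$ and $\Sigma\Sigma^\top$, Jensen's inequality (using $p_\sigma\le 2\le p'$ to absorb the diffusion coefficient bound into terms of order $p'$), and Gronwall yields
\[
\E^P[\|X\|_T^{p'}] \le C\Bigl(1 + \E^P|X_0|^{p'} + \E^P Z^{p'} + \E^P\!\!\int_0^T\!\!\int_A |a|^{p'}\Lambda_t(da)dt\Bigr) \le C(1+\kappa),
\]
with $C$ depending only on $c$, $T$, $p'$, $d$, $p_\sigma$. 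This is essentially the computation carried out in Lemma \ref{le:finitestateestimate}, adapted to the one-particle setting.

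\emph{Step 2: Aldous-type modulus estimate.} Let $\sigma\le\tau\le\sigma+\delta$ be $(\G_t)$-stopping times bounded by $T$. Splitting $X_\tau-X_\sigma$ into drift and martingale parts, applying BDG and Cauchy--Schwarz in the drift term, and invoking the growth bounds once more,
\[
\E^P|X_\tau-X_\sigma|^2 \le C(\delta+\delta^2)\Bigl(1 + \E^P\|X\|_T^2 + \E^P Z^2 + \E^P\!\!\int_0^T\!\!\int_A|a|^2\Lambda_t(da)dt\Bigr).
\]
Using Step 1 together with H\"older (to control the $2$-moments by the $p'$-moments, which are dominated by $\kappa$ up to a constant), this becomes $\E^P|X_\tau-X_\sigma|^2 \le C\delta(1+\kappa)$.

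\emph{Step 3: Averaging to the mixture family.} For $Q=\frac1n\sum_{i=1}^n P_i$ with $P_i\in\Q_{\kappa_{n,i}}$ and $K:=\sup_n\frac1n\sum_i\kappa_{n,i}<\infty$, linearity of expectation gives
\[
\E^Q[\|X\|_T^{p'}]\le C(1+K),\qquad \E^Q\!\!\int_0^T\!\!\int_A|a|^{p'}\Lambda_t(da)dt\le K,
\]
and similarly $\E^Q|X_\tau-X_\sigma|^2\le C\delta(1+K)$ uniformly over $Q\in\Q$.

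\emph{Step 4: Weak tightness of $\Q$ on $\V\times\C^d$.} The $\C^d$-marginals are tight by the classical Aldous criterion: the uniform second-moment bound at $t=0$ (say) combined with the modulus bound from Step 3 (via Markov) verifies the hypotheses. The $\V$-marginals are tight because the uniform bound $\E^Q\int_0^T\!\!\int_A|a|^{p'}\Lambda_t(da)dt\le K$ together with $p'>p\ge 1$ implies that the laws of the second marginals $\int_0^T\Lambda_t(\cdot)dt$ on $A$ are uniformly tight and uniformly integrable, which, combined with the fixed time-marginal (Lebesgue on $[0,T]$), yields tightness in $\P(\V)$. Tightness of the joint laws then follows from tightness of the marginals.

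\emph{Step 5: Upgrade to $\P^p$ compactness.} Relative compactness in $\P^p(\V\times\C^d)$ is equivalent to weak tightness plus uniform $p$-integrability of the distance to a fixed base point. The uniform bounds on $p'$-moments of $\|X\|_T$ and of $\int\!|a|^{p'}\Lambda_t(da)dt$ with $p'>p$ supply the required uniform integrability via de la Vall\'ee--Poussin, so Step 4 promotes to relative compactness in $\P^p$.

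The main technical obstacle is Step 2: the growth exponent $p_\sigma$ on $\Sigma\Sigma^\top$ can be as large as $2$ and depends on $X$ and on the random variable $Z$ rather than a deterministic constant, so care is needed to express all the right-hand sides in terms of the single scalar $\kappa$ and to keep the bound linear in $\kappa$ (so that the averaging in Step 3 is harmless). Apart from this, the argument is a fairly standard SDE tightness computation, packaged so as to be insensitive to how the individual laws $P_i$ are produced.
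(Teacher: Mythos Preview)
Your proposal is correct and follows essentially the same approach as the paper's proof: a uniform $p'$-moment bound on $\|X\|_T$ via BDG and Gronwall, Aldous' criterion for tightness of the $\C^d$-marginals, tightness of the $\V$-marginals from the uniform $p'$-moment bound on $\int|a|^{p'}\Lambda_t(da)dt$, and the upgrade to $\P^p$-compactness via $p'>p$. The only cosmetic differences are that you run Aldous with second moments while the paper uses $p$th moments, and you handle the $|a|$ contribution in the increment estimate by pulling out a factor of $\delta$ from Cauchy--Schwarz, whereas the paper instead argues that $\E^P\int_\tau^{(\tau+\delta)\wedge T}\int_A|a|^p\Lambda_t(da)dt\to 0$ by uniform integrability in time; both are valid.
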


\begin{lemma} \label{le:convergence-tight}
$(P_n)_{n=1}^\infty$ is relatively compact in $\P^p(\Omega)$, and
\begin{align}
\sup_n\E^{P_n}\left[\|X\|^{p'}_T + \int_{\C^d}\|z\|^{p'}_T\mu(dz) + \int_0^T\int_A|a|^{p'}\Lambda_t(da)dt\right] < \infty. \label{def:convergence-tight0}
\end{align}
\end{lemma}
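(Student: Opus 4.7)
The plan is to first extract a uniform $p'$-moment bound on controls from the $\epsilon^n$-Nash property, then bootstrap this through the state SDE to bound state moments, and finally translate these bounds into the relative compactness of $(P_n)$ via Proposition~\ref{pr:itocompact} and standard mean-measure arguments for $\P^p(\P^p(\X))$.

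\textbf{Step 1 (control moments).}
Apply Lemma~\ref{le:valuebound}(3) to the $\epsilon^n$-Nash equilibrium $\Lambda^n$. Since $p' > p$, Young's inequality yields constants $C_0,C_1 > 0$ such that $c_6|a|^p \le \tfrac12|a|^{p'} + C_0$ for every $a \in A$. Substituting and using $\int_0^T\int_A \Lambda^{n,i}_t(da)\,dt = T$, I obtain
\[
\frac{1}{n}\sum_{i=1}^n \E^{\PP_n}\!\int_0^T\!\int_A |a|^{p'}\Lambda^{n,i}_t(da)\,dt \;\le\; C_1\Bigl(1 + \E|\xi^1|^{p'} + \tfrac{1}{n}\!\sum_{i}\epsilon^n_i\Bigr),
\]
which is bounded uniformly in $n$ thanks to \eqref{def:epsilonconverge} and $\lambda \in \P^{p'}(\R^d)$.

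\textbf{Step 2 (state moments).}
Invoke Lemma~\ref{le:finitestateestimate} with $\gamma = p'$. Combined with Step~1, this gives
\[
\sup_n \frac{1}{n}\sum_{i=1}^n \E^{\PP_n}\bigl[\|X^i[\Lambda^n]\|_T^{p'}\bigr] \;<\; \infty.
\]
The moment bound \eqref{def:convergence-tight0} then follows immediately from the definition of $P_n$: the $\|X\|_T^{p'}$ and $\int\|z\|_T^{p'}\mu(dz)$ expectations both equal $\tfrac{1}{n}\sum_i \E^{\PP_n}\|X^i[\Lambda^n]\|_T^{p'}$, and the control term equals the bound of Step~1.

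\textbf{Step 3 (tightness of the non-$\mu$ coordinates).}
The $\R^d$-marginal of $P_n$ is $\lambda$, the $\C^{m_0}$-marginal is $m_0$-dim Wiener measure, and the $\C^m$-marginal is $m$-dim Wiener measure, all fixed. For the joint $\V \times \C^d$-marginal
\[
\frac{1}{n}\sum_{i=1}^n \PP_n\circ(\Lambda^{n,i},X^i[\Lambda^n])^{-1},
\]
I apply Proposition~\ref{pr:itocompact} with $k = m + m_0$, the $(m+m_0)$-dim Wiener process $(W^i,B)$, drift $B(t,a) = b(t,X^i_t,\widehat{\mu}^x_t[\Lambda^n],a)$, diffusion $\Sigma(t)=(\sigma,\sigma_0)(t,X^i_t,\widehat{\mu}^x_t[\Lambda^n])$, and envelope $Z := \bigl(\int \|z\|_T^p\,\widehat{\mu}^x[\Lambda^n](dz)\bigr)^{1/p}$. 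The growth hypothesis (4)(a) of Proposition~\ref{pr:itocompact} follows from (A.4) and the inequality $\int|z|^p\widehat{\mu}^x_t(dz)\le Z^p$. Jensen's inequality ($p'/p \ge 1$) gives $\E Z^{p'} \le \tfrac{1}{n}\sum_k \E \|X^k\|_T^{p'}$, so with $\kappa_{n,i} := \E|\xi^i|^{p'} + \E Z^{p'} + \E\int|a|^{p'}\Lambda^{n,i}_t(da)\,dt$, Steps~1--2 give $\sup_n \tfrac{1}{n}\sum_i \kappa_{n,i} < \infty$. Proposition~\ref{pr:itocompact} then yields the desired relative compactness in $\P^p(\V\times\C^d)$.

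\textbf{Step 4 (tightness of the $\mu$-coordinate).}
The $\P^p(\X)$-marginal of $P_n$ is $\PP_n \circ \widehat{\mu}[\Lambda^n]^{-1}$. Its mean measure on $\X$ is exactly
\[
\E^{\PP_n}\!\bigl[\widehat{\mu}[\Lambda^n]\bigr] \;=\; \frac{1}{n}\sum_{i=1}^n \PP_n\circ(W^i,\Lambda^{n,i},X^i[\Lambda^n])^{-1},
\]
whose $\C^m$-marginal is Wiener measure and whose $\V\times\C^d$-marginal was handled in Step~3; thus the mean measure is relatively compact in $\P^p(\X)$. I then invoke the standard result (from the appendices of \cite{lacker-mfgcontrolledmartingaleproblems}) that, together with the uniform bound $\sup_n \E^{P_n}\!\int \bigl(\|w\|_T^{p'}+\|q\|_{\V}^{p'}+\|x\|_T^{p'}\bigr)\mu(dw,dq,dx) < \infty$ (again a consequence of Steps~1--2), this lifts to relative compactness of $\PP_n\circ\widehat{\mu}[\Lambda^n]^{-1}$ in $\P^p(\P^p(\X))$.

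\textbf{Step 5 (assembly).}
Marginal relative compactness in each coordinate, together with the uniform $p'$-moment bounds collected above, yields relative compactness of $(P_n)$ in $\P^p(\Omega)$: joint tightness follows from tightness of the marginals on a product space, and the $p'$-moment bound ensures uniform $p$-Wasserstein integrability (since $p' > p$).

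\textbf{Main obstacle.}
The only non-routine piece is verifying the hypotheses of Proposition~\ref{pr:itocompact}---in particular, producing the $\F^n_T$-measurable envelope $Z$ that simultaneously dominates both the drift and diffusion growth uniformly over players $i$---and then converting this into tightness in the two-level space $\P^p(\P^p(\X))$. Both reduce, via the averaging trick hidden in the definition of $P_n$, to the $p'$-moment bounds of Steps~1--2.
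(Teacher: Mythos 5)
Your proof is correct and follows essentially the same route as the paper's: Lemma \ref{le:valuebound}(3) plus $p'>p$ for the uniform control moments, Lemma \ref{le:finitestateestimate} with $\gamma=p'$ for the state moments, Proposition \ref{pr:itocompact} with the envelope $Z=(\int\|z\|_T^p\,\widehat{\mu}^x[\Lambda^n](dz))^{1/p}$ for the $(\Lambda,X)$-marginal, and the mean-measure lifting (Sznitman / Corollary B.2 of \cite{lacker-mfgcontrolledmartingaleproblems}) for the $\mu$-marginal, assembled via marginal-to-joint relative compactness. The only cosmetic slips (writing $\E|\xi^1|^{p'}$ where the lemma gives $\E|\xi^1|^p$, and the undefined notation $\|q\|_\V$) are harmless.
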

\begin{proof}
We first establish \eqref{def:convergence-tight0}. Since $\Lambda^n$ is a $\epsilon^n$-Nash equilibrium, part (3) of Lemma \ref{le:valuebound} implies
\[
\frac{1}{n}\sum_{k=1}^n\E^{\PP_n}\int_0^T\int_A(|a|^{p'} - c_6|a|^p)\Lambda^{n,k}_t(da)dt \le c_7\left(1 + \E^{\PP_n}[|\xi^1|^p] + \frac{1}{n}\sum_{k=1}^n\epsilon^n_k \right).
\]
The right-hand side above is bounded in $n$, because of hypothesis \eqref{def:epsilonconverge} and because $\PP_n \circ (\xi^1)^{-1} = \lambda \in \P^p(\R^d)$ for each $n$. Since $p' > p$, it follows that
\begin{align}
\sup_n\frac{1}{n}\sum_{k=1}^n\E^{\PP_n}\int_0^T\int_A|a|^{p'}\Lambda^{n,k}_t(da)dt < \infty. \label{pf:convergence-tight3}
\end{align}
Lemma \ref{le:finitestateestimate} implies
\begin{align*}
\E^{\PP_n}\int_{\C^d}\|z\|^{p'}_T\widehat{\mu}^x[\Lambda^n](dz) \le c_5\E^{\PP_n}\left[1 + |\xi^1|^{p'} + \frac{1}{n}\sum_{k=1}^n\int_0^T\int_A|a|^{p'}\Lambda^{n,k}_t(da)dt\right] =: \kappa_n.
\end{align*}
Thus
\begin{align*}
\E^{P_n}&\left[\|X\|^{p'}_T + \int_{\C^d}\|z\|^{p'}_T\mu(dz) + \int_0^T\int_A|a|^{p'}\Lambda_t(da)dt\right] \\
	&= \frac{1}{n}\sum_{k=1}^n\E^{\PP_n}\left[\|X^k[\Lambda^n]\|^{p'}_T + \int_{\C^d}\|z\|^{p'}_T\widehat{\mu}^x[\Lambda^n](dz) + \int_0^T\int_A|a|^{p'}\Lambda^{n,k}_t(da)dt\right] \\
	&\le c_5\E^{\PP_n}\left[2 + 2|\xi^1|^{p'} + \frac{3}{n}\sum_{k=1}^n\int_0^T\int_A|a|^{p'}\Lambda^{n,k}_t(da)dt\right] \\
	&\le 3\kappa_n.
\end{align*}
Recall in the last line that $c_5 \ge 1$.
From \eqref{pf:convergence-tight3} we conclude that $\sup_n\kappa_n < \infty$, and \eqref{def:convergence-tight0} follows.

To prove that $(P_n)_{n=1}^\infty$, it suffices to show that each family of marginals is relatively compact (e.g. by \cite[Lemma A.2]{lacker-mfgcontrolledmartingaleproblems}).
Since $(P_n \circ (\xi,B,W)^{-1})_{n=1}^\infty$ is a singleton, it is trivially compact. We may apply Proposition \ref{pr:itocompact} to show that
\begin{align*}
P_n \circ (\Lambda,X)^{-1} = \frac{1}{n}\sum_{i=1}^n \PP_n \circ (\Lambda^{n,i},X^{n,i}[\Lambda^n])^{-1}
\end{align*}
forms a relatively compact sequence. Indeed, in the notation of Proposition \ref{pr:itocompact}, we use $Z = (\int_{\C^d}\|z\|_T^p\widehat{\mu}^x[\Lambda^n](dz))^{1/p}$ and $c = c_1$ of assumption (A.4) to check that $\PP_n \circ (\Lambda^{n,i},X^{n,i}[\Lambda^n])^{-1}$ is in $\Q_{\kappa_{n,i}}$ for each $1 \le i \le n$, where
\[
\kappa_{n,i} = \kappa_n + \E^{\PP_n}\left[|\xi^i|^{p'} + \int_0^T\int_A|a|^{p'}\Lambda^{n,i}_t(da)dt\right].
\]
Since $c_5 \ge 1$, we have $\frac{1}{n}\sum_{i=1}^n\kappa_{n,i} \le 2\kappa_n$, and so $\sup_n\frac{1}{n}\sum_{i=1}^n\kappa_{n,i} < \infty$. Thus, Proposition \ref{pr:itocompact} establishes the relative compactness of $(P_n \circ (\Lambda,X)^{-1})_{n=1}^\infty$. Next, note that
$P_n \circ (W,\Lambda,X)^{-1}$ is the mean measure of $P_n \circ \mu^{-1}$ for each $n$, since for each bounded measurable $\phi : \X \rightarrow \R$ we have
\begin{align*}
\E^{P_n}\left[\phi(W,\Lambda,X)\right] &= \frac{1}{n}\sum_{i=1}^n\E^{\PP_n}\left[\phi(W^i,\Lambda^{n,i},X^i[\Lambda^n])\right] = \E^{\PP_n}\int_\X\phi\,d\widehat{\mu}[\Lambda^n] = \E^{P_n}\int_\X\phi\,d\mu.
\end{align*}
Since also 
\[
\sup_n\E^{P_n}\left[\|W\|^{p'}_T + \int_0^T\int_A|a|^{p'}\Lambda_t(da)dt + \|X\|^{p'}_T\right] < \infty,
\]
the relative compactness of $(P_n \circ \mu^{-1})_{n=1}^\infty$ in $\P^p(\P^p(\X))$ follows from the relative compactness of $(P_n \circ (W,\Lambda,X)^{-1})_{n=1}^\infty$ in $\P^p(\X)$. Indeed, when $p=0$ and $\P^0$ is given the topology of weak convergence, this is a well known result of Sznitman, stated in (2.5) of the proof of \cite[Proposition 2.2]{sznitman}. See \cite[Corollary B.2]{lacker-mfgcontrolledmartingaleproblems} for the generalization to $\P^p$. This completes the proof.
\end{proof}

\begin{lemma} \label{le:convergence-pre-solution}
Any limit point $P$ of $(P_n)_{n=1}^\infty$ in $\P^p(\Omega)$ is a MFG pre-solution.
\end{lemma}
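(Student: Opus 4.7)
The plan is to check, along any convergent subsequence $P_{n_k}\to P$ in $\P^p(\Omega)$, the four hypotheses of Lemma~\ref{le:presolution}.

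\textit{Condition (3)} is the most immediate. Under $P_n$, the coordinate $\mu$ is literally the empirical measure $\widehat{\mu}[\Lambda^n]$ of the $n$ samples $(W^k,\Lambda^{n,k},X^k[\Lambda^n])$. Hence for bounded continuous $\phi:\X\to\R$ and $g:\C^{m_0}\times\P^p(\X)\to\R$, symmetrization over the agents yields
\begin{align*}
\E^{P_n}[\phi(W,\Lambda,X)g(B,\mu)]
&= \frac{1}{n}\sum_{i=1}^n\E^{\PP_n}\bigl[\phi(W^i,\Lambda^{n,i},X^i[\Lambda^n])g(B,\widehat{\mu}[\Lambda^n])\bigr] \\
&= \E^{P_n}\Bigl[g(B,\mu)\int_\X\phi\,d\mu\Bigr].
\end{align*}
Both sides are $\P^p$-continuous functionals of the law, so the identity passes to the limit and yields the tower characterization $\mu = P((W,\Lambda,X)\in\cdot\mid B,\mu)$.

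\textit{Conditions (1) and (2).} The marginal $P_n\circ(\xi,B,W)^{-1} = \lambda\otimes\W^{m_0}\otimes\W^m$ is independent of $n$, so $P\circ\xi^{-1}=\lambda$ and $\xi,B,W$ are mutually independent under $P$. The subtler joint independence $(\xi,W)\perp(B,\mu)$ is recovered by introducing $\widehat{\nu}_n:=\tfrac{1}{n}\sum_{i=1}^n\delta_{(\xi^i,W^i)}$ and rewriting
\[
\E^{P_n}[\phi(\xi,W)g(B,\mu)] = \E^{\PP_n}\Bigl[g(B,\widehat{\mu}[\Lambda^n])\int\phi\,d\widehat{\nu}_n\Bigr].
\]
Since the $(\xi^i,W^i)$ are i.i.d.~under $\PP_n$ with finite $p$-th moment, the strong law of large numbers gives $\widehat{\nu}_n\to\lambda\otimes\W^m$ in $\P^p$ almost surely; combined with the convergence of $(B,\widehat{\mu}[\Lambda^n])$ the right-hand side converges to $\int\phi\,d(\lambda\otimes\W^m)\cdot\E^P[g(B,\mu)]$, which is the desired factorization. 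For the Wiener property of $(B,W)$ with respect to the enlarged canonical filtration: on each summand $\PP_n\circ(\xi^i,B,W^i,\widehat{\mu}[\Lambda^n],\Lambda^{n,i},X^i[\Lambda^n])^{-1}$ of $P_n$, the filtration $(\F^{\xi,B,W,\mu,\Lambda,X}_t)$ is a sub-filtration of $(\F^n_t)$, so $B$ and $W^i$ remain Wiener with respect to it; the martingale characterization is then preserved under averaging in $i$ and, via testing against bounded continuous $(\F^{\xi,B,W,\mu,\Lambda,X}_s)$-measurable functionals, under the $\P^p$-limit.

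\textit{Condition (4)} is handled by a martingale-problem argument. For each $\phi\in C^2_c(\R^d)$, define
\[
M^\phi_t := \phi(X_t)-\phi(\xi)-\int_0^t\!\!\int_A \mathcal{L}_u\phi(X_u,\mu^x_u,a)\,\Lambda_u(da)\,du,
\]
where $\mathcal{L}_u\phi(x,m,a):=b(u,x,m,a)\cdot\nabla\phi(x)+\tfrac12\mathrm{tr}\bigl[(\sigma\sigma^\top+\sigma_0\sigma_0^\top)(u,x,m)D^2\phi(x)\bigr]$. It\^o's formula for each $X^i[\Lambda^n]$ under $\PP_n$ makes $M^\phi$ a martingale with respect to the canonical filtration under each of the $n$ summands of $P_n$, hence under the mixture $P_n$; the analogous identities for the cross-brackets $[M^\phi,B]$ and $[M^\phi,W]$ follow in the same way. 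Combined with the Wiener property of $(B,W)$ established above, the L\'evy/martingale characterization then delivers the canonical state SDE~\eqref{def:canonicalSDE}. The principal obstacle lies precisely in the continuity-and-growth bookkeeping at this step: one must use assumptions (A.3)--(A.4) together with $p_\sigma\le p$ to exhibit the functionals appearing in $M^\phi$ as $\P^p(\Omega)$-continuous with at most $p$-growth, and then invoke the uniform $p'$-moment bound~\eqref{def:convergence-tight0} of Lemma~\ref{le:convergence-tight} (with $p'>p$) to push the martingale and cross-variation identities through the $\P^p$-limit from $P_n$ to $P$.
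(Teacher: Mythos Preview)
Your proof follows the same overall architecture as the paper: verify the four hypotheses of Lemma~\ref{le:presolution} for the limit $P$. Conditions (1)--(3) are handled essentially identically, modulo a small sloppiness in (2): you invoke the strong law for $\widehat{\nu}_n$ ``almost surely,'' but the environments $\mathcal{E}_n$ live on potentially different probability spaces, so the a.s.\ statement is not literally meaningful. The paper instead uses the $L^1$ (or $L^2$) law of large numbers for each fixed test function $\phi$, which is what your argument really needs and what you should write.

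The genuine difference is in condition (4). The paper simply invokes the weak-convergence results of Kurtz and Protter \cite{kurtzprotter-weakconvergence} to pass the SDE through the limit. You instead outline a direct martingale-problem argument: show that $M^\phi$ and the cross-brackets $[M^\phi,B]$, $[M^\phi,W]$ are martingales under each $P_n$, then use continuity-and-growth bookkeeping together with the uniform $p'$-moment bound~\eqref{def:convergence-tight0} to push these identities to $P$, and finally use the cross-bracket identification to recover the specific driving noises $B,W$ in~\eqref{def:canonicalSDE}. This is a legitimate and more self-contained route; it avoids the black-box citation at the cost of spelling out the moment and continuity estimates (which, as you note, rest on (A.3)--(A.4), $p_\sigma\le p$, and $p'>p$). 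Either approach works; the paper's is shorter, yours is more transparent about where the assumptions enter.
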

\begin{proof}
We abuse notation somewhat by assume that $P_n \rightarrow P$, with the understanding that this is along a subsequence. We check that $P$ satisfies the four conditions of Lemma \ref{le:presolution}.

\begin{enumerate}
\item Of course,
\begin{align*}
P_n \circ (\xi,B,W)^{-1} &= \frac{1}{n}\sum_{i=1}^n\PP_n \circ (\xi^i,B,W^i)^{-1} = \lambda \times \W^{m_0} \times \W^m,
\end{align*}
where $\W^k$ denotes Wiener measure on $\C^k$. Thus $P \circ (\xi,B,W)^{-1} = \lambda \times \W^{m_0} \times \W^m$ as well. On $\Omega_n$, we know $\sigma(W^i_s - W^i_t,B_s-B_t : i=1,\ldots,n, \ s \in [t,T])$ is $\PP_n$-independent of $\F^n_t$ for each $t \in [0,T]$. It follows that, on $\Omega$, $\sigma(W_s - W_t,B_s-B_t : s \in [t,T])$ is $P_n$-independent of $\F^{\xi,B,W,\mu,\Lambda,X}_t$. Hence $B$ and $W$ are Wiener processes on $(\Omega,(\F^{\xi,B,W,\mu,\Lambda,X}_t)_{t \in [0,T]},P)$.
\item Fix bounded continuous functions $\phi : \R^d \times \C^m  \rightarrow \R$ and $\psi : \C^{m_0} \times \P^p(\X) \rightarrow \R$. Since $(\xi^1,W^1),\ldots,(\xi^n,W^n)$ are i.i.d. under $\PP_n$ with common law $P \circ (\xi,W)^{-1}$ for each $n$, the law of large numbers implies
\begin{align*}
\lim_{n\rightarrow\infty}\E^{\PP_n}\left[\left|\frac{1}{n}\sum_{i=1}^n\phi(\xi^i,W^i) - \E^P[\phi(\xi,W)]\right|\psi(B,\widehat{\mu}[\Lambda^n])\right] = 0.
\end{align*}
This implies
\begin{align*}
\E^P\left[\phi(\xi,W)\psi(B,\mu)\right] &= \lim_{n\rightarrow\infty}\frac{1}{n}\sum_{i=1}^n\E^{\PP_n}\left[\phi(\xi^i,W^i)\psi(B,\widehat{\mu}[\Lambda^n]) \right] \\
	&= \E^P[\phi(\xi,W)]\lim_{n\rightarrow\infty}\frac{1}{n}\sum_{i=1}^n\E^{\PP_n}\left[\psi(B,\widehat{\mu}[\Lambda^n]) \right] \\
	&= \E^P[\phi(\xi,W)]\E^P\left[\psi(B,\mu)\right].
\end{align*}
This shows $(B,\mu)$ is independent of $(\xi,W)$ under $P$. Since $\xi^i$ and $W^i$ are independent under $\PP_n$, it follows that $\xi$ and $W$ are independent under $P_n$, for each $n$. Thus $\xi$ and $W$ are independent under $P$, and we conclude that $\xi$, $W$, and $(B,\mu)$ are independent under $P$.
\item Let $\phi : \X \rightarrow \R$ and $\psi : \C^{m_0} \times \P^p(\X) \rightarrow \R$ be bounded and continuous. Then
\begin{align*}
\E^P\left[\psi(B,\mu)\phi(W,\Lambda,X)\right] &= \lim_{n\rightarrow\infty}\E^{\PP_n}\left[\psi(B,\widehat{\mu}[\Lambda^n])\frac{1}{n}\sum_{i=1}^n\phi(W^i,\Lambda^{n,i},X^i[\Lambda^n]) \right] \\
	&= \lim_{n\rightarrow\infty}\E^{\PP_n}\left[\psi(B,\widehat{\mu}[\Lambda^n])\int_\X\phi\,d\widehat{\mu}[\Lambda^n] \right] \\
	&= \E^P\left[\psi(B,\mu)\int_\X\phi\,d\mu \right].
\end{align*}
\item Since $(\xi^i,B,W^i,\widehat{\mu}[\Lambda^n],\Lambda^{n,i},X^i[\Lambda^n])$ verify the state SDE under $\PP_n$, the canonical processes $(\xi,B,W,\mu,\Lambda,X)$ verify the state equation \eqref{def:SDE} under each $P_n$, for each $n$. It follows from the results of Kurtz and Protter \cite{kurtzprotter-weakconvergence} that the state equation holds under the limit measure $P$ as well.
\end{enumerate}
\end{proof}

\subsection{Modified finite-player games}
The last step of the proof, executed in the next Section \ref{se:optimalityinthelimit}, is to show that any limit $P$ of $P_n$ is optimal.
This step is more involved, and we devote this subsection to studying a useful technical device which we call the \emph{$k$-modified $n$-player game}, in which agent $k$ is removed from the empirical measures. Intuitively, if the $n$-player game is modified so that the empirical measure (present in the state process dynamics and objective functions) no longer includes agent $k$, then the optimization problem of agent $k$ de-couples from that of the other agents; agent $k$ may then treat the empirical measure of the other $n-1$ agents as fixed and thus faces exactly the type of control problem encountered in the MFG. Let us make this idea precise.

For $\beta = (\beta^1,\ldots,\beta^n) \in \A_n^n(\mathcal{E}_n)$, define $Y^{-k}[\beta] = (Y^{-k,1}[\beta],\ldots,Y^{-k,n}[\beta])$ to be the unique strong solution on $(\Omega_n,(\F^n_t)_{t \in [0,T]},\PP_n)$ of the SDE
\begin{align*}
Y^{-k,i}_t[\beta] &= \xi^i + \int_0^t\int_Ab(s,Y^{-k,i}_s[\beta],\widehat{\mu}^{-k,x}_s[\beta],a)\beta^i_s(da)dt + \int_0^t\sigma(s,Y^{-k,i}_s[\beta],\widehat{\mu}^{-k,x}_s[\beta])dW^i_s \\
	&\quad\quad + \int_0^t\sigma_0(s,Y^{-k,i}_s[\beta],\widehat{\mu}^{-k,x}_s[\beta])dB_s, \\
\widehat{\mu}^{-k,x}[\beta] &:= \frac{1}{n-1}\sum_{i \neq k}^n\delta_{Y^{-k,i}[\beta]}.
\end{align*}
Define also
\[
\widehat{\mu}^{-k}[\beta] = \frac{1}{n-1}\sum_{i \neq k}^n\delta_{(W^i,\beta^i,Y^{-k,i}[\beta])}.
\]
Intuitively, $Y^{-k,i}$ is agent $i$'s state process in an analog of the $n$-player game, in which agent $k$ has been removed from the empirical measure. Naturally, for fixed $k$, the $k$-modified state processes $Y^{-k}[\beta]$ should not be far from the true state processes $X[\beta]$ if $n$ is large, and we will quantify this precisely.
We will need to be somewhat explicit about the choice of metric on $\V$, so we define $d_\V$ by
\[
d^p_\V(q,q') := T\ell_{[0,T] \times A}(q/T,q'/T) = \inf_\gamma\int_{[0,T]^2 \times A^2}(|t-t'|^p + |a-a'|^p)\gamma(dt,dt',da,da'),
\]
where the infimum is over measures $\gamma$ on $[0,T]^2 \times A^2$ with marginals $q$ and $q'$. By choosing $\gamma = dt\delta_{t}(dt')q_t(da)q'_t(da')$, we note that
\begin{align}
d^p_\V(q,q') \le 2^{p-1}\int_0^T\int_A|a|^pq_t(da)dt + 2^{p-1}\int_0^T\int_A|a|^pq'_t(da)dt. \label{vmetricinequality}
\end{align}
Define the $p'$-Wasserstein distance $\ell_{\X,p'}$ on $\P^{p'}(\X)$ with respect to the metric
\begin{align}
d_\X((w,q,x),(w',q',x')) &:= \|w-w'\|_T + d_\V(q,q') + \|x-x'\|_T. \label{def:dX}
\end{align}

\begin{lemma} \label{le:modifiedsystem}
There exists a constant $c_8 > 0$ such that, for each $n \ge k \ge 1$ and $\beta = (\beta^1,\ldots,\beta^n) \in \A_n^n(\mathcal{E}_n)$, we have
\begin{align*}
\E^{\PP_n}&\left[\ell_{\X,p'}^{p'}(\widehat{\mu}^{-k}[\beta],\widehat{\mu}[\beta]) + \left\|X^k[\beta] - Y^{-k,k}[\beta]\right\|^{p'}_T \right] \le c_8(1+M[\beta])/n, \text{ where} \\
M[\beta] &:= \E^{\PP_n}\left[|\xi^1|^{p'} + \frac{1}{n}\sum_{i=1}^n\int_0^T\int_A|a|^{p'}\beta^i_t(da)dt\right].
\end{align*}
\end{lemma}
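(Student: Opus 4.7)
The plan is to build an explicit coupling between $\widehat{\mu}[\beta]$ and $\widehat{\mu}^{-k}[\beta]$, then control the resulting cost by combining an SDE-comparison argument with Gronwall's inequality. Throughout I drop the $[\beta]$ from the notation for brevity.

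The natural coupling pairs each atom $(W^i,\beta^i,X^i)$ of $\widehat{\mu}$ (weight $1/n$) with the atom $(W^i,\beta^i,Y^{-k,i})$ of $\widehat{\mu}^{-k}$ for $i\neq k$, transporting mass $1/n$ between each such pair; the leftover mass $1/n$ sitting at $(W^k,\beta^k,X^k)$ is then redistributed with weight $1/(n(n-1))$ onto each of the $n-1$ atoms of $\widehat{\mu}^{-k}$. A straightforward marginal check shows this defines a valid coupling, producing
\[
\ell_{\X,p'}^{p'}(\widehat{\mu}^{-k},\widehat{\mu}) \le \frac{1}{n}\sum_{i\neq k}\|X^i-Y^{-k,i}\|_T^{p'} + \frac{1}{n(n-1)}\sum_{j\neq k}d_\X^{p'}\bigl((W^k,\beta^k,X^k),(W^j,\beta^j,Y^{-k,j})\bigr).
\]

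The core step is to bound $\E\|X^i-Y^{-k,i}\|_T^{p'}$ for every $i\in\{1,\ldots,n\}$. Since $X^i$ and $Y^{-k,i}$ are driven by the same $\xi^i$, $W^i$, $B$ and the same control $\beta^i$, their difference solves an SDE whose coefficients differ only in the measure argument $\widehat{\mu}^x$ versus $\widehat{\mu}^{-k,x}$. Applying Burkholder-Davis-Gundy (legitimate since $p'\ge 2$ by (A.2)), the Lipschitz bound (A.4), and Gronwall's inequality leads to
\[
\E\|X^i-Y^{-k,i}\|_t^{p'} \le C\int_0^t \E\,\ell_{\R^d,p}^{p'}(\widehat{\mu}^x_s,\widehat{\mu}^{-k,x}_s)\,ds.
\]
An analogous coupling of the time-$s$ $\R^d$-marginals, followed by raising to the $(p'/p)$-th power and applying Jensen's inequality (valid since $p'\ge p$), yields
\[
\ell_{\R^d,p}^{p'}(\widehat{\mu}^x_s,\widehat{\mu}^{-k,x}_s) \le C\Bigl[\tfrac{1}{n-1}\sum_{i\neq k}\|X^i-Y^{-k,i}\|_s^{p'} + \tfrac{1}{n(n-1)}\sum_{j\neq k}\|X^k-Y^{-k,j}\|_s^{p'}\Bigr].
\]

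Substituting this into the preceding display, averaging over $i\neq k$ and applying Gronwall closes a bound on $\frac{1}{n-1}\sum_{i\neq k}\E\|X^i-Y^{-k,i}\|_T^{p'}$ in terms of the outlier-type quantity $\frac{1}{n(n-1)}\sum_{j\neq k}\E\|X^k-Y^{-k,j}\|_T^{p'}$, which I bound by the triangle inequality together with Lemma \ref{le:finitestateestimate}, applied both to the $n$-player system (to control $\E\|X^k\|_T^{p'}$) and to the self-contained $(n-1)$-player system governing $\{Y^{-k,j}\}_{j\neq k}$ (to control the average $\frac{1}{n-1}\sum_{j\neq k}\E\|Y^{-k,j}\|_T^{p'}$). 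A second Gronwall application, taking the just-obtained bound as data, then yields $\E\|X^k-Y^{-k,k}\|_T^{p'}\le C(1+M[\beta])/n$. Finally, inserting these moment bounds into the coupling estimate from the first paragraph, and using \eqref{vmetricinequality} together with Lemma \ref{le:finitestateestimate} to control the remaining outlier summand $\frac{1}{n(n-1)}\sum_{j\neq k}\E d_\X^{p'}(\cdot,\cdot)$, completes the proof. The main technical nuisance is the careful accounting of these outlier-type terms, which do not enjoy any Gronwall smoothing and must be controlled purely through $p'$-moment estimates on $X^k$ and $Y^{-k,j}$; one must check that the $1/n$ prefactors combine correctly with those moment bounds to yield the claimed $(1+M[\beta])/n$.
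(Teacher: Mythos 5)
Your argument is essentially identical to the paper's: the same redistribution coupling for the empirical measures (pair the atoms $i\neq k$, spread the mass of atom $k$ over the other $n-1$ atoms), the same BDG/Lipschitz/Gronwall bootstrap that first closes on the averaged discrepancy $\frac{1}{n-1}\sum_{i\neq k}\E\|X^i-Y^{-k,i}\|_T^{p'}$ and then returns to the single index $k$, and the same use of Lemma \ref{le:finitestateestimate} for both the $X$- and $Y$-systems to control the outlier terms; even the point you defer (checking that the $1/n$ prefactors combine with the $p'$-moment bounds on the $k$-indexed quantities) is exactly the step the paper treats tersely. The only cosmetic difference is that you work with the time-marginal distance $\ell_{\R^d,p}$ plus a Jensen step, where the paper uses a truncated $p'$-Wasserstein distance on path space.
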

\begin{proof}
Throughout the proof, $n$ is fixed, expected values are all with respect to $\PP_n$, and the notation $[\beta]$ is omitted. Define the truncated $p'$-Wasserstein distance $\ell_t$ on $\P^{p'}(\C^d)$ by
\begin{align}
\ell^p_t(\mu,\nu) := \inf\left\{\int_{\C^d \times \C^d}\|x-y\|_t^{p'} \, \gamma(dx,dy) : \gamma \in \P(\C^d \times \C^d) \text{ has marginals } \mu, \nu\right\} \label{def:truncatedwasserstein}
\end{align}
Apply the Doob's maximal inequality and Jensen's inequality (using the assumption $p' \ge 2$) to find a constant $C > 0$ (which will change from line to line but depends only on $d$, $p$, $p'$, $T$, $c_1$, and $c_5$) such that
\begin{align*}
\E\left[\|X^i - Y^{-k,i}\|^{p'}_t\right] \le &\,C\E\int_0^t\int_A|b(s,X^i_s,\widehat{\mu}^x_s,a) - b(s,Y^{-k,i}_s,\widehat{\mu}^{-k,x}_s,a)|^{p'}\beta^i_s(da)ds \\
	&+ C\E\int_0^t\left|\sigma(s,X^i_s,\widehat{\mu}^x_s) - \sigma(s,Y^{-k,i}_s,\widehat{\mu}^{-k,x}_s)\right|^{p'}ds  \\
	&+ C\E\int_0^t\left|\sigma_0(s,X^i_s,\widehat{\mu}^x_s) - \sigma_0(s,Y^{-k,i}_s,\widehat{\mu}^{-k,x}_s)\right|^{p'}ds   \\
	\le &\,C\E\int_0^t\left(\|X^i - Y^{-k,i}\|^{p'}_s + \ell^{p'}_s(\widehat{\mu}^x,\widehat{\mu}^{-k,x})\right)ds.
\end{align*}
The last line followed from the Lipschitz assumption (A.4), along with the observation that
\[
\ell_{\R^d,p}(\nu^1_s,\nu^2_s) \le \ell_{\R^d,p'}(\nu^1_s,\nu^2_s) \le \ell_s(\nu^1,\nu^2),
\]
for each $\nu^1,\nu^2 \in \P^{p}(\C^d)$. By Gronwall's inequality (updating the constant $C$),
\begin{align}
\E\left[\|X^i - Y^{-k,i}\|^{p'}_t\right] \le C\E\int_0^t\ell^{p'}_s(\widehat{\mu}^x,\widehat{\mu}^{-k,x})ds.  \label{pf:mkv2}
\end{align}
Now we define a standard coupling of the empirical measures $\widehat{\mu}^x$ and $\widehat{\mu}^{-k,x}$: first, draw a number $j$ from $\{1,\ldots,n\}$ uniformly at random, and consider $X^j$ to be a sample from $\widehat{\mu}^x$. If $j \neq k$, choose $Y^{-k,j}$ to be a sample from $\widehat{\mu}^{-k,x}$, but if $j = k$, draw another number $j'$ from $\{1,\ldots,n\} \backslash \{k\}$ uniformly at random, and choose $Y^{-k,j'}$ to be a sample from $\widehat{\mu}^{-k,x}$. This yields
\begin{align}
\ell^{p'}_t(\widehat{\mu}^x,\widehat{\mu}^{-k,x}) &\le \frac{1}{n}\sum_{i \neq k}^n\|X^i - Y^{-k,i}\|_t^{p'} + \frac{1}{n(n-1)}\sum_{i\neq k}^n\|X^k - Y^{-k,i}\|_t^{p'} \label{pf:mkv3}
\end{align}
We know from Lemma \ref{le:finitestateestimate} that
\[
\frac{1}{n-1}\sum_{i \neq k}^n\E[\|X^i\|_T^{p'}] \le c_5(1+M),
\]
It should be clear that an analog of Lemma \ref{le:finitestateestimate} holds for $Y^{-k,i}$ as well, with the same constant. In particular,
\[
\frac{1}{n-1}\sum_{i \neq k}^n\E[\|Y^{-k,i}\|_T^{p'}] \le c_5(1+M).
\]
Combine the above four inequalities, averaging \eqref{pf:mkv2} over $i \neq k$, to get
\begin{align*}
\E\left[\ell^{p'}_t(\widehat{\mu}^x,\widehat{\mu}^{-k,x})\right] &\le C\E\int_0^t\ell^{p'}_s(\widehat{\mu}^x,\widehat{\mu}^{-k,x})ds + 2^{p'}c_5(1+M)/n.
\end{align*}
Gronwall's inequality yields a new constant such that
\begin{align*}
\E\left[\ell^{p'}_T(\widehat{\mu}^x,\widehat{\mu}^{-k,x})\right] \le  C(1+M)/n.
\end{align*}
Return to \eqref{pf:mkv2} to find
\begin{align}
\E\left[\|X^i - Y^{-k,i}\|^{p'}_T\right] \le  C(1+M)/n, \text{ for } i=1,\ldots,n. \label{pf:mkv4}
\end{align}
The same coupling argument leading to \eqref{pf:mkv3} also yields
\begin{align}
\ell^{p'}_{\X,p'}(\widehat{\mu},\widehat{\mu}^{-k}) &\le \frac{1}{n}\sum_{i \neq k}^n\|X^i - Y^{-k,i}\|^{p'}_T \nonumber \\
	&\quad + \frac{1}{n(n-1)}\sum_{i\neq k}^n d^{p'}_\X((W^i,\beta^{i},Y^{-k,i}),(W^k,\beta^{k},X^k)) \label{pf:mkv5}
\end{align}
Using \eqref{vmetricinequality}, we find yet another constant such that
\begin{align*}
\E\left[d^{p'}_\X((W^i,\beta^{i},Y^{-k,i}),(W^k,\beta^{k},X^k))\right] &\le 3^{p'-1}\E\left[\|W^i - W^k\|_T^{p'} + d_{\V}^{p'}(\beta^{i},\beta^{k}) + \|Y^{-k,i}  -X^k \|_T^{p'}\right] \\
	&\le C\E\left[\int_0^T\int_A|a|^{p'}\beta^i_t(da)dt + \int_0^T\int_A|a|^{p'}\beta^k_t(da)dt \right. \\
		&\quad\quad\quad\quad\quad\left. \vphantom{\int_0^T}+ \|W^1\|_T^{p'} + |Y^{-k,i}\|_T^{p'} + \|X^k \|_T^{p'}\right] \\
	&\le C\left(2nM + 2nc_5(1+M) + \E[\|W^1\|_T^{p'}]\right).
\end{align*}
Thus
\begin{align*}
\frac{1}{n-1}\sum_{i \neq k}^n\E\left[d^{p'}_\X((W^i,\beta^{i},Y^{-k,i}),(W^k,\beta^{k},X^k))\right] &\le C(1+M).
\end{align*}
Applying this bound and \eqref{pf:mkv4} to \eqref{pf:mkv5} completes the proof.
\end{proof}

\subsection{Optimality in the limit} \label{se:optimalityinthelimit}
Before we complete the proof, recall the definitions of $\RC$, $\A$, and $\A^*$ from Section \ref{se:canonicalspace}. 
The final step is to show that $P \in \RC\A^*(P \circ (\xi,B,W,\mu)^{-1})$, for any limit $P$ of $(P_n)_{n=1}^\infty$. The idea of the proof is to use the density of adapted controls (see Lemma \ref{le:adapteddensity}) to construct nearly optimal controls for the MFG with nice continuity properties. From these controls we build admissible controls for the $n$-player game, and it must finally be argued that the inequality obtained from the $\epsilon^n$-Nash assumption on $\Lambda^n$ may be passed to the limit.

\begin{proof}[Proof of Theorem \ref{th:mainconvergence}]
Let $P$ be a limit point of $(P_n)_{n=1}^\infty$, which we know exists by Lemma \ref{le:convergence-pre-solution}, and again abuse notation by assuming that $P_n \rightarrow P$. Let $\rho := P \circ (\xi,B,W,\mu)^{-1}$. We know from Lemma \ref{le:convergence-tight} that $P$ is a MFG pre-solution, and in light of Lemma \ref{le:convergence-pre-solution} we need only to check that $P$ is optimal. Fix some $Q^* \in \A^*(\rho)$, and set $P^* := \RC(Q^*)$.  (Lemma \ref{le:jcontinuous} assures us that $\A^{*}(\rho)$ is nonempty.) By Lemma \ref{le:adapteddensity}, there exist compact adapted functions $\phi_i : \Omega_0 \times \P^p(\X) \rightarrow \V$ (see Definition \ref{def:adapted}) such that 
\begin{enumerate}
\item $\phi_i(\omega,\cdot)$ is continuous for each $\omega \in \Omega_0$, and
\item $Q^* = \lim_{i \rightarrow \infty}Q_i$, and $J(\RC(Q^*)) = \lim_{i \rightarrow \infty}J(\RC(Q_i))$, where 
\[
Q_i := \rho \circ (\xi,B,W,\mu,\phi_i(\xi,B,W,\mu))^{-1}.
\]
\end{enumerate}
Fix $\delta > 0$, and find $i_0$ large enough that
\begin{align}
J(\RC(Q_{i_0})) \ge J(\RC(Q^*)) - \delta = \sup_{P' \in \RC\A*(\rho)}J(P') - \delta. \label{pf:optimal-1}
\end{align}
Set $\widetilde{Q} := Q_{i_0}$ and $\tilde{\phi} := \phi_{i_0}$, for ease of notation; we will use no other $\phi_i$ or $Q_i$ from now on. 
For $1 \le k \le n$, let
\[
\rho_{n,k} := \PP_n \circ (\xi^k,B,W^k,\widehat{\mu}^{-k}[\Lambda^n])^{-1},
\]
and
\begin{align*}
Q_{n,k} &:= \rho_{n,k} \circ (\xi,B,W,\mu,\tilde{\phi}(\xi,B,W,\mu))^{-1} \\
	&= \PP_n \circ \left(\xi^k,B,W^k,\widehat{\mu}^{-k}[\Lambda^n],\widetilde{\phi}(\xi^k,B,W^k,\widehat{\mu}^{-k}[\Lambda^n])\right)^{-1}.
\end{align*}
It follows from Lemma \ref{le:modifiedsystem} that
\begin{align*}
\lim_{n\rightarrow\infty}\frac{1}{n}\sum_{k=1}^n\rho_{n,k} &= \lim_{n\rightarrow\infty}\frac{1}{n}\sum_{k=1}^n\PP_n \circ (\xi^k,B,W^k,\widehat{\mu}[\Lambda^n])^{-1} = \rho.
\end{align*}
Since
\[
\frac{1}{n}\sum_{k=1}^n\rho_{n,k} \circ (\xi,B,W)^{-1} = P  \circ (\xi,B,W)^{-1}
\]
does not depend on $n$, the continuity of $\tilde{\phi}(\omega,\cdot)$ for each $\omega \in \Omega_0$ implies (using e.g. \cite[Lemma A.3]{lacker-mfgcontrolledmartingaleproblems} to deal with the possible discontinuity of $\tilde{\phi}$ in $\omega$)
\begin{align*}
\widetilde{Q} = \lim_{n\rightarrow\infty}\frac{1}{n}\sum_{k=1}^nQ_{n,k}.
\end{align*}
It is fairly straightforward to check that $\RC$ is a linear map, and it is even more straightforward to check that $J$ is linear.
Moreover, since $\tilde{\phi}$ is a compact function, the continuity of $\RC$ and $J$ of Lemmas \ref{le:rccontinuous} and \ref{le:jcontinuous} imply
\begin{align}
\lim_{n\rightarrow\infty}\frac{1}{n}\sum_{k=1}^{n}J(\RC(Q_{n,k})) &= \lim_{n\rightarrow\infty}J\left(\RC\left(\frac{1}{n}\sum_{k=1}^{n}Q_{n,k}\right)\right) \nonumber = J(\RC(\widetilde{Q})) \nonumber \\
	&\ge \sup_{P' \in \RC\A(\rho)}J(P') - \delta, \label{pf:optimal1}
\end{align}
where the last step used \eqref{pf:optimal-1}.

Now, for $k \le n$, define $\beta^{n,k} \in \A_n(\mathcal{E}_n)$ by
\[
\beta^{n,k} := \tilde{\phi}\left(\xi^k,B,W^k,\widehat{\mu}^{-k}[\Lambda^n]\right).
\]
For $\beta \in \A_n(\mathcal{E}_n)$, abbreviate $(\Lambda^{n,-k},\beta) := ((\Lambda^n)^{-k},\beta)$.
Since agent $k$ is removed from the empirical measure, we have $\widehat{\mu}^{-k}[\Lambda^n] = \widehat{\mu}^{-k}[(\Lambda^{n,-k},\beta)]$ for any $\beta \in \A_n(\mathcal{E}_n)$.
The key point is that for each $k \le n$,
\begin{align}
\PP_n \circ \left(\xi^k,B,W^k,\widehat{\mu}^{-k}[(\Lambda^{n,-k},\beta^{n,k})],\beta^{n,k},Y^{-k,k}[(\Lambda^{n,-k},\beta^{n,k})]\right)^{-1} = \RC(Q_{n,k}). \label{pf:optimal1.1}
\end{align}
To prove \eqref{pf:optimal1.1}, let $P'$ denote the measure on the left-hand side. Since $\widehat{\mu}^{-k}[\Lambda^n] = \widehat{\mu}^{-k}[(\Lambda^{n,-k},\beta^{n,k})]$, we have
\begin{align*}
P' \circ (\xi,B,W,\mu,\Lambda)^{-1} &= Q_{n,k}.
\end{align*}
Since the processes 
\[
\left(\xi^k,B,W^k,\widehat{\mu}^{-k}[(\Lambda^{n,-k},\beta^{n,k})],\beta^{n,k},Y^{-k,k}[(\Lambda^{n,-k},\beta^{n,k})]\right)
\]
verify the state SDE \eqref{def:SDE} on $(\Omega_n,(\F^n_t)_{t \in [0,T]},\PP_n)$, the canonical processes $(\xi,B,W,\mu,\Lambda,X)$ verify the state SDE \eqref{def:SDE} under $P'$. Hence, $P' = \RC(Q_{n,k})$.
With \eqref{pf:optimal1.1} in hand, by definition of $J$ the inequality \eqref{pf:optimal1} then translates to
\begin{align}
\lim_{n\rightarrow\infty}\frac{1}{n}\sum_{k=1}^{n}\E^{\PP_n}\left[\Gamma\left(\widehat{\mu}^{-k,x}[(\Lambda^{n,-k},\beta^{n,k})],\beta^{n,k},Y^{-k,k}[(\Lambda^{n,-k},\beta^{n,k})]\right) \right] &\ge \sup_{P' \in \RC\A(\rho)}J(P') - \delta. \label{pf:optimal2}
\end{align}

Before completing the proof, we check more technical point:
\begin{align}
0 = \lim_{n\rightarrow\infty}\frac{1}{n}\sum_{k=1}^{n}\E^{\PP_n}&\left[\Gamma\left(\widehat{\mu}^{-k,x}[(\Lambda^{n,-k},\beta^{n,k})],\beta^{n,k},Y^{-k,k}[(\Lambda^{n,-k},\beta^{n,k})]\right) \right. \nonumber \\
	&\quad\left. - \Gamma\left(\widehat{\mu}^x[(\Lambda^{n,-k},\beta^{n,k})],\beta^{n,k},X^k[(\Lambda^{n,-k},\beta^{n,k})]\right) \right] \label{pf:optimal3}
\end{align}
Indeed, it follows from Lemma \ref{le:finitestateestimate} (and an obvious analog for the modified state processes $Y$) that 
\begin{align*}
Z_{n,k} &:= \E^{\PP_n}\left[\vphantom{\int_{\C^d}}\|X^k[(\Lambda^{n,-k},\beta^{n,k})]\|^{p'}_T + \|Y^{-k,k}[(\Lambda^{n,-k},\beta^{n,k})]\|^{p'}_T \right. \\
	&\quad\left. + \int_{\C^d}\|z\|^{p'}_T\widehat{\mu}^x[(\Lambda^{n,-k},\beta^{n,k})](dz) + \int_{\C^d}\|z\|^{p'}_T\widehat{\mu}^{-k,x}[(\Lambda^{n,-k},\beta^{n,k})](dz)\right] \\
	&\le 4c_4\E^{\PP_n}\left[|\xi^1|^{p'} + \frac{1}{n}\sum_{i=1}^n\int_0^T\int_A|a|^{p'}\Lambda^{n,i}_t(da)dt + \int_0^T\int_A|a|^{p'}\beta^{n,k}_t(da)dt\right].
\end{align*}
Lemma \ref{le:convergence-tight} says that
\[
\sup_n\E^{\PP_n}\left[\frac{1}{n}\sum_{i=1}^n\int_0^T\int_A|a|^{p'}\Lambda^{n,i}_t(da)dt\right] < \infty.
\]
Compactness of $\tilde{\phi}$ implies that there exists a compact set $K \subset A$ such that $\widetilde{\beta}^{n,k}_t(K^c) = 0$ for a.e. $t \in [0,T]$ and all $n \ge k \ge 1$. Thus
\begin{align*}
\sup_n\frac{1}{n}\sum_{k=1}^nZ_{n,k} < \infty,
\end{align*}
and we have the uniform integrability needed to deduce \eqref{pf:optimal3}, from Lemma \ref{le:modifiedsystem} and from the continuity and growth assumptions (A.5) on $f$ and $g$.

A simple manipulation of the definitions yields $J(P_n) = \frac{1}{n}\sum_{k=1}^nJ_k(\Lambda^n)$. Then, since $P_n \rightarrow P$, the upper semicontinuity of $J$ of Lemma \ref{le:jcontinuous} implies
\begin{align*}
J(P) \ge \limsup_{n\rightarrow\infty}\frac{1}{n}\sum_{k=1}^nJ_k(\Lambda^n).
\end{align*}
Finally, use the fact that $\Lambda^n$ is a relaxed $\epsilon^n$-Nash equilibrium to get
\begin{align*}
J(P) &\ge \liminf_{n\rightarrow\infty}\frac{1}{n}\sum_{k=1}^n\left[J_k((\Lambda^{n,-k},\beta^{n,k})) - \epsilon^n_k\right] \\
	&= \liminf_{n\rightarrow\infty}\frac{1}{n}\sum_{k=1}^n \E^{\PP_n}\left[\Gamma\left(\widehat{\mu}^x[(\Lambda^{n,-k},\beta^{n,k})],\beta^{n,k},X^k[(\Lambda^{n,-k},\beta^{n,k})]\right) \right] \\
	&= \liminf_{n\rightarrow\infty} \frac{1}{n}\sum_{k=1}^{n}\E^{\PP_n}\left[\Gamma\left(\widehat{\mu}^{-k,x}[(\Lambda^{n,-k},\beta^{n,k})],\beta^{n,k},Y^{-k,k}[(\Lambda^{n,-k},\beta^{n,k})]\right) \right] \\
	&\ge \sup_{P' \in \RC\A(\rho)}J(P') - \delta
\end{align*}
The second line follows from the definition of $J_k$, and the $\epsilon^n_k$ drops out because of the hypothesis \eqref{def:epsilonconverge}. The third line comes from \eqref{pf:optimal3}, and the last is from \eqref{pf:optimal2}. Since $P \in \RC\A(\rho)$, and since $\delta > 0$ was arbitrary, this shows that $P \in \RC\A^{*}(\rho)$.
\end{proof}

\section{Proof of Theorem \ref{th:converseconvergence}} \label{se:converseconvergenceproof}
This section is devoted to the proof of Theorem \ref{th:converseconvergence}, which we split into two pieces.

\begin{theorem} \label{th:partialconverseconvergence}
Suppose assumptions \ref{assumption:A} and \ref{assumption:B} hold. Let $P \in \P(\Omega)$ be a weak MFG solution. Then there exist, for each $n$,
\begin{enumerate}
\item $\epsilon_n \ge 0$,
\item an $n$-player environment $\mathcal{E}_n = (\Omega_n,(\F^n_t)_{t \in [0,T]},\PP_n,\xi,B,W)$, and
\item a relaxed $(\epsilon_n,\ldots,\epsilon_n)$-Nash equilibrium $\Lambda^n = (\Lambda^{n,1},\ldots,\Lambda^{n,n})$ on $\mathcal{E}_n$,
\end{enumerate}
such that $\lim_{n\rightarrow\infty}\epsilon_n = 0$ and $P_n \rightarrow P$ in $\P^p(\Omega)$, where
\begin{align*}
P_n := \frac{1}{n}\sum_{i=1}^n\PP_n \circ \left(\xi^i,B,W^i,\widehat{\mu}[\Lambda^n],\Lambda^{n,i},X^i[\Lambda^n]\right)^{-1}.
\end{align*}
\end{theorem}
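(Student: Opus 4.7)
The plan is to construct the environments $\mathcal{E}_n$ and the strategies $\Lambda^n$ by placing $n$ conditionally independent copies of the MFG representative agent on a common $(B,\mu)$, using the MFG optimal control to drive each copy, and then verifying a posteriori that unilateral deviations cannot improve any single agent's reward by more than a vanishing amount. Since the MFG optimal control $\Lambda$ need not be a measurable function of $(\xi,B,W,\mu)$ under $P$, first invoke Lemma \ref{le:adapteddensity} to obtain compact adapted maps $\phi^{(\ell)} : \Omega_0 \times \P^p(\X) \to \V$ such that $\widetilde{Q}^{(\ell)} := \rho \circ (\xi,B,W,\mu,\phi^{(\ell)}(\xi,B,W,\mu))^{-1}$ satisfies $\RC(\widetilde{Q}^{(\ell)}) \to P$ in $\P^p(\Omega)$ and $J(\RC(\widetilde{Q}^{(\ell)})) \to J(P)$, where $\rho := P \circ (\xi,B,W,\mu)^{-1}$. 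On a product space $(\Omega_n,\PP_n)$, support $(B,\mu)$ with law $P \circ (B,\mu)^{-1}$ and, independently of $(B,\mu)$, $n$ i.i.d.\ copies $(\xi^i,W^i)$ of law $\lambda \times \W^m$. Let $(\F^n_t)_{t \in [0,T]}$ be the $\PP_n$-completion of $\sigma(\xi^i,B_s,W^i_s,\mu(C) : i \le n,\, s \le t,\, C \in \F^\X_t)$; conditional independence of $(B,\mu)$-increments built into the MFG definition ensures $B$ and $W^i$ remain Wiener processes under $(\F^n_t)$. Set $\Lambda^{n,i} := \phi^{(\ell(n))}(\xi^i,B,W^i,\mu)$ for an $\ell(n) \to \infty$ chosen by a diagonal argument, and let $X^i[\Lambda^n]$ solve the $n$-player SDE.

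To show $P_n \to P$, introduce auxiliary processes $Y^i$ solving the MFG state equation driven by $(\xi^i,B,W^i,\Lambda^{n,i})$ with mean field $\mu^x$ in place of the empirical $\widehat{\mu}^x[\Lambda^n]$. Conditionally on $(B,\mu)$, the tuples $(\xi^i,W^i,\Lambda^{n,i},Y^i)_{i=1}^n$ are i.i.d.\ with common law $\RC(\widetilde{Q}^{(\ell(n))})(\,\cdot\mid B,\mu)$, so a conditional law of large numbers yields $\frac{1}{n}\sum_i \delta_{(W^i,\Lambda^{n,i},Y^i)} \to \mu$ in $\P^p(\X)$ almost surely. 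An argument mirroring Lemma \ref{le:modifiedsystem} (coupling the $n$-player and decoupled McKean--Vlasov SDEs, using the coercivity bounds of Lemma \ref{le:valuebound}(1) to control moments of the controls uniformly in $n$) gives $\E^{\PP_n}\|X^i[\Lambda^n] - Y^i\|_T^{p'} = O(1/n)$, whence $\widehat{\mu}[\Lambda^n] \to \mu$ in $L^p$. By exchangeability of the construction and averaging, $P_n \to \RC(\widetilde{Q}^{(\ell(n))})$ in $\P^p(\Omega)$, which converges to $P$ provided $\ell(n)$ is chosen to diverge slowly enough.

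It remains to bound the Nash gap $\epsilon_n := \sup_k \bigl(\sup_{\beta \in \A_n(\mathcal{E}_n)} J_k((\Lambda^{n,-k},\beta)) - J_k(\Lambda^n)\bigr)$. By symmetry and the convergence $P_n \to P$, using assumption \ref{assumption:B} together with the uniform integrability afforded by Lemma \ref{le:valuebound}(3) to pass the $f$-integral to the limit, one obtains $J_k(\Lambda^n) \to J(P)$. For the best-response side, fix any sequence $\beta^n \in \A_n(\mathcal{E}_n)$; Lemma \ref{le:valuebound}(2) reduces to the case $\sup_n \E^{\PP_n}\int_0^T\int_A|a|^{p'}\beta^n_t(da)\,dt < \infty$, since otherwise $J_k((\Lambda^{n,-k},\beta^n)) = -\infty$. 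Lemma \ref{le:modifiedsystem} replaces the empirical measure entering player $k$'s state and cost by the $k$-modified empirical $\widehat{\mu}^{-k}[\Lambda^n]$, which is independent of $\beta^n$, at a cost of $o(1)$. The measures $Q^n := \PP_n \circ (\xi^k,B,W^k,\widehat{\mu}^{-k}[\Lambda^n],\beta^n)^{-1}$ then lie asymptotically in $\A(\rho)$, since $\widehat{\mu}^{-k}[\Lambda^n] \to \mu$ and the marginals $Q^n \circ (\xi,B,W,\mu)^{-1} \to \rho$. Proposition \ref{pr:itocompact} yields tightness of $(Q^n)_n$ and of the associated controlled state laws; extracting a subsequential limit $Q^\infty \in \A(\rho)$ and invoking upper semicontinuity of $J$ (Lemma \ref{le:jcontinuous}) together with the MFG optimality condition $(5)$ for $P$ gives $\limsup_n J_k((\Lambda^{n,-k},\beta^n)) \le J(\RC(Q^\infty)) \le J(P)$, hence $\epsilon_n \to 0$.

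The main obstacle is the last step, specifically showing that any subsequential limit $Q^\infty$ of the deviation measures is genuinely admissible for the MFG, i.e.\ that it satisfies the conditional independence condition in the definition of $\A(\rho)$. The concern is that each $\beta^n$ is adapted to the enlarged filtration $(\F^n_t)$ carrying the full information of all other agents' initial states and noises; one must argue that in the limit this extra information does not enlarge the effective sub-filtration beyond that generated by $(\xi^k,B,W^k,\mu)$. This relies crucially on the conditional i.i.d.\ structure of the construction: conditionally on $(B,\mu)$, the inputs $(\xi^i,W^i)_{i \neq k}$ are independent of $(\xi^k,W^k)$, so projecting the deviations onto the relevant sub-filtration does not decrease their value. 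Once admissibility of $Q^\infty$ is secured, the optimality property (5) of Definition \ref{def:weakMFGsolution} closes the loop.
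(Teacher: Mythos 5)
Your overall architecture (conditionally i.i.d.\ copies of the representative agent on a common $(B,\mu)$, propagation of chaos, decoupling the deviating player's problem) matches the paper's, but your specific construction of $\Lambda^{n,i}$ has a genuine gap. You set $\Lambda^{n,i}=\phi^{(\ell(n))}(\xi^i,B,W^i,\mu)$ and then claim a conditional law of large numbers gives $\frac{1}{n}\sum_i\delta_{(W^i,\Lambda^{n,i},Y^i)}\to\mu$. It does not: conditionally on $(B,\mu)$ the triples are i.i.d.\ with common law $\mu^{(\ell(n))}:=\RC(\widetilde{Q}^{(\ell(n))})((W,\Lambda,X)\in\cdot\mid B,\mu)$, so the empirical measure converges to $\mu^{(\ell(n))}$, not to $\mu$. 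Lemma \ref{le:adapteddensity} gives convergence of the \emph{joint} laws $\widetilde{Q}^{(\ell)}\to P\circ(\xi,B,W,\mu,\Lambda)^{-1}$ in $\P^p$, and this does not imply convergence of the conditional laws $\mu^{(\ell)}\to\mu$; conditional laws are not stable under weak convergence. Consequently neither $\widehat{\mu}[\Lambda^n]\to\mu$ nor $P_n\to P$ is established, no matter how slowly $\ell(n)$ diverges. The paper avoids this by not approximating the control at this stage: it builds $\PP$ as a product measure under which the full triples $(W^i,\Lambda^i,Y^i)$ are drawn conditionally i.i.d.\ \emph{from $\mu$ itself} (exploiting the consistency condition $\mu=P((W,\Lambda,X)\in\cdot\mid B,\mu)$), so the conditional LLN gives exactly $\widehat{\nu}^n\to\mu$; the approximation by adapted/strong controls is deferred to Proposition \ref{pr:fixednapproximation}, where it is used only to upgrade relaxed equilibria to strong ones.

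Second, on the best-response side your route --- extract a subsequential limit $Q^\infty$ of the deviation measures and argue $Q^\infty\in\A(\rho)$ --- carries exactly the burden you flag: stability of the conditional-independence (compatibility) condition under weak limits is delicate and you do not prove it. The paper does not need it. In its construction, for \emph{each fixed} $n$ and each deviation $\beta\in\A_n(\mathcal{E}_n)$, the decoupled tuple $(\xi^1,B,W^1,\mu,\beta,Y^1[\beta])$ already induces an element of $\RC\A(\rho)$ --- the required conditional independence holds at finite $n$ because, conditionally on $(B,\mu)$, the data of agents $2,\dots,n$ and the auxiliary randomization are independent of agent $1$'s --- so $J(P)\ge\E^{\PP}[\Gamma(\mu^x,\beta,Y^1[\beta])]$ holds for every $n$ with no limit extraction. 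The only limit then needed is \eqref{pf:converse2}, comparing the coupled and decoupled rewards uniformly over deviations with bounded $p'$-moments (supplied by Lemma \ref{le:valuebound}(2)); for this comparison the relevant tool is Lemma \ref{le:pathwisepropagation} (empirical vs.\ $\mu$), not Lemma \ref{le:modifiedsystem} ($k$-modified vs.\ unmodified empirical), and the latter would in any case lead you back to the unproven conditional LLN above. Finally, your appeal to Lemma \ref{le:valuebound}(3) for uniform integrability is circular (it presupposes the Nash property you are trying to establish), though this is harmless here since your candidate controls are compact-valued.
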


Theorem \ref{th:partialconverseconvergence} is nearly the same as Theorem \ref{th:converseconvergence}, except that the equilibria $\Lambda^n$ are now \emph{relaxed} instead of \emph{strong}, and the environments $\mathcal{E}_n$ are now part of the conclusion of the theorem instead of the input.
We will prove Theorem \ref{th:partialconverseconvergence} by constructing a convenient sequence of environments $\mathcal{E}_n$, which all live on the same larger probability space supporting an i.i.d. sequence of state processes corresponding to the given MFG solution. This kind of argument is known as \emph{trajectorial propagation of chaos} in the literature on McKean-Vlasov limits, and the Lipschitz assumption in the measure argument is useful here. The precise choice of environments also facilitates the proof of the following Proposition. Recall the definition of a \emph{strong $\epsilon$-Nash equilibrium} from Remark \ref{re:equivalentstrongequilibrium} and the discussion preceding it.

\begin{proposition} \label{pr:fixednapproximation}
Let $\mathcal{E}_n$ be the environments defined in the proof of Theorem \ref{th:partialconverseconvergence} (in Section \ref{se:environments}).
Let $\Lambda^0 = (\Lambda^{0,1},\ldots,\Lambda^{0,n}) \in \A_n^n(\mathcal{E}_n)$. Then there exist strong strategies $\Lambda^k = (\Lambda^{k,1},\ldots,\Lambda^{k,n}) \in \A_n^n(\mathcal{E}_n)$ such that:
\begin{enumerate}
\item In $\P^p\left(\C^{m_0} \times (\C^m)^n \times \V^n \times (\C^d)^n\right)$,
\[	
\lim_{k\rightarrow\infty}\PP_n \circ \left(B,W,\Lambda^k,X[\Lambda^k]\right)^{-1} =\PP_n \circ \left(B,W,\Lambda^0,X[\Lambda^0]\right)^{-1},
\]
\item $\lim_{k\rightarrow\infty}J_i(\Lambda^k) = J_i(\Lambda^0)$, for $i=1,\ldots,n$,
\item 
\[
\limsup_{k\rightarrow\infty}\sup_{\beta \in \A_n(\mathcal{E}_n)}J_i((\Lambda^{k,-i},\beta)) \le \sup_{\beta \in \A_n(\mathcal{E}_n)}J_i((\Lambda^{0,-i},\beta)), \text{ for } i=1,\ldots,n.
\]
\end{enumerate}
In particular, if $\Lambda^0$ is a relaxed $\epsilon^0=(\epsilon^0_1,\ldots,\epsilon^0_n)$-Nash equilibrium, then $\Lambda^k$ is a strong $(\epsilon^0 + \epsilon^k)$-Nash equilibrium, where
\[
\epsilon^k_i := \left[\sup_{\beta \in \A_n(\mathcal{E}_n)}J_i((\Lambda^{k,-i},\beta)) - J_i(\Lambda^k) - \epsilon^0_i\right]^+ \rightarrow 0 \text{ as } k \rightarrow \infty.
\]
\end{proposition}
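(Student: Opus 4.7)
My plan is to establish (1), (2), (3) in turn, after which the consequence for Nash equilibria follows by pure algebra from the $\epsilon^0$-equilibrium property of $\Lambda^0$.

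\textbf{Construction and assertions (1), (2).} The environments $\mathcal{E}_n$ of Section~\ref{se:environments} will be constructed to carry enough auxiliary randomization for an $n$-fold adaptation of Lemma~\ref{le:adapteddensity} to yield strong strict controls $\Lambda^{k,i}$, each compactly supported, $(\F^{s,n}_t)$-progressively measurable, uniformly bounded in $L^{p'}$, and with
\[
\PP_n\circ(\xi,B,W,\Lambda^{k,1},\dots,\Lambda^{k,n})^{-1}\longrightarrow \PP_n\circ(\xi,B,W,\Lambda^{0,1},\dots,\Lambda^{0,n})^{-1}\text{ in }\P^p.
\]
The Lipschitz hypothesis (A.4) then propagates this convergence through the coupled SDE via a variant of Lemma~\ref{le:finitestateestimate}, giving (1). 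For (2), we combine the uniform $p'$-integrability of $(\Lambda^{k,i},X^{i}[\Lambda^k])$ with the continuity and growth of $g$ from (A.5) and the uniform-in-$a$ continuity of $f$ from assumption~\ref{assumption:B}, yielding $J_i(\Lambda^k)\to J_i(\Lambda^0)$ by dominated convergence.

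\textbf{Assertion (3).} This is the main obstacle and amounts to upper semicontinuity of the best-response value in the opponents' strategies. Fix $i$ and pick $\beta^k\in\A_n(\mathcal{E}_n)$ with $J_i((\Lambda^{k,-i},\beta^k))\ge \sup_\beta J_i((\Lambda^{k,-i},\beta))-1/k$. Applying Lemma~\ref{le:valuebound}(2) with $\widetilde{\beta}^k=\beta^k$ and using the uniform $L^{p'}$-bound on $\Lambda^{k,-i}$ from the construction, we deduce
\[
\sup_k \E^{\PP_n}\int_0^T\int_A|a|^{p'}\beta^k_t(da)\,dt < \infty.
\]
An adaptation of Proposition~\ref{pr:itocompact} then yields relative compactness of the joint laws $\PP_n\circ(\xi,B,W,\Lambda^{k,-i},\beta^k,X^i[(\Lambda^{k,-i},\beta^k)])^{-1}$ in $\P^p$. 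Passing to a subsequence and using a Skorokhod realization, one obtains a limit $(\beta^\infty,X^\infty)$ against $\Lambda^{0,-i}$ on a compatible (possibly enriched) environment, with $X^\infty$ verifying the $n$-player state SDE driven by $(\Lambda^{0,-i},\beta^\infty)$. By the upper semicontinuity of the reward (an $n$-player analogue of Lemma~\ref{le:jcontinuous}, made available by the uniform integrability \eqref{def:uniformintegrability} just established),
\[
\limsup_k J_i((\Lambda^{k,-i},\beta^k)) \le J_i((\Lambda^{0,-i},\beta^\infty)) \le \sup_{\beta\in\A_n(\mathcal{E}_n)} J_i((\Lambda^{0,-i},\beta)),
\]
which is (3).

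\textbf{Consequence and main difficulty.} Combining (2) and (3) with the $\epsilon^0$-Nash property of $\Lambda^0$,
\[
\limsup_k \epsilon^k_i \le \bigl[\sup_\beta J_i((\Lambda^{0,-i},\beta)) - J_i(\Lambda^0) - \epsilon^0_i\bigr]^+ \le 0,
\]
so $\epsilon^k_i\to 0$; since (3) bounds the supremum over \emph{all} admissible $\beta$ (a superset of the strong alternatives), $\Lambda^k$ is automatically a strong $(\epsilon^0+\epsilon^k)$-Nash equilibrium via Remark~\ref{re:equivalentstrongequilibrium}. The main difficulty is realizing the weak limit $\beta^\infty$ as an admissible control against $\Lambda^{0,-i}$ in a way compatible with the filtration-dependent notion of admissibility on $\mathcal{E}_n$; this is exactly where the specific rich construction of the environments in Section~\ref{se:environments} is used.
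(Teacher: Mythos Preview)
Your proposal is correct and follows essentially the same approach as the paper: construct strong approximating strategies via an $n$-fold application of Lemma~\ref{le:adapteddensity}, propagate convergence through the state SDE (the paper packages this as Lemma~\ref{le:finitestateconvergence}), and for (3) take near-optimal $\beta^k$, bound their $p'$-moments via Lemma~\ref{le:valuebound}(2), extract a relatively compact subsequence, and represent any limit as a genuine control in $\A_n(\mathcal{E}_n)$. Two minor remarks: the paper does not invoke assumption~\ref{assumption:B} here (Lemma~\ref{le:jcontinuous} with the uniform $p'$-integrability already gives continuity of $J_i$, so (2) holds without it), and your correctly flagged ``main difficulty'' is exactly the content of the paper's Lemma~\ref{le:limitrepresentation}, which uses the independent uniform variable $U$ built into $\mathcal{E}_n$ to transfer the limit control back to the original environment rather than an enriched one.
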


\begin{proof}[Proof of Theorem \ref{th:converseconvergence}]
Recall that strong strategies are insensitive to the choice of $n$-player environment (see Remark \ref{re:insensitivetoenvironment}), and so it suffices to prove the theorem on any given sequence of environments, such as those provided by Theorem \ref{th:partialconverseconvergence}. By Theorem \ref{th:partialconverseconvergence} we may find $\epsilon_n \rightarrow 0$ and a relaxed $(\epsilon_n,\ldots,\epsilon_n)$-Nash equilibrium $\Lambda^n$ for the $n$-player game, with the desired convergence properties. Then, by Proposition \ref{pr:fixednapproximation}, we find for each $n$ each $k$ a strong $\epsilon^{n,k}=(\epsilon_n + \epsilon^{n,k}_1,\ldots,\epsilon_n + \epsilon^{n,k}_n)$-Nash equilibrium $\Lambda^{n,k} \in \A_n^n(\mathcal{E}_n)$ with the convergence properties defined in Proposition \ref{pr:fixednapproximation}. For each $n$, choose $k_n$ large enough to make $\epsilon^{n,k_n}_i \le 2^{-n}$ for each $i=1,\ldots,n$ and so that the sequences in (1-3) of Proposition \ref{pr:fixednapproximation} are each within $2^{-n}$ of their respective limits.
\end{proof}

\subsection{Construction of environments} \label{se:environments}
Fix a weak MFG solution $P$. Define $P_{B,\mu} := P \circ (B,\mu)^{-1}$. We will work on the space
\[
\overline{\Omega} := [0,1] \times \C^{m_0} \times \P^p(\X) \times \X^\infty.
\]
Let $(U,B,\mu,(W^i,\Lambda^i,Y^i)_{i=1}^\infty)$ denote the identity map (i.e. coordinate processes) on $\overline{\Omega}$. For $n \in \N \cup \{\infty\}$, consider the complete filtration $(\overline{\F}^n_t)_{t \in [0,T]}$ generated by $U$, $B$, $\mu$, and $(W^i,\Lambda^i,Y^i)_{i=1}^n$, that is the completion of
\[
\sigma\left\{\left(U,B_s,\mu(C_1),(W^i_s,\Lambda^i([0,s] \times C_2),Y^i_s)_{i=1}^n\right) : s \le t, \ C_1 \in \F^\X_t, \ C_2 \in \B(A)\right\}.
\] 
Define the probability measure $\PP$ on $(\overline{\Omega},\overline{\F}^\infty_T)$ by
\[
\PP := duP_{B,\mu}(d\beta,d\nu)\prod_{i=1}^\infty\nu(dw^i,dq^i,dy^i).
\]
By construction,
\[
\PP \circ (Y^i_0,B,W^i,\mu,\Lambda^i,Y^i)^{-1} = P, \text{ for each } i,
\]
and $(W^i,\Lambda^i,Y^i)_{i=1}^\infty$ are conditionally i.i.d. with common law $\mu$ given $(B,\mu)$. Moreover, $U$ and $(B,\mu,(W^i,\Lambda^i,Y^i)_{i=1}^\infty)$ are independent under $\PP$. We will work with the $n$-player environments
\[
\mathcal{E}_n := \left(\overline{\Omega},(\overline{\F}^n_t)_{t \in [0,T]},\PP,(Y^1_0,\ldots,Y^n_0),B,(W^1,\ldots,W^n)\right),
\]
and we will show that the canonical process $(\Lambda^1,\ldots,\Lambda^n)$ is a relaxed $(\epsilon_n,\ldots,\epsilon_n)$-Nash equilibrium for some $\epsilon_n \rightarrow \infty$. Including the seemingly superfluous random variable $U$ makes the class of admissible controls as rich as possible, in a sense which will be more clear later; until the proof of Proposition \ref{pr:fixednapproximation}, $U$ will be behind the scenes.

Define $X[\beta]$ and $\widehat{\mu}[\beta]$ for $\beta \in \A_n^n(\mathcal{E}_n)$ as usual, as in Section \ref{se:finiteplayergames}.
For each $(\overline{\F}^\infty_t)_{t \in [0,T]}$-progressive $\P(A)$-valued process $\beta$ on $\overline{\Omega}$ and each $i \ge 1$, define $Y^i[\beta]$ to be the unique solution of the SDE
\[
dY^i_t[\beta] = \int_Ab(t,Y^i_t[\beta],\mu^x_t,a)\beta_t(da) + \sigma(t,Y^i_t[\beta],\mu^x_t)dW^i_t + \sigma_0(t,Y^i_t[\beta],\mu^x_t)dB_t, \ Y^i_0[\beta] = Y^i_0. 
\]
Note that if $\beta = (\beta^1,\ldots,\beta^n) \in \A_n^n(\mathcal{E}_n)$ then $X^i[\beta]$ differs from $Y^i[\beta^i]$ only in the measure flow which appears in the dynamics; $X^i[\beta]$ depends on the empirical measure flow of $(X^1[\beta],\ldots,X^n[\beta])$, whereas $Y^i[\beta^i]$ depends on the random measure $\mu$ coming from the MFG solution.
Define the canonical $n$-player strategy profile by
\[
\overline{\Lambda}^n = (\overline{\Lambda}^{n,1},\ldots,\overline{\Lambda}^{n,n}) := (\Lambda^1,\ldots,\Lambda^n) \in \A_n^n(\mathcal{E}_n).
\]
This abbreviation serves in part to indicate which $n$ we are working with at any given moment, so that we can suppress the index $n$ from the rest of the notation. Note that $Y^i[\overline{\Lambda}^{n,i}] = Y^i[\Lambda^i] = Y^i$.

\subsection{Trajectorial propagation of chaos}
Intuition from the theory of propagation of chaos suggests that the state processes $(Y^1,\ldots,Y^n)$ and $(X^1,\ldots,X^n)$ should be close in some sense, and the purpose of this section is to make this quantitative.
For $\beta \in \A_n(\mathcal{E}_n)$, abbreviate
\[
(\overline{\Lambda}^{n,-i},\beta) := ((\overline{\Lambda}^n)^{-i},\beta) \in \A_n^n(\mathcal{E}_n).
\]
Recall the definition of the metric $d_\X$ on $\X$ from \eqref{def:dX}, and again define the $p'$-Wasserstein metric $\ell_{\X,p'}$ on $\P^p(\X)$ relative to the metric $d_\X$.

\begin{lemma} \label{le:yLLN}
Fix $i$ and a $(\overline{\F}^\infty_t)_{t \in [0,T]}$-progressive $P(A)$-valued process $\beta$, and define
\[
\widehat{\nu}^{n,i}[\beta] := \frac{1}{n}\left(\sum_{k \neq i}^n\delta_{(W^k,\Lambda^k,Y^k)} + \delta_{(W^i,\beta,Y^i[\beta])}\right).
\]
There exists a sequence $\delta_n > 0$ converging to zero such that
\[
\E^\PP\left[\ell^{p'}_{\X,p'}(\widehat{\nu}^{n,i}[\beta],\mu)\right] \le \delta_n\left(1 + \E^\PP\int_0^T\int_A|a|^{p'}\beta_t(da)dt\right).
\]
\end{lemma}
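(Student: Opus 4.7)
The plan is to exploit the conditional independence built into the construction of $\PP$: given $(B,\mu)$, the triples $(W^k,\Lambda^k,Y^k)_{k=1}^\infty$ are i.i.d.\ with common law $\mu$. I will split the target by the triangle inequality,
\[
\ell_{\X,p'}^{p'}(\widehat{\nu}^{n,i}[\beta],\mu) \le 2^{p'-1}\ell_{\X,p'}^{p'}(\widehat{\nu}^{n,i}[\beta],\widehat{\mu}^n) + 2^{p'-1}\ell_{\X,p'}^{p'}(\widehat{\mu}^n,\mu),
\]
where $\widehat{\mu}^n := n^{-1}\sum_{k=1}^n\delta_{(W^k,\Lambda^k,Y^k)}$, and handle the two pieces separately.

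For the first piece, since $\widehat{\nu}^{n,i}[\beta]$ and $\widehat{\mu}^n$ differ only in their $i$-th atom, coupling the other $n-1$ atoms to themselves yields $\ell_{\X,p'}^{p'}(\widehat{\nu}^{n,i}[\beta],\widehat{\mu}^n) \le n^{-1}d_\X^{p'}((W^i,\beta,Y^i[\beta]),(W^i,\Lambda^i,Y^i))$. The $W^i$ components cancel; $d_\V(\beta,\Lambda^i)^{p'}$ is bounded via \eqref{vmetricinequality} and Jensen's inequality (using $p'/p \ge 1$) by a constant multiple of $1 + \int|a|^{p'}\beta_t(da)dt + \int|a|^{p'}\Lambda^i_t(da)dt$; and the path difference $\E\|Y^i[\beta]-Y^i\|_T^{p'}$ yields to a standard Burkholder--Davis--Gundy-plus-Gronwall argument on the two SDEs (which share $\mu^x,W^i,B$ but differ in control). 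Writing the drift difference as $\int(b(\cdot,Y^i[\beta],\cdot,a) - b(\cdot,Y^i,\cdot,a))\beta_s(da) + \int b(\cdot,Y^i,\cdot,a)(\beta_s-\Lambda^i_s)(da)$, the Lipschitz property of $(b,\sigma,\sigma_0)$ in $x$ from (A.4) drives Gronwall, while the linear growth of $b$ in $a$ bounds the drift-difference error by the $p'$-moments of $\beta$ and $\Lambda^i$ plus $\E\|Y^i\|_T^{p'}$. Since $(W^i,\Lambda^i,Y^i) \sim P$, the coercivity in (A.5) combined with optimality of $\Lambda^i$ in the MFG (by the same argument as in Lemma \ref{le:valuebound}(3)) guarantees $\E^\PP\int_0^T\int_A|a|^{p'}\Lambda^i_t(da)dt < \infty$ independently of $n$. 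Hence the first piece is at most $Cn^{-1}(1 + \E^\PP\int_0^T\int_A|a|^{p'}\beta_t(da)dt)$.

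For the second piece, I apply a conditional law of large numbers. Conditionally on $(B,\mu)$, the variables $(W^k,\Lambda^k,Y^k)_{k \ge 1}$ are i.i.d.\ samples from $\mu$, and the previous paragraph's moment bound shows $\mu \in \P^{p'}(\X)$ almost surely with integrable $p'$-moment. The classical $p'$-Wasserstein law of large numbers for empirical measures (as invoked in the proof of Lemma \ref{le:convergence-tight} via \cite[Corollary B.2]{lacker-mfgcontrolledmartingaleproblems}) then gives $\ell_{\X,p'}^{p'}(\widehat{\mu}^n,\mu) \to 0$ almost surely, and uniform integrability upgrades this to $L^1(\PP)$ convergence. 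Calling the resulting rate $\delta_n' \to 0$, the lemma follows with $\delta_n := C(n^{-1} + \delta_n')$. The main technical obstacle is the $p'$-moment bound on the MFG control $\Lambda^i$, since Definition \ref{def:weakMFGsolution} asserts only $p$-th moment integrability; bridging this gap relies on the coercivity of $f$ from (A.5) exactly as in Lemma \ref{le:valuebound}, together with optimality applied to a constant-control comparator. Everything else reduces to routine SDE stability estimates and an application of a well-known empirical measure convergence result.
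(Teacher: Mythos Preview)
Your proposal is correct and follows essentially the same approach as the paper: split via the triangle inequality through the full empirical measure $\widehat{\mu}^n = n^{-1}\sum_{k=1}^n\delta_{(W^k,\Lambda^k,Y^k)}$, control the first piece by the obvious single-atom coupling, and control the second by the conditional law of large numbers in $\P^{p'}(\X)$. The paper handles the path term slightly more crudely than you do---rather than running Gronwall on the difference $Y^i[\beta]-Y^i$, it simply bounds $\|Y^i[\beta]\|_T^{p'}$ and $\|Y^i\|_T^{p'}$ separately by the analog of Lemma~\ref{le:finitestateestimate} (your more refined stability estimate is fine too). You are also right to flag the need for $\E^\PP\int_0^T\int_A|a|^{p'}\Lambda^i_t(da)dt < \infty$; the paper uses this implicitly when absorbing the $\Lambda^i$-moment into the constant, and your coercivity-plus-optimality justification is the appropriate one.
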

\begin{proof}
Expectations are all with respect to $\PP$ throughout the proof.
For $1 \le i \le n$ define 
\[
\widehat{\nu}^n := \frac{1}{n}\sum_{k=1}^n\delta_{(W^k,\Lambda^k,Y^k)}.
\]
Using the obvious coupling, we find
\[
\ell^{p'}_{\X,p'}(\widehat{\nu}^{n,i}[\beta],\widehat{\nu}^n) \le \frac{1}{n}d^{p'}_\X\left((W^i,\Lambda^i,Y^i),(W^i,\beta,Y^i[\beta])\right).
\]
Using \eqref{vmetricinequality}, we find a constant $C > 0$, depending only on $p$, $p'$, and $T$, such that
\begin{align*}
\E\left[d^{p'}_\X\left((W^i,\Lambda^i,Y^i),(W^i,\beta,Y^i[\beta])\right)\right] \le &C\E\left[\int_0^T\int_A|a|^{p'}\beta_t(da)dt + \int_0^T\int_A|a|^{p'}\Lambda^i_t(da)dt\right. \\
	&\quad\quad\quad\left.\vphantom{\int_0^T} + \|Y^i\|^{p'}_T + \|Y^i[\beta]\|^{p'}_T\right]
\end{align*}
Analogously to Lemma \ref{le:finitestateestimate}, it holds that
\begin{align}
\E[\|Y^i[\beta]\|^{p'}_T] &\le c_5\E\left[1 + |Y^i_0|^{p'} + \int_{\C^d}\|z\|^{p'}_T\mu^x(dz) + \int_0^T\int_A|a|^{p'}\beta_t(da)dt\right]. \label{pf:yLLN1}
\end{align}
Note that $\E\int_{\C^d}\|z\|^{p'}_T\mu^x(dz) < \infty$ and that $\E[|Y^i_0|^{p'}] = \E[|Y^1_0|^{p'}] < \infty$. Apply \eqref{pf:yLLN1} also with $\beta = \Lambda^i$, we find a new constant, still called $C$ and still independent of $n$, such that
\[
\E\left[d^{p'}_\X\left((W^i,\Lambda^i,Y^i),(W^i,\beta,Y^i[\beta])\right)\right] \le C\left(1 + \E\int_0^T\int_A|a|^{p'}\beta_t(da)dt\right).
\]
Finally, recall that $(W^k,\Lambda^k,Y^k)_{k=1}^\infty$ are conditionally i.i.d. given $(B,\mu)$ with common conditional law $\mu$. Since also they are $p'$-integrable, it follows from the law of large numbers that
\[
\lim_{n\rightarrow\infty}\E\left[\ell^{p'}_{\X,p'}(\widehat{\nu}^n,\mu)\right] = 0.
\]
Complete the proof by using the triangle inequality to get
\[
\E\left[\ell^{p'}_{\X,p'}(\widehat{\nu}^{n,i}[\beta],\mu)\right] \le \frac{C2^{p'-1}}{n}\left(1 + \E\int_0^T\int_A|a|^{p'}\beta_t(da)dt\right) + 2^{p'-1}\E\left[\ell^{p'}_{\X,p'}(\widehat{\nu}^n,\mu)\right].
\]
\end{proof}

\begin{lemma} \label{le:pathwisepropagation}
There is a sequence $\delta_n > 0$ converging to zero such that for each $1 \le i \le n$ and each $\beta \in \A_n(\mathcal{E}_n)$,
\[
\E^\PP\left[\ell_{\X,p'}^{p'}(\widehat{\mu}[(\overline{\Lambda}^{n,-i},\beta)],\mu) + \left\|X^i[(\overline{\Lambda}^{n,-i},\beta)] - Y^i[\beta]\right\|_T^{p'}\right] \le \delta_n\left(1 + \E^\PP\int_0^T\int_A|a|^{p'}\beta_t(da)dt\right).
\]
\end{lemma}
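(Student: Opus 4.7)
The plan is to combine a trajectorial SDE estimate (as in Lemma \ref{le:modifiedsystem}) with the empirical-measure convergence supplied by Lemma \ref{le:yLLN}, and then close with Gronwall. Throughout, write $\widetilde{\Lambda} := (\overline{\Lambda}^{n,-i},\beta)$ and $\widetilde{X}^k := X^k[\widetilde{\Lambda}]$ for $k=1,\ldots,n$, and let $\ell_{\X,t}$ denote the $p'$-Wasserstein distance on $\P^{p'}(\X)$ obtained by replacing $\|\cdot\|_T$ with $\|\cdot\|_t$ (and the analogous truncation on $\V$) in the definition of $d_\X$.

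First, I would couple $\widehat{\mu}[\widetilde{\Lambda}]$ to the measure $\widehat{\nu}^{n,i}[\beta]$ from Lemma \ref{le:yLLN} index-by-index: pair $(W^k,\Lambda^k,\widetilde{X}^k)$ with $(W^k,\Lambda^k,Y^k)$ for $k\neq i$, and pair $(W^i,\beta,\widetilde{X}^i)$ with $(W^i,\beta,Y^i[\beta])$. Since the noise and control components match, this coupling yields
\[
\ell_{\X,t}^{p'}\!\left(\widehat{\mu}[\widetilde{\Lambda}],\widehat{\nu}^{n,i}[\beta]\right) \le \frac{1}{n}\sum_{k\neq i}^n\|\widetilde{X}^k-Y^k\|_t^{p'} + \frac{1}{n}\|\widetilde{X}^i-Y^i[\beta]\|_t^{p'}.
\]
Combined with the triangle inequality and Lemma \ref{le:yLLN}, this gives, with some sequence $\delta_n\to 0$,
\[
\E^\PP\!\left[\ell_{\X,t}^{p'}\!\left(\widehat{\mu}[\widetilde{\Lambda}],\mu\right)\right] \le C\Bigl(\E^\PP\!\left[\tfrac{1}{n}\!\sum_{k=1}^n\|\widetilde{X}^k-Y^k[\overline{\Lambda}^{n,k}\mathbf{1}_{k\neq i}+\beta\mathbf{1}_{k=i}]\|_t^{p'}\right] + \delta_n\bigl(1+\E^\PP\!\int_0^T\!\!\int_A|a|^{p'}\beta_t(da)dt\bigr)\Bigr).
\]

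Next, I would estimate each trajectorial discrepancy. For $k\neq i$, subtract the SDE for $Y^k$ (driven by $\mu^x$ and $\Lambda^k$) from the SDE for $\widetilde{X}^k$ (driven by $\widehat{\mu}^x[\widetilde{\Lambda}]$ and the same control $\Lambda^k$), apply BDG and the Lipschitz assumption (A.4); the control terms cancel pointwise since both SDEs use $\Lambda^k$. Using
\[
\ell_{\R^d,p}^{p'}\!\left(\widehat{\mu}^x_s[\widetilde{\Lambda}],\mu^x_s\right) \le \ell_{\X,s}^{p'}\!\left(\widehat{\mu}[\widetilde{\Lambda}],\mu\right),
\]
together with the same manipulation of the $\sigma,\sigma_0$ growth that appears in Lemma \ref{le:modifiedsystem} (splitting by whether $\gamma\ge 2$), I obtain
\[
\E^\PP\!\left[\|\widetilde{X}^k-Y^k\|_t^{p'}\right] \le C\int_0^t\E^\PP\!\left[\|\widetilde{X}^k-Y^k\|_s^{p'}\right]ds + C\int_0^t\E^\PP\!\left[\ell_{\X,s}^{p'}(\widehat{\mu}[\widetilde{\Lambda}],\mu)\right]ds.
\]
The same inequality holds for $k=i$ with $Y^i$ replaced by $Y^i[\beta]$ (again the control terms cancel because $\widetilde{X}^i$ is driven by $\beta$ and so is $Y^i[\beta]$). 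Gronwall on each $k$ individually gives
\[
\E^\PP\!\left[\|\widetilde{X}^k-Y^k\|_t^{p'}\right]+\E^\PP\!\left[\|\widetilde{X}^i-Y^i[\beta]\|_t^{p'}\right] \le C\int_0^t\E^\PP\!\left[\ell_{\X,s}^{p'}(\widehat{\mu}[\widetilde{\Lambda}],\mu)\right]ds.
\]

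Averaging over $k$, substituting into the coupling bound displayed above, and using Lemma \ref{le:yLLN}, I arrive at an integral inequality of Gronwall type:
\[
\E^\PP\!\left[\ell_{\X,t}^{p'}(\widehat{\mu}[\widetilde{\Lambda}],\mu)\right] \le C\int_0^t\E^\PP\!\left[\ell_{\X,s}^{p'}(\widehat{\mu}[\widetilde{\Lambda}],\mu)\right]ds + C\delta_n\Bigl(1+\E^\PP\!\int_0^T\!\!\int_A|a|^{p'}\beta_t(da)dt\Bigr).
\]
A final application of Gronwall's inequality at $t=T$ controls $\E^\PP[\ell_{\X,p'}^{p'}(\widehat{\mu}[\widetilde{\Lambda}],\mu)]$, and feeding this back into the trajectorial estimate for $k=i$ controls $\E^\PP\|\widetilde{X}^i-Y^i[\beta]\|_T^{p'}$. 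The two estimates together yield the claim, after renaming $\delta_n$.

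The main obstacle, conceptually, is choosing the right coupling and the right (time-truncated) Wasserstein distance so that the Gronwall argument closes: one must propagate the comparison pathwise yet index-by-index, and handle the fact that the empirical measure $\widehat{\mu}[\widetilde{\Lambda}]$ feeds back into each $\widetilde{X}^k$. Once the coupling in Step 1 is set up, everything else is a careful bookkeeping of the BDG/Lipschitz estimates already used in Lemma \ref{le:modifiedsystem}, combined cleanly with the law of large numbers encapsulated by Lemma \ref{le:yLLN}.
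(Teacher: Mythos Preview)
Your proposal is correct and follows essentially the same approach as the paper: index-by-index coupling of $\widehat{\mu}[\widetilde{\Lambda}]$ with $\widehat{\nu}^{n,i}[\beta]$, BDG/Lipschitz trajectorial estimates as in Lemma \ref{le:modifiedsystem}, the triangle inequality through $\widehat{\nu}^{n,i}$, Lemma \ref{le:yLLN}, and Gronwall. The only cosmetic differences are that the paper works with the truncated $p'$-Wasserstein distance on $\C^d$ rather than on all of $\X$ (which is equivalent here since the noise and control components match under the coupling), and the paper averages over $k$ before applying Gronwall rather than applying Gronwall per $k$ and then averaging.
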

\begin{proof}
The proof is similar to that of Lemma \ref{le:modifiedsystem}, and we work again with the truncated $p'$-Wasserstein distances $\ell_t$ on $\C^d$ defined in \eqref{def:truncatedwasserstein}. Throughout this proof, $n$ and $i$ are fixed, and expectations are all with respect to $\PP$. Abbreviate $\overline{X}^k = X^k[(\overline{\Lambda}^{n,-i},\beta)]$ and $\widehat{\mu} = \widehat{\mu}[(\overline{\Lambda}^{n,-i},\beta)]$ throughout. Define $\overline{Y}^i := Y^i[\beta]$ and $\overline{Y}^k := Y^k$ for $k \neq i$. As in the proof of Lemma \ref{le:modifiedsystem}, we use the Burkholder-Davis-Gundy inequality 
followed by Gronwall's inequality to find a constant $C_1 > 0$, depending only on $c_1$, $p'$, and $T$, such that
\begin{align}
\E\left[\|\overline{X}^k - \overline{Y}^k\|^{p'}_t\right] \le C_1\E\int_0^t\ell^{p'}_s(\widehat{\mu}^x,\mu^x)ds, \text{ for } 1 \le k \le n. \label{pf:pathwisepropagation1}
\end{align}
Define $\widehat{\nu}^{n,i} = \widehat{\nu}^{n,i}[\beta]$ as in Lemma \ref{le:yLLN}, and write $\widehat{\nu}^{n,i,x} := (\widehat{\nu}^{n,i})^x$ for the empirical distribution of $(\overline{Y}^1,\ldots,\overline{Y}^n)$. 
Use \eqref{pf:pathwisepropagation1} and the triangle inequality to get
\begin{align*}
\frac{1}{n}\sum_{k=1}^n\E\left[\|\overline{X}^k - \overline{Y}^k\|^{p'}_t\right]  &\le 2^{p'-1}C_1\E\int_0^t\left(\ell^{p'}_s(\widehat{\mu}^x,\widehat{\nu}^{n,i,x}) + \ell^{p'}_s(\widehat{\nu}^{n,i,x},\mu^x) \right)ds \\
	&\le 2^{p'-1}C_1\E\int_0^t\left(\frac{1}{n}\sum_{k=1}^n\|\overline{X}^k - \overline{Y}^k\|^{p'}_s + \ell^{p'}_s(\widehat{\nu}^{n,i,x},\mu^x) \right)ds
\end{align*}
By Gronwall's inequality and Lemma \ref{le:yLLN}, with $C_2 := 2^{p'-1}C_1e^{2^{p'-1}C_1T}$ we have
\begin{align}
\frac{1}{n}\sum_{k=1}^n\E\left[\|\overline{X}^k - \overline{Y}^k\|^{p'}_t\right] &\le  C_2\E\int_0^t\ell^{p'}_s(\widehat{\nu}^{n,i,x},\mu^x)ds \le C_2 T\E\left[\ell^{p'}_{\X,p'}(\widehat{\nu}^{n,i},\mu)\right]  \nonumber \\
	&\le C_2 T\delta_n\left(1 + \E\int_0^T\int_A|a|^{p'}\beta_t(da)dt\right).  \label{pf:pathwisepropagation2}
\end{align}
The obvious coupling yields the inequality
\begin{align*}
\ell^{p'}_{\X,p'}(\widehat{\mu},\widehat{\nu}^{n,i}) \le \frac{1}{n}\sum_{k=1}^n\|\overline{X}^k - \overline{Y}^k\|^{p'}_T,
\end{align*}
and then the triangle inequality implies
\begin{align*}
\E\left[\ell_{\X,p'}^{p'}(\widehat{\mu},\mu)\right] \le 2^{p'-1}\frac{1}{n}\sum_{k=1}^n\E\left[\|\overline{X}^k - \overline{Y}^k\|^{p'}_T\right] + 2^{p'-1}\E\left[\ell^{p'}_{\X,p'}(\widehat{\nu}^{n,i},\mu)\right].
\end{align*}
Conclude from Lemma \ref{le:yLLN} and \eqref{pf:pathwisepropagation2}.
\end{proof}

\subsection{Proof of Theorem \ref{th:partialconverseconvergence}}
With Lemma \ref{le:pathwisepropagation} in hand, we begin the proof of Theorem \ref{th:partialconverseconvergence}. The convergence $P_n \rightarrow P$ follows immediately from Lemma \ref{le:pathwisepropagation}, and it remains only to check that $\overline{\Lambda}^n$ is a relaxed $(\epsilon_n,\ldots,\epsilon_n)$-Nash equilibrium for some $\epsilon_n \rightarrow 0$. Define
\begin{align*}
\epsilon_n &:= \max_{i=1}^n\left[\sup_{\beta \in \A_n(\mathcal{E}_n)}J_i((\overline{\Lambda}^{n,-i},\beta)) - J_i(\overline{\Lambda}^n)\right] \\
  &= \sup_{\beta \in \A_n(\mathcal{E}_n)}J_1((\overline{\Lambda}^{n,-1},\beta)) - J_1(\overline{\Lambda}^n),
\end{align*}
where the second equality follows from exchangeability, or more precisely from the fact that (using the notation of Remark \ref{re:exchangeability}) the measure
\[
\PP \circ \left(\xi_\pi,B,W_\pi,\widehat{\mu}[\overline{\Lambda}^n_\pi],\overline{\Lambda}^n_\pi,X[\overline{\Lambda}^n_\pi]_\pi\right)^{-1}
\]
does not depend on the choice of permutation $\pi$.
Recall that $P \in \P(\Omega)$ was the given MFG solution, and define $\rho := P \circ (\xi,B,W,\mu)^{-1}$ so that $P \in \RC\A^{*}(\rho)$.
For each $n$, find $\beta^n \in \A_n(\mathcal{E}_n)$ such that 
\begin{align}
J_1((\overline{\Lambda}^{n,-1},\beta^n)) \ge \sup_{\beta \in \A_n(\mathcal{E}_n)}J_1((\overline{\Lambda}^{n,-1},\beta)) - 1/n. \label{pf:converse0}
\end{align}
To complete the proof, it suffices to prove the following:
\begin{align}
&\lim_{n\rightarrow\infty}J_1(\overline{\Lambda}^n) = \E^\PP\left[\Gamma(\mu^x,\Lambda^1,Y^1)\right], \label{pf:converse1}\\
&\lim_{n\rightarrow\infty}\left|\E^\PP\left[\Gamma(\widehat{\mu}^x[(\overline{\Lambda}^{n,-1},\beta^n)],\beta^n,\right.\left.X^1[(\overline{\Lambda}^{n,-1},\beta^n)]) - \Gamma(\mu^x,\beta^n,Y^1[\beta^n])\right]\right| = 0. \label{pf:converse2}
\end{align}
Indeed, note that $\PP \circ \left(\xi^1,B,W^1,\mu,\Lambda^1,Y^1\right)^{-1} = P$ holds by construction. Since
\[
P'_n := \PP \circ \left(\xi^1,B,W^1,\mu,\beta^n,Y^1[\beta^n]\right)^{-1}
\]
is in $\RC\A(\rho)$ for each $n$, and since $P$ is in $\RC\A^{*}(\rho)$, we have
\[
\E^\PP\left[\Gamma(\mu^x,\beta^n,Y^1)\right] = J(P) \ge J(P'_n) = \E^\PP\left[\Gamma(\mu^x,\beta^n,Y^1[\beta^n])\right], \text{ for all } n.
\]
Thus, from \eqref{pf:converse1} and \eqref{pf:converse2} it follows that
\begin{align*}
\lim_{n\rightarrow\infty}J_1(\overline{\Lambda}^n) &\ge \limsup_{n\rightarrow\infty}\E^\PP\left[\Gamma(\mu^x,\beta^n,Y^1[\beta^n])\right] \\
	&= \limsup_{n\rightarrow\infty}J_1((\overline{\Lambda}^{n,-1},\beta^n)) \\
	&= \limsup_{n\rightarrow\infty}\sup_{\beta \in \A_n(\mathcal{E}_n)}J_1((\overline{\Lambda}^{n,-1},\beta)),
\end{align*}
where of course in the last step we have used \eqref{pf:converse0}. Since $\epsilon_n \ge 0$, this shows $\epsilon_n \rightarrow 0$.

\subsubsection*{Proof of \eqref{pf:converse1}:}
First, apply Lemma \ref{le:pathwisepropagation} with $\beta = \Lambda^1$ (so that $(\overline{\Lambda}^{n,-1},\beta) = \overline{\Lambda}^n$) to get
\begin{align*}
\lim_{n\rightarrow\infty}\PP \circ \left(Y^1_0,B,W^1,\widehat{\mu}[\overline{\Lambda}^n],\Lambda^1,X^1[\overline{\Lambda}^n]\right)^{-1} &= \PP \circ \left(Y^1_0,B,W^1,\mu,\Lambda^1,Y^1\right)^{-1},
\end{align*}
where the limit is taken in $\P^p(\Omega)$. Moreover, since $\E^\PP\int_0^T\int_A|a|^{p'}\Lambda^1_t(da)dt < \infty$, we use the continuity of $J$ of Lemma \ref{le:jcontinuous} (since the additional uniform integrability condition holds trivially) to conclude that
\begin{align*}
\lim_{n\rightarrow\infty}J_1(\overline{\Lambda}^n)	&= \lim_{n\rightarrow\infty}\E^\PP\left[\Gamma(\widehat{\mu}^x[\overline{\Lambda}^n],\Lambda^1,X^1[\overline{\Lambda}^n])\right] = \E^\PP\left[\Gamma(\mu^x,\Lambda^1,Y^1)\right].
\end{align*}

\subsubsection*{Proof of \eqref{pf:converse2}:} 
This step is fairly involved and thus divided into several steps. The first two steps identify a relative compactness for the laws of the empirical measure and state process pairs, crucial for the third and fourth steps below. Step (3) focuses on the $g$ term, and Step (4) uses the additional assumption \ref{assumption:B} to deal with the $f$ term.

\subsubsection*{Proof of \eqref{pf:converse2}, Step (1):}
We show first that 
\begin{align}
\sup_n\E\int_0^T\int_A|a|^{p'}\beta^n_t(da)dt < \infty. \label{pf:converse3}
\end{align}
By \eqref{pf:converse0} and Lemma \ref{le:valuebound}(2), we have
\begin{align*}
\E\int_0^T\int_A(|a|^{p'} - c_6|a|^p)\beta^n_t(da)dt &\le c_7\E\left[1 + \frac{1}{n} + |\xi^1|^p + \frac{1}{n}\sum_{i=2}^n\int_0^T\int_A|a|^p\Lambda^i_t(da)dt\right] \\
	&= c_7\E\left[1 + \frac{1}{n} + |\xi^1|^p + \frac{n-1}{n}\int_0^T\int_A|a|^p\Lambda^1_t(da)dt\right],
\end{align*}
where the second line follows from symmetry. Since $\E[|\xi^1|^p] < \infty$ and $\E\int_0^T\int_A|a|^p\Lambda^1_t(da)dt < \infty$, we have proven \eqref{pf:converse3}.

\subsubsection*{Proof of \eqref{pf:converse2}, Step (2):}
Define $\A_R$ for $R > 0$ to be the set of $(\overline{\F}^\infty_t)_{t \in [0,T]}$-progressive $\P(A)$-valued processes $\beta$ such that 
\[
\E\int_0^T\int_A|a|^{p'}\beta_t(da)dt \le R.
\]
According to \eqref{pf:converse3}, there exists $R > 0$ such that $\beta^n \in \A_R$ for all $n$. Define also
\begin{align*}
S_R := \left\{\PP \circ \left(\widehat{\mu}^x[(\overline{\Lambda}^{n,-1},\beta)],X^1[(\overline{\Lambda}^{n,-1},\beta)]\right)^{-1} : n \ge 1, \beta \in \A_R \right\}.
\end{align*}
We show next that $S_R$ is relatively compact in $\P^p(\P^p(\C^d) \times \C^d)$. Note first that it follows from Lemma \ref{le:finitestateestimate} that
\begin{align}
\sup\left\{\E^\PP\int_{\C^d}\|z\|_T^{p'}\widehat{\mu}^x[(\overline{\Lambda}^{n,-1},\beta)](dz) : n \ge 1, \ \beta \in \A_R\right\} < \infty. \label{pf:converse5}
\end{align}
By symmetry, we have
\[
\left\{ \PP \circ (X^1[(\overline{\Lambda}^{n,-1},\beta)])^{-1} : n \ge 1, \ \beta \in \A_R \right\} = \left\{ \frac{1}{n}\sum_{k=1}^n\PP \circ (X^k[(\overline{\Lambda}^{n,-k},\beta)])^{-1} : n \ge 1, \ \beta \in \A_R \right\},
\]
and by Proposition \ref{pr:itocompact} this set is relatively compact in $\P^p(\C^d)$. For $\beta \in \A_R$, the mean measure of $\PP \circ (\widehat{\mu}^x[(\overline{\Lambda}^{n,-1},\beta)])^{-1}$ is exactly
\begin{align*}
\frac{1}{n}\sum_{k=1}^n\PP \circ (X^k[(\overline{\Lambda}^{n,-1},\beta)])^{-1},
\end{align*}
and it follows again from Proposition \ref{pr:itocompact} that the family
\[
\left\{\frac{1}{n}\sum_{k=1}^n\PP \circ (X^k[(\overline{\Lambda}^{n,-1},\beta)])^{-1} : n \ge 1, \beta \in \A_R\right\}
\]
is relatively compact in $\P^p(\C^d)$. From this and \eqref{pf:converse5} we conclude that $\PP \circ (\widehat{\mu}^x[(\overline{\Lambda}^{n,-1},\beta)])^{-1}$ are relatively compact in $\P^p(\P^p(\C^d))$. Hence, $S_R$ is relatively compact. (See Corollary B.2 and Lemma A.2 of \cite{lacker-mfgcontrolledmartingaleproblems} regarding these last two conclusions.)

\subsubsection*{Proof of \eqref{pf:converse2}, Step (3):}
Since $\beta^n \in \A_R$ for each $n$, to prove \eqref{pf:converse2} it suffices to show that
\begin{align}
\sup_{\beta \in \A_R}I^\beta_n \rightarrow 0, \label{pf:converse3.1}
\end{align}
where
\begin{align*}
I^\beta_n &:= \E\left[\Gamma(\widehat{\mu}^x[(\overline{\Lambda}^{n,-1},\beta)],\beta,X^1[(\overline{\Lambda}^{n,-1},\beta)]) - \Gamma(\mu^x,\beta,Y^1[\beta])\right] \\
	&= \E\left[\int_0^T\int_A\left(f(t,X^1_t[(\overline{\Lambda}^{n,-1},\beta)],\widehat{\mu}^x_t[(\overline{\Lambda}^{n,-1},\beta)],a) - f(t,Y^1_t[\beta],\mu^x_t,a)\right)\beta_t(da)dt\right] \\
	&\quad + \E\left[g(X^1_T[(\overline{\Lambda}^{n,-1},\beta)],\widehat{\mu}^x_T[(\overline{\Lambda}^{n,-1},\beta)]) - g(Y^1_T[\beta],\mu^x_T)\right].
\end{align*}
We start with the $g$ term.
Define
\begin{align*}
Q_n^\beta &:= \PP \circ (\widehat{\mu}^x[(\overline{\Lambda}^{n,-1},\beta)],X^1[(\overline{\Lambda}^{n,-1},\beta)])^{-1}, \\
Q^\beta &:= \PP \circ (\mu^x,Y^1[\beta])^{-1}.
\end{align*}
Using the metric on $\P^p(\C^d) \times \C^d$ given by
\[
((\mu,x),(\mu',x')) \mapsto \left[\ell^p_{\C^d,p}(\mu,\mu') + \|x-x'\|_T^p\right]^{1/p},
\]
we define the $p$-Wasserstein metric $\ell_{\P^p(\C^d) \times \C^d,p}$ on $\P^p(\P^p(\C^d) \times \C^d)$.
By Lemma \ref{le:pathwisepropagation}, we have
\begin{align*}
\ell_{\P^p(\C^d) \times \C^d,p}^{p'}(Q_n^\beta,Q^\beta) &\le \E\left[\ell^p_{\C^d,p}\left(\widehat{\mu}^x[(\overline{\Lambda}^{n,-1},\beta)],\mu^x\right) + \|X^1[(\overline{\Lambda}^{n,-1},\beta)] - Y^1[\beta]\|_T^p\right]^{p'/p} \\
	&\le 2^{p'/p-1}\E\left[\ell^{p'}_{\X,p'}\left(\widehat{\mu}[(\overline{\Lambda}^{n,-1},\beta)],\mu\right) + \|X^1[(\overline{\Lambda}^{n,-1},\beta)] - Y^1[\beta]\|_T^{p'}\right] \\
	&\le 2^{p'/p-1}\delta_n(1 + R),
\end{align*}
and thus $Q_n^\beta \rightarrow Q^\beta$ in $\P^p(\P^p(\C^d) \times \C^d)$, uniformly in $\beta \in \A_R$. The function
\[
\P^p(\P^p(\C^d) \times \C^d) \ni Q \mapsto \int Q(d\nu,dx)g(x_T,\nu_T)
\]
is continuous, and so its restriction to the closure of $S_R$ is \emph{uniformly} continuous. Thus, since $\{Q_n^\beta : n \ge 1, \ \beta \in \A_R\} \subset S_R$,
\[
\lim_{n\rightarrow\infty}\sup_{\beta \in \A_R}\left|\E\left[g(X^1_T[(\overline{\Lambda}^{n,-1},\beta)],\widehat{\mu}^x_T[(\overline{\Lambda}^{n,-1},\beta)]) - g(Y^1_T[\beta],\mu^x_T)\right]\right| = 0.
\]

\subsubsection*{Proof of \eqref{pf:converse2}, Step (4):}
To deal with the $f$ term in $I^\beta_n$ it will be useful to define $G : \P^p(\C^d) \times \C^d \rightarrow \R$ by
\[
G\left((\mu^1,x^1),(\mu^2,x^2)\right) := \int_0^T\sup_{a \in A}\left|f(t,x^1_t,\mu^1_t,a) - f(t,x^2_t,\mu^2_t,a)\right|dt
\]
With the $g$ term taken care of in Step (3) above, the proof of \eqref{pf:converse3.1} and thus the theorem will be complete if we show that
\begin{align}
0 &= \lim_{n\rightarrow\infty}\sup_{\beta \in \A_R}\E\left[Z^n_\beta\right], \text{ where} \label{pf:converse4.1} \\
Z^n_\beta &:= G\left((\widehat{\mu}^x[(\overline{\Lambda}^{n,-1},\beta)],X^1[(\overline{\Lambda}^{n,-1},\beta)]),(\mu^x,Y^1[\beta])\right). \nonumber
\end{align}
Fix $\eta > 0$, and note that by relative compactness of $S_R$ we may find (e.g. by \cite[Theorem 7.12]{villanibook}) a compact set $K \subset \P^p(\C^d) \times \C^d$ such that, if the event $K_\beta$ is defined by
\[
K_\beta := \left\{\left(\widehat{\mu}^x[(\overline{\Lambda}^{n,-1},\beta)],X^1[(\overline{\Lambda}^{n,-1},\beta)]\right) \in K\right\},
\]
then
\[
\E\left[\left(1 + \int_{\C^d}\|z\|^p_T\widehat{\mu}^x[(\overline{\Lambda}^{n,-1},\beta)](dz) + \|X^1[(\overline{\Lambda}^{n,-1},\beta)\|_T^p\right)1_{K^c_\beta}\right] \le \eta,
\]
for all $n \ge 1$ and $\beta \in \A_R$. Sending $n \rightarrow \infty$, it follows from Lemma \ref{le:pathwisepropagation} that also 
\[
\E\left[\left(1 + \int_{\C^d}\|z\|^p_T\mu^x(dz) + \|Y^1[\beta]\|_T^p\right)1_{K^c_\beta}\right] \le \eta.
\]
Hence, the growth condition of Assumption \ref{assumption:B} implies
\begin{align}
\E\left[1_{K^c_\beta}Z^n_\beta\right] \le c_4\eta, \label{pf:converse6}
\end{align}
for all $n \ge 1$ and $\beta \in \A_R$.
Assumption \ref{assumption:B} implies that $G$ is continuous, and thus uniformly continuous on $K \times K$. We will check next that $\E[1_{K_\beta}Z^n_\beta]$
converges to zero, uniformly in $\beta \in \A_R$. Indeed, by uniform continuity there exists $\eta_0 > 0$ such that if $(\mu^1,x^1),(\mu^2,x^2) \in K$ and $G((\mu^1,x^1),(\mu^2,x^2)) > \eta$ then $\|x^1-x^2\|_T + \ell_{\C^d,p}(\mu^1,\mu^2) > \eta_0$. Thus, since $G$ is bounded on $K \times K$, say by $C > 0$, we use Markov's inequality and Lemma \ref{le:pathwisepropagation} to conclude that
\begin{align*}
\E\left[1_{K_\beta}Z^n_\beta\right] &\le \eta + C\PP\left\{\left\|X^1[(\overline{\Lambda}^{n,-1},\beta)] - Y^1[\beta]\right\|_T + \ell_{\C^d,p}\left(\widehat{\mu}^x[(\overline{\Lambda}^{n,-1},\beta)],\mu^x\right) > \eta_0\right\} \\
	&\le \eta + 2^{p'-1}C\eta_0^{-p'}\E\left[\left\|X^1[(\overline{\Lambda}^{n,-1},\beta)] - Y^1[\beta]\right\|_T^{p'} + \ell_{\C^d,p}^{p'}\left(\widehat{\mu}^x[(\overline{\Lambda}^{n,-1},\beta)],\mu^x\right)\right] \\
	&\le \eta + 2^{p'-1}C\eta_0^{-p'}\delta_n\left(1 + \E\int_0^T\int_A|a|^{p'}\beta_t(da)dt\right) \\
	&\le \eta + 2^{p'-1}C\eta_0^{-p'}\delta_n(1+R),
\end{align*}
whenever $\beta \in \A_R$, where $\delta_n \rightarrow 0$ is from Lemma \ref{le:pathwisepropagation}. Combining this with \eqref{pf:converse6}, we get
\[
\limsup_{n\rightarrow\infty}\sup_{\beta \in \A_R}\E\left[Z^n_\beta\right] \le (1+c_4)\eta.
\]
This holds for each $\eta > 0$, completing the proof of \eqref{pf:converse4.1} and thus of the theorem. \hfill \qedsymbol

\subsection{Proof of Proposition \ref{pr:fixednapproximation}} \label{se:fixednapproximation}

Throughout the section, the number of agents $n$ is fixed, and we work on the $n$-player environment $\mathcal{E}_n$ specified in Section \ref{se:environments}. The proof of Proposition \ref{pr:fixednapproximation} is split into two main steps. In this first step, we approximate the relaxed strategy $\Lambda^0$ by bounded strong strategies, and we check the convergences (1) and (2) claimed in Proposition \ref{pr:fixednapproximation}. The second step verifies the somewhat more subtle inequality (3) of Proposition \ref{pr:fixednapproximation}.

\begin{remark} \label{re:adapteddensity}
Propositions \ref{pr:fixednapproximation}, \ref{pr:equilibriuminclusions}, and \ref{pr:verystrongeq} are really just instances of the density of strong (and strict) controls in the class of weak controls, in a sense made precise by Lemma \ref{le:adapteddensity}. Indeed, a consequence of Lemma \ref{le:adapteddensity} may be stated more transparently as follows. Suppose $(\widetilde{\Omega},(\F_t)_{t \in [0,T]},P)$ is a filtered probability space supporting a $(\F_t)_{t \in [0,T]}$-Wiener process $\widetilde{W}$ (of any dimension), an $\F_0$-measurable random variable $\widetilde{\xi}$ living in some Euclidean space, and a progressively measurable $\P(A)$-valued process $(\widetilde{\Lambda}_t)_{t \in [0,T]}$, satisfying $\E^P\int_0^T\int_A|a|^{p'}\Lambda_t(da)dt < \infty$. Then, if $\G_t := \sigma(\widetilde{\xi},\widetilde{W}_s : s \le t)$, then there exists a sequence $(\alpha^k)_{k=1}^\infty$ of $(\G_t)_{t \in [0,T]}$-progressively measurable $A$-valued processes such that
\[
\lim_{k\rightarrow\infty} P \circ \left(\widetilde{\xi},\widetilde{W},dt\delta_{\alpha^k_t}(da)\right)^{-1} = P \circ \left(\widetilde{\xi},\widetilde{W},dt\widetilde{\Lambda}_t(da)\right)^{-1},
\]
and
\[
\lim_{r\rightarrow\infty}\sup_k\E^P\left[\int_0^T|\alpha^k_t|^{p'}1_{\{|\alpha^k_t| > r\}}dt\right] < \infty.
\]
\end{remark}

Before we prove Proposition \ref{pr:fixednapproximation}, we need the following lemma, which is a simple variant of a standard result:

\begin{lemma} \label{le:finitestateconvergence}
Suppose $\widetilde{\Lambda}^k = (\widetilde{\Lambda}^{k,1},\ldots,\widetilde{\Lambda}^{k,n}) \in \A_n^n(\mathcal{E}_n)$ is such that
\[
\lim_{k\rightarrow\infty}\PP \circ (\xi,B,W,\widetilde{\Lambda}^k)^{-1} = \PP \circ (\xi,B,W,\Lambda^0)^{-1},
\]
with the limit taken in $\P^p((\R^d)^n \times \C^{m_0} \times (\C^m)^n \times \V^n)$. Then 
\[
\lim_{k\rightarrow\infty}\PP \circ \left(B,W,\widetilde{\Lambda}^k,X[\widetilde{\Lambda}^k]\right)^{-1} = \PP \circ \left(B,W,\Lambda^0,X[\Lambda^0]\right)^{-1},
\]
in $\P^p(\C^{m_0} \times (\C^m)^n \times \V^n \times (\C^d)^n)$.
\end{lemma}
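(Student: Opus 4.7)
The plan is to first establish tightness of the joint laws $\PP \circ (B, W, \widetilde{\Lambda}^k, X[\widetilde{\Lambda}^k])^{-1}$, then to identify every limit point as $\PP \circ (B, W, \Lambda^0, X[\Lambda^0])^{-1}$ via strong uniqueness of the state SDE, and finally to upgrade weak convergence to $\P^p$ convergence by means of a uniform integrability argument.

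First I would derive the necessary moment bounds. Since $\widetilde{\Lambda}^k \to \Lambda^0$ in $\P^p(\V^n)$, $p$-Wasserstein convergence is equivalent to weak convergence together with convergence of $p$-th absolute moments, from which it follows that the family of nonnegative random variables $\{\int_0^T \int_A |a|^p \widetilde{\Lambda}^{k,i}_t(da)\,dt : k \ge 1\}$ is uniformly integrable for each $i$. Lemma \ref{le:finitestateestimate} with $\gamma = p$ then yields both $\sup_k \E^\PP[\|X^i[\widetilde{\Lambda}^k]\|_T^p] < \infty$ and uniform integrability of $\{\|X^i[\widetilde{\Lambda}^k]\|_T^p\}_k$. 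Tightness of $\PP \circ X[\widetilde{\Lambda}^k]^{-1}$ in $\P^p((\C^d)^n)$ follows from a standard application of the Aldous criterion, entirely analogous to the argument behind Proposition \ref{pr:itocompact}. Combined with the trivial tightness of $\PP \circ (B, W)^{-1}$ and the hypothesized convergence of $\PP \circ \widetilde{\Lambda}^{k, -1}$, we obtain relative compactness of the full joint laws in $\P^p(\C^{m_0} \times (\C^m)^n \times \V^n \times (\C^d)^n)$.

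Next, given any convergent subsequence, I would apply Skorokhod's representation theorem to realize a.s.\ convergence on an auxiliary probability space $(\bar{\Omega}, \bar{\F}, \bar{\PP})$: there exist random variables $(\bar{\xi}^k, \bar{B}^k, \bar{W}^k, \bar{\Lambda}^k, \bar{X}^k)$ equal in law to $(\xi, B, W, \widetilde{\Lambda}^k, X[\widetilde{\Lambda}^k])$ for each $k$ and converging almost surely to a limit $(\bar{\xi}, \bar{B}, \bar{W}, \bar{\Lambda}, \bar{X})$. Since the state SDE holds on the original space, the corresponding SDE relation holds between $(\bar{\xi}^k, \bar{B}^k, \bar{W}^k, \bar{\Lambda}^k, \bar{X}^k)$ on $\bar{\Omega}$, with respect to the natural filtration. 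Using the Lipschitz estimates of (A.4), the continuity of $b$ in $(x, \mu, a)$, the $\V$-convergence of $\bar{\Lambda}^k$ to $\bar{\Lambda}$ (which gives convergence of integrals of continuous test functions of suitable growth), and the theorem of Kurtz and Protter \cite{kurtzprotter-weakconvergence} on convergence of stochastic integrals against weakly converging semimartingale inputs, I would pass to the limit in each component of the SDE system to conclude that $\bar{X}$ solves the state SDE driven by $(\bar{\xi}, \bar{B}, \bar{W}, \bar{\Lambda})$. Strong uniqueness of this system, which follows from the Lipschitz assumption (A.4), then forces the joint law of $(\bar{B}, \bar{W}, \bar{\Lambda}, \bar{X})$ to coincide with $\PP \circ (B, W, \Lambda^0, X[\Lambda^0])^{-1}$.

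Since every subsequence admits a further subsequence converging to the same limit, the original sequence converges weakly, and combining this with the uniform integrability of $\|X^i[\widetilde{\Lambda}^k]\|_T^p$ from the first step upgrades the convergence to $\P^p$. The main technical obstacle is the simultaneous convergence of state processes and controls appearing inside the drift integral $\int_A b(t, X^i_t, \widehat{\mu}^x_t, a) \widetilde{\Lambda}^{k,i}_t(da)\,dt$; this is handled by the joint Skorokhod representation together with the continuity of $b$ in $(x, \mu, a)$, which ensures that the drift integrals converge along the convergent realizations.
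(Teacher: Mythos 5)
Your proposal is correct and follows essentially the same route as the paper's (sketched) proof: relative compactness of the joint laws via the moment estimates and Aldous' criterion as in Proposition \ref{pr:itocompact}, identification of every limit point as a solution of the state SDE via Kurtz--Protter, and conclusion by strong (pathwise) uniqueness of that SDE, with the $\P^p$ upgrade coming from the uniform integrability inherited from the Wasserstein-$p$ convergence of the controls. The only point worth noting is that, since the hypothesis only provides $p$-th (not $p'$-th) moments of the controls, the tightness step must substitute uniform integrability of the $p$-th moments for the higher-moment bound used in Proposition \ref{pr:itocompact}, which you correctly anticipate.
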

\begin{proof}
This is analogous to the proof of Lemma \ref{le:rccontinuous}, given in \cite{carmonadelaruelacker-mfgcommonnoise}, which is itself an instance of a standard method proving weak convergence of SDE solutions, so we only sketch the proof. It can be shown as in Proposition \ref{pr:itocompact} that $\{\PP \circ (X[\widetilde{\Lambda}^k])^{-1} : k \ge 1\}$ is relatively compact in $\P^p((\C^d)^n)$, and thus $\{\PP \circ \left(B,W,\widetilde{\Lambda}^k,X[\widetilde{\Lambda}^k]\right)^{-1} : k \ge 1\}$ is relatively compact in $\P^p(\C^{m_0} \times (\C^m)^n \times \V^n \times (\C^d)^n)$. Using the results of Kurtz and Protter \cite{kurtzprotter-weakconvergence}, it is straightforward to check that under any limit point the canonical processes satsify a certain SDE, and the claimed convergence follows from uniqueness of the SDE solution.
\end{proof}

We are now ready to prove Proposition \ref{pr:fixednapproximation}.

\subsubsection*{Step 1:} 
Define $\overline{\V}$ analogously to $\V$, but with $A$ replaced by $A^n$. That is, $\overline{\V}$ is the set of measures $q$ on $[0,T] \times A^n$ with first marginal equal to Lebesgue measure and with
\[
\int_{[0,T] \times A^n}\sum_{i=1}^n|a_i|^pq(dt,da_1,\ldots,da_n) < \infty.
\]
Endow $\overline{\V}$ with the $p$-Wasserstein metric. Define
\[
\overline{\Lambda}^0_t(da_1,\ldots,da_n) := \prod_{i=1}^n\Lambda^{0,i}_t(da_i),
\]
and identify this $\P(A^n)$-valued process with the random element $\overline{\Lambda}^0 := dt\overline{\Lambda}^0_t(da)$ of $\overline{\V}$. By Lemma \ref{le:adapteddensity} (see also Remark \ref{re:adapteddensity}), with $A$ replaced by $A^n$, there exists a sequence of bounded $A^n$-valued processes $\alpha^k = (\alpha^{k,1},\ldots,\alpha^{k,n})$ such that, if we define
\[
\overline{\Lambda}^k := dt\delta_{\alpha^{k}_t}(da_1,\ldots,da_n) = dt\prod_{i=1}^n\delta_{\alpha^{k,i}_t}(da_i),
\]
then we have
\begin{align}
\lim_{r\rightarrow\infty}\sup_k\E^\PP\left[\int_0^T|\alpha^k_t|^{p'}1_{\{|\alpha^k_t| > r\}}dt \right] = 0 \label{pf:fpss1}
\end{align}
and
\[
\lim_{k\rightarrow \infty}\PP \circ \left(\xi,B,W,\overline{\Lambda}^{k}\right)^{-1} = \PP \circ \left(\xi,B,W,\overline{\Lambda}^0\right)^{-1},
\]
in $\P^{p}((\R^d)^n \times \C^{m_0} \times (\C^m)^n \times \overline{\V})$. Defining $\pi_i : [0,T] \times A^n \rightarrow [0,T] \times A$ by $\pi_i(t,a_1,\ldots,a_n) := (t,a_i)$, we note that the map $\overline{\V} \ni q \mapsto q \circ \pi_i^{-1} \in \V$ is continuous. Define $\Lambda^{k,i}_t := \delta_{\alpha^{k,i}_t}$ and $\Lambda^{k} = (\Lambda^{k,1},\ldots,\Lambda^{k,n})$, and conclude that 
\[
\lim_{k\rightarrow \infty}\PP \circ \left(\xi,B,W,\Lambda^{k}\right)^{-1} = \PP \circ \left(\xi,B,W,\Lambda^0\right)^{-1},
\]
in $\P^{p}((\R^d)^n \times \C^{m_0} \times (\C^m)^n \times \V^n)$, for each $k$. By Lemma \ref{le:finitestateconvergence},
\begin{align*}
\lim_{k\rightarrow\infty}\PP \circ \left(B,W,\Lambda^{k},X[\Lambda^{k}]\right)^{-1} = \PP \circ \left(B,W,\Lambda^0,X[\Lambda^0]\right)^{-1},
\end{align*}
in $\P^p(\C^{m_0} \times (\C^m)^n \times \V^n \times (\C^d)^n)$. It follows from the uniform integrability \eqref{pf:fpss1} and the continuity of $J$ of Lemma \ref{le:jcontinuous} that
\begin{align*}
\lim_{k\rightarrow\infty}J_i(\Lambda^{k}) = J_i(\Lambda^0), \ i=1,\ldots,n.
\end{align*}
This verifies (1) and (2) of Proposition \ref{pr:fixednapproximation}.

\subsubsection*{Step 2:}
It remains to justify the inequality (3) of Proposition \ref{pr:fixednapproximation}. We prove this only for $i=1$, since the cases $i=2,\ldots,n$ are identical. For each $k$ find $\beta^k \in \A_n(\mathcal{E}_n)$ such that
\begin{align}
J_i((\Lambda^{k,-1},\beta^k)) \ge \sup_{\beta \in 
\A_n(\mathcal{E}_n)}J_i((\Lambda^{k,-1},\beta)) - \frac{1}{k}. \label{pf:fpss5}
\end{align}
First, use Lemma \ref{le:valuebound}(2) to get
\begin{align*}
\E\int_0^T\int_A(|a|^{p'} - c_6|a|^p)\beta^k_t(da)dt &\le c_7\E\left[1 + \frac{1}{k} + |\xi^1|^p + \frac{1}{n}\sum_{i=2}^n\int_0^T\int_A|a|^p\Lambda^{k,i}_t(da)dt\right].
\end{align*}
Since $\E[|\xi^1|^p] < \infty$, and since
\[
\lim_{k\rightarrow\infty}\E\int_0^T\int_A|a|^p\Lambda^{k,i}_t(da)dt = \E\int_0^T\int_A|a|^p\Lambda^{0,i}_t(da)dt < \infty,
\]
holds by construction, for $i=2,\ldots,n$, it follows that
\[
R := \sup_k \E^{\PP}\int_0^T\int_A|a|^{p'}\beta^k_t(da)dt < \infty.
\]
It follows as in Proposition \ref{pr:itocompact} (or more precisely \cite[Proposition B.4]{lacker-mfgcontrolledmartingaleproblems}) that the set
\[
\left\{\PP \circ \left((\Lambda^{k,-1},\beta^k),X[(\Lambda^{k,-1},\beta^k)]\right)^{-1} : k \ge 1\right\}
\]
is relatively compact in $\P^p(\V^n \times (\C^d)^n)$. Hence, the set 
\begin{align}
\left\{P_k := \PP \circ \left(B,W,(\Lambda^{k,-1},\beta^k),X[(\Lambda^{k,-1},\beta^k)]\right)^{-1} : k \ge 1\right\} \label{pf:fpss6}
\end{align}
is relatively compact in $\P^p(\C^{m_0} \times (\C^m)^n \times \V^n \times (\C^d)^n)$ (e.g. by \cite[Lemma A.2]{lacker-mfgcontrolledmartingaleproblems}).
By the following Lemma \ref{le:limitrepresentation}, every limit point $P$ of $(P_k)_{k=1}^\infty$ is of the form
\begin{align}
P = \PP \circ \left(B,W,(\Lambda^{0,-1},\beta),X[(\Lambda^{0,-1},\beta)]\right)^{-1}, \text{ for some } \beta \in \A_n(\mathcal{E}_n). \label{pf:fpss7}
\end{align}
This implies
\[
\limsup_{k\rightarrow\infty}J_i((\Lambda^{k,-1},\beta^k)) \le \sup_{\beta \in \A_n(\mathcal{E}_n)}J_i((\Lambda^{0,-1},\beta)).
\]
Because of \eqref{pf:fpss5}, this completes the proof of Proposition \ref{pr:fixednapproximation}.

\begin{lemma} \label{le:limitrepresentation}
Every limit point $P$ of $(P_k)_{k=1}^\infty$ (defined in \eqref{pf:fpss6}) is of the form \eqref{pf:fpss7}.
\end{lemma}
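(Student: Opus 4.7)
\emph{Plan.} The plan is to enrich the joint law, extract a weak limit along a subsequence, identify the limiting state processes via Kurtz--Protter, and then realize the limiting control on $\mathcal{E}_n$ by exploiting the auxiliary randomizer $U$ built into $\overline{\Omega}$. Along the way, the adaptedness requirement is the subtle point, and it is precisely what forces us to carry the full canonical family $(\xi,B,W,\mu,(\Lambda^i,Y^i)_{i=1}^{n})$ through the compactness argument rather than just $(B,W,\Lambda^k,\beta^k,X[\cdot])$.

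First I would establish relative compactness of the enriched laws
\[
R_k \,:=\, \PP\circ\bigl(\xi,B,W,\mu,(\Lambda^i,Y^i)_{i=1}^{n},\Lambda^k,\beta^k,X[(\Lambda^{k,-1},\beta^k)]\bigr)^{-1}.
\]
The first block has $k$-independent law; the joint convergence $(\xi,B,W,\Lambda^k)\to(\xi,B,W,\Lambda^0)$ in $\P^p$ holds by Step~1 of the proof of Proposition~\ref{pr:fixednapproximation}; the sequence $\{\PP\circ(\beta^k)^{-1}\}$ is relatively compact in $\P^p(\V)$ because $\sup_k\E^\PP\int_0^T\!\!\int_A|a|^{p'}\beta^k_t(da)dt \le R<\infty$, combined with the fact that bounded $p'$-moment sets in $\V$ are $\ell_{\V,p}$-relatively compact; and the state sequence is relatively compact in $\P^p((\C^d)^n)$ by the same argument as in Proposition~\ref{pr:itocompact} together with Lemma~\ref{le:finitestateestimate}. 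Extract a subsequence along which $R_k\to R$ in $\P^p$.

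Next I would identify the limit $R$ as the law of a controlled state system driven by $(\Lambda^{0,-1},\beta_\infty)$. Denote the canonical coordinates on the target space by the obvious abuse of notation. Under $R$, the $(\xi,B,W,\mu,(\Lambda^i,Y^i))$ marginal equals the $\PP$-marginal, so $B,W$ remain independent Wiener processes with respect to the filtration generated by the canonical coordinates (this follows as in Lemma~\ref{le:convergence-pre-solution}(1), since the Wiener character passes to the limit). Applying the weak-convergence-of-stochastic-integrals machinery of \cite{kurtzprotter-weakconvergence} to the $n$-player state SDE---whose coefficients are Lipschitz in $(x,\mu)$ by (A.4)---together with pathwise uniqueness of that SDE, one identifies the state coordinate under $R$ as $X[(\Lambda^{0,-1},\beta_\infty)]$, where $\beta_\infty$ is the limiting $\V$-coordinate playing the role of player~$1$'s control.

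The main obstacle is to transfer $\beta_\infty$ back to $\overline{\Omega}$ as a bona fide element of $\A_n(\mathcal{E}_n)$, i.e., a process that is $(\overline{\F}^n_t)$-progressive, rather than a process living on the abstract limit space. This is exactly the reason $U$ was inserted into the environment $\mathcal{E}_n$: conditioning $R$ on the $(\xi,B,W,\mu,(\Lambda^i,Y^i)_{i=1}^n)$-block yields a Borel kernel taking values in $\P^p(\V)$, and since $U$ is $\overline{\F}^n_0$-measurable and $\PP$-independent of that block, a standard measurable-selection / randomization-via-uniform-variable procedure produces a $\V$-valued random variable $\beta$ on $\overline{\Omega}$ realizing that kernel. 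Adaptedness is not automatic from mere realization, but it passes to the limit from the adaptedness of each $\beta^k$: the joint law $R$ inherits the conditional-independence relation that $\F^{\beta^k}_t$ is conditionally independent of $\F^{\xi,B,W,\mu,(\Lambda^i,Y^i)}_T$ given $\F^{\xi,B,W,\mu,(\Lambda^i,Y^i)}_t$ (this is closed under weak $\P^p$-limits, cf.\ the proof of Lemma~\ref{le:convergence-pre-solution}), which forces the realized $\beta$ to be progressive with respect to $(\overline{\F}^n_t)$. The $p$-integrability of $\beta$ follows from the uniform bound $R$ and lower semicontinuity. Combined with the SDE identification, this produces the desired representation \eqref{pf:fpss7}.
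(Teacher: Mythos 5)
Your overall architecture matches the paper's: pass to a weak limit of an (enriched) sequence of joint laws, identify the state coordinate via Kurtz--Protter plus uniqueness in law of the SDE, and then realize the limiting control back on $\overline{\Omega}$ through the auxiliary uniform variable $U$ via the transfer theorem. Two steps, however, do not close as written.

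First, and most importantly, you never verify that the realized control $\beta$ has the correct \emph{joint} law with the other players' original controls $\Lambda^{0,2},\ldots,\Lambda^{0,n}$. The target \eqref{pf:fpss7} involves $X[(\Lambda^{0,-1},\beta)]$, the system in which players $2,\ldots,n$ keep playing $\Lambda^{0,-1}$; by uniqueness in law of the state SDE what must be shown is $\PP\circ(X_0,B,W,(\Lambda^{0,-1},\beta))^{-1}=P\circ(X_0,B,W,\Lambda)^{-1}$ (the paper's \eqref{pf:limitrepresentation2}). Your randomization produces a $\beta$ whose joint law with the block $(\xi,B,W,\mu,(\Lambda^i,Y^i)_{i\le n})$ is correct, but $\Lambda^{0,-1}$ is a vector of arbitrary admissible controls on $\mathcal{E}_n$ — it need not be measurable with respect to that block (it may depend on $U$, which your randomization re-uses) — and the enriched limit $R$ only pins down the \emph{law} of the limiting copy of $\Lambda^{k,-1}$, not its identification with the random element $\Lambda^{0,-1}$ living on $\overline{\Omega}$. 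The paper devotes the second half of its proof to exactly this point: it records $P\circ(X_0,B,W,\Lambda^{-1})^{-1}=\PP\circ(X_0,B,W,\Lambda^{0,-1})^{-1}$ (its \eqref{pf:limitrepresentation1}), transfers the \emph{entire} vector $\overline{\beta}=(\overline{\beta}^1,\ldots,\overline{\beta}^n)$ as a function of $(U,X_0,B,W)$, and then matches the conditional law of $\overline{\beta}^1$ given the remaining coordinates against that of $\Lambda^1$ given $(X_0,B,W,\Lambda^{-1})$. Some version of this reconciliation is indispensable and is absent from your argument.

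Second, the adaptedness step is not right as stated: conditional independence of $\F^{\beta}_t$ from the terminal $\sigma$-field of the inputs given their time-$t$ $\sigma$-field does \emph{not} by itself force $\beta$ to be $(\overline{\F}^n_t)$-progressive — that compatibility-versus-adaptedness distinction is precisely the gap between weak and strong controls around which the whole paper is organized. What actually yields admissibility in the paper is the conjunction of three facts: $\beta$ is a measurable function of the full paths $(U,X_0,B,W)$; $U$ is $\overline{\F}^n_0$-measurable and independent of $(X_0,B,W)$; and, because $B,W$ remain Wiener processes for the filtration including the control under the limit $P$, the restriction $\beta|_{[0,t]}$ is independent of $\sigma(B_s-B_t,W_s-W_t:s\in[t,T])$ jointly with $(U,X_0,B|_{[0,t]},W|_{[0,t]})$. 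Since the past and the future increments are independent and together generate $\sigma(U,X_0,B,W)$, one concludes that $\beta|_{[0,t]}$ is a.s. $\sigma(U,X_0,B_s,W_s:s\le t)$-measurable. This mechanism leans on the independent-increment structure of $(B,W)$; conditioning on your larger block, which contains $\mu$ and $(\Lambda^i,Y^i)$ and does not split into an independent past and future, makes this step harder rather than easier.
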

\begin{proof}
Let us abbreviate
\[
\Omega^{(n)} := \C^{m_0} \times (\C^m)^n \times \V^n \times (\C^d)^n.
\]
Let $(B,W=(W^1,\ldots,W^n),\Lambda=(\Lambda^1,\ldots,\Lambda^n),X=(X^1,\ldots,X^n))$ denote the identity map on $\Omega^{(n)}$, and let $(\F^{(n)}_t)_{t \in [0,T]}$ denote the natural filtration,
\[
\F^{(n)}_t = \sigma\left((B_s,W_s,\Lambda([0,s] \times C),X_s) : s \le t, \ C \in \B(A)\right).
\]
Fix a limit point $P$ of $P_k$. 
It is easily verified that $P$ satisfies
\begin{align}
P \circ \left(X_0,B,W,(\Lambda^2,\ldots,\Lambda^n)\right)^{-1} &= \PP \circ \left(X_0,B,W,(\Lambda^{0,2},\ldots,\Lambda^{0,n})\right)^{-1}. \label{pf:limitrepresentation1}
\end{align}
Moreover, for each $k$, we know that $B$ and $W$ are independent $(\F^{(n)}_t)_{t \in [0,T]}$-Wiener processes under $P_k$, and thus this is true under $P$ as well. Note that $(B,W,(\Lambda^{k,-1},\beta^k),X[(\Lambda^{k,-1},\beta^k)])$ satisfy the state SDE under $\PP$, or equivalently under $P_k$ the canonical processes verify the SDE
\begin{align}
\begin{cases}
dX^i_t &= \int_Ab(t,X^i_t,\widehat{\mu}^x_t,a)\Lambda^i_t(da)dt + \sigma(t,X^i_t,\widehat{\mu}^x_t)dW^i_t + \sigma_0(t,X^i_t,\widehat{\mu}^x_t)dB_t, \ i=1,\ldots,n \\
\widehat{\mu}^x_t &= \frac{1}{n}\sum_{k=1}^n\delta_{X^k_t}.
\end{cases} \label{pf:SDE}
\end{align}
The results of Kurtz and Protter \cite{kurtzprotter-weakconvergence} imply that this passes to the limit: The canonical processes on $\Omega^{(n)}$ verify the same SDE under $P$.

It remains only to show that there exists $\beta \in \A_n(\mathcal{E}_n)$ such that 
\begin{align}
\PP \circ (X_0,B,W,(\Lambda^{0,-1},\beta))^{-1} = P \circ (X_0,B,W,\Lambda)^{-1}. \label{pf:limitrepresentation2}
\end{align}
Indeed, from uniqueness in law of the solution of the SDE \eqref{pf:SDE} it will then follow that
\[
\PP \circ (B,W,(\Lambda^{0,-1},\beta),X[(\Lambda^{0,-1},\beta)])^{-1} = P.
\]
The independent uniform random variable $U$ built into $\mathcal{E}_n$ now finally comes into play. 
Using a well known result from measure theory (e.g. \cite[Theorem 5.10]{kallenberg-foundations}) we may find a measurable function
\[
\overline{\beta} = (\overline{\beta}^1,\ldots,\overline{\beta}^n) : [0,1] \times (\R^d)^n \times \C^{m_0} \times (\C^m)^n \rightarrow \V^n
\]
such that
\begin{align}
\PP \circ \left(X_0,B,W,\overline{\beta}(U,X_0,B,W)\right)^{-1} = P \circ (X_0,B,W,\Lambda)^{-1}. \label{pf:limitrepresentation3}
\end{align}
Since $B$ and $W$ are independent $(\F^{(n)}_t)_{t \in [0,T]}$-Wiener processes under $P$, it follows that
\[
(\overline{\beta}(U,X_0,B,W)_s)_{s \in [0,t]} \quad \text{and} \quad \sigma(B_s-B_t,W_s-W_t : s \in [t,T])
\]
are independent under $\PP$, for each $t \in [0,T]$. Thus, $(\overline{\beta}(U,X_0,B,W)_t)_{t \in [0,T]}$ is progressively measurable with respect to the $\PP$-completion of the filtration $(\sigma(U,X_0,B_s,W_s : s \le t))_{t \in [0,T]}$. In particular, $(\overline{\beta}(U,X_0,B,W))_{t \in [0,T]} \in \A_n^n(\mathcal{E}_n)$ and $\beta := (\overline{\beta}^1(U,X_0,B,W)_t)_{t \in [0,T]}$ is in $\A_n(\mathcal{E}_n)$. Now note that \eqref{pf:limitrepresentation1} and \eqref{pf:limitrepresentation3} together imply
\begin{align*}
\PP \circ \left(X_0,B,W,\left(\overline{\beta}^2(U,X_0,B,W),\ldots,\overline{\beta}^n(U,X_0,B,W)\right)\right)^{-1} = P \circ \left(X_0,B,W,(\Lambda^2,\ldots,\Lambda^n)\right)^{-1}.
\end{align*}
On the other hand, \eqref{pf:limitrepresentation3} implies that the conditional law under $P$ of $\Lambda^1$ given $(X_0,B,W,\Lambda^2,\ldots,\Lambda^n)$ is the same as the conditional law under $\PP$ of $\overline{\beta}^1(U,X_0,B,W)$ given 
\[
\left(X_0,B,W,\overline{\beta}^2(U,X_0,B,W),\ldots,\overline{\beta}^n(U,X_0,B,W)\right).
\]
This completes the proof of \eqref{pf:limitrepresentation2}.
\end{proof}

\section{Proof of Theorem \ref{th:withoutcommonnoise}} \label{se:withoutcommonnoiseproof}
This section explains the proof of Theorem \ref{th:withoutcommonnoise}, which specializes the main results to the setting without common noise essentially by means of the following simple observation. Note that although we assume $\sigma_0\equiv 0$ throughout the section, \emph{weak MFG solution} has the same meaning as in Definition \ref{def:weakMFGsolution}, distinct from Definition \ref{def:mfgsolutionwithoutcommonnoise} of \emph{weak MFG solution without common noise}.

\begin{lemma} \label{le:with-withoutcommonnoise}
If $(\widetilde{\Omega},(\F_t)_{t \in [0,T]},P,B,W,\mu,\Lambda,X)$ is a weak MFG solution, then $(\widetilde{\Omega},(\F_t)_{t \in [0,T]},P,W,\mu,$ $\Lambda,X)$ is a weak MFG solution without common noise. Conversely, if $(\widetilde{\Omega},(\F_t)_{t \in [0,T]},P,W,\mu,\Lambda,X)$ is a weak MFG solution without common noise, then we may construct (by enlarging the probability space, if necessary) an $m_0$-dimensional Wiener process $B$ independent of $(W,\mu,\Lambda,X)$ such that $(\widetilde{\Omega},(\F_t)_{t \in [0,T]},P,B,W,\mu,\Lambda,X)$ is a weak MFG solution.
\end{lemma}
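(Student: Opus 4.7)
My plan is to verify the six conditions of Definition \ref{def:mfgsolutionwithoutcommonnoise} (respectively Definition \ref{def:weakMFGsolution}) one at a time, exploiting that $\sigma_0\equiv 0$ eliminates $B$ from the state dynamics and from the objective $\Gamma$.

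For the forward direction, given a weak MFG solution $(\widetilde{\Omega},(\F_t),P,B,W,\mu,\Lambda,X)$, I would simply forget $B$ from the tuple while keeping the same filtered probability space. Conditions (1), the progressivity and integrability portions of (3), and (4) of Definition \ref{def:mfgsolutionwithoutcommonnoise} follow immediately from their analogues in Definition \ref{def:weakMFGsolution}, since $\sigma_0\equiv 0$ makes the state equation \eqref{def:relaxedSDE-weak-ncn} coincide with \eqref{def:relaxedSDE-weak}. Condition (2) follows by projecting $B$ out of the independence $X_0 \perp W \perp (B,\mu)$. Condition (6) follows from the tower property: $\mu = P((W,\Lambda,X)\in\cdot \mid B,\mu)$ together with the $\sigma(\mu)$-measurability of $\mu$ gives $P((W,\Lambda,X)\in C\mid \mu) = \E^P[\mu(C)\mid \mu] = \mu(C)$. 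The only nontrivial points are the conditional independence in (3), which requires passing from conditional independence with respect to $\F^{X_0,B,W,\mu}$ to the smaller $\F^{X_0,W,\mu}$, and the optimality in (5).

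For the converse direction, I would enlarge $\widetilde{\Omega}$ by a factor carrying an $m_0$-dimensional Wiener measure, define $B$ as the projection onto that factor, and take the product measure so that $B$ is independent of $(X_0, W, \mu, \Lambda, X)$. After augmenting $(\F_t)$ to include $(\F^B_t)$, conditions (1), (2), the progressivity and integrability parts of (3), and (4) of Definition \ref{def:weakMFGsolution} all become immediate. For the conditional independence in (3) the independence of $B$ from everything else allows $B$ to be factored out of the relevant conditional expectations. Consistency (6) holds because $(W,\Lambda,X)$ is independent of $B$ given $\mu$, so $P((W,\Lambda,X)\in\cdot\mid B,\mu) = P((W,\Lambda,X)\in\cdot\mid \mu) = \mu$.

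The main subtlety in both directions, and what I expect to be the principal obstacle, is the optimality condition (5). For the converse it is the easier of the two: arranging $B$ to be independent of $\mu$ makes $P\circ(B,\mu)^{-1}$ a product measure, so any alternative with matching $(B',\mu')$ law automatically satisfies $B'\perp\mu'$ with $P'\circ(\mu')^{-1} = P\circ\mu^{-1}$; the tuple $(W',\mu',\Lambda',X')$ then becomes a valid comparator for the without-CN optimality, and because $\Gamma$ does not depend on $B'$ when $\sigma_0\equiv 0$, the with-CN inequality follows. For the forward direction I face the reverse task: given an alternative $(W',\mu',\Lambda',X')$ for without-CN with $P'\circ(\mu')^{-1} = P\circ\mu^{-1}$, I need to adjoin a $B'$ to $\widetilde{\Omega}'$ so that $(B',\mu')$ has joint law $P\circ(B,\mu)^{-1}$ under the enlarged measure. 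This calls for a disintegration argument using the regular conditional distribution of $B$ given $\mu$ under $P$, and the hard part is verifying that the resulting $B'$ is a Wiener process with respect to a filtration that also carries $W',\Lambda',X'$; this exploits the independence of $B$ from $(X_0,W)$ guaranteed by Definition \ref{def:weakMFGsolution}(2) together with a careful filtration enlargement. Once the alternative is built on the larger space, the with-CN optimality applies, and since $\Gamma$ does not depend on $B$, the desired without-CN inequality follows.
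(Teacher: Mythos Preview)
Your overall outline is sensible, but there is a genuine gap in the forward direction, and the paper's proof takes a structurally different route that avoids it.

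The gap is in condition (3). You write that it ``requires passing from conditional independence with respect to $\F^{X_0,B,W,\mu}$ to the smaller $\F^{X_0,W,\mu}$'' as though this were routine, but it is not: from $\sigma(\Lambda_s : s\le t) \perp \F^{X_0,B,W,\mu}_T \mid \F^{X_0,B,W,\mu}_t$ one cannot in general deduce $\sigma(\Lambda_s : s\le t) \perp \F^{X_0,W,\mu}_T \mid \F^{X_0,W,\mu}_t$, because shrinking the conditioning $\sigma$-field does not preserve conditional independence. The independence structure $X_0 \perp W \perp (B,\mu)$ does not by itself repair this. The paper explicitly flags this as ``the only difficulty'' and does not attempt to verify (3) directly.

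Instead, the paper's key move is to invoke Lemma~\ref{le:presolution} and the density of strong controls (Lemma~\ref{le:adapteddensity}) to \emph{reformulate} both Definitions~\ref{def:weakMFGsolution} and~\ref{def:mfgsolutionwithoutcommonnoise}: the conditional independence in (3) is dropped (leaving only progressivity and integrability, called (3.a)), and the optimality condition (5) is replaced by its restriction to strong controls, i.e.\ controls that are progressive with respect to $(\F^{X_0,B,W,\mu}_t)$ (called (5'.a)) or $(\F^{X_0,W,\mu}_t)$ (called (5'.b)). With this reformulation both directions become short: in the forward direction, any $(\F^{X_0,W,\mu}_t)$-progressive alternative is automatically $(\F^{X_0,B,W,\mu}_t)$-progressive, so (5'.a) implies (5'.b) trivially; in the converse direction, if $\Lambda'$ is $(\F^{X_0,B,W,\mu}_t)$-progressive and $B$ is independent of $(X_0,W,\mu)$, then $\sigma(\Lambda'_s : s\le t)$ is conditionally independent of $\F^{X_0,W,\mu}_T$ given $\F^{X_0,W,\mu}_t$, so the original without-CN condition (5) applies.

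Your disintegration approach to (5) in the forward direction is in principle workable, but the filtration compatibility you flag as ``the hard part'' is genuinely delicate and you have not indicated how to resolve it; the paper's reformulation sidesteps this entirely. For the converse direction your argument for (5) is essentially correct and close to the paper's.
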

\begin{proof}
The only difficulty comes from the conditional independence required in condition (3) of both Definitions \ref{def:weakMFGsolution} and \ref{def:mfgsolutionwithoutcommonnoise}, and it is convenient here to reformulate the definitions slightly.
Lemma \ref{le:presolution} tells us that Definition \ref{def:weakMFGsolution} of a weak MFG solution is equivalent to an alternative definition, in which the conditional independence is omitted from condition (3) and is added to condition (5). To be precise, define the following conditions:
\begin{enumerate}
\item[(3.a)] $(\Lambda_t)_{t \in [0,T]}$ is $(\F_t)_{t \in [0,T]}$-progressively measurable with values in $\P(A)$ and
\[
\E^P\int_0^T\int_A|a|^p\Lambda_t(da)dt < \infty.
\]
\item[(5.a)] Suppose $(\widetilde{\Omega}',(\F'_t)_{t \in [0,T]},P')$ is another filtered probability space supporting $(B',W',\mu',\Lambda',X')$ satisfying (3.a), (1,2,4) of Definition \ref{def:weakMFGsolution}, and $P \circ (B,\mu)^{-1} = P' \circ (B',\mu')^{-1}$, with $\sigma(\Lambda'_s : s \le t)$ conditionally independent of $\F^{X'_0,B',W',\mu'}_T$ given $\F^{X'_0,B',W',\mu'}_t$, for each $t \in [0,T]$. Then
\begin{align*}
\E^P[\Gamma(\mu^x,\Lambda,X)] \ge \E^{P'}[\Gamma(\mu'^x,\Lambda',X')].
\end{align*}
\end{enumerate}
Then, by Lemma \ref{le:presolution}, $(\widetilde{\Omega},(\F_t)_{t \in [0,T]},P,B,W,\mu,\Lambda,X)$ is a weak MFG solution if and only if it satisfies Definition \ref{def:weakMFGsolution} with conditions (3) and (5) replaced by (3.a) and (5.a). In fact, the same is true if (5.a) is replaced by
\begin{enumerate}
\item[(5'.a)] If $(\Lambda'_t)_{t \in [0,T]}$ is $(\F^{X_0,B,W,\mu}_t)_{t \in [0,T]}$-progressively measurable with values in $\P(A)$ and
\[
\E^P\int_0^T\int_A|a|^p\Lambda'_t(da)dt < \infty,
\]
and if $X'$ is the unique strong solution of 
\begin{align}
dX'_t = \int_Ab(t,X'_t,\mu^x_t,a)\Lambda'_t(da)dt + \sigma(t,X'_t,\mu^x_t)dW_t, \ X'_0 = X_0, \label{def:X'SDE}
\end{align}
then $\E^P[\Gamma(\mu^x,\Lambda,X)] \ge \E^P[\Gamma(\mu^x,\Lambda',X')]$.
\end{enumerate}
Indeed, this follows from the density of strong controls provided by Lemma \ref{le:adapteddensity} (see also Remark \ref{re:adapteddensity}). Analogously, for the setting without common noise, consider the following condition:
\begin{enumerate}
\item[(5'.b)] If $(\Lambda'_t)_{t \in [0,T]}$ is $(\F^{X_0,W,\mu}_t)_{t \in [0,T]}$-progressively measurable with values in $\P(A)$ and
\[
\E^P\int_0^T\int_A|a|^p\Lambda'_t(da)dt < \infty,
\]
and if $X'$ is the unique strong solution of \eqref{def:X'SDE},
then $\E^P[\Gamma(\mu^x,\Lambda,X)] \ge \E^P[\Gamma(\mu^x,\Lambda',X')]$.
\end{enumerate}
It is proven exactly as in Lemma \ref{le:presolution} that $(\widetilde{\Omega},(\F_t)_{t \in [0,T]},P,W,\mu,\Lambda,X)$ is a weak MFG solution without common noise if and only if it satisfies Definition \ref{def:mfgsolutionwithoutcommonnoise} with conditions (3) and (5) replaced by (3.a) and (5'.b). We are now ready to prove the lemma:

Suppose $(\widetilde{\Omega},(\F_t)_{t \in [0,T]},P,B,W,\mu,\Lambda,X)$ is a weak MFG solution. It is straightforward to check that $(\widetilde{\Omega},(\F_t)_{t \in [0,T]},P,W,\mu,\Lambda,X)$ satisfies condition (3.a) as well as (1,2,4) of Definition \ref{def:mfgsolutionwithoutcommonnoise}. Condition (5) of Definition \ref{def:weakMFGsolution} cleary implies condition (5'.b). Finally $\mu = P((W,\Lambda,X) \in \cdot \ | \ B,\mu)$ implies $\mu = P((W,\Lambda,X) \in \cdot \ | \ \mu)$, which verifies the final condition (6) of Definition \ref{def:mfgsolutionwithoutcommonnoise}. Hence $(\widetilde{\Omega},(\F_t)_{t \in [0,T]},P,W,\mu,\Lambda,X)$ is a weak MFG solution without common noise.

Conversely, let $(\widetilde{\Omega},(\F_t)_{t \in [0,T]},P,W,\mu,\Lambda,X)$ be a weak MFG solution without common noise, and assume without loss of generality that $(\widetilde{\Omega},(\F_t)_{t \in [0,T]},P)$ supports an $(\F_t)_{t \in [0,T]}$-Wiener process $B$ of dimension $m_0$ which is independent of $(W,\mu,\Lambda,X)$. Again, condition (3.a) as well as (1), (2), and (4) of Definition \ref{def:weakMFGsolution} clearly hold. The consistency condition $\mu = P((W,\Lambda,X) \in \cdot \ | \ \mu)$ and the independence of $B$ and $(W,\mu,\Lambda,X)$ imply $\mu = P((W,\Lambda,X) \in \cdot \ | \ B,\mu)$. Finally, to check (5'.a), note first that the independence of $B$ and $(X_0,W,\mu)$ implies easily that $\F^{X_0,B,W,\mu}_t$ and $\F^{X_0,W,\mu}_T$ are conditionally independent given $\F^{X_0,W,\mu}_t$. Thus, if $(\Lambda'_t)_{t \in [0,T]}$ is $(\F^{X_0,B,W,\mu}_t)_{t \in [0,T]}$-progressively measurable, then $\sigma(\Lambda'_s : s \le t)$ is conditionally independent of $\F^{X_0,W,\mu}_T$ given $\F^{X_0,W,\mu}_t$, and condition (5) of Definition \ref{def:mfgsolutionwithoutcommonnoise} implies that $\E^P[\Gamma(\mu^x,\Lambda,X)] \ge \E^P[\Gamma(\mu^x,\Lambda',X')]$, where $X'$ is defined as in \eqref{def:X'SDE}. This verifies (5'.a), and so $(\widetilde{\Omega},(\F_t)_{t \in [0,T]},P,B,W,\mu,\Lambda,X)$ is a weak MFG solution.
\end{proof}

\begin{proof}[Proof of Theorem \ref{th:withoutcommonnoise}]
At this point, the proof is mostly straightforward. The first claim, regarding the adaptation of Theorem \ref{th:mainconvergence}, follows immediately from Theorem \ref{th:mainconvergence} and the observation of Lemma \ref{le:with-withoutcommonnoise}. The second claim, about adapting Theorem \ref{th:converseconvergence}, is not so immediate but requires nothing new. First, notice that Theorem \ref{th:partialconverseconvergence} remains true if we replace ``weak MFG solution'' by ``weak MFG solution without common noise,'' and if we define $P_n$ instead by \eqref{def:pnwithoutcommonnoise}; this is a consequence of Theorem \ref{th:partialconverseconvergence} and Lemma \ref{le:with-withoutcommonnoise}. Then, we must only check that Proposition \ref{pr:fixednapproximation} remains true if we replace ``strong'' by ``very strong,'' and if we replace the conclusion (1) by
\begin{enumerate}
\item[(1')] In $\P^p((\C^m)^n \times \V^n \times (\C^d)^n)$
\[
\lim_{k\rightarrow\infty}\PP_n \circ \left(W,\Lambda^k,X[\Lambda^k]\right)^{-1} = \PP_n \circ \left(W,\Lambda^0,X[\Lambda^0]\right)^{-1}.
\]
\end{enumerate}
It is straightforward to check that the proof of Proposition \ref{pr:fixednapproximation} given in Section \ref{se:fixednapproximation} translates mutatis mutandis to this new setting.
\end{proof}

\appendix

\section{Proof of Propositions \ref{pr:equilibriuminclusions} and \ref{pr:verystrongeq}} \label{se:appendixproofs1}

\subsection{Proof of Proposition \ref{pr:equilibriuminclusions}} \label{se:proof-equilibriuminclusions}

\subsubsection*{Step 1:}
We first show that every strong $\epsilon$-Nash equilibrium is also a relaxed $\epsilon$-Nash equilibrium.
Suppose $\Lambda = (\Lambda^1,\ldots,\Lambda^n) \in \A_n^n(\mathcal{E}_n)$ is a strong $\epsilon$-Nash equilibrium on $\mathcal{E}_n$. Lemma \ref{le:valuebound}(3) implies
\[
\E^{\PP_n}\int_0^T\int_A|a|^{p'}\Lambda^i_t(da)dt < \infty, \ i=1,\ldots,n.
\]
Let $\delta > 0$, and find $\beta^* \in \A_n(\mathcal{E}_n)$ such that
\begin{align}
J_i((\Lambda^{-i},\beta^*)) \ge \sup_{\beta \in \A_n(\mathcal{E}_n)}J_i((\Lambda^{-i},\beta)) - \delta. \label{pf:equilibriuminclusions1}
\end{align}
Lemma \ref{le:valuebound}(2) implies $\E^{\PP_n}\int_0^T\int_A|a|^{p'}\beta^*_t(da)dt < \infty$.
Thus, by Lemma \ref{le:adapteddensity} (see also Remark \ref{re:adapteddensity}), we may find a sequence of $(\F^{s,n}_t)_{t \in [0,T]}$-progressively measurable $A$-valued processes $(\alpha^k_t)_{t \in [0,T]}$ such that
\begin{align}
\lim_{r\rightarrow\infty}\sup_k\E^{\PP_n}\int_0^T|\alpha^k_t|^{p'}1_{\{|\alpha^k_t| > r\}}dt = 0, \label{pf:equilibriuminclusions2}
\end{align}
and
\[
\PP_n \circ \left(\xi,B,W,\beta^*\right)^{-1} = \lim_{k \rightarrow\infty}\PP_n \circ \left(\xi,B,W,dt\delta_{\alpha^k_t}(da)\right)^{-1},
\]
in $\P^p((\R^d)^n \times \C^{m_0} \times (\C^m)^n \times \V)$. Abbreviate $\beta^k = dt\delta_{\alpha^k_t}(da)$. Since $\Lambda$ is a strong strategy, we may write $\Lambda = \widehat{\Lambda}(\xi,B,W)$ for some measurable function $\widehat{\Lambda}$, and it follows (e.g. from \cite[Lemma A.3]{lacker-mfgcontrolledmartingaleproblems}, which deals with the potential discontinuity of $\widehat{\Lambda}$) that
\begin{align}
\PP_n \circ \left(\xi,B,W,(\Lambda^{-i},\beta^*)\right)^{-1} = \lim_{k \rightarrow\infty}\PP_n \circ \left(\xi,B,W,(\Lambda^{-i},\beta^k)\right)^{-1}, \label{pf:equilibriuminclusions3}
\end{align}
Lemma \ref{le:finitestateconvergence} gives
\[
\PP_n \circ \left(\xi,B,W,(\Lambda^{-i},\beta^*),X[(\Lambda^{-i},\beta^*)]\right)^{-1} = \lim_{k \rightarrow\infty}\PP_n \circ \left(\xi,B,W,(\Lambda^{-i},\beta^k),X[(\Lambda^{-i},\beta^k)]\right)^{-1}.
\]
Hence, the uniform integrability \eqref{pf:equilibriuminclusions2} and continuity of $J$ of Lemma \ref{le:jcontinuous} imply
\begin{align}
\lim_{k\rightarrow\infty}J_i((\Lambda^{-i},\beta^k)) = J_i((\Lambda^{-i},\beta^*)). \label{pf:equilibriuminclusions3.5}
\end{align}
Finally, since $\Lambda$ is a strong $\epsilon$-Nash equilibrium, it holds for each $k$ that
\begin{align*}
J_i(\Lambda)  + \epsilon_i &\ge \sup_{\beta \in \A_n(\mathcal{E}_n) \text{ strong}}J_i\left((\Lambda^{-i},\beta)\right) \ge J_i\left((\Lambda^{-i},\beta^k)\right).
\end{align*}
Thus, sending $k \rightarrow \infty$ and applying \eqref{pf:equilibriuminclusions1} yields
\[
J_i(\Lambda)  + \epsilon_i \ge J_i\left((\Lambda^{-i},\beta^*)\right) \ge \sup_{\beta \in \A_n(\mathcal{E}_n)}J_i((\Lambda^{-i},\beta)) - \delta.
\]
Sending $\delta \downarrow 0$ shows that $\Lambda$ is in fact a relaxed $\epsilon$-Nash equilibrium. \hfill \qedsymbol

\subsubsection*{Step 2:}
The proof that every strict $\epsilon$-Nash is a relaxed $\epsilon$-Nash equilibrium follows the same structure; the only difference is that we construct the sequence $\alpha^k$ from $\beta^*$ a bit differently. First, let $\iota_k : A \rightarrow A$ be a measurable function satisfying $\iota_k(a) = a$ for $|a| \le k$ and $|\iota_k(a)| \le k$ for all $a \in A$. Let $\widetilde{\beta}^k_t := \beta^*_t \circ \iota_k^{-1}$, so that $\widetilde{\beta}^k \rightarrow \beta^*$ a.s., and clearly
\begin{align}
\int_{\{|a| > r\}}|a|^{p'}\widetilde{\beta}^k_t(da) \le \int_{\{|a| > r\}}|a|^{p'}\beta^*_t(da), \ r > 0. \label{pf:equilibriuminclusions4}
\end{align}
For each $k$, apply the well-known Chattering Lemma \cite[Theorem 2.2(b)]{elkaroui-partialobservations} to find a sequence of $(\F^{n}_t)_{t \in [0,T]}$-progressively measurable $A$-valued processes $\alpha^{k,j}_t$ such that
\begin{align}
\widetilde{\beta}^k = \lim_{j \rightarrow\infty}dt\delta_{\alpha^{k,j}_t}(da), \ a.s. \label{pf:equilibriuminclusions5}
\end{align}
We then find a subsequence $j_k$ such that $\beta^k := dt\delta_{\alpha^{k,j_k}_t}(da)$ converges a.s. to $\beta^*$, and \eqref{pf:equilibriuminclusions3} holds. It follows also from \eqref{pf:equilibriuminclusions4} and \eqref{pf:equilibriuminclusions5} that
\[
\lim_{r\rightarrow\infty}\sup_{k,j}\E^{\PP_n}\int_0^T|\alpha^{k,j}_t|^{p'}1_{\{|\alpha^{k,j}_t| > r\}}dt = 0,
\]
so that \eqref{pf:equilibriuminclusions3.5} holds as well. The rest of the proof is as in Step 1.

\subsection{Proof of Proposition \ref{pr:verystrongeq}} \label{se:proof-verystrongeq}

First, note that when $\sigma_0 \equiv 0$, Lemma \ref{le:finitestateconvergence} holds true when the common noise $B$ is omitted everywhere it appears. With this in mind, the proof of Proposition \ref{pr:verystrongeq} follows exactly Step 1 of the proof of Proposition \ref{pr:equilibriuminclusions}, except of course with the word ``strong'' replaced by ``very strong,'' and with the common noise $B$ removed everywhere it appears.

\section{Proof of Proposition \ref{pr:itocompact}} \label{ap:itocompact}
This proof is similar to the proofs of \cite[Proposition B.2]{carmonadelaruelacker-mfgcommonnoise} and \cite[Proposition B.4]{lacker-mfgcontrolledmartingaleproblems}.
For each $1 \le i \le n$ and $P \in \Q_{\kappa_{n,i}}$, apply the Burkholder-Davis-Gundy inequality and the growth assumption to find a constant $C > 0$ (which will change from line to line but depends only on $c$, $T$, and $p'$) such that
\begin{align*}
\E^P[\|X\|^{p'}_t] \le &\,C\E^P\left[ |X_0|^{p'} + \left(\int_0^t\int_A|B(s,a)|\Lambda_s(da)ds\right)^{p'} + \left(\int_0^t\left|\Sigma\Sigma^\top(s)\right|ds\right)^{p'/2}\right]  \\
	\le &\,C\,\E\left\{1 + |X_0|^{p'} + Z^{p'} + \int_0^t\left(\|X\|^{p'}_s + \int_A|a|^{p'}\Lambda_s(da)\right) ds \right\},
\end{align*}
where we used also $p' \ge 2$ and Jensen's inequality. By Gronwall's inequality,
\begin{align*}
\E^P[\|X\|^{p'}_T] \le C\E^P\left[1 + |X_0|^{p'} + Z^{p'} + \int_0^T\int_A|a|^{p'}\Lambda_t(da)dt\right] \le C(1 + \kappa_{n,i}).
\end{align*}
Thus
\begin{align}
\sup_{P \in \Q}\E^P[\|X\|^{p'}_T] &= \sup_n\sup\left\{\frac{1}{n}\sum_{i=1}^n\E^{P_i}[\|X\|^{p'}_T] : P_i \in \Q_{\kappa_{n,i}} \text{ for } i=1,\ldots,n\right\} \nonumber \\
	&\le C\sup_n\frac{1}{n}\sum_{i=1}^n(1+\kappa_{n,i}) < \infty. \label{pf:itocompact1}
\end{align}
By assumption, we have also
\begin{align}
\sup_{P \in \Q}\E^P\int_0^T\int_A|a|^{p'}\Lambda_t(da)dt \le \sup_n\frac{1}{n}\sum_{i=1}^n\kappa_{n,i} < \infty. \label{pf:itocompact3}
\end{align}
In light of \eqref{pf:itocompact1} and \eqref{pf:itocompact3}, it suffices to show that $\{P \circ X^{-1} : P \in \Q\} \subset \P(\C^d)$ is tight; see \cite[Proposition B.3]{lacker-mfgcontrolledmartingaleproblems}. To check this, we will verify Aldous' criterion \cite[Lemma 16.12]{kallenberg-foundations} for tightness, or
\begin{align}
\lim_{\delta\downarrow 0}\sup_{P \in \Q}\sup_\tau \E^P[|X_{(\tau + \delta) \wedge T} - X_\tau|^p] = 0, \label{pf:itocompact2}
\end{align}
where the supremum is over stopping times $\tau$ valued in $[0,T]$.
The Burkholder-Davis-Gundy inequality implies that there exists a constant $C' > 0$ (which again depends only on $c$, $T$, and $p$ and will change from line to line) such that, for any $i$ and any $P \in \Q_{\kappa_{n,i}}$,
\begin{align*}
\E^P[|X_{(\tau + \delta) \wedge T} - X_\tau|^p] &\le C'\E^P\left[\left|\int_\tau^{(\tau + \delta) \wedge T}\int_AB(t,a)\Lambda_t(da)dt\right|^p + \left|\int_\tau^{(\tau + \delta) \wedge T}dt|\Sigma(t)|^2\right|^{p/2} \right] \\
	&\le C'\E^P\left[\left|c\int_\tau^{(\tau + \delta) \wedge T}\left(1 + |X_t|	+ Z + \int_A|a|\Lambda_t(da)\right)dt\right|^p\right] \\
	&\quad + C'\E^P\left[\left|c\int_\tau^{(\tau + \delta) \wedge T}\left(1 + |X_t|^{p_\sigma} + Z^{p_\sigma}\right)dt\right|^{p/2}\right] \\
	&\le C'\E^P\left[(\delta^p + \delta^{p/2})\left(1 + \|X\|_T^p + Z^p\right) + \left|\int_\tau^{(\tau + \delta) \wedge T}\int_A|a|\Lambda_t(da)dt\right|^p\right].
\end{align*}
Since $p' > p$, we have $\E^P[Z^p] \le \E^P[Z^{p'}]^{p/p'} \le \kappa_{n,i}^{p/p'}$ for $P \in \Q_{\kappa_{n,i}}$, and thus by assumption
\begin{align*}
\sup_{P \in \Q}\E^P[Z^p] &= \sup_n\sup\left\{\frac{1}{n}\sum_{i=1}^n\E^{P_i}[Z^p] : P_i \in \Q_{\kappa_{n,i}} \text{ for } i=1,\ldots,n\right\} \\
	&\le \sup_n\frac{1}{n}\sum_{i=1}^n\kappa_{n,i}^{p/p'} < \infty.
\end{align*}
This and \eqref{pf:itocompact1} imply
\[
\lim_{\delta \downarrow 0}\sup_{P \in \Q}(\delta^p + \delta^{p/2})\E^P\left[1 + \|X\|_T^p + Z^p\right] = 0.
\]
To control the term with $\Lambda$, note that $p' > p$ and \eqref{pf:itocompact3} imply
\[
\lim_{\delta \downarrow 0}\sup_{P \in \Q}\sup_\tau\E^P\int_\tau^{(\tau + \delta) \wedge T}\int_A|a|^p\Lambda_t(da)dt = 0.
\]
Putting this all together proves \eqref{pf:itocompact2}. \hfill \qedsymbol

\bibliographystyle{amsplain}
\bibliography{MFGconvergence-bib}

\end{document}